\setlist[itemize]{noitemsep,nolistsep}
\setlist[enumerate]{noitemsep,nolistsep}
\let\mathcal\mathscr
\def\Z{{\bf Z}}
\def\N{{\bf N}}
\def\C{{\bf C}}
\def\R{{\bf R}}
\def\Q{{\bf Q}}
\def\P{{\bf P}}
\def\hK{hyperk\"ahler}
\def\hKm{hyperk\"ahler manifold}
\def\ggs{generated by global sections}
\def\SS{S^{[2]}}
\def\phi{\varphi}
\def\kp{\kappa_{\rm prim}}
\def\bs{{\mathbf s}}
\def\bv{{\mathbf v}}
\def\cI{\mathcal{I}}
\def\cD{\mathcal{D}}
\def\cA{\mathcal{A}}
\def\cF{\mathcal{F}}
\def\cO{\mathcal{O}}
\def\cP{\mathcal{P}}
\def\cH{\mathcal{H}}
\def\cE{\mathcal{E}}
\def\cS{\mathcal{S}}
\def\cC{\mathcal{C}}
\def\cM{\mathcal{M}}
\def\cK{\mathcal{K}}
\def\cQ{\mathcal{Q}}
\def\cU{\mathcal{U}}
\def\lra{\longrightarrow}
\def\llra{\hbox to 10mm{\rightarrowfill}}
\def\lllra{\hbox to 15mm{\rightarrowfill}}
\def\llla{\hbox to 10mm{\leftarrowfill}}
\def\lllla{\hbox to 15mm{\leftarrowfill}}
\def\dra{\dashrightarrow}
\def\hra{\hookrightarrow}
\def\lhra{\ensuremath{\lhook\joinrel\relbar\joinrel\rightarrow}}
\def\isom{\simeq}
\def\eps{\varepsilon}
\def\ie{\hbox{i.e.}}
\def\eg{\hbox{e.g.}}
\def\tO{\widetilde{O}}
 \def\vide{\varnothing}
\DeclareMathOperator{\isomto}{\stackrel{{}_{\scriptstyle\sim}}{\to}}
\DeclareMathOperator{\isomtol}{\stackrel{{}_{\scriptstyle\sim}}{\leftarrow}}
\DeclareMathOperator{\isomdra}{\stackrel{{}_{\scriptstyle\sim}}{\dra}}
\DeclareMathOperator{\isomlra}{\stackrel{{}_{\scriptstyle\sim}}{\lra}}
\DeclareMathOperator{\Amp}{Amp}
\DeclareMathOperator{\Aut}{Aut}
\DeclareMathOperator{\Bir}{Bir}
\DeclareMathOperator{\ch}{ch}
\DeclareMathOperator{\codim}{codim}
\def\div{\mathop{\rm div}\nolimits}
\DeclareMathOperator{\disc}{disc}
\DeclareMathOperator{\Exc}{Exc}
\DeclareMathOperator{\GL}{GL}
\DeclareMathOperator{\Gr}{\mathsf{Gr}}
\DeclareMathOperator{\OGr}{\mathsf{OGr}}
\DeclareMathOperator{\Hilb}{Hilb}
\DeclareMathOperator{\Hom}{Hom}
\DeclareMathOperator{\Id}{Id}
\DeclareMathOperator{\Int}{Int}
\DeclareMathOperator{\Ker}{Ker}
\DeclareMathOperator{\Mon}{Mon}
\DeclareMathOperator{\Mov}{Mov}
\DeclareMathOperator{\NS}{NS}
\DeclareMathOperator{\PGL}{PGL}
\DeclareMathOperator{\Pic}{Pic}
\DeclareMathOperator{\Pos}{Pos}
\DeclareMathOperator{\Nef}{Nef}
\DeclareMathOperator{\Proj}{Proj}
\DeclareMathOperator{\rank}{rank}
\DeclareMathOperator{\Sym}{Sym}
\DeclareMathOperator{\td}{td}
\DeclareMathOperator{\Tr}{Tr}
\DeclareMathOperator{\VSP}{VSP}
\def\llra{\hbox to 10mm{\rightarrowfill}}
\def\lllra{\hbox to 15mm{\rightarrowfill}}
\def\bw#1#2{\textstyle{\bigwedge\hskip-0.9mm^{#1}}\hskip0.2mm{#2}}
\def\sbw#1#2{\small{\bigwedge\hskip-0.9mm^{#1}}\hskip0.2mm{#2}}
\newtheorem{lemm}{Lemma}[section]
\newtheorem{theo}[lemm]{Theorem}
\newtheorem{coro}[lemm]{Corollary}
\newtheorem{prop}[lemm]{Proposition}
\theoremstyle{definition}
\newtheorem{defi}[lemm]{Definition}
\newtheorem{rema}[lemm]{Remark}
\newtheorem{exam}[lemm]{Example}
\theoremstyle{remark}
\newtheorem*{remark*}{Remark}
\newtheorem*{note*}{Note}
\newtheorem{exer}[lemm]{\footnotesize{Exercise}}
\def\moins{\smallsetminus}
\def\L{{\Lambda}}
\def\LKKK{{\Lambda_{\KKK}}}
\def\Lkkk[#1]{{\Lambda_{\KKK^{[#1]}}}}
\def\kkk[#1]{{\KKK^{[#1]}}}
\DeclareMathOperator{\KKK}{{K3}}
\def\sss[#1]{{S^{[#1]}}}
\def\setminus{\smallsetminus}
\newcommand{\bijar}[1][]{%
 \ar[#1]
 \ar@<0.7ex>@{}[#1]|-*=0[@]{\sim}}
\begin{document}
\title{
 Hyperk\"ahler manifolds}

\author[O.\ Debarre]{Olivier Debarre}
\address{Universit\'e Paris Diderot-Sorbonne Universit\'e, CNRS, 
Institut de Math\'ematiques de Jussieu-Paris Rive Gauche, IMJ-PRG, 75013 Paris, France}
 \email{{\tt olivier.debarre@imj-prg.fr}}

 \date{\today}

 \subjclass[2010]{14J28, 14J32, 14C34, 14E07, 14J50, 14J60}
\keywords{K3 surfaces, hyperk\"ahler manifolds, birational isomorphisms, automorphisms, Torelli theorem, period domains, Noether--Lefschetz loci, cones of divisors, nef classes, ample classes, movable classes, moduli spaces.}

\begin{abstract}
The aim of these notes is to acquaint the reader with important objects in complex algebraic geometry: K3 surfaces and their higher-dimensional analogs, hyperk\"ahler manifolds.\ These manifolds are interesting from several points of view: dynamical (some have interesting automorphism groups), arithmetical (although we will not say anything on this aspect of the theory), and geometric.\ It is also one of those rare cases where the Torelli theorem allows for  a powerful link between  the  geometry of these manifolds   and lattice theory.\ 

We do not prove all the results that we state.\ Our aim is more to provide, for specific families of  hyperk\"ahler manifolds (which are projective deformations of punctual Hilbert schemes of K3 surfaces),   a panorama of    results about projective embeddings, automorphisms, moduli spaces,  period maps and domains, rather than a complete reference guide.\ These results are mostly not new, except perhaps those of Appendix~\ref{imagep} (written with E.~Macr\`i), where we give in Theorem~\ref{thm:ImagePeriodMap} an explicit description of the image of the period map for these polarized manifolds. 
\end{abstract}

  \thanks{These  notes were originally written (and later expanded) for  two different mini-courses, one given for a summer school   on Texel Island, The Netherlands,  August 28--September 1, 2017, and the other 
  for the CIMPA Research School organized at the Pontificia Universidad Cat\'olica del Per\'u in Lima, Peru, September 4--15, 2017.\ The author would like to thank both sets of organizers for their support: Bas Edixhoven, Gavril Farkas, Gerard van der Geer, J\"urg Kramer, and 
Lenny Taelman
  for the first school, Richard Gonz\'ales and Clementa Alonso for the CIMPA school.}

\maketitle

\tableofcontents

\section{Introduction}
As explained in the abstract, the aim of these notes is to gather some results on  projective K3 surfaces and \hKm s, in particular their projective embeddings, their biregular and birational automorphism groups, and their moduli spaces.\ For K3 surfaces, these results have been known for more than twenty years, whereas they are much more recent for \hKm s; for example, the Torelli theorem for K3 surfaces was proved more than 35 years ago, whereas its version for all \hKm s (Theorem~\ref{torthhk}) was published by M.~Verbitsky in 2013.\ The results on the image of the period map (Theorems~\ref{imper} and \ref{thm:ImagePeriodMap}) are new and were obtained on collaboration with E.~Macr\`i.

Section~\ref{sect2} is devoted to K3 surfaces: complex compact surfaces with vanishing irregularity and whose space of holomorphic 2-forms is generated by a  nowhere vanishing  form.\ After describing their topological invariants, we state some characterizations of ample and very ample line bundles on projective K3 surfaces.\ We then describe general K3 surfaces with a polarization of low degree (most are complete intersections in a homogeneous space).\ The Torelli theorem says that K3 surfaces are characterized by their Hodge structure; more precisely, any automorphism between the Hodge structures on their second cohomology groups is induced by an isomorphism.\ This means that their  period map, a regular map between their (quasiprojective) moduli space (whose construction we explain in Section \ref{sec14} for polarized K3 surfaces) and their (quasiprojective) period domain (the quotient of a Hermitian symmetric domain by an arithmetic group of automorphisms) is an open embedding (Section~\ref{sec17}).\ Its image   is also described in that Section, using the description of the ample cone given in Section~\ref{newsecc}: it is the complement of the union of one or two Heegner divisors.\ 
The Torelli theorem is also extremely useful to study automorphisms groups of K3 surfaces.\ We   give   examples in Section~\ref{sec210}.

The rest of the notes deals with \hKm s, which are generalizations of K3 surfaces in higher (even) dimensions and for which many results are still unknown.\ They are defined in Section~\ref{sect3} as simply connected compact K\"ahler manifolds whose space of holomorphic 2-forms is generated by an everywhere nondegenerate    form.\ Their second integral cohomology group carries a canonical integral valued nondegenerate quadratic form defined by Fujiki and Beauville.\ Examples are provided by punctual Hilbert schemes of K3 surfaces and generalized Kummer varieties.\ Most of the results that we state will concern only deformations of the former type (called {\em \hKm s of $\KKK^{[m]}$-type}).\ 

Polarized \hKm s of that type admit quasiprojective moduli spaces whose irreducibility we discuss in Section~\ref{sec25}.\ Even 
in low degrees and dimension 4, their projective embeddings are only known in a few cases, through beautiful but quite involved geometric constructions (Section~\ref{sec36}).\ In the next (long) Section~\ref{sec37}, we define and study two important cones attached to a \hKm: the nef and the movable cones.\ These cones are closed convex cones in a real vector space of dimension the rank of the Picard group of the manifold.\ Their determination  is a very difficult question, only recently settled by works of Bayer, Macr\`i, Hassett, and Tschinkel.\ Many examples are given in Section~\ref{sec37}, essentially when the Picard rank is 2, where their description involves playing with some equations of Pell-type.

The next section (Section~\ref{sec27}) contains two versions of the Torelli theorem.\ We miss all the subtleties (and difficulties) of this general result by restricting ourselves to polarized \hKm s.\ Even with the ``classical'' version, one has to be careful (see Theorem~\ref{torthhk}).\ When one states the Torelli theorem in terms of the injectivity of a period map (Theorem~\ref{torthhk2}), the situation is tricky and more complicated than for K3 surfaces: the moduli spaces, although still quasiprojective, may be reducible, and the period domain is obtained by quotienting (still a Hermitian symmetric domain) by a restricted automorphism group.\ At the same time, this makes the situation richer: the period domains may have nontrivial involutions and may be isomorphic to each other.\ This implies that the some moduli spaces of polarized \hKm s are birationally isomorphic, a phenomenon which we call {\em strange duality}   (Remarks~\ref{stdu} and ~\ref{rem220}).\ 

In the rest of   Section~\ref{sect3}, we determine explicitly the image of the period map for polarized \hK\ fourfolds.\ This involves going through a rather lengthy description of the Heegner divisors in the period domain (Section~\ref{sec29}).\ As for K3 surfaces, the image is the complement of a finite number of Heegner divisors which we describe precisely.\ This result is a simple consequence of the description of the ample cone (which is the interior of the nef cone) given earlier.

In the final Section~\ref{sect4}, we use our knowledge of the movable and nef cones of \hKm s given in Section~\ref{sec37} to describe explicitely the birational and biregular automorphism groups of \hKm s of Picard number 1 or 2 in some cases.\ For Picard number 1 (Section~\ref{sec42}), the result is a rather simple consequence of the Torelli theorem and some easy lattice-theoretic considerations.\ For Picard number 2, a general result was proved by Oguiso (Theorem~\ref{thogui}).\ We end the section with many explicit calculations.

In the final short Section~\ref{sect5}, we use the Torelli theorem (and a deep result of Clozel and Ullmo on Shimura varieties) to prove that in each moduli space of polarized \hKm s of $\KKK^{[m]}$-type, the points corresponding to Hilbert squares of K3 surfaces form a dense subset (our Proposition~\ref{thh} provides a  more explicit statement).

In Appendix \ref{secpell}, we go through a few elementary facts about Pell-type equations.\ In the more difficult Appendix \ref{imagep}, written with    E.~Macr\`i, we revisit the description of the ample cone of a projective \hKm\ in terms of its {\em Mukai lattice} and use it to describe explicitly the image of the period map for polarized \hKm s of $\KKK^{[m]}$-type in all dimensions.

\section{K3 surfaces}\label{sect2}

  K3 surfaces can also be defined over fields of   positive characteristics    (and have been used to prove results about complex K3 surfaces) but we will restrict ourselves here to complex K3 surfaces.\footnote{This strange name was coined by Weil in 1958:~``ainsi nomm\'ees en l'honneur de Kummer, K\"ahler, Kodaira et de la belle montagne K2 au Cachemire.''}\ The reader will find in \cite{huyk3} a very complete study of these surfaces  in all characteristics.\ The reference \cite{laf} is more elementary and covers more or less the same topics as this section (with more details).

\subsection{Definition and first properties}\label{sec11}

\begin{defi}
A {\em K3 surface} is a compact surface $S$ such that $H^0(S,\Omega^2_S)=\C \omega$, where $\omega$ is a nowhere vanishing holomorphic 2-form on $S$, and $H^1(S,\cO_S)=0$.
\end{defi}

K3 surfaces are interesting for many reasons:
\begin{itemize}
\item they have interesting dynamics: up to bimeromorphism, they are the only  compact complex surfaces which can have an automorphism with positive entropy (see footnote~\ref{entro}) and no fixed points;\footnote{A theorem of Cantat (\cite{can}) says that a  smooth compact K\"ahler surface $S$ with an automorphism of positive entropy is bimeromorphic
to either $\P^2$, or to a 2-dimensional complex torus, or to an Enriques surface, or to a K3
surface.\ If the automorphism has no fixed points, $S$  
is birational to a projective
K3 surface of Picard number greater than
1 and conversely, there is a projective
K3 surface of Picard number 2 with a fixed-point-free automorphism of positive entropy (see \cite{ogu1} or Theorem \ref{autk32}).}
\item they have interesting arithmetic properties;
\item they have interesting geometric properties: it is conjectured (and known except when the Picard number is 2 or 4; see   \cite{lili}) that any K3 surface contains countably many rational curves.
\end{itemize}

Let $S$ be a K3 surface.\ The exponential sequence
$$0\to \Z\xrightarrow{\ \cdot 2i\pi\ } \cO_S\xrightarrow{\ \exp\ } \cO_S^*\to 1$$
implies that $H^1(S,\Z)$ is a subgroup of $H^1(S,\cO_S)$, hence $b_1(S)=0$.\ Moreover, there is an exact sequence
\begin{equation}\label{exps}
0\lra \Pic(S)\lra H^2(S,\Z)\lra H^2(S,\cO_S).
\end{equation}

\begin{lemm}\label{pic}
The Picard group of a K3 surface is torsion-free.
\end{lemm}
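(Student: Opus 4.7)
The plan is to prove the statement directly using Riemann--Roch on the K3 surface, together with the observation that the canonical bundle is trivial (because a nowhere vanishing holomorphic $2$-form trivializes $\Omega^2_S = \omega_S$).

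Suppose $L \in \Pic(S)$ is a torsion element, with $L^{\otimes n} \isom \cO_S$ for some $n \geq 1$. My goal is to show $L \isom \cO_S$. The exact sequence \eqref{exps} shows that $c_1 \colon \Pic(S) \to H^2(S, \Z)$ is injective, so the image of $L$ is torsion in $H^2(S,\Z)$; in particular the self-intersection $L^2 \in H^4(S,\Z) \isom \Z$ is torsion, hence zero. Since $K_S = 0$, the definition of a K3 surface gives $\chi(\cO_S) = 1 - 0 + 1 = 2$, and Riemann--Roch then yields
\[
\chi(L) \;=\; \chi(\cO_S) + \tfrac{1}{2} L \cdot (L - K_S) \;=\; 2.
\]

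Next I would show that a nontrivial torsion line bundle has no nonzero global sections. If $s \in H^0(S, L)$ were nonzero, then $s^{\otimes n} \in H^0(S, L^{\otimes n}) = H^0(S, \cO_S) = \C$ would be a constant; if this constant is nonzero, then $s$ vanishes nowhere, which forces $L \isom \cO_S$, contradicting nontriviality; if it is zero, then $s$ itself would vanish on a divisor whose $n$-fold multiple is empty, again impossible on a smooth surface. So $h^0(L) = 0$. Applying the same argument to $L^{-1}$ (which is also nontrivial torsion) gives $h^0(L^{-1}) = 0$, and Serre duality together with $K_S = \cO_S$ then yields $h^2(L) = h^0(L^{-1}) = 0$.

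Combining these, $\chi(L) = -h^1(L) \leq 0$, which contradicts $\chi(L) = 2$. Therefore no nontrivial torsion element exists and $\Pic(S)$ is torsion-free. The main subtlety, if any, is the verification that $c_1(L)^2 = 0$ for torsion $L$, but this is immediate from the torsion-freeness of $H^4(S,\Z) \isom \Z$; apart from this, the proof is essentially a short Riemann--Roch computation powered by $K_S = 0$ and $\chi(\cO_S) = 2$.
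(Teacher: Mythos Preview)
Your proof is correct and follows essentially the same approach as the paper's: both use Riemann--Roch with $K_S=0$ and $\chi(\cO_S)=2$, Serre duality, and the observation that a nonzero section of a torsion line bundle must be nowhere vanishing. The only differences are cosmetic---you phrase it as a contradiction and are more explicit about why $L^2=0$, and your case split on whether $s^{\otimes n}$ is the zero constant is unnecessary (a nonzero section on an integral variety always has nonzero tensor powers).
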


\begin{proof}
Let $M$ be a torsion element in $\Pic(S)$.\ The Riemann--Roch theorem and Serre duality give  
$$h^0(S,M)-h^1(S,M)+h^0(S,M^{-1}) = \chi(S,M)=\chi(S,\cO_S)=2.
$$
In particular, either $M$ or $M^{-1}$ has a nonzero section $s$.\ If $m$ is a positive integer such that $M^{\otimes m}$ is trivial, the nonzero section $s^m$ of $M^{\otimes m}$ vanishes nowhere, hence so does $s$.\ The line bundle $M$ is therefore trivial.
\end{proof}

Since $H^2(S,\cO_S)$ is also torsion-free, the lemma implies that $H^2(S,\Z)$ is torsion-free, hence so is $H_2(S,\Z)$ by Poincar\'e duality.\ By the Universal Coefficient Theorem, $H_1(S,\Z)$ is torsion-free hence 0 (because $b_1(S)=0$) , hence so is $H^3(S,\Z)$ by Poincar\'e duality again.\ So the whole (co)homology of $S$ is torsion-free.

We have $c_2(S)=\chi_{\textnormal{top}}(S)=b_2(S)+2$.\ Noether's formula 
$$12\chi(S,\cO_S)=c_1^2(S)+c_2(S)
$$
then implies 
$$c_2(S)=24\quad,\quad b_2(S)=22.$$
The abelian group $H^2(S,\Z)$ is therefore free of rank 22.\ The intersection form is unimodular and {\em even.}\footnote{Let $X$ be a  smooth compact real manifold of (real) dimension $4$.\ Its second
 {\em Wu class}  $v_2(X)\in H^2(X,\Z/2\Z)$ is characterized by the property (Wu's formula)
\begin{equation*}\label{cha}
\forall x\in H^2(X,\Z/2\Z)\qquad v_2(X)\cup x=x\cup x.
\end{equation*}
The class $v_2$ can be expressed as $w_2+w_1^2$, where $w_1$ and $w_2$ are the first two Stiefel--Whitney classes.\ When $X$ is a complex surface, one has $w_1(X)=0$ and $w_2(X)$ is the reduction modulo 2 of $c_1(X)$ (which is 0 for a K3 surface).} Its signature is given by Hirzebruch's formula
$$\tau(S)=\frac13(c_1^2(S)-2c_2(S))=-16.$$

\bigskip

Recall from Lemma \ref{pic} that the Picard group is therefore a free abelian group; we let $\rho(S)$ be its rank (the {\em Picard number} of $S$).

The Lefschetz $(1,1)$-theorem tells us that $\rho(S)\le h^1(S,\Omega_S^1)$.\ If $S$ is K\"ahler, we have
$$h^1(S,\Omega_S^1)=b_2(S)-2h^0(S,\Omega_S^2)=20$$
and this holds even if $S$ is not K\"ahler (which does not happen anyway!).\footnote{As explained in the proof of \cite[Lemma~IV.2.6]{bhpv}, 
 there is an inclusion
  $H^0(S,\Omega^1_S)\subset H^1(S,\C) $   for any compact complex  surface $S$, so that $H^0(S,\Omega^1_S)=0$ for a K3 surface $S$.\ We then have $H^2(S,\Omega^1_S)=0$ by Serre duality and the remaining number $h^1(S,\Omega^1_S)$ can be computed using the Riemann--Roch theorem
$\chi(S,\Omega^1_S)=\int_S\ch(\Omega^1_S)\td(S)=\frac12 c_1^2(\Omega^1_S)-c_2(\Omega^1_S)+2\rank(\Omega^1_S)=-20$ (because $c_1(\Omega^1_S)=0$ and $c_2(\Omega^1_S)=24$).} In particular,
we have 
$$0\le \rho(S)\le 20.$$
 
 Finally, one can show that all K3 surfaces are diffeomorphic and that they are all simply connected.

\begin{exer}
\footnotesize\narrower

Let $S\subset \P^3$ be the Fermat surface  defined by the quartic equation $x_0^4+x_1^4+x_2^4+x_3^4=0$.

\noindent (a) Show that $S$ is smooth and that it is a K3 surface.

\noindent (b) Show that $S$ contains (at least) 48 lines $L_1,\dots,L_{48}$.


\noindent (c) How would you show (using a computer) that the rank of the Picard group $\Pic(S)$ is at least 20?


\noindent (d) Prove that the rank of the Picard group $\Pic(S)$ is exactly 20.\footnote{It was proved by Mizukami in 1975 that the group $\Pic(S)$ is generated by $L_1,\dots,L_{48}$ and that its discriminant is $-64$.
}

 \end{exer}

\subsection{Properties of lines bundles}\label{ssec22}

Let $L$ be an ample line bundle on $S$.\ By Kodaira vanishing, we have $H^1(S,L)=0$ hence the Riemann--Roch theorem reads
$$h^0(S,L)=\chi(S,L)=\frac12 L^2+2.$$
Most of the times, $L$ is \ggs, by the following result that we will not prove. 

\begin{theo}\label{th13}
Let $L$ be an ample line bundle on a K3 surface $S$.\ The line bundle $L$ is \ggs\ if and only if there are no divisors $D$ on $S$ such that $LD=1$ and $D^2=0$.
\end{theo}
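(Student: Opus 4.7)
My plan is to prove the two directions separately: the forward implication (globally generated $\Rightarrow$ no such $D$) is elementary and I would prove it by contrapositive, while the converse is the hard content, for which I would cite the classical Mayer--Saint-Donat analysis of base loci.

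First, suppose there exists a divisor $D$ with $LD=1$ and $D^2=0$. Riemann--Roch on the K3 surface gives
$$\chi(S,D)=\tfrac12 D^2+\chi(S,\cO_S)=2,$$
so $h^0(S,D)+h^0(S,-D)\geq 2$. Ampleness of $L$ and $L\cdot(-D)=-1<0$ rule out $-D$ effective, hence $h^0(S,D)\geq 2$ and we may assume $D$ is effective. Decompose $D=F+M$ into its fixed and movable parts; then $LF+LM=1$ with $LM\geq 1$ (as $M\neq 0$), which forces $LF=0$ and hence $F=0$ by ampleness of $L$. Thus $|D|$ is a genuine pencil with $D^2=0$, and a base point $p$ would contribute to $D_1\cdot D_2=D^2$ for general distinct $D_1,D_2\in|D|$ sharing no component, contradicting $D^2=0$. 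So $|D|$ is base-point free.

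Next I would analyze the geometry of this pencil. The base-point-free pencil $|D|$ defines a morphism $\phi:S\to\P^1$ (the Stein factorization lands in a smooth connected curve $B$ with $H^1(B,\cO_B)\hookrightarrow H^1(S,\cO_S)=0$, hence $B=\P^1$). By generic smoothness in characteristic zero, a general fiber $E$ is smooth, and adjunction yields
$$2p_a(E)-2=E^2+K_S\cdot E=0,$$
so $E$ is a smooth elliptic curve with $LE=LD=1$. The restricted line bundle $L|_E$ has degree $1$ on an elliptic curve, so Riemann--Roch on $E$ gives $h^0(E,L|_E)=1$; the unique section (up to scalar) vanishes at one point, and therefore $L|_E$ is not globally generated. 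Since restrictions of globally generated line bundles are globally generated, $L$ itself cannot be globally generated, contradiction.

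For the converse direction, which is the main obstacle, I would invoke the theorem of Mayer--Saint-Donat: if $L$ is a big and nef line bundle on a K3 surface and $|L|$ has a base point, then
$$L\sim kE+\Gamma\qquad (k\geq 2),$$
with $E$ a smooth elliptic curve and $\Gamma$ a smooth rational $(-2)$-curve satisfying $E\cdot\Gamma=1$. Granting this, the curve $E$ gives a divisor with $LE=(kE+\Gamma)\cdot E=1$ and $E^2=0$, so the hypothesis rules out base points. The delicate part of Saint-Donat's proof, which I would not reprove, combines Riemann--Roch and Kodaira vanishing to peel off an effective sub-divisor through any base point $p$, iterates to extract the maximal multiple of an elliptic pencil, and then applies the Hodge index theorem to identify the remaining component as a $(-2)$-curve; I would refer to Saint-Donat's original paper or to Huybrechts's textbook for the technical details.
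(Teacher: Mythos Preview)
The paper does not actually prove this theorem: immediately before the statement it says ``by the following result that we will not prove,'' and no argument is given afterwards. So there is nothing in the paper to compare your proof against.

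That said, your argument is correct and is the standard one. The forward direction is handled cleanly: you correctly extract a base-point-free elliptic pencil from the class $D$ (the step showing that two distinct members of a pencil without fixed part share no component, hence are disjoint when $D^2=0$, is fine), and the degree-$1$ restriction to a general elliptic fiber forces $L$ to have a base point. For the converse you invoke exactly the Mayer--Saint-Donat structure theorem for base loci on K3 surfaces, which is the canonical reference; the paper itself cites Saint-Donat \cite{sd} and Huybrechts \cite{huyk3}, so your citations are in line with the paper's conventions. There is no gap.
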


The hypothesis of the theorem holds  in particular when $\rho(S)=1$.\ When $L$ is \ggs, it  defines a morphism
$$\phi_L\colon S\lra \P^{e+1},$$
where $e:= \frac12 L^2$.\ General elements   $C\in |L|$ are smooth irreducible  (by Bertini's theorem)  curves of genus $e+1$ and the restriction of $\phi_L$ to $C$ is the canonical map $C\to \P^e$.\ 

One can show the morphism $\phi_L$ is then an embedding (in which case all smooth curves in $|L|$ are nonhyperelliptic), that is, $L$ is very ample, except in the following cases, where $\phi_L$   is a double cover of its image, a smooth surface of minimal degree $e$ in $\P^{e+1}$:
\begin{itemize}
\item $L^2=2$;
\item $L^2=8$ and $L=2D$;
\item there is a divisor $D$ on $S$ such that $LD=2$ and $D^2=0$ (all smooth curves in $|L|$ are then hyperelliptic).
\end{itemize}
The last case does not occur when $\rho(S)=1$.\ This implies the following result.


\begin{theo}\label{vample}
Let $L$ be an ample line bundle on a K3 surface.\ The line bundle $L^{\otimes 2}$ is \ggs\ and the line bundle $L^{\otimes k}$ is very ample for all $k\ge 3$.
\end{theo}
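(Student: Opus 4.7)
The plan is to apply the two criteria recalled just before the statement to the ample line bundle $M:=L^{\otimes k}$. By Theorem~\ref{th13}, $M$ fails to be \ggs\ exactly when there is a divisor $D$ with $M\cdot D=1$ and $D^2=0$; and once $M$ is \ggs, the subsequent discussion asserts that $M$ fails to be very ample only in the three listed cases ($M^2=2$, or $M^2=8$ with $M=2D'$, or the existence of an effective divisor $D$ with $M\cdot D=2$ and $D^2=0$). Since $L^{\otimes k}$ is ample for every $k\geq 1$, both criteria are available throughout.

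For generation by global sections of $L^{\otimes k}$ with $k\geq 2$, suppose there were a divisor $D$ on $S$ with $(kL)\cdot D=1$ and $D^2=0$. Then $k(L\cdot D)=1$, which has no solution in integers for $k\geq 2$. In particular $L^{\otimes 2}$ is \ggs.

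For very ampleness of $L^{\otimes k}$ with $k\geq 3$, recall that $L^2$ is a positive even integer, hence $L^2\geq 2$. I would eliminate each of the three exceptional cases in turn.
\begin{itemize}
\item $(kL)^2=2$ would force $k^2 L^2=2$, contradicting $k^2 L^2\geq 9\cdot 2=18$.
\item $(kL)^2=8$ with $kL=2D'$ would force $k^2 L^2=8$, again incompatible with $k^2 L^2\geq 18$.
\item The existence of an effective divisor $D$ with $D^2=0$ and $(kL)\cdot D=2$ would give $k(L\cdot D)=2$; since $L$ is ample and $D$ is a nonzero effective divisor, $L\cdot D\geq 1$, so $k\leq 2$, contradicting $k\geq 3$.
\end{itemize}
All three cases being impossible, $L^{\otimes k}$ is very ample.

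The only mild subtlety is that in the third exceptional case $D$ must be taken effective for the inequality $L\cdot D\geq 1$ to follow from ampleness of $L$; this is indeed the setting in which the classification is stated (the curve $D$ is in fact a smooth elliptic curve by adjunction on $S$, since $K_S=0$, and the smooth members of $|L|$ become hyperelliptic). With that observation in hand every step is a routine numerical check, and there is no serious obstacle to overcome.
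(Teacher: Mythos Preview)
Your argument is correct and is exactly the filling-in of details the paper intends with ``This implies the following result'': apply Theorem~\ref{th13} and the subsequent classification to $M=L^{\otimes k}$. One small simplification: in the third exceptional case you do not need effectivity of $D$ at all, since $k(L\cdot D)=2$ with $k\ge 3$ already has no integer solution for $L\cdot D$; your closing paragraph is thus unnecessary.
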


\subsection{Polarized K3 surfaces of low degrees}\label{sec13}

A polarization $L$ on a K3 surface $S$ is an isomorphism class of  ample line bundles on $S$ or equivalently, an ample class in $\Pic(S)$, which is not divisible in $\Pic(S)$.\ Its degree is $2e:=L^2$.\ We give below descriptions (mostly due to Mukai; see \cite{muk1, muk2, muk3, muk4, muk5, muk6}) of the  
 morphism $\phi_L\colon S\to\P^{e+1}$ associated with a general polarized K3 surface $(S,L)$ of degree $2e\le 14$  (it is a morphism by Theorem~\ref{th13}) and of $S$ for $e\in\{8,\dots,12,15, 17, 19\}$.\footnote{It might be a bit premature to talk about ``general polarized K3 surfaces'' here but the statements below hold whenever   $\Pic(S)= \Z L$, a condition that can be achieved by slightly perturbing $(S,L)$; we will come back to that in Section~\ref{nlsec}.}
 
 Several of these descriptions involve the Grassmannian $\Gr(r,n)$, the smooth projective variety of dimension $r(n-r)$ that parametrizes $r$-dimensional subspaces in $\C^n$, and its vector bundles $\cS$, the tautological rank-$r$ subbundle, and   $\cQ$, the   tautological rank-$(n-r)$ quotient bundle.\ It embeds via the Pl\"ucker embedding into $\P(\bw{r}\C^n)$ and the restriction of $\cO_{\P(\sbw{r}\C^n)}(1)$ is a generator of $\Pic(\Gr(r,n))$.

\noindent{$\mathbf{L^2=2}$.} The morphism $\phi_L\colon S\to \P^2$ is a double cover branched over a smooth plane sextic curve.\ Conversely, any such double cover is a polarized K3 surface of degree $2$.

\noindent{$\mathbf{L^2=4}$.} The morphism $\phi_L\colon S\to \P^3$ induces an isomorphism between $S$ and a smooth quartic surface.\ Conversely, any smooth quartic surface in $\P^3$ is a polarized K3 surface of degree $4$.

\noindent{$\mathbf{L^2=6}$.} The morphism $\phi_L\colon S\to \P^4$ induces an isomorphism between $S$ and the   intersection of a quadric and a cubic.\ Conversely, any smooth complete intersection of a quadric and a cubic in $\P^4$ is a polarized K3 surface of degree $6$.

\noindent{$\mathbf{L^2=8}$.} The morphism $\phi_L\colon S\to \P^5$ induces an isomorphism between $S$ and the   intersection of 3 quadrics.\ Conversely, any smooth complete intersection of 3 quadrics in $\P^5$ is a polarized K3 surface of degree $8$.

\noindent{$\mathbf{L^2=10}$.} The morphism $\phi_L\colon S\to \P^6$ is a closed embedding.\ Its image is obtained as the transverse intersection of the Grassmannian $\Gr(2,5)\subset \P^9$, a quadric $Q\subset \P^9$, and a $\P^6\subset \P^9$.\  Conversely, any such smooth complete intersection   is a polarized K3 surface of degree $10$.

\noindent{$\mathbf{L^2=12}$.} The morphism $\phi_L\colon S\to \P^7$ is a closed embedding.\ Its image is obtained as the transverse intersection of the orthogonal Grassmannian\footnote{This   is one of the two components of the family of all 5-dimensional isotropic subspaces for a nondegenerate quadratic form on $\C^{10}$.}    $\OGr(5,10)\subset  \P^{15}$ and a $\P^8\subset \P^{15}$.\  Conversely, any such smooth complete intersection   is a polarized K3 surface of degree $12$.

One can also  describe   $\phi_L(S)$ as the zero-locus of a general section of the rank-4 vector bundle $\cO(1)^{\oplus 2}\oplus \cS(2)  $ on $\Gr(2,5)$ (\cite[Theorem 9]{muk5}).

\noindent{$\mathbf{L^2=14}$.} The morphism $\phi_L\colon S\to \P^8$ is a closed embedding.\ Its image is obtained as the transverse intersection of the Grassmannian $\Gr(2,6)\subset \P^{14}$ and a $\P^8\subset \P^{14}$.\  Conversely, any such smooth complete intersection   is a polarized K3 surface of degree $14$.

\noindent{$\mathbf{L^2=16}$.} General K3 surfaces of degree 16 are exactly the zero loci
 of  general sections   of the  of the rank-6 vector bundle $\cO(1)^{\oplus 4}\oplus   \cS(1) $ on $\Gr(3,6)$ (\cite[Example~1]{muk2}).

\noindent{$\mathbf{L^2=18}$.} General K3 surfaces of degree 18 are exactly the zero loci
 of  general sections   of the  rank-7 vector bundle $\cO(1)^{\oplus 3}\oplus   \cQ^\vee(1) $ on $\Gr(2,7)$ (\cite[Example~1]{muk2}).
 
 \noindent{$\mathbf{L^2=20}$.} General K3 surfaces of degree 20 are described in \cite{muk3} as Brill--Noether loci on curves of genus 11.

\noindent{$\mathbf{L^2=22}$.} General K3 surfaces of degree 22 are exactly the zero loci
 of  general sections   of the 
rank-$10$     vector bundle $ \cO(1)  \oplus   \cS(1)^{\oplus 3}  $ on $\Gr(3,7)$    (\cite[Theorem~10]{muk5}). 

\noindent{$\mathbf{L^2=24}$.} General K3 surfaces of degree 24 are exactly the zero loci
 of  general sections   of the  
rank-$10$     vector bundle $ \cS(1) ^{\oplus 2}\oplus   \cQ^\vee(1)  $ on $\Gr(3,7)$    (\cite[Theorem~1]{muk5}).
 
  
    \noindent{$\mathbf{L^2=30}$.} Let $T$ be the 12-dimensional GIT quotient of $\C^2\otimes \C^3\otimes \C^4$ by the action of $\GL(2)\times GL(3)$ on the first and second factors.\ There are tautological vector bundles $\cE$ and $\cF$ on $T$, of respective ranks 3 and 2.\ General K3 surfaces of degree 30 are exactly the zero loci
 of  general sections  of the rank-10 vector bundle $\cE^{\oplus 2} \oplus \cF^{\oplus 2} $ on $T$ (\cite{muk6}).

  \noindent{$\mathbf{L^2=34}$.} General K3 surfaces of degree 34 are exactly the zero loci
 of  general sections   of the  
 rank-10 vector bundle $ \bw2\cS^\vee \oplus\bw2\cQ\oplus  \bw2\cQ  $ on $\Gr(4,7)$ (\cite[Theorem~1]{muk5}).

  \noindent{$\mathbf{L^2=38}$.} General K3 surfaces of degree 38 are exactly the zero loci
 of  general sections   of the  
rank-18 vector bundle $ (\bw2\cS^\vee)^{\oplus 3}$ on $\Gr(4,9)$.


  \subsection{The ample  cone of a projective K3 surface}\label{newsecc}
 
 Let $X$ be a projective manifold.\ We
define the {\em nef cone}
 $$\Nef(X)\subset \NS(X)\otimes \R$$
 as the closed convex cone generated by classes of nef line bundles.\footnote{The group $\NS(X)$ is the group of line bundles on $X$ modulo numerical equivalence; it is a finitely generated abelian group.\ When $H^1(X,\cO_X)=0$ (\eg, for K3 surfaces), it is the same as $\Pic(X)$.}\ Its interior $\Amp(X)$ is not empty and consists of ample classes.

Let now $S$ be a projective K3 surface and let $M$ be a line bundle on $S$ with $M^2\ge -2$.\ The Riemann--Roch theorem and Serre duality imply
$$h^0(S,M)-h^1(S,M)+h^0(S,M^{-1})=2+\frac12M^2\ge 1,
$$
so that either $M$ or $M^{-1}$ has a nonzero section.\ If we fix an ample class $L$ on $S$, the line bundle $M$  has a nonzero section if and only if $L\cdot M\ge 0$.\ Let
$\Pos(S)$ be that of the two components of the cone $\{x\in \Pic(S)\otimes\R\mid x^2> 0 \}$ that contains an (hence all) ample class (recall that the signature of the lattice $\Pic(S)$ is $(1,\rho(S)-1)$).

 We set
$$\Delta^+:=\{ M\in \Pic(S)\mid M^2=-2,\ H^0(S,M)\ne 0\}.
$$
If $L$ is a ample class, we have $L\cdot M> 0$ for all $M\in \Delta^+$.\ If $M\in \Delta^+$, one shows that the fixed part of the linear system $|M|$ contains a smooth rational curve $C$ (with $C^2=-2$).

\begin{theo}\label{amps}
Let   $S$ be a projective K3 surface.\ We have
 \begin{eqnarray*}
\Amp(S)&=&\{x\in \Pos(S)\mid \forall M\in \Delta^+\quad x\cdot M> 0\}\\
&=&\{x\in \Pos(S)\mid \textnormal{ for all smooth rational curves }C\subset S, \  x\cdot C> 0\}.
\end{eqnarray*}
\end{theo}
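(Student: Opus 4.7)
The plan is to prove equality of the three sets by mutual inclusion, with the real content being the implication ``positive on smooth rational curves $\Rightarrow$ ample.''

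First the easy inclusions. Every smooth rational curve $C\subset S$ has $C^2=2p_a(C)-2=-2$ by adjunction (using $K_S=0$), and $[C]$ is effective, hence lies in $\Delta^+$; this gives the inclusion of the first set into the second. Conversely, any ample class $x$ satisfies $x^2>0$ so $x\in\Pos(S)$, and has strictly positive intersection with every nonzero effective class, in particular with every $M\in\Delta^+$; this yields $\Amp(S)\subseteq\{x\in\Pos(S)\mid x\cdot M>0\ \forall M\in\Delta^+\}$.

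The heart of the argument is the reverse: if $x\in\Pos(S)$ satisfies $x\cdot C>0$ for every smooth rational curve $C\subset S$, then $x$ is ample. I would invoke the Nakai--Moishezon criterion in its $\R$-version on surfaces: it suffices to check $x^2>0$, which is given, and $x\cdot D>0$ for every integral curve $D\subset S$. For such a $D$, adjunction with $K_S=0$ yields $D^2=2p_a(D)-2\geq -2$. If $D^2=-2$ then $p_a(D)=0$, so $D$ is a smooth rational curve and $x\cdot D>0$ by hypothesis. If $D^2\geq 0$, then $D$ is effective and nonzero, so $L\cdot D>0$ for any ample class $L$, which places $[D]$ in $\overline{\Pos(S)}\setminus\{0\}$. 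Since $\Pic(S)\otimes\R$ has signature $(1,\rho(S)-1)$, standard hyperbolic geometry (reverse Cauchy--Schwarz on the future cone) then forces $x\cdot D>0$.

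The main obstacle is the $D^2\geq 0$ case, which needs two ingredients whose combination is the substantive input: (i) the class $[D]$ lies in the correct component $\overline{\Pos(S)}$ of the closed positive cone, which follows from effectivity of $D$ together with an ample witness; and (ii) the Lorentzian-type inner product on $\Pic(S)\otimes\R$ pairs the open cone $\Pos(S)$ with any nonzero element of its closure strictly positively, a standard but not completely trivial consequence of the Hodge index theorem. Once these are in place, Nakai--Moishezon concludes.
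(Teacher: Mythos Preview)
Your proof is correct and complete. The paper does not actually give a proof of this statement; it simply says ``For the proof, which is elementary, we refer to \cite[Corollary~8.1.6]{huyk3}.'' Your argument via Nakai--Moishezon, using adjunction to split integral curves into the $(-2)$-case (forcing $D$ to be smooth rational since $p_a(D)=0$ on an integral curve implies smoothness) and the $D^2\ge 0$ case (handled by the Lorentzian light-cone inequality), is exactly the standard route one finds in the reference, so there is nothing to add.
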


For the proof, which is elementary, we refer to \cite[Corollary~8.1.6]{huyk3}.

\subsection{Moduli spaces for polarized K3 surfaces}\label{sec14}

We will indicate here the main steps for the construction of quasiprojective coarse moduli space for   polarized complex K3  surfaces of fixed degree.\ Historically, the first existence proof was given in 1971 by Pjatecki-Shapiro--Shafarevich and relied on the Torelli theorem (see next section), but it makes more sense to reverse the course of history and explain a construction based on later work of Viehweg. 

Let $(S,L)$ be a polarized K3 surface of degree $2e$.\ The fact that a fixed multiple (here $L^{\otimes 3}$) is very ample implies that $S$ embeds into some projective space of fixed dimension (here $\P^{9e+1}$), with fixed Hilbert polynomial (here $9eT^2+2 $).\ The Hilbert scheme that parametrizes closed subschemes of $\P^{9e+1}$ with that Hilbert polynomial is projective (Grothendieck) and its subscheme $\cH$ parametrizing K3 surfaces is open and smooth.\ The question is now to take the quotient of $\cH$ by the canonical action of $\PGL(9e+2)$.\ The usual technique for taking this quotient, Geometric Invariant Theory, is difficult to apply directly in that case but Viehweg managed to go around this difficulty to avoid a direct check of GIT stability and still construct a quasiprojective coarse moduli space (over $\C$ only! See \cite{vie}).

\begin{theo}\label{thm16}
Let $e$ be a positive integer.\ There exists an irreducible $19$-dimensional  quasiprojective coarse moduli space $\cK_{2e}$ for 
polarized complex K3 surfaces of degree $2e$.
\end{theo}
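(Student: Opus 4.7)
The plan is to realize $\cK_{2e}$ as the quotient of an open piece of a Hilbert scheme by $\PGL(9e+2)$, exactly as outlined after the statement. First I apply Theorem~\ref{vample}: for every polarized K3 surface $(S,L)$ of degree $2e$, the line bundle $L^{\otimes 3}$ is very ample, and by Riemann--Roch together with Kodaira vanishing
$$h^0(S,L^{\otimes 3})=\tfrac12(3L)^2+2=9e+2,$$
so $\phi_{L^{\otimes 3}}$ embeds $S$ as a smooth surface in $\P^{9e+1}$ with Hilbert polynomial $P(T)=9eT^2+2$. Let $\Hilb^P$ be the corresponding Hilbert scheme (projective by Grothendieck) and let $\cH\subset \Hilb^P$ be the locally closed subset whose points $[X\subset \P^{9e+1}]$ satisfy: $X$ is a smooth K3 surface, and $\cO_X(1)$ is the third power of a primitive ample class on $X$ of degree $2e$. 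Smoothness is open in flat families and the condition on the polarization is locally closed via the relative Picard scheme, so $\cH$ is a well-defined locally closed subscheme.

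Next I would show that $\cH$ is smooth and that the natural action of $\PGL(9e+2)$ on $\cH$ has orbits in bijection with isomorphism classes of polarized K3 surfaces of degree $2e$. Smoothness follows from the unobstructedness of K3 deformations combined with $H^1(S,L^{\otimes 3})=H^2(S,L^{\otimes 3})=0$, which ensures that embedded deformations compute abstract polarized deformations and have the expected dimension
$$\dim \cH = 19 + \dim \PGL(9e+2) = 19 + (9e+2)^2 - 1.$$
Two points of $\cH$ are $\PGL$-equivalent if and only if the corresponding polarized K3s $(S,L)$ and $(S',L')$ are isomorphic, since the embedding is canonically determined by the complete linear system $|L^{\otimes 3}|$; the stabilizer of a point is the finite group $\Aut(S,L)$.

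The main difficulty is then to construct the geometric quotient $\cH/\PGL(9e+2)$ as a quasiprojective coarse moduli space. A direct GIT approach would require verifying Hilbert--Mumford stability of pluri-canonically embedded K3 surfaces, which is notoriously delicate. Instead, I would invoke Viehweg's construction in \cite{vie}, which produces a quasiprojective coarse moduli space for polarized manifolds with semi-ample canonical bundle (a condition trivially met here since $K_S=\cO_S$) and which bypasses a direct stability analysis by a clever ampleness argument on the quotient. Applying it to $\cH$ yields the desired space $\cK_{2e}$.

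Finally, irreducibility and dimension. All complex K3 surfaces are diffeomorphic and simply connected (Section~\ref{sec11}), so a standard application of Ehresmann's fibration theorem shows that the family of marked polarized K3 surfaces of degree $2e$ is connected; hence $\cH$, and therefore $\cK_{2e}$, is connected, and being smooth it is irreducible. The claimed dimension $19$ is the dimension of the local deformation space of a polarized K3 surface: it is cut out inside the $20$-dimensional Kuranishi space of $S$ (using $h^1(S,T_S)=h^1(S,\Omega_S^1)=20$, via the isomorphism $T_S\cong \Omega_S^1$ provided by the symplectic form) by the single linear condition that $c_1(L)$ remain of type $(1,1)$.
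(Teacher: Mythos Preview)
Your proposal is correct and follows essentially the same approach as the paper: the paper does not give a formal proof but only the outline in the paragraph preceding the theorem (embed via $L^{\otimes 3}$ into $\P^{9e+1}$, take the smooth open locus $\cH$ of the Hilbert scheme, and invoke Viehweg's result \cite{vie} to bypass a direct GIT stability check), and you have faithfully expanded that outline with the relevant cohomological computations and deformation-theoretic dimension count. One small point: your irreducibility argument via Ehresmann and the common diffeomorphism type is a bit thin as stated, since a diffeomorphism $S\to S'$ need not carry $c_1(L)$ to $c_1(L')$; one really needs that the diffeomorphism group (or monodromy) acts transitively on primitive classes of fixed square in $H^2(S,\Z)$, which is the content of Eichler's criterion (Theorem~\ref{eic}) --- but the paper does not spell this out either, so your sketch is at the same level of detail.
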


The constructions explained in Section \ref{sec13} imply that $\cK_{2e}$ is unirational for  $e\le 7$.\ It is in fact known to be unirational for $ e\le   19$ and $e\notin\{ 14,18\}$ by work of Mukai (and more recently Nuer).\ At the opposite end, $ \cK_{2e}$ is of general type for $e\ge 31$ (and for a few lower values of $e$; see \cite{ghs0,ghssur}).

\subsection{The Torelli theorem}

The Torelli theorem (originally stated and proved for curves) answers the question as to whether a smooth K\"ahler complex manifold is determined (up to isomorphism) by (part of) its Hodge structure.\ In the case of polarized K3 surfaces, this property holds.

\begin{theo}[Torelli theorem, first version]\label{torth}
Let $(S,L)$ and $(S',L')$ be polarized complex K3 surfaces.\ If there exists an isometry of lattices 
$$\phi\colon H^2(S',\Z)\isomto H^2(S,\Z)$$ such that $\phi(L')=L$ and $\phi_\C(H^{2,0}(S'))=H^{2,0}(S)$, there exists an isomorphism $\sigma\colon S\isomto S'$ such that $\phi=\sigma^*$.
\end{theo}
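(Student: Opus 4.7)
The plan is to follow the classical route of Pjatecki-Shapiro and Shafarevich: first reduce to a matching of ample cones, then treat Kummer surfaces directly using Torelli for complex $2$-tori, and finally extend to general K3 surfaces via density and deformation arguments. As a preliminary observation, Theorem \ref{amps} implies that $\phi$ automatically maps $\Amp(S')$ onto $\Amp(S)$: the Riemann--Roch argument recalled just before that theorem characterizes $\Delta^+(S)$ as the set of $(-2)$-classes pairing positively with any ample class, and since $\phi$ is a Hodge isometry sending $L'$ to $L$, it carries $\Delta^+(S')$ onto $\Delta^+(S)$ and thus maps the ample chamber of $\Pos(S')$ onto that of $\Pos(S)$. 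This matching of K\"ahler cones will be the crucial geometric constraint on $\phi$.

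Next, I would handle the case in which both surfaces are Kummer surfaces, say $S=\widetilde{A/\{\pm 1\}}$ and $S'=\widetilde{A'/\{\pm 1\}}$ for complex $2$-tori $A$, $A'$. The lattice $H^2(S,\Z)$ contains a canonical Kummer sublattice $K$, generated (up to a half-sum relation) by the classes of the $16$ exceptional curves, whose orthogonal complement is Hodge-isometric to $H^2(A,\Z)(2)$. After possibly modifying $\phi$ by reflections in $(-2)$-classes realized geometrically (\eg, by translations by $2$-torsion on the underlying tori together with automorphisms of the exceptional configuration), I may arrange $\phi(K')=K$; the restriction of $\phi$ to the orthogonal complement is then a Hodge isometry $H^2(A',\Z)\isomto H^2(A,\Z)$, and the Torelli theorem for complex $2$-tori promotes it to an isomorphism $A\isomto A'$, which in turn induces the desired $\sigma\colon S\isomto S'$.

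Finally, for the general case I would combine the surjectivity of the period map with local Torelli (the period map is a local isomorphism on the marked moduli space of polarized K3 surfaces) to connect $(S,L)$ and $(S',L')$ by a path in the period domain that passes through a Kummer period; this is possible because Kummer periods are dense in each connected component. Lifting $\phi$ to a family of marked Hodge isometries along smooth families on either side of this Kummer point, and using the Kummer case to pin the lift down there, one then patches the two lifts together to obtain an isomorphism $\sigma$ realizing $\phi$ globally. The main obstacle is precisely this patching step: it requires a separatedness statement saying that two marked families with identified periods are canonically isomorphic, and it is here that density of Kummer K3 surfaces, together with the already-proved Kummer case, do the essential work of forcing the two lifts to agree on a dense set.
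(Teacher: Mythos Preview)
The paper does not actually prove Theorem~\ref{torth}; it is stated as a classical result and the text moves on immediately to lattice theory and the period map. So there is no ``paper's own proof'' to compare against. Your sketch follows the historically correct route of Pjatecki-Shapiro and Shafarevich, and the preliminary observation that $\phi$ matches ample cones via Theorem~\ref{amps} is exactly right.

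That said, your outline is honest about its own weak point: the patching step in the general case is where the real work lies, and what you have written there is more of a strategy than an argument. The actual proof requires a careful analysis of how two marked families with the same periods relate---this is the Burns--Rapoport uniqueness lemma, which shows that inseparable points in the (non-Hausdorff) marked moduli space differ by a composition of reflections in $(-2)$-classes. Density of Kummer surfaces alone does not give this; you need a local statement about the structure of the period map near non-separated points. Your Kummer reduction is also more delicate than indicated: arranging $\phi(K')=K$ is not achieved just by geometric reflections but requires Nikulin's lattice-theoretic characterization of the Kummer sublattice. So the proposal is a correct roadmap, but the two steps you flag as routine (matching Kummer lattices, and patching via density) each hide a substantial lemma.
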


We will see later that the isomorphism $u$  is uniquely determined by $\phi$ (Proposition~\ref{prop110}).

\subsection{A bit of lattice theory}\label{seclat}

A very good introduction to this theory can be found in the superb book \cite{ser}.\ More advanced results are proved in the difficult article \cite{nik}.

A lattice is a free abelian group $\L$ of finite rank endowed with an integral valued quadratic form $q$.\ Its discriminant group is the finite abelian group 
$$D(\L):=\L^\vee/\L,$$
where 
$$\L\subset \L^\vee:=\Hom_\Z(\L,\Z)=\{x\in \L\otimes\Q\mid \forall y\in\L\quad x\cdot y\in\Z\}\subset \L\otimes\Q
.$$
 The lattice $\L$ is {\em unimodular} if the group $D(\L)$ is trivial; it is {\em even} if the quadratic form $q$ only takes even values.

If $t$ is an integer, we let $\L(t)$ be the group $\L$ with the quadratic form $tq$.\ Finally, for any integers $ r,s\ge 0$, we let $I_1$ be the lattice $\Z$ with the quadratic form
$q(x)=x^2$
and we let $I_{r,s}$ be the lattice $I_1^{\oplus r}\oplus I_1(-1)^{\oplus s}$.

The only unimodular lattice of rank $1$ is the lattice $I_1$.\ The only unimodular lattices of rank $2$ are the lattices $I_{2,0},I_{1,1},I_{0,2}$ and the hyperbolic plane $U$.\ There is a unique positive definite even unimodular lattice of rank 8, which we denote by $E_8$.\ The signature $\tau(\L)$  of an even unimodular lattice $\L$  is divisible by 8; if $\L$ is not definite (positive or negative), it is a direct sum of copies of $U$ and $E_8$ if $\tau(\L)\ge0$  (resp.\ of $U$ and $E_8(- 1)$ if $\tau(\L)<0$).

If $x$ is a nonzero element of $\L$, we define its divisibility $\gamma(x)$ as the positive generator of the subgroup $x\cdot\L$ of $\Z$.\ We also consider $ x/\gamma(x)$, a primitive (\ie, nonzero and nondivisible) element of $\L^\vee$, and its class $x_*=[x/\gamma(x)]\in D(\L)$, an element of order $\gamma(x)$.

{\em Assume now that the lattice $\L$ is even.}\ We extend the quadratic form to a $\Q$-valued quadratic form on $\L\otimes\Q$, hence also on $\L^\vee$.\ We then
define a quadratic form $\bar q\colon D(\L)\to \Q/2\Z$ as follows: let $x\in \L^\vee, y\in \L$; then $q(x+y)=q(x)+2x\cdot y+q(y)$, modulo $2\Z$, does not depend on $y$.\ We may therefore set
$$\bar q([x]):=q(x) \in \Q/2\Z.$$
The {\em stable orthogonal group} $\tO(\L)$ is the kernel of the canonical map
$$O(\L)\lra O(D(\L),\bar q).$$
 This map is surjective when $\L$ is indefinite and   $\rank(\L)$ is at least the minimal number of generators of the finite abelian group $D(\L)$ plus 2.\ 

We will  use  the following result very often (see \cite[Lemma~3.5]{ghs}).\ When $\L$ is even unimodular (and contains at least two orthogonal copies of  the hyperbolic plane $U$), it says that the $O(\L)$-orbit of a primitive vector is exactly characterized by the square of this vector.

\begin{theo}[Eichler's criterion]\label{eic}
Let $\Lambda$ be an even lattice that contains at least two orthogonal copies of $U$.\ The $\tO(\L)$-orbit of a  primitive vector  $x\in\L$ is determined by the integer $q(x)$ and the element $x_*$ of $D(\L)$.
\end{theo}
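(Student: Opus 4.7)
The ``only if'' direction is immediate: any $\phi \in \tO(\L)$ preserves $q$ and fixes $D(\L)$ pointwise, so $q(\phi(x)) = q(x)$ and $\phi(x)_* = x_*$. For the converse, the plan is to exhibit, for each pair $(q(x), x_*)$, a canonical representative in the $\tO(\L)$-orbit of a primitive $x$, produced by Eichler transvections that exploit the two orthogonal copies of $U$.

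For each isotropic $e \in \L$ and each $v \in \L$ with $e \cdot v = 0$, introduce the Eichler transvection
$$E_{e,v}(y) := y + (e \cdot y) v - (v \cdot y) e - \tfrac{1}{2}(v \cdot v)(e \cdot y) e.$$
This is well-defined on $\L$ because $(v \cdot v)/2 \in \Z$ (as $\L$ is even). A direct expansion using $e^2 = 0 = e \cdot v$ shows that $E_{e,v}$ is an isometry; moreover, for every $y^* \in \L^\vee$ the integers $(e \cdot y^*)$ and $(v \cdot y^*)$ make $E_{e,v}(y^*) - y^* \in \Z e + \Z v \subset \L$, so $E_{e,v}$ acts as the identity on $D(\L)$ and therefore lies in $\tO(\L)$.

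Next split $\L = U_1 \oplus U_2 \oplus M$ with $U_i = \Z e_i \oplus \Z f_i$, so that $D(\L) \simeq D(M)$ since $U_1 \oplus U_2$ is unimodular. Write $x = a_1 e_1 + b_1 f_1 + a_2 e_2 + b_2 f_2 + m$ with $m \in M$. The sample computation
$$E_{e_2, \lambda e_1}(x) = (a_1 + \lambda b_2) e_1 + b_1 f_1 + (a_2 - \lambda b_1) e_2 + b_2 f_2 + m,$$
together with its $f_2$-analogue (which shifts $b_2$ by a multiple of $b_1$), enables a Euclidean-algorithm style iteration that, using the primitivity of $x$, drives the $U_2$-coordinates to zero. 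Further transvections $E_{e_1, v}$ and $E_{f_1, v}$ with $v \in M$ modify $m$ by integral multiples of $v$ while preserving its class in $D(M) \simeq D(\L)$. The upshot is that $x$ is brought to a canonical form $\alpha e_1 + \beta f_1 + m_0$, with $m_0 \in M$ a fixed vector depending only on $x_*$ and $(\alpha, \beta)$ determined by primitivity and by $2\alpha\beta + q(m_0) = q(x)$ up to the action of $\tO(U_1) \subset \tO(\L)$; this form depends only on $(q(x), x_*)$.

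The main obstacle is executing and justifying this combinatorial reduction: one must check that Eichler transvections built from both $U_i$ together really do eliminate the $U_2$-coordinates and reduce the $M$-part to a preselected representative of $x_*$. Two hyperbolic planes are essential here --- a single copy of $U$ offers too few isotropic directions to carry out a Euclidean gcd argument on all coordinates simultaneously, but with two copies the task becomes a sequence of elementary row/column operations on a $2 \times (2 + \rank M)$ matrix, always solvable because $x$ is primitive. Once the canonical form is achieved, two primitive vectors with the same invariants land at the same representative, completing the proof.
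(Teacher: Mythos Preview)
The paper does not give its own proof of this theorem; it simply states the result and refers to \cite[Lemma~3.5]{ghs} for the proof. So there is nothing to compare against in the paper itself.

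Your proposal outlines exactly the standard argument used in that reference (and going back to Eichler): introduce the transvections $E_{e,v}$, check they lie in $\tO(\L)$, and use the two orthogonal hyperbolic planes to run a gcd-style reduction bringing any primitive vector to a canonical form depending only on $(q(x),x_*)$. The ingredients you list are correct---the transvection formula, the verification that $E_{e,v}$ acts trivially on $D(\L)$, and the sample computation are all right.

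What you have is a correct sketch rather than a complete proof. Two points would need tightening if you were writing it out in full. First, the endgame: after the reduction one should arrive at $e_1 + \beta f_1 + m_0$ with the $e_1$-coefficient equal to $1$ (primitivity and the Euclidean step force this), not an unspecified $\alpha$; then $\beta$ is determined by $q(x)$ once $m_0$ is fixed, so there is no residual ambiguity to kill with $\tO(U_1)$. Second, reducing the $M$-component to a preselected representative of $x_*$ requires a bit more than ``transvections $E_{e_1,v}$ shift $m$ by multiples of $v$'': one has to combine transvections based at $e_1$ and $f_1$ (and briefly pass through $U_2$) to see that the full coset $m_0 + M$ is reachable. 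None of this changes the strategy, but these are the steps where the actual work in \cite{ghs} happens.
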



\subsection{The period map}\label{sec17}

The Torelli theorem (Theorem~\ref{torth}) says that a polarized K3 surface is determined by its Hodge structure.\ We want to express this as the injectivity of a certain morphism, the period map, which we  now construct.

Let $S$ be a complex K3 surface.\ The lattice $(H^2(S,\Z),\cdot)$ was shown in Section~\ref{sec11} to be even unimodular with signature $-16$; by the discussion in Section~\ref{seclat}, it is therefore isomorphic to the rank-22 lattice
$$\LKKK:= U^{\oplus 3}\oplus E_8(-1)^{\oplus 2}.$$
Since we will restrict ourselves to polarized K3 surfaces, we fix, for each positive integer $e$, a primitive vector $h_{2e}\in \LKKK$ with $h_{2e}^2=2e$ (by Eichler's criterion (Theorem~\ref{eic}), they are all in the same $O(\LKKK)$-orbit).\ For example, if $(u,v)$ is a standard basis for one copy of $U$, we may take $h_{2e}=u+ev$.\ We then have
\begin{eqnarray}\label{lke}
\L_{\KKK,2e}:=h_{2e}^\bot= U^{\oplus 2}\oplus E_8(-1)^{\oplus 2}\oplus I_1(-2e).
\end{eqnarray}

Let now $(S,L)$ be a polarized K3 surface of degree $2e$ and let $\phi\colon H^2(S,\Z)\isomto \LKKK$ be an isometry of lattices such that $\phi(L)=h_{2e}$ (such an isometry exists by Eichler's criterion).\ The ``period'' $p(S,L):= \phi_\C(H^{2,0}(S))\in \LKKK\otimes\C$ is then in $h_{2e}^\bot$; it also satisfies the Hodge--Riemann bilinear relations
$$p(S,L)\cdot p(S,L)=0\quad,\quad p(S,L)\cdot \overline{p(S,L)}>0.$$
This leads us to define the $19$-dimensional (nonconnected) complex manifold
$$\Omega_{2e}:=\{[x]\in \P( \L_{\KKK,2e}\otimes\C)\mid x\cdot x=0,\ x\cdot\overline{x}>0\}
,$$
so that $p(S,L)$ is in $\Omega_{2e}$.\ However, the point $p(S,L)$ depends on the choice of the isometry $\phi$, so we would like to consider the quotient of $\Omega_{2e}$ by the image 
of the (injective) restriction morphism
\begin{eqnarray*}
\{\Phi\in O(\LKKK)\mid \Phi(h_{2e}) =h_{2e}\}&\lra &O(\L_{\KKK,2e})\\
\Phi&\longmapsto&\Phi\vert_{h_{2e}^\bot}.
\end{eqnarray*}
 It turns out that 
 this image is equal to the special orthogonal group $\tO(\L_{\KKK,2e})$, so we set\footnote{\label{misl}This presentation is correct but a bit misleading:  $\Omega_{2e}$ has in fact two irreducible components (interchanged by complex conjugation) and one usually chooses one component $\Omega_{2e}^+$ and considers the subgroups of the various orthogonal groups that preserve this component (denoted by $O^+$ in \cite{marsur,ghs,ghssur}), so that $\cP_{2e} =\tO^+(\L_{\KKK,2e})\backslash \Omega_{2e}^+$.}
 $$\cP_{2e}:=\tO(\L_{\KKK,2e})\backslash \Omega_{2e}
.$$
Everything goes well: by the Baily--Borel theory, $\cP_{2e}$ is an irreducible quasiprojective normal noncompact variety of dimension 19 and one can define a {\em period map}
\begin{eqnarray*}
\wp_{2e}\colon \cK_{2e}&\lra&\cP_{2e}\\
{[}(S,L)]&\longmapsto& [p(S,L)]
\end{eqnarray*}
which is an algebraic morphism.\ The Torelli theorem now takes the following form.

\begin{theo}[Torelli theorem, second version]\label{torth2}
Let $e$ be a positive integer.\ The period map
$$\wp_{2e}\colon \cK_{2e}\lra\cP_{2e}
$$
is an open embedding.
\end{theo}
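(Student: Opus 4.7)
The plan is to split the statement into two independent parts: set-theoretic injectivity, which will follow from the global Torelli theorem (Theorem~\ref{torth}) together with a lattice-extension argument, and the étale property, which will require a local Torelli theorem.

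First I would prove injectivity. Suppose two polarized K3 surfaces $(S,L)$ and $(S',L')$ have the same period in $\cP_{2e}$. Fix marking isometries $\phi\colon H^2(S,\Z)\isomto \LKKK$ and $\phi'\colon H^2(S',\Z)\isomto \LKKK$ sending $L$ and $L'$ respectively to $h_{2e}$; these exist by Eichler's criterion (Theorem~\ref{eic}). The equality of periods means there is some $\psi\in \tO(\L_{\KKK,2e})$ with $\psi_\C(p(S',L'))=p(S,L)$. I would then lift $\psi$ to an isometry $\tilde\psi\in O(\LKKK)$ fixing $h_{2e}$, which is possible by the surjectivity of the restriction map recorded in the excerpt just above the theorem. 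The composite $\phi^{-1}\circ\tilde\psi\circ\phi'\colon H^2(S',\Z)\isomto H^2(S,\Z)$ is then a Hodge isometry carrying $L'$ to $L$, and Theorem~\ref{torth} produces the desired isomorphism $(S',L')\isomto (S,L)$.

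Next, for the étale property, I would invoke local Torelli for K3 surfaces: at each point $[(S,L)]\in\cK_{2e}$, deformations of $(S,L)$ are unobstructed (Bogomolov--Tian--Todorov, restricted to the polarized locus), so $\cK_{2e}$ has smooth $19$-dimensional local models; and the infinitesimal period map, given by cup product $H^1(S,T_S)_L\to \Hom(H^{2,0}(S),H^{1,1}(S)_{\mathrm{prim}})$, is an isomorphism because contraction with the nowhere-vanishing $2$-form $\omega$ identifies $T_S$ with $\Omega_S^1$. This identifies the tangent space to $\cK_{2e}$ at $[(S,L)]$ with the tangent space to $\Omega_{2e}$ (hence to $\cP_{2e}$, after passing to the quotient by the properly discontinuous action) at $p(S,L)$, so $d\wp_{2e}$ is a linear isomorphism at every point.

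Combining the two, $\wp_{2e}$ is an algebraic morphism between irreducible quasiprojective varieties of the same dimension~$19$ that is simultaneously injective and étale, hence an open embedding (an injective étale morphism of varieties is an open immersion, e.g.\ by Zariski's main theorem). The most delicate points will be the local Torelli computation, which rests crucially on the existence of the symplectic form, and the surjectivity of the restriction map onto $\tO(\L_{\KKK,2e})$; the latter uses Nikulin-type extension results and is enabled by the presence of two orthogonal hyperbolic planes in $\L_{\KKK,2e}$.
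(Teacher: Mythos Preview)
The paper does not actually supply a proof of this theorem; it states it as a reformulation of the first Torelli theorem (Theorem~\ref{torth}) and moves on, so there is nothing to compare your argument against. Your outline is the standard route and the two main ingredients---injectivity from Theorem~\ref{torth} plus the lattice-extension fact recorded just before the statement, and the local Torelli isomorphism coming from the contraction $T_S\isom\Omega^1_S$---are correct.

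One point deserves more care. Both $\cK_{2e}$ and $\cP_{2e}$ are \emph{coarse} spaces: $\cK_{2e}$ is only a coarse moduli space (Theorem~\ref{thm16}) and $\cP_{2e}$ is the quotient of $\Omega_{2e}$ by a group acting with finite stabilizers. Your local Torelli computation shows that the period map is \'etale at the level of the deformation functor (equivalently, on the Kuranishi space, or on the moduli stack), but this does not immediately give \'etaleness on the coarse spaces, which can acquire quotient singularities at points with extra automorphisms. What makes the argument go through is that the finite groups one quotients by on each side coincide: by Proposition~\ref{prop110} the automorphism group of $(S,L)$ injects into the group of Hodge isometries of $H^2(S,\Z)$ fixing $L$, and the extended Torelli theorem gives surjectivity, so $\Aut(S,L)$ is identified with the stabilizer of the period point in $\tO(\L_{\KKK,2e})$. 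Once this is said, the map of analytic germs on the coarse spaces is a local isomorphism, and your injective-plus-\'etale argument concludes.
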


It is this description of $\cK_{2e} $ (as an open subset of the quotient of the Hermitian symmetric domain $\Omega_{2e}$ by an arithmetic subgroup acting properly and discontinuously) that Gritsenko--Hulek--Sankaran used to compute the Kodaira dimension of $ \cK_{2e}$ (\cite{ghs0,ghssur}).

\medskip

Let us discuss what the image of $\wp_{2e} $  is.\ Let $y\in \L_{\KKK,2e}$ be such that $y^2=-2$ (one says that $y$ is a {\em root} of $\L_{\KKK,2e} $).\ If a period $p(S,L)$ is orthogonal to $y$, the latter is, by the Lefschetz $(1,1)$-theorem, the class of a line bundle $M$ with $M^2=-2$ and $L\cdot M=0$.\ Since either $|M|$ or $|M^{-1}|$ is nonempty (Section \ref{newsecc}), this contradicts the ampleness of $L$ and implies that the image of the period map is contained in the complement  $\cP_{2e}^0$ of the image of
$$\bigcup_{y\in \L_{\KKK,2e},\ y^2=-2}y^\bot\subset  \Omega_{2e}
$$
 in $\cP_{2e}$.\
It turns out that the image of the period map is exactly $\cP_{2e}^0$ (\cite[Remark~6.3.7]{huyk3}).\ Let us describe  $\cP_{2e}^0$ using  Eichler's criterion.\

\begin{prop}\label{prop19}
Let $e$ be a positive integer.\ The image of the period map
$\wp_{2e}\colon \cK_{2e}\to\cP_{2e}
$
is the complement of one irreducible hypersurface if $e\not\equiv 1\pmod4$ and of two irreducible hypersurfaces if $e\equiv 1\pmod4$.
\end{prop}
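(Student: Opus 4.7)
The plan is to prove the proposition by counting $\tO(\L_{\KKK,2e})$-orbits of primitive $(-2)$-vectors in $\L_{\KKK,2e}$. The discussion immediately preceding the proposition identifies the image of $\wp_{2e}$ with the complement $\cP_{2e}^0$ of the union of Heegner hypersurfaces attached to these vectors; two vectors $y,y'$ define the same hypersurface exactly when they lie in the same $\tO(\L_{\KKK,2e})$-orbit, after identifying $y$ with $-y$ (automatic in our situation, as will be seen). By Eichler's criterion (Theorem~\ref{eic}), applicable because $\L_{\KKK,2e}$ contains two copies of $U$, the $\tO(\L_{\KKK,2e})$-orbits of primitive $(-2)$-vectors are classified by the discriminant-group image $y_*\in D(\L_{\KKK,2e})$.

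First I would compute the discriminant group. Since $U^{\oplus 2}\oplus E_8(-1)^{\oplus 2}$ is unimodular, $D(\L_{\KKK,2e})\cong\Z/2e\Z$, generated by $\ell_*=[\ell/(2e)]$ where $\ell$ spans the $I_1(-2e)$ summand, with $\bar q(\ell_*)\equiv -1/(2e)\pmod{2\Z}$. Write $y_*=k\ell_*$, set $d=\gcd(k,2e)$ and $\gamma:=2e/d$ (the order of $y_*$, equal to the divisibility of $y$), and write $k=dk'$ with $\gcd(k',\gamma)=1$. The consistency condition $\bar q(y_*)\equiv y^2/\gamma^2=-2/\gamma^2\pmod{2\Z}$ then simplifies (using $2e=d\gamma$) to the congruence
$$ e\,k'^2\equiv 1\pmod{\gamma^2}. $$

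The decisive step is an elementary reduction. Reducing this congruence modulo $\gamma$ gives $ek'^2\equiv 1\pmod\gamma$; as $k'$ is a unit mod $\gamma$, so is $e$, i.e.\ $\gcd(e,\gamma)=1$. Combined with $\gamma\mid 2e$, this forces $\gamma\mid 2$. The case $\gamma=1$ gives $y_*=0$, realized by any root in an $E_8(-1)$ summand, and always contributes one hypersurface. The case $\gamma=2$ gives $y_*=e\ell_*$ and $k'=1$, and the constraint becomes $e\equiv 1\pmod 4$; when this holds, a representative is $y=\ell+2z$ with $z$ in the unimodular sublattice satisfying $z^2=(e-1)/2$ (an even integer, hence representable in the indefinite even unimodular lattice $U^{\oplus 2}\oplus E_8(-1)^{\oplus 2}$). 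In both cases $y_*$ has order dividing $2$, so $-y_*=y_*$ and the $y\sim -y$ identification is automatic.

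It remains to observe that each orbit contributes a single irreducible hypersurface to $\cP_{2e}$: for any fixed $y$ with $y^2=-2$, the orthogonal $y^\bot$ has signature $(2,18)$ and $y^\bot\cap\Omega_{2e}^+$ is an irreducible type-IV domain, whose quotient by the stabilizer in $\tO^+(\L_{\KKK,2e})$ is irreducible; the full Heegner divisor attached to $y_*$ is the union of the images of $y^\bot\cap\Omega_{2e}^+$ as $y$ runs over the orbit, and is therefore irreducible. The technical heart of the argument is the reduction $\gamma\in\{1,2\}$ in the third paragraph; everything else is direct verification.
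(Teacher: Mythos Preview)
Your proof is correct and follows the same strategy as the paper: apply Eichler's criterion to reduce to classifying $y_*\in D(\L_{\KKK,2e})\cong\Z/2e\Z$, show the divisibility $\gamma$ of a root is $1$ or $2$, then check realizability in each case. The one notable difference is that the paper reaches $\gamma\in\{1,2\}$ in a single line---since $y\cdot y=-2$ lies in $y\cdot\L_{\KKK,2e}$, the divisibility $\gamma(y)$ divides $2$---whereas you derive it from the discriminant-form congruence $ek'^2\equiv 1\pmod{\gamma^2}$; your route is valid but longer than necessary.
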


\begin{proof}
By Eichler's criterion, the $\tO(\L_{\KKK,2e})$-orbit of a root $y$ of $\L_{\KKK,2e}$ is characterized by   $ y_*\in D(\L_{\KKK,2e} )$.\
Let $w$ be a generator of $I_1 (-2e)$ in a decomposition \eqref{lke} of $\L_{\KKK,2e}$ (if $(u,v)$ is a standard basis for one copy of $U$ in $\LKKK$, we may take $w=u-ev$).\ We then have $\div(w)=2e$ and $w_*$ generates $D(\L_{\KKK,2e})\isom \Z/2e\Z$, with $\bar q (w_*)=q\bigl(\frac{1}{2e}w\bigr)=-\frac{1}{2e} \pmod{2\Z}$.\ A root $y$ has divisibility $1$ or $2$; if $\div(y)=1$, we have $y_*=0$; if $\div(y)=2$, then $y_*=\frac12 y$ has order $2$ in $D(\L_{\KKK,2e}) $, hence $y_*=ew_*$.\ In the latter case, we have $\bar q (y_*)\equiv q(\frac12 y)=-\frac12 $, whereas $\bar q (ew_*)\equiv e^2\bigl(-\frac{1}{2e}\bigr)=-\frac{e}{2}$; this implies $e\equiv 1\pmod4$.\ Conversely, if this condition is realized, we write $e=4e'+1$ and we let $(u',v')$ be a standard basis for one copy of $U$ in $\L_{\KKK,2e}$.\ The vector
$$y:=w+2(u'+e'v') 
$$
is then a root with divisibility 2.\ This proves the proposition.
\end{proof}

These irreducible hypersurfaces, or more generally any hypersurface in $\cP_{2e}$ which is the (algebraic and irreducible) image $\cD_x$ of a hyperplane section of $\Omega_{2e} $ of the form $x^\bot$, for some  $x\in \L_{\KKK,2e}$ with $x^2<0$, is usually called a {\em Heegner divisor.}\ By Eichler's criterion, the Heegner divisors can be indexed by the integer $x^2$ and the element $x_*$ of $ D(\L_{\KKK,2e})$, for $x$  primitive in~$\L_{\KKK,2e}$.

\begin{rema}[Period map over the integers] The construction of the period domain and period map can be done over $\Q$, and even over $\Z[\frac12]$.\ We refer the reader to \cite[Section~8.6]{lie} for a summary of results and for references.
\end{rema}

\begin{rema}
As noted by Brendan Hassett, the results of Section~\ref{ssec22} imply, by the same reasoning as in the proof of Proposition~\ref{prop19}, that, inside $\cK_{2e}$, the locus where the polarization is globally generated (resp.\ very ample) is the complement of the union of finitely many Heegner divisors.
\end{rema}

\begin{rema}
A point of $\cP_{2e}$ which is very general in the complement of the image of $\wp_{2e}$ corresponds to a pair $(S,L)$, where $L$ is a ``nef and big'' line bundle on the K3 surface $S$.\  Theorem \ref{th13} still applies to these line bundles.\ When $e>1$, it implies that $L$ is   \ggs, hence defines a morphism $\phi_L\colon S\to \P^{e+1}$ that contracts a smooth rational curve to a point.

When $e=1$, the complement of the image of $\wp_{2e}$ has two irreducible components.\ On one component, the Picard lattice has matrix $\begin{pmatrix} 2&0\\0&-2\end{pmatrix}$ and 
the situation is as above: $L$ is   \ggs\ and defines a morphism $\phi_L\colon S\to \P^2$ branched over a sextic curve with a single node.\ On the other component, the Picard lattice has matrix $\begin{pmatrix} 2&1\\1&0\end{pmatrix}$ and   the fixed part of the linear system $|L|$ is a smooth rational curve $C$.\ The morphism $\phi_L=\phi_{L(-C)}\colon S\to \P^2$ is an elliptic fibration onto a smooth conic.
\end{rema}

\subsection{The Noether--Lefschetz locus}\label{nlsec}
Given a polarized K3 surface $(S,L)$ with period $p(S,L)\in \Omega_{2e} $, the Picard group of $S$ can be identified, by  the Lefschetz $(1,1)$-theorem, with the saturation of the subgroup of $\LKKK$ generated by $h_{2e}$ and 
$$p(S,L)^\bot \cap \L_{\KKK,2e}.
$$
This means that if the period of  $(S,L)$ is outside the countable union 
$$\bigcup_{x\textnormal{ primitive in }\L_{\KKK,2e},\ x^2<0}\!\! \cD_x$$ 
of Heegner divisors, the Picard number $\rho(S)$  is 1 (and $\Pic(S)$ is generated by $L$).\ The inverse image in $\cK_{2e}$ of this countable union is called the {\em Noether--Lefschetz locus.}

\subsection{Automorphisms}\label{sec210}

Many works deal with automorphism groups of  K3 surfaces and we will only mention a couple of related results.\ Let $S$ be a K3 surface.\ The first remark is that since $T_S\isom \Omega^1_S\otimes (\Omega^2_S )^\vee\isom \Omega^1_S$, we have
$$H^0(S,T_S)\isom H^0(S,\Omega^1_S)=0
.$$
In particular,   the group $\Aut(S)$ of biregular automorphisms of $S$ is discrete (note that since $S$ is a minimal surface which has a unique minimal model, any birational automorphism of $S$ is biregular).

\begin{prop}\label{prop110}
Let $S$ be a K3 surface.\ Any automorphism of $S$ that acts trivially on $H^2(S,\Z)$ is trivial.
\end{prop}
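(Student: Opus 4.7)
My plan is to first reduce to the case where $\sigma$ has finite order, and then to combine the topological Lefschetz formula with the Atiyah--Bott holomorphic Lefschetz formula to derive a contradiction from the assumption $\sigma \neq \Id$.

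\emph{Finite order.} I would fix an ample line bundle $L$ on $S$.\ Since $\sigma^{*}$ acts trivially on $H^{2}(S,\Z)$, it fixes the class of $L$, so $\sigma \in \Aut(S,L)$.\ By Theorem~\ref{vample}, $L^{\otimes 3}$ is very ample and embeds $S$ into some $\P^{N}$; the group $\Aut(S,L^{\otimes 3}) \supset \Aut(S,L)$ is then the stabilizer of $S$ in $\PGL(N+1)$, hence an algebraic group.\ Its tangent space at the identity is $H^{0}(S,T_{S}) = 0$ (as observed at the start of this section), so it is zero-dimensional, hence finite.\ In particular $\sigma$ has finite order $n$.

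\emph{Local analysis at fixed points.} Because $\sigma^{*}$ is trivial on $H^{2}(S,\C) \supset H^{2,0}(S) = \C\omega$, one has $\sigma^{*}\omega = \omega$, i.e.\ $\sigma$ is symplectic.\ At any $p \in S^{\sigma}$, the differential $d\sigma_{p}$ lies in $\Sp(T_{p}S,\omega_{p}) = \SL(T_{p}S)$ and has finite order, hence is diagonalizable with eigenvalues $\lambda_{p}, \lambda_{p}^{-1}$ that are roots of unity.\ If $d\sigma_{p} = \Id$ at some $p$, the Cartan/Bochner linearization theorem for finite-order holomorphic actions yields coordinates at $p$ in which $\sigma$ acts linearly, so $\sigma = \Id$ in a neighborhood of $p$ and hence on $S$ by analytic continuation.\ Assuming $\sigma \neq \Id$, every fixed point is therefore isolated and satisfies $\lambda_{p} \neq 1$.

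\emph{Fixed-point formulas.} The topological Lefschetz formula, applied to the trivial action of $\sigma^{*}$ on $H^{*}(S,\Q) = H^{0} \oplus H^{2} \oplus H^{4}$, gives $|S^{\sigma}| = 1 + 22 + 1 = 24$.\ Since $\sigma^{*}$ also acts trivially on $H^{2}(S,\cO_{S}) \cong H^{0}(S,\Omega^{2}_{S})^{\vee}$, the Atiyah--Bott holomorphic Lefschetz formula reads
$$\sum_{p \in S^{\sigma}} \frac{1}{\det_{\C}(\Id - d\sigma_{p})} \;=\; \sum_{i}(-1)^{i}\Tr\bigl(\sigma^{*} \mid H^{i}(S,\cO_{S})\bigr) \;=\; 2.$$
Each denominator equals $(1-\lambda_{p})(1-\lambda_{p}^{-1}) = |1-\lambda_{p}|^{2} \in (0,4]$ (as $\lambda_{p}$ is a root of unity $\neq 1$), so each term on the left is at least $1/4$, forcing the sum to be at least $24/4 = 6 > 2$, a contradiction.\ The hard part will be the local step---specifically, invoking the linearization theorem to exclude $d\sigma_{p} = \Id$ and justifying that all fixed points are then isolated so that Atiyah--Bott applies---but this becomes routine once the finite-order reduction is in place.
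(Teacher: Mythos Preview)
Your approach is essentially the same as the paper's: reduce to finite order, observe that the symplectic condition forces all fixed points to be isolated with eigenvalues $\lambda_p,\lambda_p^{-1}$, then compare the topological and holomorphic Lefschetz formulas. The paper phrases the conclusion of the holomorphic formula as ``$N(\sigma)\le 8$'' (citing \cite[Corollary~15.1.5]{huyk3}); your explicit bound $(1-\lambda_p)(1-\lambda_p^{-1})=|1-\lambda_p|^2\le 4$ is precisely how that inequality is obtained, so there is no real difference in the second half.

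There is, however, a gap in your reduction to finite order: you fix an ample line bundle $L$ on $S$, but the proposition is stated for an arbitrary complex K3 surface, and not every K3 surface is projective. The paper avoids this by invoking Fujiki's theorem \cite[Theorem~4.8]{fuj} that, on a compact K\"ahler manifold, the group of automorphisms fixing a K\"ahler class has only finitely many connected components; combined with $H^0(S,T_S)=0$ this yields finite order without any ampleness assumption. Your argument is fine once restricted to the projective case, and the fix in general is simply to replace the ample class by a K\"ahler class and the linear-algebraic-group argument by Fujiki's result.
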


\begin{proof}[Sketch of proof]
We follow \cite[Section~15.1.1]{huyk3}.\ Let $\sigma$ be a nontrivial automorphism of $S$ of finite order $n$ that acts trivially on $H^2(S,\Z)$ (hence on $H^\bullet(S,\Z)$).\ By the Hodge decomposition, $\sigma$ also acts trivially on $H^0(S,\Omega^2_S)$, hence $\sigma^*\omega=\omega$.\ Around any fixed point   of $\sigma$, there are analytic local coordinates $(x_1,x_2)$ such that $\sigma(x_1,x_2)=(\lambda x_1, \lambda^{-1}x_2)$, where $\lambda$ is a primitive $n$th root of $1$ \cite[Lemma~15.1.4]{huyk3}.\ In particular, the
  number $N(\sigma)$ of fixed points of $\sigma$ is   finite.

The holomorphic Lefschetz fixed   point formula, which relates     $N(\sigma)$   to the trace of the action of $\sigma$ on the $H^i(S,\cO_S)$, implies $N(\sigma)\le 8$ (\cite[Corollary~15.1.5]{huyk3}).\ The topological Lefschetz fixed   point formula, which relates $N(\sigma)$ to the trace of the action of $\sigma$ on the $H^i(S,\Z)$, implies that since $\sigma$ acts trivially on $H^\bullet(S,\Z)$, we have $N(\sigma)=\chi_{\textnormal{top}}(S)=24$ (\cite[Corollary~15.1.6]{huyk3}).\ This gives a contradiction, hence  no nontrivial automorphism of $S$ of finite order  acts trivially on $H^2(S,\Z)$.

To prove that any automorphism $\sigma$  that acts trivially on $H^2(S,\Z)$ has finite order, one can invoke \cite[Theorem~4.8]{fuj} which says that the group of automorphisms of a compact K\"ahler manifold that fix a K\"ahler class has only finitely many connected components: this implies in our case that  $\sigma$ has finite  order, hence is trivial.\end{proof}

The conclusion of the proposition can be restated as the injectivity of the map
\begin{equation}\label{injp}
\Psi_{S}\colon\Aut(S)\lra O( H^2(S,\Z),\cdot).
\end{equation}
One may also consider the morphism
$$\overline\Psi_{S}\colon\Aut(S)\lra O( \Pic(S),\cdot).$$
Even if $S$ is projective, this morphism is not necessarily injective.\ An easy example is provided by polarized K3 surfaces $(S,L)$ of degree 2: we saw in Section~\ref{sec13} that $\phi_L\colon S\to \P^2$ is a double cover; the associated involution $\iota$ of $S$ satisfies $\iota^*L\isom L$ and when  $(S,L)$ is very general, $\Pic(S)=\Z L$ is acted on trivially by $\iota$.\ 

However, the kernel of  $ \overline\Psi_{S}$ is always finite (if $S$ is projective): if $S\subset \P^N$ is an embedding, 
 any automorphism $\sigma$ of $S$ in $\ker(\overline\Psi_{S})$ satisfies $\sigma^*\cO_S(1)\isom \cO_S(1)$.\ In follows that $\Aut(S)$ acts 
  linearly on $\P(H^0(\P^N,\cO_{\P^N}(1)))$ while globally preserving  $S$.\ The group $\ker(\overline\Psi_{S})$ is therefore a closed algebraic subgroup of the affine linear algebraic group $\PGL(N+1,\C)$ hence has finitely many components.\ Since it is discrete, it is finite.

Here a simple application of this result: the automorphism group of a K3 surface with Picard number 1 is not very interesting!

\begin{prop}\label{autk3}
Let $S$ be a K3 surface whose Picard group is generated by an ample class $L$.\ The automorphism group of $S$ is trivial when $L^2\ge 4$ and has order 2 when $L^2=2$.   
\end{prop}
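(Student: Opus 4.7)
My plan is to combine Proposition~\ref{prop110} with a Hodge-theoretic analysis of $\sigma^*$ on the transcendental lattice $T := L^\bot \subset H^2(S,\Z)$ and to finish with a lattice-gluing computation. Since $\Pic(S) = \Z L$ with $L$ ample, every $\sigma \in \Aut(S)$ must fix $L$, so $\Aut(S) \subseteq \ker(\overline\Psi_S)$, which was shown just above to be finite. By Proposition~\ref{prop110}, $\sigma$ is determined by its action $\sigma^* \in O(H^2(S,\Z))$; this is a Hodge isometry preserving $L$, hence preserving $T$ and acting on $H^{2,0}(S) = \C\omega$ by a root of unity $\lambda$ of some finite order $n$.

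If $\lambda = 1$ (symplectic case), I would invoke Nikulin's theorem that a symplectic automorphism of finite order of a K3 surface acts trivially on $T$. Combined with the trivial action on $\Z L$, this yields $\sigma^* = \mathrm{id}$ on $H^2(S,\Z)$, and Proposition~\ref{prop110} then gives $\sigma = \mathrm{id}$.

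If $\lambda \neq 1$ (non-symplectic case), the assumption $\rho(S) = 1$ forces $T \cap H^{1,1} \cap H^2(S,\Z) = 0$, hence $T^{\sigma^k} = 0$ for every $k$ with $\lambda^k \neq 1$. Writing $\chi_{\sigma^*|_T} = \prod_{d \mid n} \Phi_d^{m_d}$ with $m_n \geq 1$, I would apply this vanishing to $k = d$ for each proper divisor $d \mid n$ (so that $\lambda^d \neq 1$), which forces $m_d = 0$. Thus $\chi_{\sigma^*|_T} = \Phi_n^{m_n}$ with $m_n \phi(n) = 21$; so $\phi(n) \mid 21$, giving $n = 2$ and $\sigma^*|_T = -\mathrm{id}$.

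It then remains to decide when the isometry $(\mathrm{id}_{\Z L}, -\mathrm{id}_T)$ of $\Z L \oplus T$ extends to the unimodular overlattice $H^2(S,\Z)$. This overlattice is the graph of an isomorphism $\varphi : D(\Z L) \isomto D(T)$ inside $D(\Z L) \oplus D(T)$, with $D(\Z L) \cong \Z/2e\Z$; a direct check shows the involution extends if and only if $\varphi = -\varphi$, i.e.\ iff $D(\Z L)$ is $2$-torsion, iff $e = 1$. For $L^2 \ge 4$ this fails and $\Aut(S) = \{1\}$; for $L^2 = 2$ the double cover $\phi_L \colon S \to \P^2$ from Section~\ref{sec13} supplies the involution $\iota$, and uniqueness (from Proposition~\ref{prop110}) gives $\Aut(S) = \langle \iota \rangle \cong \Z/2\Z$. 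The main obstacle is the invocation of Nikulin's theorem in the symplectic case; the remainder reduces to bookkeeping with cyclotomic polynomials and discriminant forms.
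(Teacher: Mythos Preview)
Your proof is correct and reaches the same conclusion, but the route differs from the paper's. The paper does not split into symplectic versus non-symplectic cases; it invokes in one stroke the fact (from \cite[Corollary~3.3.5]{huyk3}, via Kronecker's theorem and the odd rank~$21$ of $L^\bot$) that every Hodge isometry of the transcendental lattice acts as $\pm\Id$. It then does the gluing by an explicit coordinate computation: writing $L=u+ev$ in a hyperbolic plane, the relations $\sigma^*(u+ev)=u+ev$ and $\sigma^*(u-ev)=\eps(u-ev)$ force $2e\mid 1-\eps$, so $\eps=1$ when $e>1$. Your argument instead uses Nikulin's theorem for the symplectic case, a cyclotomic analysis for the non-symplectic case, and discriminant-form gluing. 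Both approaches import one nontrivial external result (Huybrechts versus Nikulin); yours is more hands-on, the paper's more economical.

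One point deserves tightening. When you write $\chi_{\sigma^*|_T}=\prod_{d\mid n}\Phi_d^{m_d}$, you are implicitly assuming that the order of $\sigma^*|_T$ divides $n$, which is not a priori clear (the order of $\sigma$ could exceed $n$). This is easily fixed: since $\sigma^n$ acts trivially on $H^{2,0}$, it is symplectic, so by the very Nikulin theorem you already used, $\sigma^{*n}|_T=\Id$. Alternatively, you can bypass Nikulin entirely by using the simplicity of the rational Hodge structure on $T$ (an elementary fact the paper uses later in the proof of Theorem~\ref{thogui}): the $\Phi_d$-isotypic pieces are sub-Hodge structures, so exactly one survives, namely the one containing $\omega$, and $\chi=\Phi_n^{21/\phi(n)}$ follows immediately for both the symplectic and non-symplectic cases.
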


\begin{proof}
Let $\sigma$ be an automorphism of $S$.\ One has $\sigma^*L=L$ and $\sigma^*$ induces a Hodge isometry of the {\em transcendental lattice} $L^\bot$.\ One can show that $\sigma^*\vert_{L^\bot}=\eps\Id$, where $\eps\in\{-1,1\}$.\footnote{If $(S,L)$ is very general, this follows from standard deformation theory; the general argument is clever and relies on Kronecker's theorem and  the fact that 21, the rank of $L^\bot$, is odd (\cite[Corollary~3.3.5]{huyk3}).}\ Choose as above an identification of the lattice $H^2(S,\Z)$ with $\LKKK$ such that $L=u+ev$, where $L^2=2e$ and $(u,v)$ is a standard basis of a hyperbolic plane $U\subset \LKKK$.\ One then has
$$\sigma^*(u+ev)=u+ev\qquad\textnormal{and}\qquad \sigma^*(u-ev)=\eps(u-ev).$$
This implies $2e\sigma^*(v)=(1-\eps)u+e(1+\eps)v$, so that $2e\mid (1-\eps)$.\ If $e>1$, this implies $\eps=1$, hence $\sigma^*=\Id$, and $\sigma=\Id$ by Proposition~\ref{prop110}.\ If $e=1$, there is also the possibility $\eps=-1$, and $\sigma$, if nontrivial, is a uniquely defined involution of $S$.\ But in that case,   such an involution always exists (Section~\ref{sec13}).
\end{proof}

To actually construct automorphisms of a K3 surface $S$, one needs to know the image of the map $\Psi_S$.\ This is provided by the Torelli theorem, or rather an extended version of Theorem \ref{torth} to the nonpolarized setting: any Hodge isometry of $H^2(S,\Z)$ that maps one K\"ahler class to a K\"ahler class is induced by an automorphism of $S$ (\cite[Theorem~7.5.3]{huyk3}).

Automorphisms get  more interesting when the Picard number increases.\ There is a huge literature on the subject and we will only sketch one construction.

\begin{theo}[Cantat, Oguiso]\label{autk32}
There exists a projective
K3 surface of Picard number 2 with a fixed-point-free automorphism of positive entropy.\footnote{\label{entro}If $\sigma$ is an automorphism of a metric space $(X,d)$, we set, for all positive integers $m$ and all   $x,y\in X$, 
$$d_m(x,y):=\max_{0\le i<m}d(\sigma^i(x),\sigma^i(y)).$$
For $\eps>0$, let $s_m(\eps)$ be the maximal number of disjoint balls in $X$ of radius $\eps/2$ for the distance $d_m$.\ The {\em topological entropy} of $\sigma$ is defined by
$$h(\sigma):=\lim_{\eps\to 0}\limsup_{m\to\infty}\frac{\log s_m(\eps)}{m}\ge 0
.$$
Automorphisms with positive entropy are the most interesting from a dynamical point of view.}
\end{theo}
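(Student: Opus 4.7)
The plan is to build $(S,\sigma)$ backwards from lattice data: choose a rank-$2$ hyperbolic lattice $N$ together with a suitable isometry $\phi$, realize $N\cong \Pic(S)$ by surjectivity of the period map, and then lift $\phi$ to $\sigma$ via the non-polarized Torelli theorem.

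First, I would pick an even lattice $N$ of signature $(1,1)$ and an element $\phi\in\mathrm{O}(N)$ satisfying: (a) $N$ represents no $-2$, which by Theorem~\ref{amps} forces the ample cone of any K3 surface $S$ with $\Pic(S)\cong N$ to equal a connected component $\cC$ of the positive cone $\{x\in N_\R : x^2>0\}$; (b) $\phi$ is of infinite order and preserves $\cC$, so its real eigenvalues are $\lambda,\lambda^{-1}$ with $\lambda>1$ a real quadratic unit; (c) $\mathrm{tr}(\phi)=\lambda+\lambda^{-1}=18$; (d) $\phi$ acts as $-\mathrm{Id}$ on the discriminant group $D(N)$. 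Condition (b) is the existence of a nontrivial solution to a Pell-type equation attached to the Gram matrix of $N$, automatic for any indefinite rank-$2$ even lattice (see Appendix~\ref{secpell}), while (c) and (d) can be arranged by selecting the Gram matrix and an appropriate power of $\phi$. This arithmetic step is the main obstacle.

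Second, embed $N$ primitively in $\LKKK$ with orthogonal complement $T$ of signature $(2,18)$; Nikulin's gluing theory guarantees existence, and condition (d) ensures that the pair $(\phi,-\mathrm{Id}_T)$ extends to a genuine isometry $\tilde\phi\in\mathrm{O}(\LKKK)$, since $-\mathrm{Id}_T$ also acts as $-\mathrm{Id}$ on $D(T)$. Pick a very general $[\omega]$ in the period domain $\Omega(T):=\{[x]\in\P(T\otimes\C): x^2=0,\;x\bar x>0\}$ that is not orthogonal to any $(-2)$-class of $\LKKK$. By surjectivity of the period map for non-polarized K3 surfaces, there is a K3 surface $S$ with a marking $H^2(S,\Z)\cong\LKKK$ under which $H^{2,0}(S)=\C\omega$ and $\Pic(S)\cong N$. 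Then $\tilde\phi$ sends $\omega$ to $-\omega$ (so is a Hodge isometry) and preserves the K\"ahler cone (since it preserves $\cC$), so by the non-polarized Torelli theorem \cite[Theorem~7.5.3]{huyk3} it is induced by a unique $\sigma\in\Aut(S)$.

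Finally, positive entropy is immediate from the Gromov--Yomdin theorem: the topological entropy of $\sigma$ equals $\log\lambda>0$, the logarithm of the spectral radius of $\sigma^*$ acting on $H^2(S,\R)$. For fixed-point-freeness, the identity $\sigma^*\omega=-\omega$ means $(\sigma^2)^*\omega=\omega$, so the local coordinate argument in the proof of Proposition~\ref{prop110} shows that the fixed locus of $\sigma^2$, and hence of $\sigma$, consists of isolated points; and $\sigma$ cannot pointwise fix a curve $C$ because $[C]\in\Pic(S)$ would then be a nonzero $\phi$-eigenvector, whereas the eigenlines of $\phi$ in $N_\R$ are irrational. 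The topological Lefschetz fixed point formula therefore yields
$$\#\mathrm{Fix}(\sigma)\;=\;2\;+\;\mathrm{tr}\bigl(\sigma^*|H^2(S,\Q)\bigr)\;=\;2\;+\;18\;-\;20\;=\;0,$$
which proves the theorem modulo the explicit construction of $(N,\phi)$ carried out, for example, in Oguiso's original paper.
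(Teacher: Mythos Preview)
Your approach is essentially identical to the paper's: choose a rank-$2$ even lattice with no $(-2)$-classes, an isometry of trace $18$ extended by $-\Id$ on the orthogonal complement, lift via Torelli, then apply Lefschetz and Gromov--Yomdin. The paper simply supplies the explicit data you defer to Oguiso, namely the Gram matrix $\left(\begin{smallmatrix}4&2\\2&-4\end{smallmatrix}\right)$ and the isometry $\left(\begin{smallmatrix}5&8\\8&13\end{smallmatrix}\right)$ (with spectral radius $9+4\sqrt{5}$).

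One small wrinkle: your appeal to the local-coordinate argument of Proposition~\ref{prop110} for $\sigma^2$ is not quite licit, since that argument is stated for automorphisms of \emph{finite} order and your $\sigma^2$ has infinite order. But this is harmless: your second argument---that a pointwise-fixed curve would give a nonzero integral $\phi$-eigenvector, impossible since the eigenlines of $\phi$ are irrational---is exactly the one the paper uses (phrased there as ``$1$ is not an eigenvalue of $\phi$'') and is already sufficient on its own to conclude that $\mathrm{Fix}(\sigma)$ is finite.
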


\begin{proof}[Sketch of proof]
One first shows that there exists a projective K3 surface $S$ with Picard lattice isomorphic to the rank-2 lattice $K=\Z^2$ with intersection matrix\footnote{This can be deduced as in \cite[Lemma~4.3.3]{has} from the Torelli theorem and the surjectivity of the period map, because this lattice does not represent $-2$; the square-4 class given by the first basis vector is even very ample on $S$ since the lattice does not represent $0$ either.\ Oguiso gives in \cite{ogu1} a geometric construction of the surface $S$ as a quartic in $\P^3$.\ It was later realized  in \cite{vgee} that $S$ is a determinantal Cayley surface (meaning that its equation can be written as the determinant of a $4\times 4$-matrix of linear forms) and that the automorphism $\sigma$ had already been constructed by Cayley in 1870 (moreover, $\Aut(S)\isom \Z$, generated by $\sigma$); that article gives very clear explanations about the various facets of these beautiful constructions.}
$$\begin{pmatrix}  4 & 2 \\  2 &-4\end{pmatrix}.$$
 By Theorem \ref{amps} and the fact that the lattice $K$ does not represent $-2$, the ample cone of $S$ is one component of its positive cone.\  

The next step is to  check that the isometry $\phi$ of $K\oplus K^\bot$ that acts as the matrix 
$$\begin{pmatrix}  5 & 8 \\  8 &13\end{pmatrix}$$
 on $K$ and   as $-\Id$ on $K^\perp$ extends to an isometry of $\Lambda_{\KKK}$ (one can do it by hand as in the proof above).\
 This isometry obviously preserves the ample cone of $S$, and a variation of the Torelli Theorem \ref{torth} implies that it is induced by an antisymplectic automorphism $\sigma$ of $S$.
 
Since $1$ is not an eigenvalue of $\phi$, the fixed locus of $\sigma$ contains no curves: it is therefore a finite set of points, whose cardinality $c$ can be computed by the Lefschetz fixed point formula
\begin{eqnarray*}
c&=&\sum_{i=0}^4\Tr (\sigma^*\vert_{H^i(S,\Q)})\\
&=&2+\Tr (\sigma^*\vert_{H^2(S,\Q)})\\
&=&2+\Tr (\sigma^*\vert_{K\otimes \Q})+\Tr (\sigma^*\vert_{K^\bot\otimes \Q})=2+18-20=0.
 \end{eqnarray*}

The fact that the entropy $h(\sigma)$ is positive is a consequence of results of Gromov and Yomdim that say that $h(\sigma)$ is the logarithm of 
  the largest eigenvalue of $\sigma^*$ (here $9+4\sqrt{5}$) acting on $H^{1,1}(S,\R)$.
\end{proof}

 \begin{rema}
 It is known that for any K3 surface $S$, the group $\Aut(S)$ is finitely generated.\ The proof uses the injective map  $\Psi_S$ defined in \eqref{injp} (\cite[Corollary~15.2.4]{huyk3}).
  \end{rema}

\section{Hyperk\"ahler manifolds}\label{sect3}

We study in the section generalizations of complex K3 surfaces to higher (even) dimensions.

\subsection{Definition and first properties}\label{sec21}

\begin{defi}
A {\em \hKm} is a simply connected compact K\"ahler manifold $X$ such that  $H^0(X,\Omega^2_X)=\C \omega$, where $\omega$ is a holomorphic 2-form on $X$ which is nowhere degenerate (as a skew symmetric form on the tangent space).
\end{defi}

These properties imply that the canonical bundle is trivial, the dimension of $X$ is even, say $2m$, and the abelian group $H^2(X,\Z)$ is torsion-free.\footnote{The fact that $X$ is simply connected implies  $H_1(X,\Z)=0$, hence the torsion of $H^2(X,\Z)$ vanishes by the Universal Coefficient Theorem.}\ Hyperk\"ahler manifolds of dimension 2 are K3 surfaces.\ The following result follows from the classification of compact K\"ahler manifolds with vanishing real first Chern class (see \cite{beac10}).

\begin{prop}
Let $X$ be a \hKm\ of dimension $2m$ and let $\omega$ be a generator of $H^0(X,\Omega^2_X)$.\ For each $r\in\{0,\dots,2m\}$, we have
$$H^0(X,\Omega^{r}_X)=\begin{cases}
\C\omega^{\wedge (r/2)}&\textnormal{\it{ if $r$ is even;}}\\
0&\textnormal{\it{ if $r$ is odd.}}
\end{cases}$$
In particular, $\chi(X,\cO_X)=m+1$.
\end{prop}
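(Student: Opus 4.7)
The plan is to deduce the statement from the classification of Ricci-flat K\"ahler manifolds via a holonomy analysis.

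First, the wedge power $\omega^{\wedge m}$ is a nowhere vanishing holomorphic section of $\Omega^{2m}_X=K_X$, so $K_X$ is trivial and in particular $c_1(X)=0$ in $H^2(X,\R)$. Yau's theorem then provides a K\"ahler metric $g$ on $X$ with vanishing Ricci curvature. Bochner's principle ensures that on such a metric every holomorphic $r$-form is parallel with respect to the Levi-Civita connection of $g$ (because the Hodge Laplacian on $(r,0)$-forms reduces to the rough Laplacian). Evaluation at a fixed point $x\in X$ therefore identifies $H^0(X,\Omega^r_X)$ with the space of $H$-invariants in $\bw{r}{V^\vee}$, where $V=T_xX$ and $H\subset U(V)$ is the restricted holonomy group; simple connectedness of $X$ forces $H$ to be connected.

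Next I would pin down $H=\Sp(m)$. The nondegenerate parallel form $\omega$ immediately shows $H\subset\Sp(m)$. Conversely, if $H$ were a proper connected subgroup of $\Sp(m)$, Berger's classification of irreducible holonomies (equivalently, the Beauville--Bogomolov decomposition theorem) would force either a nontrivial product decomposition of a finite \'etale cover of $X$, or would produce extra $H$-invariants in $\bw{2}{V^\vee}$; both are ruled out by the hypotheses (simple connectedness and $\dim H^0(X,\Omega^2_X)=1$). This is precisely the classification input the statement is alluding to.

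With $H=\Sp(m)$ in hand, the problem reduces to the classical representation-theoretic computation of $\Sp(m)$-invariants in $\bw{r}{(\C^{2m})^\vee}$. Under the Lefschetz-type decomposition of $\bw{\bullet}{(\C^{2m})^\vee}$ driven by the standard symplectic form $\omega_0$, every primitive piece of positive degree is a nontrivial irreducible $\Sp(m)$-representation, so only the lines $\C\,\omega_0^{\wedge k}\subset \bw{2k}{(\C^{2m})^\vee}$ survive taking invariants. Transporting back through the isomorphism above gives $H^0(X,\Omega^r_X)=\C\,\omega^{\wedge(r/2)}$ when $r$ is even, and $0$ when $r$ is odd. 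Finally, Hodge theory on the compact K\"ahler manifold $X$ yields $h^r(X,\cO_X)=h^{0,r}(X)=h^{r,0}(X)=h^0(X,\Omega^r_X)$, and hence
$$\chi(X,\cO_X)=\sum_{r=0}^{2m}(-1)^rh^0(X,\Omega^r_X)=\sum_{k=0}^m 1=m+1.$$
The main obstacle is pinning down the holonomy to exactly $\Sp(m)$: this is the deep step, relying on Yau's solution of the Calabi conjecture together with Berger's/Bogomolov's classification. The remainder is the standard parallel-forms principle together with an invariant-theory computation.
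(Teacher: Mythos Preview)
Your proposal is correct and is precisely the argument the paper alludes to: the paper does not give its own proof but simply says the result ``follows from the classification of compact K\"ahler manifolds with vanishing real first Chern class'' and cites Beauville's paper \cite{beac10}, whose proof is exactly the Yau--Bochner--holonomy argument you sketch. Your unpacking of the holonomy identification $H=\Sp(m)$ via Berger/Bogomolov, the $\Sp(m)$-invariant computation in $\bw{r}{(\C^{2m})^\vee}$, and the Hodge-theoretic derivation of $\chi(X,\cO_X)=m+1$ are all on point.
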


Hyperk\"ahler manifolds of fixed dimension $2m\ge 4$ do not all have the same topological type.\ The various possibilities  (even the possible Betti numbers) are still unknown and it is also not known whether there are finitely  many deformation types.

A fundamental tool in the study of \hKm s is the {\em Beauville--Fujiki} form, 
 a canonical integral nondivisible quadratic form $q_X$ on the free abelian group $H^2(X,\Z)$.\ Its signature is $(3,b_2(X)-3)$, it is proportional to the quadratic form
$$x\longmapsto \int_X\sqrt{\td(X)}\, x^2,$$
 and it satisfies  
$$\forall x\in H^2(X,\Z)\qquad x^{2m}=c_Xq_X(x)^m,$$
where $c_X$ (the {\em Fujiki constant}) is a positive rational number  and $m:=\frac12\dim(X)$ (in dimension~2, $q_X$ is of course the cup-product).\ Moreover, one has $q_X(x)>0$ for all K\"ahler (\eg, ample) classes $x\in H^2(X,\Z)$.

\subsection{Examples}

A few families of \hKm s are known: in each even dimension $\ge 4$,  two deformations types, which we   describe  below, were found by Beauville (\cite[Sections 6 and~7]{bea}), and two other types (in dimensions 6 and 10) were later found   by O'Grady (\cite{ognew1, ognew2}).

\subsubsection{Hilbert powers of K3 surfaces}\label{k3m}
Let $S$ be a K3 surface and let $m$ be a positive integer.\ The Hilbert--Douady space  $S^{[m]}$ parametrizes analytic subspaces of $S$ of length $m$.\ It is a smooth (Fogarty) compact K\"ahler  (Varouchas) manifold of dimension $2m$ and Beauville proved in \cite[Th\'eor\`eme~3]{bea} that it is a \hKm.\ When $m\ge 2$, he also computed the   Fujiki constant $c_{S^{[m]}}=\frac{(2m)!}{m!2^m}$ and   the second cohomology group
$$H^2(S^{[m]} ,\Z)\isom H^2(S,\Z)\oplus \Z\delta,$$
where $2\delta$ is the class of the divisor in $S^{[m]}$ that parametrizes nonreduced subspaces.\ This decomposition is orthogonal for the Beauville form $q_{S^{[m]}}$, which restricts to the intersection form on $H^2(S,\Z)$ and satisfies
$q_{S^{[m]}}(\delta)=-2(m-1)$.\ In particular, we have 
\begin{eqnarray}
(H^2(S^{[m]} ,\Z),q_{S^{[m]}})&\isom& \LKKK\oplus I_1(-2(m-1))\nonumber\\
&\isom &  U^{\oplus 3}\oplus E_8(-1)^{\oplus 2}\oplus I_1(-2(m-1))=:\Lkkk[m].\label{lk3m}
\end{eqnarray}
The second Betti number of $S^{[m]}$ is therefore 23.\ This is the maximal possible value for all \hK\ fourfolds (\cite{gua}) and sixfolds (\cite[Theorem 3]{saw}).\footnote{Added on September 15, 2020: the author of that article informed me that there is a gap in his proof of this result, which should be corrected soon.} Odd Betti numbers of $S^{[m]}$ all vanish.

The geometric structure of $S^{[m]}$ is explained in \cite[Section~6]{bea}.\ It is particularly simple when $m=2$: the fourfold $S^{[2]}$ is the quotient by the involution that interchanges the two factors of the blow-up of the diagonal in 
$S^2$.

Finally, any line bundle $M$ on $S$ induces a line bundle on each $S^{[m]}$; we denote it by~$M_m$.

\subsubsection{Generalized Kummer varieties}
Let $A$ be a complex torus of dimension 2.\ The Hilbert--Douady space  $A^{[m+1]}$   again  carries a nowhere-degenerate holomorphic 2-form, but it is not simply connected.\ We consider instead the sum morphism
\begin{eqnarray*}
A^{[m+1]}&\lra&A\\
(a_1,\dots,a_m)&\longmapsto &a_1+\dots+a_m
\end{eqnarray*}
and the inverse image $K_m(A)$ of $0\in A$.\ Beauville proved in \cite[Th\'eor\`eme~4]{bea} that it is a \hKm\ (of dimension $2m$).\ When $m=1$, the surface $K_1(A)$ is isomorphic to the blow-up of the surface $A/\pm 1$ at its 16 singular points; this is the Kummer surface of $A$.\ For this reason, the $K_m(A)$ are called generalized Kummer varieties.\ When $m\ge 2$, we have $c_{K_m(A)}=\frac{(2m)!(m+1)}{m!2^m}$ and there is again a decomposition
$$H^2(K_m(A)  ,\Z)\isom H^2(A,\Z)\oplus \Z\delta,$$
which is orthogonal for the Beauville form $q_{K_m(A)}$, and $q_{K_m(A)}(\delta)=-2(m+1)$.\ 
 The second Betti number of $K_m(A)$ is therefore 7 and
$$
(H^2(K_m(A),\Z),q_{K_m(A)}) \isom   U^{\oplus 3}\oplus I_1(-2(m+1))=:\L_{K_m}.
$$
As for all \hKm s, the first Betti number vanishes,  but not all odd Betti numbers vanish (for example, one has $b_3(K_2(A))=8$).

\subsection{The Hirzebruch--Riemann--Roch theorem}

The Hirzebruch--Riemann--Roch theorem takes the following form on  \hKm s.

\begin{theo}[Fujiki]
Let $X$ be a \hKm\  of dimension $2m$.\ There exist rational constants $a_0,a_2,\dots,a_{2m}$ such that, for every line bundle  $M$ on $X$,  one has
$$\chi(X,M)=\sum_{i=0}^m a_{2i}q_X(M)^i
.$$
\end{theo}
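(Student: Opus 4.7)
The plan is to combine the classical Hirzebruch--Riemann--Roch theorem with the vanishing of the odd Chern classes of $X$ and a Fujiki-type identity that forces top-degree integrals against characteristic classes of $X$ to factor through $q_X$.

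First, HRR applied to the line bundle $M$ gives
\[
\chi(X,M)=\int_X e^{c_1(M)}\,\td(X)=\sum_{k=0}^{2m}\frac{1}{k!}\int_X c_1(M)^k\,\td_{2m-k}(X),
\]
where $\td_j(X)\in H^{2j}(X,\Q)$ denotes the degree-$2j$ component of the Todd class. This already shows $\chi(X,M)$ is a polynomial of degree at most $2m$ in $c_1(M)$ with coefficients that depend only on $X$.

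Next I would eliminate the odd-$k$ contributions. The nondegenerate 2-form $\omega$ provides an isomorphism $T_X\isom \Omega^1_X$, and $c_i(\Omega^1_X)=(-1)^i c_i(T_X)$ then forces $c_i(X)=0$ for all odd $i$. Equivalently, after formal splitting the Chern roots of $T_X$ can be paired as $\pm t_1,\dots,\pm t_m$, and since
\[
\frac{t}{1-e^{-t}}\cdot\frac{-t}{1-e^{t}}=\frac{t^2}{4\sinh^2(t/2)}
\]
is an even function of $t$, the Todd class $\td(X)=\prod_{j=1}^m t_j^2/(4\sinh^2(t_j/2))$ lies in $\bigoplus_k H^{4k}(X,\Q)$. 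In particular $\td_j(X)=0$ whenever $j$ is odd, so only indices $k=2i$ survive in the HRR sum:
\[
\chi(X,M)=\sum_{i=0}^m \frac{1}{(2i)!}\int_X c_1(M)^{2i}\,\td_{2m-2i}(X).
\]

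The final step is to invoke the \emph{extended Fujiki relation}: for any class $\beta\in H^{4j}(X,\R)$ expressible as a polynomial in the Chern classes of $T_X$, there exists a constant $C(\beta)\in\R$ such that
\[
\int_X \alpha^{2m-2j}\,\beta=C(\beta)\,q_X(\alpha)^{m-j}\qquad\text{for all }\alpha\in H^2(X,\R).
\]
Applying this with $\beta=\td_{2m-2i}(X)\in H^{4(m-i)}(X,\Q)$ and $\alpha=c_1(M)$ gives $\int_X c_1(M)^{2i}\,\td_{2m-2i}(X)=C_i\,q_X(M)^i$ for a rational constant $C_i$, and $a_{2i}:=C_i/(2i)!$ yields the claimed formula. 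The main obstacle is precisely this extension: the classical Beauville--Fujiki identity $\int_X\alpha^{2m}=c_X q_X(\alpha)^m$ only pairs classes in $H^2(X)$, while here $\beta$ lies in higher-degree cohomology. The extension is obtained by noting that $\td_{2m-2i}(X)$ remains of type $(2m{-}2i,2m{-}2i)$ throughout the twistor family, hence is invariant under a large orthogonal group acting on $H^2(X,\R)$; by invariant theory, the only $O(q_X)$-invariant polynomial of degree $2m-2i$ in $\alpha$ is a scalar multiple of $q_X(\alpha)^{m-i}$. In the writeup I would cite Fujiki's theorem rather than reprove it.
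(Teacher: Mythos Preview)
The paper does not prove this theorem; it is stated without proof and attributed to Huybrechts. Your sketch is correct and is essentially the standard argument: Hirzebruch--Riemann--Roch, the vanishing of the odd Chern classes of $X$ (via the isomorphism $T_X\isom\Omega^1_X$ induced by $\omega$, hence $\td(X)\in\bigoplus_k H^{4k}(X,\Q)$), and the generalized Fujiki relation for characteristic classes. One point you assert but do not justify is the rationality of the constants $C_i$: this follows by evaluating the identity $\int_X\alpha^{2i}\,\td_{2m-2i}(X)=C_i\,q_X(\alpha)^i$ at any rational class $\alpha\in H^2(X,\Q)$ with $q_X(\alpha)\neq 0$, since both the Todd class and $q_X$ are defined over~$\Q$.
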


The relevant constants have been computed for the two main series of examples:
\begin{itemize}
\item when $X$ is the $m$th Hilbert power of a K3 surface (or a deformation), we have  (Ellingsrud--G\"ottsche--M.~Lehn)
\begin{equation}\label{chism}
\chi(X,M) = \binom{\frac12 q_X(M)+m+1}{ m}   
;
\end{equation}
\item when $X$ is a generalized Kummer variety of dimension $2m$ (or a deformation), we have (Britze)
$$\chi(X,M)=(m+1)\binom{\frac12 q_X(M)+m}{m}
.$$
\end{itemize}

\subsection{Moduli spaces for polarized hyperk\"ahler manifolds}

Quasi-projective coarse moduli spaces for polarized \hKm s $(X,H)$ of fixed dimension $2m$ and fixed degree $H^{2m}$ can be constructed using the techniques explained in Section \ref{sec14}:
Matsusaka's Big Theorem implies that there is a positive integer $k(m)$ such that, for any \hKm\ $X$ of dimension $2m$ and any ample line bundle $H$ on $X$, the line bundle $H^{\otimes k}$ is very ample for all $k\ge k(m)$,\footnote{The integer $k(m)$ was made explicit (but very large) by work of  Demailly and Siu, but is still far from the  value $k(m)=2m+2$ conjectured by Fujita in general, or for the optimistic value $k(m)=3$ conjectured (for \hKm s) by Huybrechts (\cite[p.~34]{huyk3}) and O'Grady.} and Viehweg's theorem works in any dimension.

\begin{theo}
Let $m$ and $d$ be positive integers.\ There exists   a quasiprojective coarse moduli space  for 
polarized   complex \hKm s  of dimension $2m$ and  degree $d$.
\end{theo}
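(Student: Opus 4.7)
The proof follows the template of Theorem~\ref{thm16} for K3 surfaces, with two substantive results used essentially as black boxes: Matsusaka's Big Theorem (for uniform very ampleness and boundedness) and Viehweg's quasiprojective quotient construction \cite{vie}.

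First I would invoke the version of Matsusaka's Big Theorem mentioned in the footnote to obtain an integer $k=k(m)$, depending only on $m$, such that $H^{\otimes k}$ is very ample for every polarized \hKm\ $(X,H)$ of dimension $2m$. Every such $(X,H)$ with $H^{2m}=d$ then embeds via $|H^{\otimes k}|$ as a smooth closed subscheme of a projective space, and by Huybrechts's Hirzebruch--Riemann--Roch formula its Hilbert polynomial is
$$P(t)=\chi(X,H^{\otimes kt})=\sum_{i=0}^{m} a_{2i}\,k^{2i}\,q_X(H)^i\,t^{2i}.$$
This polynomial is controlled by $k$, by the deformation-dependent constants $a_{2i}$ and $c_X$, and by $q_X(H)$ (a positive integer constrained by $c_X q_X(H)^m = d$). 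Matsusaka's boundedness theorem for projective manifolds with trivial canonical class, dimension $2m$, and polarization degree $d$ then bounds the possible numerical invariants and ensures that only finitely many Hilbert polynomials $P$ arise, all of whose associated embeddings land in one fixed $\P^N$.

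For each admissible $P$ I would consider inside the Grothendieck Hilbert scheme $\mathrm{Hilb}_P(\P^N)$ the locally closed subscheme $\cH_P$ parametrizing closed subvarieties $X \subset \P^N$ that are \hKm s of dimension $2m$ with $\cO_X(1) = H^{\otimes k}$ for the primitive ample generator $H$ of the corresponding class in $\NS(X)$.\ These conditions are open (hyperk\"ahlerianity is deformation-invariant in smooth proper K\"ahler families, and the divisibility of a N\'eron--Severi class is open in families), so $\cH_P$ is quasiprojective, and it is moreover smooth by the Bogomolov--Tian--Todorov unobstructedness theorem for polarized \hKm s.\ The natural $\PGL(N+1)$-action on $\cH_P$ has orbits equal to isomorphism classes of polarized \hKm s, with finite stabilizers: the stabilizer of a closed point $X \subset \P^N$ is the algebraic subgroup $\Aut(X,H) \subset \PGL(N+1)$, which is discrete because $H^0(X,T_X) \simeq H^0(X,\Omega^1_X) = 0$ (contract with the symplectic form and use $b_1(X)=0$). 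Viehweg's theorem then produces a quasiprojective coarse moduli space for each admissible $P$, and the desired moduli space is their finite disjoint union.

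The main obstacle, and the only genuinely new feature compared to the K3 argument, is the boundedness step: unlike the K3 case, where the Hilbert polynomial was determined by the degree alone, here $P$ also depends on the deformation-theoretic invariants $c_X$ and $a_{2i}$, so one must rely on the full strength of Matsusaka's theorem for varieties with trivial canonical class in order to conclude that only finitely many numerical types arise with polarization degree $d$.
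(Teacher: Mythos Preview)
Your proposal is correct and follows the same two-ingredient template as the paper---Matsusaka's Big Theorem for uniform very ampleness, then Viehweg's quasiprojective quotient construction---and indeed the paper's own ``proof'' is only the one-sentence sketch in the paragraph preceding the theorem. You have usefully made explicit the one step the paper glosses over, namely that only finitely many Hilbert polynomials arise once $m$ and $d$ are fixed (via the Koll\'ar--Matsusaka bounds on the coefficients of the Hilbert polynomial in terms of $H^{2m}$ and $K_X\cdot H^{2m-1}=0$), which is the only genuine difference from the K3 case.
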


The dimension of the moduli space at a point $(X,H)$  is $h^1(X,T_X)-1=h^{1,1}(X)-1=b_2(X)-3$.\ The matter of its irreducibility  will be discussed in the next section.


\subsection{Hyperk\"ahler manifolds of $\kkk[m]$-type}\label{sec25}

A   \hKm\ $X$ is said to be {\em of $\kkk[m]$-type} if it is a smooth deformation of the $m$th Hilbert power of a K3 surface.\ This fixes the topology of $X$ and its Beauville form.\ In particular, we have
$$(H^2(X,\Z),q_X) \isom \Lkkk[m]:=U^{\oplus 3}\oplus E_8(-1)^{\oplus 2}\oplus I_1(-(2m-2)).$$
Let $\ell$ be a generator for  $I_1(-(2m-2))$.\ The lattice $\Lkkk[m]$ has discriminant group $\Z/(2m-2)\Z$, generated by $\ell_*=\ell/(2m-2) $, with $\bar q(\ell_* )=-1/(2m-2)\pmod{2\Z}$.

A {\em polarization type} is the $O(\Lkkk[m] )$-orbit of a primitive element $h$ with positive square.\ It certainly determines the positive integers $2n:=h^2$ and $\gamma:=\div(h)$; the converse is not true in general,
 but it is if and only if 
  $\gamma=2$ or   $\gcd(\frac{2n}{\gamma},\frac{2m-2}{\gamma},\gamma)=1$ (\cite[Proposition~3.6, Example~3.10 and Corollary~3.7]{ghs}), \eg, when $\gcd(n,m-1)$ is square-free and odd.
%

If we write  $h=ax+b\ell$, where $a,b\in\Z$ are relatively prime and $x$ is primitive in $\L_{\KKK}$,  we have $\div(h)=\gcd(a,2m-2)$.

Even fixing the polarization type does not always give rise to {\em irreducible} moduli spaces of polarized \hKm s   of $\kkk[m]$-type.\footnote{Song checked that this is the case if and only if the divisibility is either $p^s$ or $2p^s$, where $p$ is a prime number and $s\ge0$.}

 We still have  the following result.

 \begin{theo}[Gritsenko--Hulek--Sankaran, Apostolov]\label{ghsa}
Let $n$ and $m$ be integers with $m\ge 2$ and $n>0$.\ The quasiprojective moduli space ${}^m\!\!\cM^{(\gamma)}_{2n}$ which parametrizes  \hKm s  of $\kkk[m]$-type with a polarization of  square $2n$ and divisibility $\gamma $ is    irreducible of   dimension $20$ whenever $\gamma=1$, or $\gamma=2$ and $n+m\equiv 1 \pmod4$.
\end{theo}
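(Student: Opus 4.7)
The plan is to translate the statement into a lattice-theoretic orbit count via the Torelli theorem for hyperkähler manifolds (stated later as Theorems~\ref{torthhk} and~\ref{torthhk2}), and then settle the count using Eichler's criterion (Theorem~\ref{eic}). Via the period map, the irreducible components of ${}^m\!\!\cM^{(\gamma)}_{2n}$ are in bijection (up to the $O^+$-versus-$O$ subtlety of footnote~\ref{misl}) with the $\tO(\Lkkk[m])$-orbits of primitive vectors $h \in \Lkkk[m]$ with $h^2 = 2n$ and $\div(h) = \gamma$. Since $\Lkkk[m] = U^{\oplus 3} \oplus E_8(-1)^{\oplus 2} \oplus I_1(-(2m-2))$ contains three hyperbolic planes, Eichler's criterion applies and pins down such an orbit by the single invariant $h_* \in D(\Lkkk[m]) \simeq \Z/(2m-2)\Z$.

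For $\gamma = 1$, the divisibility condition forces $h_* = 0$, so the $\tO$-orbit is unique, and existence is witnessed by $h = u + nv$ in one hyperbolic plane $U \subset \LKKK$. For $\gamma = 2$, write $h = ax + b\ell$ with $x$ primitive in $\LKKK$ and $\gcd(a,b) = 1$; then $\gamma = \gcd(a,2m-2) = 2$ forces $a$ even and $b$ odd, and a short computation in $D(\Lkkk[m])$ gives $h_* = b(m-1)\ell_* = (m-1)\ell_*$, where $\ell_* = \ell/(2m-2)$ is the standard generator with $\bar q(\ell_*) \equiv -1/(2m-2) \pmod{2\Z}$. Note that $(m-1)\ell_*$ is the unique element of order~$2$ in $D(\Lkkk[m])$. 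Compatibility with the discriminant quadratic form then amounts to
\[
\frac{n}{2} \equiv \bar q(h/\gamma) \equiv \bar q((m-1)\ell_*) \equiv -\frac{m-1}{2} \pmod{2\Z},
\]
which is precisely the condition $n+m \equiv 1 \pmod 4$. Under this hypothesis, such an $h$ is realized, e.g.\ by $h = 2u + \tfrac{n+m-1}{2}v + \ell$ with $(u,v)$ a standard basis of one copy of $U \subset \LKKK$, which is primitive of square $2n$ and divisibility $2$; one checks directly that its image in $D(\Lkkk[m])$ is $(m-1)\ell_*$. In both regimes a unique $\tO$-orbit arises, yielding irreducibility.

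The dimension statement is standard: the local deformation space of a polarized hyperkähler manifold $(X,H)$ has dimension $h^{1,1}(X) - 1 = b_2(X) - 3 = 23 - 3 = 20$, using $b_2(X) = \rank(\Lkkk[m]) = 23$ from \eqref{lk3m}. The main obstacle in this argument is not the Eichler bookkeeping, which is elementary once the right invariant is identified, but the preliminary reduction: making rigorous the bijection between irreducible components of ${}^m\!\!\cM^{(\gamma)}_{2n}$ and $\tO^+$-orbits of polarization vectors relies on the full Torelli theorem and the surjectivity of the period map for hyperkähler manifolds, combined with careful tracking of the two connected components of the period domain.
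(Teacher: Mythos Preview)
The paper does not give a proof; the result is cited from \cite{ghs,apo}. Your Eichler computation---showing that there is a unique $\tO(\Lkkk[m])$-orbit of primitive $h$ with $h^2=2n$ and $\div(h)=\gamma$, including the discriminant-form check $n/2\equiv -(m-1)/2\pmod{2\Z}$ when $\gamma=2$---is correct and is essentially the argument of \cite[Examples~3.8 and~3.10]{ghs}. The dimension count is also fine.

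Where your sketch is imprecise is the reduction step. The bijection you claim, between components of ${}^m\!\cM^{(\gamma)}_{2n}$ and $\tO$-orbits of polarization vectors, does not follow from Torelli and surjectivity alone, and the group is not the right one. Torelli (Theorem~\ref{torthhk2}) only tells you that each component embeds in the period domain $\cP_\tau$; several components with the same polarization type all land in the \emph{same} $\cP_\tau$, so the period map by itself cannot separate them. What actually governs the component count is Markman's determination of the monodromy group $\Mon^2(X)=\widehat O^+(\Lkkk[m])$ (Section~\ref{sec27}): the components are in bijection with $\widehat O^+$-orbits of $h$, or equivalently---in Apostolov's formulation, cf.\ Remark~\ref{remsym} and \cite{apo}---with isometry classes of the saturation of $\Z h\oplus\Z\bv$ in the Mukai lattice $\widetilde\Lambda_{\KKK}$. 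Since $\tO^+\subset\widehat O^+$, your unique-$\tO$-orbit conclusion does force a unique $\widehat O^+$-orbit, so the argument can be salvaged; but the missing ingredient is Markman's monodromy theorem, not merely ``careful tracking of the two connected components of the period domain.'' Supplying that monodromy input is precisely Apostolov's half of the attribution.
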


 We can work out in those two cases what the lattice $h^\bot$ is.\ Let  $(u,v)$ be a standard basis for a copy of $U$ in $\Lkkk[m]$ and set  $M:=  U^{\oplus 2}\oplus E_8(-1)^{\oplus 2}$.
\begin{itemize}
\item When $\gamma=1$, we have $h_*=0$ and we may take $h= u+nv$,  so that 
\begin{equation}\label{eq1}
h^\bot \isom  M\oplus I_1(-(2m-2))\oplus I_1(-2n)=:\L^{(1)}_{\KKK^{[m]},2n},
\end{equation}
with discriminant group $ \Z/(2m-2)\Z\times \Z/2n\Z$.\footnote{\label{foo11n}One has $h^\bot \isom  M\oplus \Z\ell\oplus \Z(u-nv)$, hence  $D(h^\bot)\isom    \Z/(2m-2)\Z\times \Z/ 2n\Z$, with generators   $\frac{1}{2m-2}\ell$ and $\frac{1}{2n} (u-nv) $, and intersection matrix   $\left( \begin{smallmatrix} -\frac{ 1}{2m-2} & 0 \\ 0 &-\frac{ 1}{2n}\end{smallmatrix}\right) $.}
\item When $\gamma=2$ (and $n+m\equiv 1 \pmod4$),
we have $h_*=(m-1)\ell_*$ and we may take $h=2( u+\frac{n+m-1}{4}v)+\ell$, where   $(u,v)$ is a standard basis for a copy of $U$ inside $\Lkkk[m]$, so that
\begin{equation}\label{eq2}
h^\bot \isom  M \oplus  \begin{pmatrix}-(2m-2)&-(m-1)\\-(m-1)&-\frac{n+m-1}{2}\end{pmatrix}=:\L^{(2)}_{\KKK^{[m]},2n} ,
\end{equation}
 with discriminant group of   order $n(m-1)$ and  isomorphic to $   \Z/( m-1)\Z\times \Z/ n\Z$ when $n$ (hence also $m-1$) is odd, or when $n=m-1$ (and $n$ even).\footnote{\label{foo11}We have $\L^{(2)}_{\KKK^{[m]},2n}\isom M\oplus \langle e_1,e_2\rangle$, with $e_1=(m-1)v+\ell$ and $e_2:=-u+ \frac{n+m-1}{4}v$, with intersection matrix as desired.\ It contains $M\oplus \Z e_1\oplus \Z(e_1-2e_2)\isom \L^{(1)}_{\KKK^{[m]},2n}$ as a sublattice of index 2 and discriminant group $ \Z/(2m-2)\Z\times \Z/2n\Z$.\ This implies that $D(h^\bot)$ is  the quotient by an element of order 2 of a subgroup of index 2 of $ \Z/(2m-2)\Z\times \Z/2n\Z$.\ When $n$ is odd, it is therefore isomorphic to $   \Z/( m-1)\Z\times \Z/ n\Z$.\ One can choose as generators of each factor $\frac{1}{m-1}e_1$ and $\frac{1}{n}(e_1-2e_2)$, with intersection matrix   $\left( \begin{smallmatrix} -\frac{ 2}{m-1} & 0 \\ 0 &-\frac{ 2}{n}\end{smallmatrix}\right) $.\ When $n=m-1$ (which implies $n$ even), 
 the lattices $(\Z^2, \left( \begin{smallmatrix} -2n & -n \\ -n & -n\end{smallmatrix}\right))$ and $(\Z^2, \left( \begin{smallmatrix} - n & 0 \\ 0 & -n\end{smallmatrix}\right))$ are isomorphic, hence 
 $D(h^\bot)\isom    \Z/n\Z\times \Z/ n\Z$, with generators   $\frac{1}{n}(e_1-e_2)$ and $\frac{1}{n} e_2 $, and intersection matrix   $\left( \begin{smallmatrix} -\frac{ 1}{n} & 0 \\ 0 &-\frac{ 1}{n}\end{smallmatrix}\right) $.
 
More generally, one    checks that   $D(h^\bot)\isom \Z/ n\Z\times \Z/( m-1)\Z$ if and only if $v_2(n)=v_2(m-1)$.\ If $v_2(n)>v_2(m-1)$, one has $D(h^\bot)\isom \Z/ 2n\Z\times \Z/( (m-1)/2)\Z$, and analogously if $v_2(m-1)>v_2(n)$.}
\end{itemize}

 \begin{rema}\label{remsym}
Note in both cases the symmetry $\L^{(\gamma)}_{\KKK^{[m]},2n}\isom \L^{(\gamma)}_{\KKK^{[n+1]},2(m-1)}$: it is obvious when $\gamma=1$; when $\gamma=2$, the change of coordinates $(x,y)\leftrightarrow (-x,2x+y)$ interchanges the matrices $\left( \begin{smallmatrix}  2m-2 & m-1 \\ m-1 & \frac{n+m-1}{2}\end{smallmatrix}\right)$ and $\left(\begin{smallmatrix} 2(n+1)-2 &  (n+1)-1 \\  (n+1)-1 & \frac{(m-1)+(n+1)-1}{2}\end{smallmatrix}\right)$.

Another way to see this is to note that the lattice $\Lkkk[m]$ is the orthogonal in the larger even unimodular lattice  $\widetilde{\Lambda}_{\KKK}$ defined in \eqref{mml} of any primitive vector  of square $2m-2$.\ The lattice 
$h^\bot$ considered above is then the orthogonal in $\widetilde{\Lambda}_{\KKK}$ of a (possibly nonprimitive) rank-2 lattice $\Lambda_{m-1,n}$ with intersection matrix $\left( \begin{smallmatrix}  2m-2 & 0 \\ 0& 2n\end{smallmatrix}\right)$.\ The embedding of $\Lambda_{m-1,n}$ is primitive if and only if $\gamma=1$ and any two such embedding differ by an isometry of $\widetilde{\Lambda}_{\KKK}$;  the symmetry is then explained by the isomorphism $\Lambda_{m-1,n}\isom  \Lambda_{ n,m-1}$.\ The embedding of $\Lambda_{m-1,n}$ has index~2 in its saturation   if and only if $\gamma=2$; this explains the 
symmetry in the same way (\cite[Proposition~2.2]{apo}).
\end{rema}

\subsection{Projective models of \hKm s}\label{sec36}

In this section, we   consider exclusively \hK\ manifolds (mostly fourfolds) $X$ of $\kkk[m]$-type with a polarization $H$ of divisibility $\gamma\in\{1,2\}$ and $q_X(H)=2n$ (that is, the pair $(X,H)$ represents a point of the irreducible moduli space ${}^m\!\!\cM^{(\gamma)}_{2n}$; the case $\gamma=2$ only occurs when $n+m\equiv 1\pmod4$).\ We want to know whether $H$ is very ample (at least for general $(X,H)$) and describe the corresponding embedding of $X$ into a projective space.\ We  start with a general construction.

Let $S\subset \P^{e+1}$ be a  K3 surface.\
There is  a  morphism
$$\phi_2\colon \SS\lra \Gr(2,e+2)$$
that takes a pair of points to the line that it spans in $\P^{e+1}$.\ More generally, if the line bundle $L $ defining the embedding $S\subset \P^{e+1}$ is {\em $(m-1)$-very ample,}\footnote{A line bundle $L$ on $S$ is $m$-very ample if, for every 0-dimensional scheme $Z\subset S$ of length $\le m+1$, the restriction map $H^0(S,L)\to H^0(Z,L\vert_Z)$ is surjective (in particular, $1$-very ample is just very ample).} one can define a morphism
\begin{equation}\label{phim}
\phi_m\colon S^{[m]}\lra \Gr(m,e+2)
\end{equation}
sending a 0-dimensional subscheme of $S$ of length $m$ to its linear span in $\P^{e+1}$, and $\phi_m$ is an embedding if and only if $L $ is $m$-very ample.\ The pull-back by $\phi_m$ of the Pl\"ucker line bundle on the Grassmannian has class $L_m-\delta$ on $S^{[m]}$ (with the notation of Section~\ref{k3m}).

\begin{prop}\label{pk}
Let $(S,L) $ be a polarized K3 surface  with $\Pic(S)\isom \Z L$ and $L^2=2e$.\ The line bundle  $ L^{\otimes a}$ is $k$-very ample if and only if either $a=1$ and $k\le e/2$, or $a\ge 2$ and $k\le 2(a-1)e-2$.
\end{prop}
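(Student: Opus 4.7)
The plan is to apply the classical criterion for $k$-very ampleness of an ample line bundle on a K3 surface---essentially due to Beltrametti--Francia--Sommese, in the refined form of Knutsen---and exploit the hypothesis $\Pic(S)=\Z L$ to reduce everything to a one-parameter numerical problem.

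The criterion states that an ample line bundle $M$ on a K3 surface $S$ is $k$-very ample if and only if $M^{2}\ge 4k$ and there is no effective divisor $D$ on $S$ satisfying
\[
2D^{2}\;\le\;M\cdot D\;\le\;D^{2}+k+1\;\le\;2k+2.
\]
Since $\Pic(S)=\Z L$ with $L$ ample, every effective divisor has the form $D=bL$ with $b\ge 1$. Taking $M=L^{\otimes a}$ and using $L^{2}=2e$, $D^{2}=2b^{2}e$ and $M\cdot D=2abe$, the three inequalities translate respectively into
\[
2b\le a,\qquad 2be(a-b)\le k+1,\qquad 2b^{2}e\le k+1.
\]
On the admissible range $1\le b\le a/2$ one has $a-b\ge b$, so the middle inequality automatically implies the last one.

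When $a=1$ the constraint $2b\le 1$ has no positive integer solution: no obstructing divisor can exist, and $L$ is $k$-very ample if and only if the sole remaining square condition $M^{2}=2e\ge 4k$ holds, giving $k\le e/2$.

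When $a\ge 2$ the function $b\mapsto 2be(a-b)$ is strictly increasing on the real interval $[1,a/2]$, so its minimum over integers $1\le b\le\lfloor a/2\rfloor$ is attained at $b=1$, with value $2e(a-1)$. Hence an obstructing divisor exists precisely when $k+1\ge 2e(a-1)$, so $L^{\otimes a}$ is $k$-very ample if and only if $k\le 2(a-1)e-2$; the inequality $e(a-2)^{2}+4>0$ confirms that this bound already implies the square condition $M^{2}\ge 4k$, so no extra constraint appears. The principal difficulty throughout is locating and correctly stating the $k$-very ampleness criterion on K3 surfaces; once that input is in hand, the remaining argument is a routine optimization over the single integer parameter $b$ permitted by $\Pic(S)=\Z L$.
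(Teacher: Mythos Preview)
Your proof is correct and follows essentially the same route as the paper: both invoke Knutsen's numerical criterion for $k$-very ampleness on K3 surfaces and then use $\Pic(S)=\Z L$ to reduce the search for an obstructing divisor to $D=bL$, leaving a one-variable optimisation. Your write-up is in fact slightly more complete, since you explicitly verify the converse (optimality) direction---exhibiting $D=L$ as an obstruction when $a\ge 2$ and $k\ge 2e(a-1)-1$, and invoking $M^{2}<4k$ when $a=1$---whereas the paper leaves this to the reader. One cosmetic point: the map $b\mapsto 2be(a-b)$ is only weakly increasing on $[1,a/2]$ (the derivative vanishes at $b=a/2$), but this has no bearing on your argument since only the minimum at $b=1$ matters.
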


\begin{proof}
We follow the proof of \cite[Proposition 3.1]{bcns} and use  the numerical characterization of $k$-ample line bundles on $S$ given in \cite[Theorem 1.1]{knu}.\ Set $H:=L^{\otimes a}$; that theorem implies the following.

If $a=1$ and  $k\le e/2$, the line bundle $H$ is $k$-very ample unless there exist a positive integer $n$ and a nonzero divisor $D\in |L^{\otimes n}|$ satisfying various properties, including $2D^2\le HD$, which is absurd.\ If $a\ge 2$ and $k\le 2e(a-1)-2$, we have $H^2=2ea^2\ge 2e(4a-4)>4k $, hence $H$ is $k$-very ample unless there exist a nonzero divisor $D\in |L^{\otimes n}|$ satisfying various properties, including $2D^2\le HD$, \ie, $2n\le a/2$, and
$HD\le D^2+k+1$, \ie, $2ane\le 2en^2+k+1$.\ These two inequalities imply
$ 2e(a-1)\le  2en(a-n) \le  k+1 
$, which contradicts our hypothesis.\ The divisor $D$ therefore does not exist and this proves the proposition.

The proof that these conditions are optimal is left to the reader.\end{proof}

 \begin{coro}\label{cor17}
Let $(S,L) $ be a polarized K3 surface  with $\Pic(S)\isom \Z L$ and $L^2=2e$.\ The line bundle  $aL_m-\delta$ on $S^{[m]}$ is base-point-free if
  and only if either $a=1$ and $m-1\le e/2$, or $a\ge 2$ and $m\le 2(a-1)e-1$; it is very ample 
  if and only if either $a=1$ and $m\le e/2$, or $a\ge 2$ and $m\le 2(a-1)e-2$.
\end{coro}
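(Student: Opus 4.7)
The plan is to reduce both statements to Proposition~\ref{pk} applied to $L^{\otimes a}$, by reinterpreting $aL_m-\delta$ as the pullback of the Pl\"ucker line bundle under the morphism $\phi_m$ of \eqref{phim} associated with $L^{\otimes a}$.

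First I would observe that under $\Pic(S)=\Z L$, the line bundle $L^{\otimes a}$ is very ample in all cases that appear: for $a\ge 2$ this is Theorem~\ref{vample}, and for $a=1$ the condition $m\le e/2$ forces $e\ge 2$, in which case the exceptional possibilities in the discussion following Theorem~\ref{th13} (that would require a divisor $D$ with $D^2=0$ or $2D=L$) are excluded by the Picard-rank-one assumption. Using $L^{\otimes a}$ to embed $S\hookrightarrow \P^{h^0(S,L^{\otimes a})-1}$, the construction around \eqref{phim} yields a rational map $\phi_m\colon S^{[m]}\dashrightarrow \Gr(m,h^0(S,L^{\otimes a}))$ with the following three properties: $\phi_m$ extends to a morphism on all of $S^{[m]}$ iff $L^{\otimes a}$ is $(m-1)$-very ample; $\phi_m$ is a closed embedding iff $L^{\otimes a}$ is $m$-very ample; and in either case $\phi_m^*\cO_{\Gr}(1)=aL_m-\delta$. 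The first equivalence unwinds to the fact that base-point-freeness of $aL_m-\delta$ at a length-$m$ subscheme $Z\subset S$ is the surjectivity of the evaluation map $H^0(S,L^{\otimes a})\to H^0(Z,L^{\otimes a}|_Z)$, which by definition is exactly the $(m-1)$-very ampleness of $L^{\otimes a}$; the second equivalence adds scheme-theoretic separation, governed by the analogous surjectivity on length-$(m+1)$ subschemes.

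With these translations, the corollary becomes a direct transcription of Proposition~\ref{pk}. For base-point-freeness we apply that proposition with $k=m-1$: the condition is that either $a=1$ and $m-1\le e/2$, or $a\ge 2$ and $m-1\le 2(a-1)e-2$, \ie\ $m\le 2(a-1)e-1$. For very ampleness we apply it with $k=m$: the condition is that either $a=1$ and $m\le e/2$, or $a\ge 2$ and $m\le 2(a-1)e-2$. These match the statement of the corollary exactly, and since Proposition~\ref{pk} is an ``iff'', both directions of each implication follow.

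The main obstacle is the precise geometric dictionary between $k$-very ampleness of a line bundle on $S$ and base-point-freeness/very ampleness of the associated Pl\"ucker pullback $aL_m-\delta$ on $S^{[m]}$; the excerpt only spells out the ``if'' direction in the discussion preceding \eqref{phim}, so the converse (needed for the ``only if'' implications in the corollary) has to be argued via the scheme-theoretic identification of the non-base-point-free locus of $aL_m-\delta$ with the locus of length-$m$ subschemes on which sections of $L^{\otimes a}$ fail to surject, and analogously for non-embedding loci via length-$(m+1)$ subschemes. Once this standard correspondence is in place, everything else is bookkeeping.
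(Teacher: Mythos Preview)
Your proof is correct and follows exactly the route the paper intends: the corollary is meant to be a direct transcription of Proposition~\ref{pk} via the dictionary ``$L^{\otimes a}$ is $(m-1)$-very ample (resp.\ $m$-very ample) $\Leftrightarrow$ $aL_m-\delta$ is base-point-free (resp.\ very ample)'', and you apply the proposition with $k=m-1$ and $k=m$. The one ingredient you flag as needing care---the converse direction of this dictionary, used for the ``only if'' statements---is precisely the content of Catanese--G\"ottsche~\cite{cago} (referenced in the paper's bibliography), so your identification of the gap and its resolution are both on target.
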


Let us restrict ourselves to the case   $a=1$ (resp.\ $a=2$).\ The class $aL_m-\delta$ then has divisibility $a$ and square $2(e-m+1)$ (resp.\ $2(4e-m+1) $); it is very ample when $e\ge 2m$ (resp.\ $e\ge (m+2)/2 $).\

 \begin{coro}\label{cpk}
Let $m$, $n$, and $\gamma$ be  integers with $m\ge 2$, $n\ge1$, and $\gamma\in\{1,2\}$.\
Let $(X,H)$ be a polarized  \hK\ $2m$-fold corresponding to a general point of the (irreducible) moduli space ${}^m\!\!\cM^{(\gamma)}_{2n}$.\ 
\begin{itemize}
\item When $\gamma=1$, the line bundle $H$ is base-point-free if $n\ge m-1$ and very ample if  $n\ge m+1$. 
\item When $\gamma=2$, the line bundle $H$ is base-point-free if  $n\ge m+3$ and very ample if  $n\ge m+5$. 
\end{itemize}
When $H$ is  very ample,
it defines an embedding  
$$X\lhra \P^{\binom{ n+m+1}{ m} -1}.$$
\end{coro}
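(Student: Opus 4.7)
The idea is to exhibit a single \emph{concrete} point of the (irreducible) moduli space ${}^m\!\!\cM^{(\gamma)}_{2n}$ where the polarization has the required property, and then spread this property to the general point using openness in the family and the irreducibility supplied by Theorem~\ref{ghsa}. The explicit model will be a Hilbert power $S^{[m]}$ of a K3 surface with Picard rank one, polarized by $aL_m-\delta$ with $a=\gamma$, to which Corollary~\ref{cor17} directly applies.

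\textbf{Explicit model and invariants.} Let $(S,L)$ be a K3 surface with $\Pic(S)=\Z L$ and $L^2=2e$, and set $X:=S^{[m]}$, $H:=aL_m-\delta\in\Pic(X)$ for $a\in\{1,2\}$. Using the orthogonal decomposition \eqref{lk3m} together with $q(\delta)=-2(m-1)$, we get $q_X(H)=2a^2e-2(m-1)$. Since $\delta$ and $L_m$ are orthogonal, $L_m$ is primitive with divisibility $1$ (the K3 lattice being unimodular), and $\delta$ has divisibility $2(m-1)$, the divisibility of $H$ equals $\gcd(a,2(m-1))=a$ (for $m\ge 2$ and $a\in\{1,2\}$). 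Setting $q_X(H)=2n$ gives $e=n+m-1$ when $a=1$, and $e=(n+m-1)/4$ when $a=2$; the latter is an integer precisely when $n+m\equiv 1\pmod 4$, which is exactly the congruence needed for ${}^m\!\!\cM^{(2)}_{2n}$ to be the irreducible component provided by Theorem~\ref{ghsa}. Thus $(X,H)$ is a bona fide point of ${}^m\!\!\cM^{(\gamma)}_{2n}$ in both cases.

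\textbf{Applying Corollary~\ref{cor17} and spreading.} Translating the inequalities of Corollary~\ref{cor17} through the substitution $e=(n+m-1)/a^2$: for $a=1$, $H$ is base-point-free iff $e\ge 2(m-1)$, i.e. $n\ge m-1$, and very ample iff $e\ge 2m$, i.e. $n\ge m+1$; for $a=2$, base-point-freeness requires $e\ge (m+1)/2$, i.e. $n\ge m+3$, and very ampleness requires $e\ge (m+2)/2$, i.e. $n\ge m+5$. Base-point-freeness and very ampleness are open conditions in flat families of polarized projective varieties (EGA III); combined with the irreducibility of ${}^m\!\!\cM^{(\gamma)}_{2n}$ from Theorem~\ref{ghsa}, the existence of \emph{one} point where the condition holds forces it to hold on a dense open subset, hence at the general polarized \hKm\ $(X,H)$ in that moduli space.

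\textbf{Dimension of the embedding.} When $H$ is very ample it is in particular ample; since $K_X=0$, Kodaira vanishing gives $H^i(X,H)=0$ for $i>0$, so that $h^0(X,H)=\chi(X,H)$. The Ellingsrud--G\"ottsche--Lehn formula \eqref{chism} evaluates to
\[
\chi(X,H)=\binom{\tfrac12 q_X(H)+m+1}{m}=\binom{n+m+1}{m},
\]
producing the closed embedding $X\hookrightarrow\P^{\binom{n+m+1}{m}-1}$. \textbf{Main obstacle.} The only subtle points are bookkeeping: correctly identifying the divisibility of $aL_m-\delta$ so that $(X,H)$ lies in the intended component of the moduli space, and verifying that the congruence $n+m\equiv 1\pmod 4$ needed in the $\gamma=2$ case falls out automatically from the construction (ensuring compatibility with Theorem~\ref{ghsa}); everything else is a direct translation between inequalities in $e$ and in $n$.
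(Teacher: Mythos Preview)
Your proof is correct and follows essentially the same strategy as the paper's: exhibit a Hilbert power $S^{[m]}$ of a K3 surface with $\Pic(S)=\Z L$ as an explicit point of ${}^m\!\!\cM^{(\gamma)}_{2n}$, apply Corollary~\ref{cor17} to the class $aL_m-\delta$ with $a=\gamma$, and then spread via openness and the irreducibility of Theorem~\ref{ghsa}. The only cosmetic difference is in the dimension count: the paper identifies $\phi_H$ with the composition $S^{[m]}\xrightarrow{\phi_m}\Gr(m,n+m+1)\hookrightarrow\P^{\binom{n+m+1}{m}-1}$ and checks against~\eqref{chism}, whereas you invoke Kodaira vanishing and~\eqref{chism} directly; both yield the same $h^0$.
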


\begin{proof}
Assume $\gamma=1$ (the proof in the case $\gamma=2$ is completely similar and left to the reader).\ If $(S,L )$ is a very general K3 surface of degree $2e$, the class $L_m-\delta$ on $S^{[m]}$ has divisibility 1 and square $2e-(2m-2)=:2n$.\ By Corollary~\ref{cor17}, it is base-point-free  if and only if $2(m-1)\le e=n+m-1$.\ It then defines a morphism
$$S^{[m]}\xrightarrow{\ \phi_m\ }   \Gr(m,n+m+1 )\xhookrightarrow{\ \textnormal{Pl\"ucker}\ } \P^{\binom{ n+m+1}{ m} -1}$$
which is the morphism associated with the complete linear system $|L_m-\delta|$ (compare with~\eqref{chism}).\ By 
Corollary~\ref{cor17} again, this morphism is a closed embedding 
if and only if $2m\le e=n+m-1$.

 Since base-point-freeness and very ampleness are open properties, they still hold for a general deformation of $(S^{[m]}, L_m-\delta)$, that is, for a general element of  ${}^m\!\!\cM^{(1)}_{2n}$.
\end{proof}

 \begin{exam}\label{ex19}
A general polarized K3 surface $(S,L )$ of degree 4 is a smooth quartic surface   in $\P^3$.\ Points $Z_1$ and $Z_2$ of $\SS$   have same image by $\phi_2\colon \SS\to \Gr(2,4)$  if and only if they span the same line.\ 
  If $(S,L )$ is   general, $S$ contains no lines and $\phi_2$ is finite  of degree $\binom{4}{ 2}$ (so that the class $L_2-\delta$ is ample on $\SS$, of square 2).
\end{exam}

 \begin{exam}\label{ex20}
A general polarized K3 surface $(S,L )$ of degree 6 is the   intersection of a smooth quadric $Q$ and a cubic $C$ in $\P^4$.\ Two   points $Z_1$ and $Z_2$ of $\SS$ have same image by $\phi_2\colon \SS\to \Gr(2,5)$ if and only if they span the same line.\ If $Z_1\ne Z_2$, this line   lies in $Q$.\ Conversely, 
if $(S,L )$ is very general, $S$ contains no lines,
any line contained in $Q$ (and there is a $\P^3$  of such lines)  meets $C$ in~3 points and   gives rise to 3 points of $\SS$ identified by $\phi_2$.\ The morphism $\phi_2$ is therefore finite, birational onto its image, but not an embedding  (so that the class $L_2-\delta$ is ample, but not very ample, on $\SS$).

The rational map $\phi_3\colon \sss[3] \dra \Gr(3,5)$ is not a morphism; it is dominant of degree   $\binom{6}{3}$.
\end{exam}

\begin{rema}\label{remaog}
Let $S\subset \P^{e+1} $ be a K3 surface of degree $2e$.\ Assume that $S$ contains no lines and that its ideal is generated by quadrics (by  \cite[Theorem~7.2]{sd}, this happens when $e\ge4$ and $(S,L)$ is a general polarized K3 surface of degree $2e$).\ Let $I_{S}(2)$ be the vector space of quadratic equations of $S$.\
Consider the morphism
$$\psi_2\colon S^{[2]}\lra \P(I_{S}(2)^\vee)\isom \P^{e(e-3)/2}$$
that sends a 0-dimensional subscheme $Z$ of $S$ of length 2 to the hyperplane of quadratic equations that vanish on the line spanned by $Z$.\ By \cite[Claim~5.16]{og8}, one has $\psi_2^*\cO_{\P(I_{S}(2)^\vee)}(1)=L_2-2\delta$, where  $L:=\cO_S(1)$.\ In particular, this   line bundle   is base-point-free on $S^{[2]}$; it is ample when $e\ge 5$.
 \end{rema}

\begin{exer}\label{32}
\footnotesize\narrower

Let $\phi\colon S\to \P^2$ be a double cover branched along a smooth sextic curve, so that $(S,L)$, with $L:=\phi^*\cO_{\P^2}(1)$, is a   polarized K3 surface of degree 2. 

\noindent (a) Show that the map $\phi_{L_2-\delta}$ is the rational map $ S^{[2]}\dra \P^{2\vee}$ that takes a pair of points of $S$ to the line in $\P^2$ spanned by their images by $\phi$ and that its indeterminacy locus is the image   of the embedding $\P^2\hra S^{[2]}$ given by the double cover $\phi$.

\noindent (b) Let $S^{[2]}\dra X$ be the {\em Mukai flop} of $\P^2$ (that is, the blow up of $\P^2$ in $ S^{[2]}$ followed by the contraction of its exceptional divisor in another direction).\ Show that $\phi_{L_2-\delta}$ induces a morphism $ X\to \P^{2\vee}$.\footnote{This example was presented by Mukai in \cite[Example~0.6]{muk0} as one of the first examples of \hKm s (in that article, the Mukai flop is called  {\em elementary transformation}).\ The \hK\ fourfold $X$ is the neutral component of the compactified Picard scheme of the family of curves in $|L|$.}

\noindent (c)
Show that $3L$ is very ample and that the image of $\phi_{3L}\colon S\to \P^{10}$ is the intersection of a quadric with a cone over the image of the Veronese map $\phi_{\cO_{\P^2}(3)}\colon \P^2\to \P^{9}$.

\noindent (d) Show that the line bundle $3L_2-2\delta$ on $S^{[2]}$ is base-point-free, hence nef, but not ample: the associated morphism   contracts the image of the canonical embedding $\P^2\hra S^{[2]}$, factors through the involution of $S^{[2]}$ induced by the involution of $S$ associated with $\phi$, and has degree $2$.\ In particular, $3L_2-2\delta$ is also base-point-free on $X$.
  \end{exer}

\subsubsection{Hyperk\"ahler fourfolds of low degrees}\label{sec261}
We review the known descriptions of the morphism $\phi_H\colon X\to \P^{}$  for $(X,H)$ general in $ {}^2\!\!\cM^{(\gamma)}_{2n}$ and small $n$ (recall that the case $\gamma=2$ only occurs when $n\equiv -1\pmod4$).\ By  \eqref{chism} and Kodaira vanishing, we have
$$h^0(X,H)=\chi(X,H) = \binom{n+3}{ 2} 
.$$

\noindent{$\mathbf{q(H)=2}$.}  O'Grady showed that for $(X,H)$ general in $ {}^2\!\!\cM^{(1)}_2$, the map $\phi_H\colon X\to \P^5$ is a morphism (as predicted by Corollary \ref{cpk}) which is a double ramified cover of a  singular sextic hypersurface called an {\em EPW sextic} (for Eisenbud--Popescu--Walter).\ Any such smooth {\em double EPW sextic} is a polarized \hK\ fourfold of   degree $2$.\ We saw in Example~\ref{ex19}  (nongeneral)  examples where the morphism $\phi_H$ is 6:1 onto the quadric $\Gr(2,4)\subset \P^5$.

\noindent{$\mathbf{q(H)=4}$.}  There is no geometric description of  general elements $(X,H)$ of $\,{}^2\!\!\cM^{(1)}_4$.\ In particular, it is not known whether $\phi_H\colon X\to \P^9$ is an embedding (\eg, whether $H$ is very ample).\ Example~\ref{ex20} shows that $\phi_H$ is birational onto its image, which therefore has degree~48.\ The pairs $(X,H)=(\SS,L_2-\delta)$, where $(S,L)$ is a polarized K3 surface of degree $6$, form a hypersurface in ${}^2\!\!\cM^{(1)}_4$; for these pairs, $\phi_H(X)$ is a nonnormal fourfold in $\Gr(2,5)\subset \P^9$.\ The general elements $(X,H)$ of another hypersurface in ${}^2\!\!\cM^{(1)}_4$ where described in \cite{ikkr}; for those pairs, the morphism $\phi_H$ is a double cover of a singular quartic section of the cone over the Segre embedding $\P^2\times \P^2\hra \P^8$ (a fourfold of degree 24).

Note that $\dim(\Sym^2H^0(X,H))=\binom{9+2}{2}$, whereas $H^0(X,H^{\otimes 2})= \binom{8+3}{ 2} $ by \eqref{chism}.\ One would expect the canonical map
$$\Sym^2H^0(X,H)\lra H^0(X,H^{\otimes 2})
$$
to be an isomorphism for $(X,H)$ general in $\,{}^2\!\!\cM^{(1)}_4$, but this does not hold for the two families of examples we just described.

\noindent{$\mathbf{q(H)=6, \gamma=1}$.}  There is  no geometric description of  general elements $(X,H)$ of $\,{}^2\!\!\cM^{(1)}_6$.\ The morphism $\phi_H\colon X\to \P^{14}$ is a closed embedding by Corollary \ref{cpk}.\ Moreover, the pairs $(\SS,L_2-\delta)$, where $(S,L)$ is a polarized K3 of degree $8$, form a hypersurface in $\,{}^2\!\!\cM^{(1)}_6$.

\noindent{$\mathbf{q(H)=6, \gamma=2}$.}   General elements $(X,H)$ of $\,{}^2\!\!\cM^{(2)}_6$ can be described as follows.\
Let $W\subset \P^5$ be a smooth cubic hypersurface.\ Beauville--Donagi showed in \cite{bedo} that the family $F(W)\subset \Gr(2,6)\subset \P^{14}$ of lines  contained in $W$ is a \hK\ fourfold and that the Pl\"ucker polarization $H$ has square 6 and divisibility 2.\ General elements of $\,{}^2\!\!\cM^{(2)}_6$ are  of the form $(F(W),H)$.\ We have $h^0(X,H)=\binom{6}{2}=15$, and $\phi_H$ is the closed embedding 
$F(W)\subset \Gr(2,6)\subset \P^{14}$; in particular, $H$ is very ample.

Pairs $(\SS,2L_2-\delta)$, where $(S,L)$ is a polarized K3 surface of degree $2$ with $\Pic(S)=\Z L$, form a family of codimension 1 in $\,{}^2\!\!\cM^{(2)}_6$ which is   disjoint from the family of varieties of lines in cubic fourfolds described above.\footnote{The line bundle $2L_2-\delta$ is ample (Example \ref{exa217} or Exercice~\ref{32}) but
it  has base-points:  the  morphism $\phi_{2L}\colon S\to \P^5$ is the composition of the double cover $\phi_{L}\colon S\to \P^2$ with the Veronese embedding and the map $\phi_2\colon S^{[2]}\dra \Gr(2,6)$ is not   defined along the image of the embedding $\P^2\hra S^{[2]}$ given by the double cover $\phi_L$.\ Therefore, $(\SS,2L_2-\delta)$ cannot be the variety of lines of a smooth cubic fourfold.}\ It is interesting to mention that Beauville--Donagi proved that $F(W)$ is a \hK\ fourfold by exhibiting a codimension-1 family of cubics $W$ for which $(F(W),H)$ is isomorphic to  $(\SS,2L_2-5\delta)$, where $(S,L)$ is a polarized K3 surface of degree 14 (we check $q_{\SS}(2L_2-5\delta)=2^2\cdot 14+5^2(-2)=6$).

\noindent{$\mathbf{q(H)=22, \gamma=2}$.}   General elements $(X,H)$ of $\,{}^2\!\!\cM^{(2)}_{22}$ were   described by Debarre--Voisin in \cite{devo} as follows.\ Let $\cU$ be the tautological rank-6 subbundle on the Grassmannian $\Gr(6,10)$.\ Any smooth  zero locus $X\subset \Gr(6,10)$ (of the expected dimension 4) of a section $s$ of $\cE:=\bw3\cU^\vee$ is a \hK\ fourfold, the Pl\"ucker polarization   has square   22 and divisibility 2, and general elements of $\,{}^2\!\!\cM^{(2)}_{22}$ are  of this form. 

One checks using the Koszul complex for $\cE$ that $X\subset \Gr(6,10)\subset  \P(\bw{6}{\C^{10}})$ is actually contained in a linear subspace $\P^{90}\subset  \P(\bw{6}{\C^{10}})$.\footnote{We need to compute $h^0(\Gr(6,10),\cI_X(1))$.\ Only two terms from  the Koszul complex contribute: $H^0(\Gr(6,10),\cE^\vee(1))\isom \bw3\C^{10\vee}$, of dimension $\binom{10}{3}=120$, and 
 $H^0(\Gr(6,10),\bw2\cE^\vee(1))\isom H^0(\Gr(6,10),\bw6\cU(1))\isom H^0(\Gr(6,10),\cO_{\Gr(6,10)})$, of dimension 1.\ Hence $H^0(\Gr(6,10),\cI_X(1))$ has dimension $120-1=\dim(\bw{6}{\C^{10}})-91$ (thanks to L.\ Manivel for doing these computations).
 
 More intrinsically, one sees that $X$ is the linear section of $\Gr(6,10)$   by the projectivization of the kernel of the contraction map ${}\lrcorner\,s\colon \bw6 \C^{10}\lra \bw3 \C^{10}
$, whose image is the hyperplane defined by $s$ (here $s$ is the element of $H^0(\Gr(6,10), \cE)\isom \bw3 \C^{10\vee}$ that defines $X$).}\ By \eqref{chism}, we have $h^0(X,H)=\binom{14}{2}=91$ and   $\phi_H$ is the closed embedding $X\hra \P^{90}$; in particular, $H$ is very ample, as predicted by Corollary~\ref{cpk}.\ 
 
 Pairs $(S^{[2]}, 2 L_2-\delta)$, where~$(S,L)$ is a general polarized K3 surface of degree 6, form an irreducible hypersurface in $\,{}^2\!\!\cM^{(2)}_{22}$.

\noindent{$\mathbf{q(H)=38, \gamma=2}$.}   General elements $(X,H)$ of $\,{}^2\!\!\cM^{(2)}_{38}$ can be described as follows: for a general cubic polynomial $P$  in 6 variables,
$$\VSP(P,10):=\overline{\{(\ell_1,\dots,\ell_{10})\in \Hilb^{10}(\P^5)\mid P\in \langle \ell_1^3,\dots,\ell_{10}^3  \rangle\}}
$$
is a (smooth) \hK\ fourfold with a natural embedding into $\Gr(4,\bw4\C^6)$  (\cite{ir1}).\ It was checked in \cite[Proposition~1.4.16]{mon} that the Pl\"ucker polarization restricts to a polarization $H$ of square $38$ and divisibility 2 (by checking that this polarization is $2L_2-3\delta$ when $\VSP(P,10)$ is isomorphic to the Hilbert square of a very general polarized K3 surface $(S,L)$ of degree~14).\ A general element of $\,{}^2\!\!\cM^{(2)}_{38}$ is of the form $(\VSP(P,10),H)$. 

The line bundle $H$ is very ample by Corollary \ref{cpk}, and $h^0(X,H)=\binom{22}{2}=231$.\ It is likely that the embedding $X\hra  \Gr(4,\bw4\C^6)\xhookrightarrow{\textnormal{Pl\"ucker}}  \P(\bw4(\bw4\C^6)) $ factors as
 $X\xhookrightarrow{\phi_H} \P^{230} \subset\P(\bw4(\bw4\C^6)) $.

\subsubsection{An example in dimension 6 {\rm($\mathbf{m=3, q(H)=4, \gamma=2}$)}}\label{sec262a}
General elements $(X,H)$ of $\,{}^3\!\!\cM^{(2)}_{4}$ are described in \cite[Theorem 1.1]{ikkr1} as double covers of certain degeneracy loci   $D\subset \Gr(3,6)\subset \P^{19}$.\ By \eqref{chism}, we have $h^0(X,H)= 20$, and $\phi_H$ factors as   
$$\phi_H\colon X\xrightarrow{\ 2:1\ } D\lhra \Gr(3,6)\xhookrightarrow{\textnormal{Pl\"ucker}}  \P(\bw3\C^6) .$$

\subsubsection{An example in dimension 8 {\rm($\mathbf{m=4, q(H)=2, \gamma=2}$)}}\label{sec262}
 Let again $W\subset \P^5$ be a smooth cubic hypersurface that contains no planes.\ The moduli space $\cM_3(W)$ of generalized twisted cubic curves on $W$ is a smooth and
irreducible projective variety of dimension~10, and there is a contraction $\cM_3(W)\to X(W)$, where $X(W)$ is a projective  \hKm\ of type $\kkk[4]$ (\cite{lls,adle}).\ The maps that takes a cubic curve to its span defines a morphism from $\cM_3(W)$ to $\Gr(4,6)$ which factors through a surjective rational map
$ X(W)\dra \Gr(4,6)$ 
of degree 72.\ The Pl\"ucker polarization on $\Gr(4,6)$ pulls back to a polarization $H$ on $X(W)$ with   $q_{X(W)}(H)=2$.\  By \eqref{chism}, we have $h^0(X(W),H)= 15$, and $\phi_H$ is the rational map
 $$\xymatrix    @C=40pt
{\phi_H\colon  X(W)\ar@{-->>}[r]^-{72:1}& \Gr(4,6)\ar@{^(->}[r]^-{\textnormal{Pl\"ucker}} & \P(\bw4\C^6) .}$$
It follows from \cite{adle}\footnote{The main result of \cite{adle} is that $X(W)$ is a deformation of $S^{[4]}$, where $(S,L)$ is a very general polarized K3 surface of degree 14.\ One can therefore write the polarization  on $X(W)$ as $H=aL_4-b\delta$, with $2=H^2=14a^2-6b^2$, hence $a^2\equiv b^2 -1\pmod4$.\ This implies that $a$ is even, hence so is $\gamma$.\ Since $\gamma \mid H^2$, we get $\gamma=2$ (note that the condition $n+m\equiv 1\pmod4$ of Theorem~\ref{ghsa} holds).} that the divisibility $\gamma$ is  2  and a general element of $\,{}^4\!\!\cM^{(2)}_{2}$ is of the form $(X(W),H)$ (we will actually see in Proposition~\ref{prop:BayerMongardi} that {\em all} elements of $\,{}^4\!\!\cM^{(2)}_{2}$ are of this type).

\begin{rema}
It follows from  the above descriptions that the moduli spaces $\,{}^2\!\!\cM^{(1)}_2$, $\,{}^2\!\!\cM^{(2)}_6$, $\,{}^2\!\!\cM^{(2)}_{22}$,  $ \,{}^2\!\!\cM^{(2)}_{38}$, $ \,{}^3\!\!\cM^{(2)}_{4}$, and $ \,{}^4\!\!\cM^{(2)}_{2}$ are unirational.\ It was proved in \cite{ghs} that $\,{}^2\!\!\cM^{(1)}_{2n}$ is of general type for all $n\ge 12$.
\end{rema}

 \subsection{The nef and movable cones}\label{sec37}
 
 Let $X$ be a projective \hKm.\ We define its {\em positive cone}  $\Pos(X)$ as that of the two components of
 the cone $\{x\in \Pic(X)\otimes\R\mid q_X(x)> 0 \}$ that contains one (hence all) ample classes.

 The (closed) {\em movable cone} 
  $$\Mov(X)\subset \Pic(X)\otimes \R$$
   is the closed convex cone generated by classes of   line bundles on $X$ whose base locus has codimension $\ge 2$ (no fixed divisor).\ It is not too difficult to prove the inclusion\footnote{One may follow the argument in the proof of \cite[Theorem~7]{hast2} and  compute explicitly the Beauville--Fujiki form on a resolution of the rational map induced by the complete linear system of the movable divisor.}
   $$\Mov(X)\subset \overline{\Pos(X)}   .$$
We
defined in Section~\ref{newsecc}  the  nef cone $\Nef(X)\subset \Pic(X)\otimes \R$; we have of course
 \begin{equation}\label{inc}
\Nef(X)\subset \Mov(X).
\end{equation}
The importance of the movable cone (which, for K3 surfaces, is just the nef cone) stems from the following result.

\begin{prop}\label{prop110b}
Let $X$ and $X'$ be  \hKm s.\ Any birational isomorphism   $\sigma\colon X\isomdra X'$ induces a Hodge isometry    $\sigma^*\colon (H^2(X',\Z),q_{X'})\isomlra (H^2(X,\Z),q_X)$.

Assume moreover that $X$ and $X'$ are moreover projective.\footnote{Since any compact K\"ahler manifold which is Moishezon is projective, $X$ is projective if and only if $X'$ is projective.\ The general statement here is that if $\sigma^*$ maps a K\"ahler class of $X'$ to a K\"ahler class of $X$, the map $\sigma$ is an isomorphism (\cite[Proposition~27.6]{ghj}).} 
We have then,
$$\sigma^*(\Mov(X'))=\Mov(X),$$
and if $\sigma^*(\Nef(X'))$ meets  $\Amp(X)$, the map $\sigma$ is an isomorphism and $\sigma^*(\Nef(X'))=\Nef(X)$.
\end{prop}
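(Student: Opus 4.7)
The plan is as follows.\ For the first assertion, I would begin by recalling the standard fact that a birational map between \hKm s is an isomorphism in codimension one: since $K_X\isom K_{X'}\isom 0$, there can be no $\sigma$-exceptional divisor (by the negativity lemma, or equivalently by the fact that a \hKm\ is its own unique minimal model).\ Hence there exist Zariski open subsets $U\subset X$ and $U'\subset X'$ with complements of codimension $\ge 2$ such that $\sigma$ restricts to a biholomorphism $U\isomto U'$.\ Restriction induces isomorphisms $H^2(X,\Z)\isomto H^2(U,\Z)$ and $H^2(X',\Z)\isomto H^2(U',\Z)$, and composing with $(\sigma\vert_U)^*$ produces the desired map $\sigma^*\colon H^2(X',\Z)\isomto H^2(X,\Z)$; it is Hodge because $\sigma\vert_U$ is holomorphic.

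To see that $\sigma^*$ is an isometry for the Beauville--Fujiki form, note that all top intersection numbers are computed on $U$, so $(\sigma^*x)^{2m}=x^{2m}$ for every $x\in H^2(X',\Z)$.\ The Fujiki relation then gives $c_Xq_X(\sigma^*x)^m=c_{X'}q_{X'}(x)^m$, and since two birational \hKm s are deformation equivalent (a theorem of Huybrechts), the Fujiki constants agree: $c_X=c_{X'}$.\ Taking $m$-th roots, using that $\sigma^*$ preserves the positive cone (as $\sigma\vert_U$ does), one obtains $q_X(\sigma^*x)=q_{X'}(x)$.\ For the statement about movable cones, if the base locus of a line bundle $D'$ on $X'$ has codimension $\ge 2$, its restriction to $U'$ has the same property, and transporting via $\sigma\vert_U$ then extending to $X$ (whose complement to $U$ is itself of codimension $\ge 2$) shows that the base locus of $\sigma^*D'$ on $X$ has codimension $\ge 2$.\ Hence $\sigma^*(\Mov(X'))\subset \Mov(X)$, and applying the same argument to $\sigma^{-1}$ yields the reverse inclusion.

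For the second assertion, suppose $X$ and $X'$ are projective and that $\sigma^*(\Nef(X'))$ meets the open cone $\Amp(X)$.\ Since $\sigma^*\colon \NS(X')_\R\to \NS(X)_\R$ is a linear isomorphism, $\sigma^*(\Amp(X'))$ is the non-empty interior of $\sigma^*(\Nef(X'))$.\ Let $x_0\in \sigma^*(\Nef(X'))\cap \Amp(X)$ and pick any $y_0\in \sigma^*(\Amp(X'))$: for small $\eps>0$, the point $(1-\eps)x_0+\eps y_0$ belongs to $\sigma^*(\Amp(X'))$ by convexity and to $\Amp(X)$ by openness of the latter at $x_0$.\ Clearing denominators in a rational point of the open intersection, one finds an ample line bundle $L'$ on $X'$ such that $\sigma^*L'$ is ample on $X$.\ The result \cite[Proposition~27.6]{ghj} (cited in the footnote of the statement) then implies that $\sigma$ is biregular.\ The equality $\sigma^*(\Nef(X'))=\Nef(X)$ follows at once, since a biregular isomorphism pulls back ample (resp.\ nef) classes to ample (resp.\ nef) classes, and the same holds for $\sigma^{-1}$.

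The main obstacle is the black-box invocation of \cite[Proposition~27.6]{ghj} — that a birational map of \hKm s sending a K\"ahler class to a K\"ahler class must be biregular — whose proof uses the twistor family and positivity arguments; once this is granted, the rest amounts to linear algebra on $\NS_\R$ together with the standard codimension-one fact about birational maps of $K$-trivial manifolds.
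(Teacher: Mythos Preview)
Your argument is correct, but the paper takes a more self-contained route in two places. First, to show $\sigma$ is an isomorphism in codimension one, the paper does not invoke the negativity lemma or minimal-model language (which require some care in the non-projective K\"ahler setting relevant to the first assertion); it argues directly with the symplectic form: on the domain $U$ of $\sigma$, the pullback $(\sigma|_U)^*\omega'$ is a nonzero multiple of $\omega|_U$, hence nowhere degenerate, so $\sigma|_U$ is quasifinite and therefore an open embedding by Zariski's Main Theorem, identifying $U$ with the domain $U'$ of $\sigma^{-1}$. Second, for the last assertion the paper does not cite \cite[Proposition~27.6]{ghj} as a black box but gives the elementary Proj argument: once one has an ample $H'$ on $X'$ with $\sigma^*H'=H$ ample on $X$ (your convexity step produces this, in fact more explicitly than the paper), the isomorphisms $H^0(X',H'^{\otimes k})\isomto H^0(X,H^{\otimes k})$ coming from $\sigma|_U$ identify the section rings, whence $X\isom\Proj\bigoplus_k H^0(X,H^{\otimes k})\isom X'$. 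Your isometry argument via the Fujiki relation together with Huybrechts' deformation-equivalence theorem is also heavier than needed, and the sign-fixing step ``$\sigma^*$ preserves the positive cone'' is slightly circular (the positive cone is defined through $q$); the paper simply defers to the direct computation in \cite[Section~27.1]{ghj}.
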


\begin{proof}[Sketch of proof]
Let $U\subset X$ (resp.\ $U'\subset X'$) be the largest open subset on which $\sigma$ (resp.\ $\sigma^{-1}$) is defined.\ We have $\codim_X(X\moins U)\ge 2$ 
hence, since $X$ is normal and $\Omega_X^2$ is locally free, restriction induces an isomorphism $H^0(X, \Omega_X^2)\isomto H^0(U, \Omega_U^2)$.\ These vector spaces are spanned by the symplectic form $\omega$ and, since $(\sigma\vert_U)^*\omega'$ is nonzero, it is a nonzero multiple of $\omega\vert_U$.\ Since this 2-form is nowhere degenerate, $\sigma\vert_U$ is quasifinite and, being birational, it is an open embedding by Zariski's Main Theorem.\ This implies that $\sigma$ induces an isomorphism between $U$ and $U'$.\ Since $\codim_X(X\moins U)\ge 2$, the restriction   $H^2(X, \Z)\isomto H^2(U, \Z)$ is an isomorphism\footnote{If $N:=\dim_\C(X)$ and $Y:=X\moins U$, this follows from example from the long exact sequence
$$   H_{2N-2}(Y,\Z) \to H_{2N-2}(X,\Z)\to H_{2N-2}(X,Y,\Z)\to H_{2N-3}(Y,\Z),
$$
the fact that $H_j(Y,\Z)=0$ for $j>2(N-2)\ge \dim_\R(Y)$, and the duality isomorphisms $H_{2N-i}(X,\Z) \isom H^i(X,\Z)$ 	and $H_{2N-i}(X,Y,\Z) \isom H^i(X\moins Y,\Z)$.}
 and we get an isomorphism $\sigma^*\colon H^2(X',\Z)\isomto H^2(X,\Z)$ of Hodge structures.
 
For the proof  that $\sigma^*$ is an isometry, we refer to \cite[Section~27.1]{ghj}.\ Given a line bundle  $M'$ on $X'$, we have isomorphisms
\begin{equation}\label{iso21}
H^0(X',M')\isomto H^0(U',M')\isomto H^0(U,\sigma^*M')\isomtol  H^0(X,\sigma^*M')
\end{equation}
and it is clear that $\sigma^*$ maps $\Mov(X')$ to $\Mov(X)$.

Finally, if $\sigma^*$ maps an (integral) ample class $H'$ on $X'$ to an ample class $H$ on $X$, we obtain by \eqref{iso21}  isomorphisms
$  H^0(X',H^{\prime\otimes k})\isomto H^0(X,(\sigma^*H')^{\otimes k}))=H^0(X,H^{\otimes k})$  for all $k\ge 0$.\ Since  $X=\Proj(\bigoplus_{k=0}^{+\infty}H^0(X,H^{\otimes k}))$ and $X'=\Proj(\bigoplus_{k=0}^{+\infty}H^0(X',H^{\prime\otimes k}))$, this means that $\sigma$ is an isomorphism.
\end{proof}

   If $X'$ is a  (projective) \hKm\ with a birational map $\sigma\colon X\isomdra X'$, one  identifies $H^2(X',\Z)$ and $  H^2(X,\Z)$ using $\sigma^*$ (Proposition~\ref{prop110b}).\ By \cite[Theorem~7]{hast2},     we have 
\begin{equation}\label{chamb}
\Mov(X)=\overline{\bigcup_{\sigma\colon X\isomdra X'}\sigma^*(\Nef(X'))},
\end{equation}
      where the various cones $\sigma^*(\Nef(X'))$ are either equal or have disjoint interiors.\ It is known that for any \hKm\ $X$ of $\KKK^{[m]}$-type, the set of isomorphism classes of \hKm s  
birationally isomorphic to $X$
 is finite 
 (\cite[Corollary~1.5]{mayo}).

\subsubsection{Nef and movable cones of \hK\ fourfolds of  $\KKK^{[2]}$-type}
 
 The nef and movable cones of \hKm s of  $\KKK^{[m]}$-type are  known, although their concrete descriptions can be quite complicated.\  They are best explained in terms of the Markman--Mukai lattice, which we now define (see~\cite[Section 9]{marsur} and~\cite[Section~1]{bht} for more details).
 
 We define the \emph{extended K3 lattice} 
\begin{equation}\label{mml}
 \widetilde{\Lambda}_{\KKK} := U^{\oplus 4}\oplus E_8(-1)^{\oplus 2}.
\end{equation}
It is even, unimodular of signature $(4,20)$ and the  lattice $\Lkkk[m]$ defined in \eqref{lk3m} is the orthogonal of any primitive vector of square $2m-2$.\ 

Given a \hKm\ $X$ of  $\KKK^{[m]}$-type, there 
  is a canonical  extension 
  $$\theta_X\colon H^2(X,\Z)\hra\widetilde{\Lambda}_X$$
  of lattices and weight-2 Hodge structures, where the lattice $\widetilde{\Lambda}_X$  is isomorphic to $\widetilde{\Lambda}_{\KKK}$.\  A generator $\bv_X$ (with square $2m-2$) of $H^2(X,\Z)^\bot$ is of type $(1,1)$.\ 
 We denote by $\widetilde{\Lambda}_{\textnormal{alg},X}$ the algebraic (\ie, type $(1,1)$) part of $\widetilde{\Lambda}_X $, so that $\Pic(X)= \bv_X^\bot\cap \widetilde{\Lambda}_{\textnormal{alg},X}$.

Given a class $  \bs\in \widetilde{\Lambda}_X$, we define a hyperplane
$$H_{  \bs}:=\{ x\in \Pic(X)\otimes\R \mid x\cdot  {\bs}=0 \}
.$$

In order to keep things simple, we will limit ourselves to the descriptions of the nef and movable cones of \hKm s of  $\KKK^{[2]}$-type ($m=2$).\ The general case will be treated in Theorem~\ref{thm:BHT}  (see also Example~\ref{exa215}).

\begin{theo}\label{thm:NefConeHK4}
Let $X$ be a hyperk\"ahler fourfold of $\KKK^{[2]}$-type.

\noindent {\rm (a)} The interior  $\Int(\Mov(X))$ of the movable cone is the connected component of
\[
\Pos(X) \smallsetminus  \bigcup_{\kappa\in \Pic(X)\atop \kappa^2=-2}   H_\kappa
\]
that contains the class of an ample divisor.

\noindent {\rm (b)} The ample cone $\Amp(X)$ is the connected component of
\[
\Int(\Mov(X)) \smallsetminus \bigcup_{\kappa\in \Pic(X)\atop {\kappa^2=-10 \atop  \div_{H^2(X,\Z)}(\kappa)=2}}\hskip-5mm  H_\kappa
\]
that contains the class of an ample divisor.
\end{theo}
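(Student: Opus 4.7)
\medskip
\noindent\textbf{Proof proposal.} The plan is to work inside the Markman--Mukai lattice $\widetilde{\Lambda}_X$ introduced just before the statement, exploiting the extension $\theta_X\colon H^2(X,\Z)\hra\widetilde{\Lambda}_X$ and its distinguished class $\bv_X$ of square $\bv_X^2=2m-2=2$. The positive cone $\Pos(X)$ lies inside $\Pic(X)\otimes\R=\bv_X^\bot\cap(\widetilde{\Lambda}_{\textnormal{alg},X}\otimes\R)$, so every hyperplane $H_\kappa\subset\Pic(X)\otimes\R$ that I want to detect can be read as the intersection of $\Pic(X)\otimes\R$ with some hyperplane $H_{\bs}$ inside $\widetilde{\Lambda}_X\otimes\R$. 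The strategy is to characterize the ``wall-inducing'' classes $\bs\in\widetilde{\Lambda}_{\textnormal{alg},X}$ and then translate them back to classes $\kappa\in\Pic(X)$.

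The general principle I would invoke (Bayer--Macr\`\i\ wall-crossing, or its geometric reformulation by Bayer--Hassett--Tschinkel; this is exactly what Theorem~\ref{thm:BHT} will later state in any dimension) is that, with $\bv_X^2=2$, the walls in $\Pos(X)$ correspond to classes $\bs\in\widetilde{\Lambda}_{\textnormal{alg},X}$ satisfying $\bs^2\geq -2$ and $0\le \bs\cdot \bv_X\le \bv_X^2/2=1$, with $\bs$ not proportional to $\bv_X$. So I first enumerate the two cases:
\begin{itemize}
\item $\bs\cdot\bv_X=0$ and $\bs^2=-2$: then $\bs\in\Pic(X)$ is itself a $(-2)$-class, and $H_{\bs}\cap\Pic(X)_\R=H_{\bs}$;
\item $\bs\cdot\bv_X=1$ and $\bs^2=-2$: setting $\kappa:=2\bs-\bv_X$, a direct computation gives $\kappa\in\bv_X^\bot\cap\widetilde{\Lambda}_{\textnormal{alg},X}=\Pic(X)$, with $\kappa^2=4\bs^2-4\bs\cdot\bv_X+\bv_X^2=-10$; moreover $\kappa/2=\bs-\bv_X/2$ pairs integrally with every $\alpha\in\Pic(X)\subset\bv_X^\bot$ (since $\bs\cdot\alpha\in\Z$), so $\div_{H^2(X,\Z)}(\kappa)=2$. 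The restriction $H_{\bs}\cap\Pic(X)_\R$ is then exactly $H_\kappa$.
\end{itemize}
Conversely, every $(-2)$-class in $\Pic(X)$ and every $(-10)$-class of divisibility $2$ arises this way (for the latter, reconstruct $\bs=(\kappa+\bv_X)/2$ and verify $\bs\in\widetilde{\Lambda}_{\textnormal{alg},X}$ using the divisibility hypothesis).

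Once the dictionary is in place, I would argue that the $\bs\cdot\bv_X=0$ walls are precisely the walls of $\Mov(X)$: these correspond to divisorial contractions or flops preserving the Mukai vector component structure, hence to birational models of $X$, which via Proposition~\ref{prop110b} and the chamber decomposition \eqref{chamb} account for all walls of the movable cone. The $\bs\cdot\bv_X=1$ walls, by contrast, correspond to biregular wall-crossings (Mukai flops of Lagrangian planes $\P^2\subset X$, as in Exercise~\ref{32}), which subdivide $\Int(\Mov(X))$ into ample chambers of isomorphic but distinct contractions. This gives both (a) and (b), the chamber containing an ample class being selected in each case by picking the $\Aut$-orbit of a Kähler class.

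The genuine difficulty, which I would not try to redo here, is the \emph{sufficiency} direction: showing that \emph{every} such class $\bs$ actually cuts out an honest wall, i.e.\ is realized geometrically by a birational or biregular wall-crossing phenomenon. The necessity direction is lattice-theoretic bookkeeping, but sufficiency relies on the full strength of Bridgeland wall-crossing on a K3 surface, applied to a moduli space of $\bv$-stable objects birational to $X$, together with Markman's monodromy results to reduce an arbitrary $\KKK^{[2]}$-type fourfold to such a moduli space. This is the hard input that I would cite rather than reprove, leaving the lattice translation above as the content of the proof once the wall-crossing machinery is granted.
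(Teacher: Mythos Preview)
Your approach is essentially the paper's: cite the deep external input (Markman, Bayer--Hassett--Tschinkel) for the wall structure in the Mukai lattice, then do the lattice translation $\kappa=2\bs-\bv_X$ (equivalently $\bs=\tfrac12(\kappa+\bv_X)$) to pass from $\bs$-classes to $\kappa$-classes in $\Pic(X)$. That computation, including the converse direction and the divisibility check, is correct and matches the paper exactly.

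Where you and the paper differ is in how the two parts are separated. The paper cites \emph{two distinct} results: Markman's \cite[Lemma~6.22]{marsur} directly characterizes $\Int(\Mov(X))$ via the $\bs^2=-2$, $\bs\cdot\bv_X=0$ walls, and then \cite[Theorem~1]{bht} characterizes $\Amp(X)$ via the $\bs^2=-2$, $\bs\cdot\bv_X\in\{0,1\}$ walls; the proof of (b) is then just ``the extra walls are the $\bs\cdot\bv_X=1$ ones.'' You instead invoke a single BHT-style wall list and then try to argue geometrically which walls bound $\Mov(X)$ versus which subdivide it. That argument is where your write-up gets loose: the phrase ``biregular wall-crossings (Mukai flops of Lagrangian planes)'' is confused---Mukai flops are small \emph{birational} modifications, not biregular isomorphisms---and ``preserving the Mukai vector component structure'' is not a precise criterion. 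The honest fix is to do what the paper does: cite Markman's movable-cone description separately for (a), rather than trying to extract it from BHT.

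One smaller point: you state the BHT condition as $\bs^2\ge -2$ but then silently restrict to $\bs^2=-2$. In this case ($m=2$) the restriction is harmless---classes with $\bs^2\ge 0$ and $\bs\cdot\bv_X=0$ give hyperplanes not meeting $\Pos(X)$, and $\bs^2=0$, $\bs\cdot\bv_X=1$ gives $\kappa^2=-2$, already on the part~(a) list---but you should say so.
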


\begin{proof}
Statement (a)  follows from the general result~\cite[Lemma~6.22]{marsur} (see also \cite[Proposition~6.10, Proposition~9.12, Theorem~9.17]{marsur}).\ More precisely, this lemma says that the interior of the movable cone is the connected component that contains the class of an ample divisor of the complement in $\Pos(X)$ of the union of the hyperplanes $H_{ \bs}$, where $ \bs\in \widetilde{\Lambda}_{\textnormal{alg},X}$ is such that
  $ \bs^2= -2$ and $ \bs\cdot \bv_X=0$.\ The second relation means $ \bs\in \Pic(X)$, hence (a) is proved.

As to (b), the dual statement of~\cite[Theorem 1]{bht} says\footnote{This still requires some work, and reading \cite[Sections 12 and 13]{bama} might help.} that the ample cone is the connected component containing the class of an ample divisor of the complement in $\Pos(X)$ of the union of the hyperplanes $H_{  \bs}$, where $  \bs\in \widetilde{\Lambda}_{\textnormal{alg},X}$ is such that
  $  \bs^2=-2$   and $  \bs\cdot \bv_X\in\{0,1\}$.\ 
 
Writing $  \bs=a\kappa+ b\bv_X$, with $\kappa\in\Pic(X)$ primitive and $a,b\in\frac12\Z$, we get $-2=  \bs^2=a^2\kappa^2+2b^2$ and $  \bs\cdot \bv_X=2b$.\ Inside $\Int(\Mov(X))$, the only new condition  therefore corresponds to $b=\frac12$ and $a^2\kappa^2= -\frac{5}{2}$, hence $a= \frac12$ and $\kappa^2= -10$.\ Moreover, $\kappa\cdot H^2(X,\Z)=2   \bs\cdot \widetilde{\Lambda}_X=2\Z$, hence $\div_{H^2(X,\Z)}(\kappa)=2$.\ Conversely, given any such $s$, the element $\kappa+\bv_X$ of $ \widetilde{\Lambda}_X$ is divisible by 2 and  $  \bs:=\frac12(\kappa+\bv_X)$ satisfies $  \bs^2=-2$, $  \bs\cdot \bv_X=1$, and $H_\kappa=H_{  \bs}$.\ This proves (b).\end{proof}

\begin{rema}\label{rmk:NefConeHK4}
We can make the descriptions in  Theorem~\ref{thm:NefConeHK4} more precise.

\noindent (a) As explained in~\cite[Section 6]{marsur}, it follows from~\cite{mar3} that there is a group of reflections $W_{\Exc}$ acting on $\Pos(X)$ that acts faithfully and transitively on the set of connected components of 
\[
\Pos(X) \smallsetminus  \bigcup_{\bs\in \Pic(X)\atop \bs^2=-2}   H_\bs
\]
In particular,   $\overline{\Mov(X)} \cap \Pos(X)$ is a fundamental domain for the action of $W_{\Exc}$ on $\Pos(X)$.

\noindent (b) By~\cite[Proposition 2.1]{mayo} (see also~\cite[Theorem 7]{hast2}), each connected component of
\[
\Int(\Mov(X)) \smallsetminus \bigcup_{\bs\in \Pic(X)\atop {\bs^2=-10 \atop  \div_{H^2(X,\Z)}(\bs)=2}}\hskip-5mm  H_\bs
\]
corresponds to the ample cone of a hyperk\"ahler fourfold $X'$ of $\KKK^{[2]}$-type  via   a
  birational isomorphism $X\isomdra X'$ (compare with \eqref{chamb}).
\end{rema}

  \subsubsection{Nef and movable cones of punctual Hilbert schemes of K3 surfaces}\label{sec210a}
 
 In this section, $(S,L)$ is a polarized K3 surface  of degree $2e$ with $\Pic(S)=\Z L$  and  $m$ is an integer such that $m\ge 2$.\ We describe the nef and movable cones of the $m$th Hilbert power $S^{[m]}$.\ The line bundle $L_m$ on $S^{[m]}$ induced by $L$ is nef and big and spans a ray  which is extremal for  both cones $\Mov(\sss[m])$ and $\Nef(\sss[m])$.\ Since the Picard number of $ \sss[m] $ is 2, there is just one ``other'' extremal ray for each cone.

   \begin{exam}[Nef and movable cones of $S^{[2]}$]\label{exa217}
  By Theorem~\ref{thm:NefConeHK4}, cones of divisors on $S^{[2]}$ can be described as follows (see Appendix~\ref{secpell} for the notation).
\begin{itemize}
\item[(a)] The other extremal ray of the (closed) movable cone $\Mov(S^{[2]})$ is spanned by $L_2-\mu_e\delta$, where
\begin{itemize}
\item[$\bullet$] if $e$ is a perfect square, $\mu_e=\sqrt{e}$;
\item[$\bullet$] if $e$ is not a perfect square and $(a_1,b_1)$ is the minimal solution of the equation $\cP_e(1)$, $\mu_e=e\frac{b_1}{a_1}$.
\end{itemize}
\item[(b)] The other extremal ray of the nef cone $\Nef(S^{[2]})$ is spanned by  $L_2-\nu_e\delta$, where
\begin{itemize}
\item[$\bullet$] if the equation $\cP_{4e}(5)$ is not solvable, $\nu_e=\mu_e$;
\item[$\bullet$] if the equation $\cP_{4e}(5)$ is solvable and 
  $(a_5,b_5)$ is its minimal solution, $\nu_e =  2e\frac{b_5}{a_5}$.\footnote{\label{newf}There is a typo in~\cite[Lemma 13.3(b)]{bama}: one should replace $d$ with $2d$.\ Also, the   general theory developed in \cite{bama} implies that this ray does lie  inside the movable cone; this is also a consequence of the elementary inequalities \eqref{slope2} and \eqref{slope3}.}
\end{itemize}
\end{itemize}

\begin{exer}\label{lbound}
\footnotesize\narrower
Prove that in all cases, one has $\nu_e\ge \lfloor \sqrt{e}\rfloor$, with equality if and only if $e$ is a perfect square and $e>1$.
%
\end{exer}

The walls of the decomposition  \eqref{chamb} of the movable cone $ \Mov(S^{[2]}) $ correspond exactly to the solutions of the equation $\cP_{4e}(5)$ that give rays inside that cone and there are only three possibilities (see Appendix~\ref{secpell} and in particular Lemma \ref{le46}, which says that the equation $\cP_{4e}(5)$ has at most two classes of solutions when $e$ is not a perfect square): 
\begin{itemize}
\item either   the equation $\cP_{4e}(5)$ is not solvable, in which case the nef and movable cones are equal; 
\item or   the equation $\cP_{4e}(5)$ has a   minimal solution   $(a_5,b_5)$, and, if $(a_1,b_1)$ is the minimal solution to  the equation $\cP_{e}(1)$ (set $b_1=1$ when $e=1$),
\begin{itemize}
\item either $b_1$ is odd, or $b_1$ is even and $5\mid e$, in which cases there are two chambers in the movable cone  and the middle wall is spanned by  $ a_5L_2-2eb_5\delta$;
\item or $b_1$ is even and $5\nmid e$,  in which case  there are three chambers   in the movable cone and the middle walls are spanned by  $ a_5L_2-2eb_5\delta$ and $(a_1a_5-2eb_1b_5)L_2-e(a_5b_1-2a_1b_5)\delta$ respectively.
\end{itemize}
\end{itemize} 
In the last case, for example, there are three ``different'' birational isomorphisms $S^{[2]}\dra X$, where $X$ is a \hKm\ of dimension 4.

The nef and movable cones of  $\sss[2]$  are computed in the table below for $1\le e \le 13$.\ The first two lines give the minimal solution of the Pell-type equations $\cP_{e}(1)$ and $\cP_{4e}(5) $  (when they exists).\ A~$\star$ means that $e$ is a perfect square.\ The last two lines indicate the ``slope'' $\nu$ of the ``other'' extremal ray of the cone (that is, the ray is generated by $L_2-\nu\delta$).\ The  $=$ sign means that the two cones are equal.
$$\tiny{
 \setlength{\extrarowheight}{1ex}
 \begin{array}{|c|c|c|c|c|c|c|c|c|c|c|c|c|c|c|c|c|c|}
 \hline 
e&1&2&3&4&5&6&7&8&9&10&11&12&13    \\
 \hline 
\cP_{e}(1)&\star&(3,2)&(2,1)&\star& (9,4)&(5,2)&(8,3)&(3,1)&\star&(19,6)&(10,3)&(7,2)& (649,180)\\
\cP_{4e}(5)&\star&-&-&\star&(5,1)&-&-&-&\star&-&(7,1)&-&-
\\
 \Mov(S^{[2]}) 
 &1&\frac43&\frac32&2& \frac{20}{9}&\frac{12}5&\frac{21}8&\frac83&3&\frac{60}{19}&\frac{33}{10}&\frac{24}7&\frac{2340}{649}  
\\
 \Nef(S^{[2]}) 
 &\frac23&=&=&=&2&=&=&=&=&=&\frac{22}7&=&= 
\\[5pt]
 \hline
 \end{array}}$$
  
 This table shows that when $e=1$, the fourfold $\sss[2]$ has a nontrivial birational model.\ This model was described in Exercise~\ref{32}: it is the Mukai flop of $\sss[2]$ along a $\P^2$.
 
 Let $(S,L)$ be a general polarized K3 surface of degree 22.\ It was shown in \cite[Proposition~3.4]{devo} that there is a a birational isomorphism $\sigma\colon S^{[2]}\dra X$, where $(X,H)$ is a nodal degeneration of polarized \hKm s   of  degree 22 and divisibility 2 (see Section~\ref{sec261}), such  that $\sigma^*H=10L_2-33\delta $ (this implies that this line bundle is movable).\ The table above shows that this class is on the boundary of the movable cone of $S^{[2]}$.\ The corresponding birational model of $S^{[2]}$ (via the decomposition \eqref{chamb}) is the minimal desingularization of $X$, on which the class $H$ is nef but not ample. 
 
 In the next table, we restrict ourselves to   integers $1<e\le 71$  for which the equation 
 $\cP_{4e}(5)$ is solvable (in other words, to those $e$ for which the nef and movable cones are distinct).\ We indicate all solutions of that equation that give rise to rays that are in the movable cone (that is, with slope smaller than that of the ``other'' ray of the movable cone): they are the walls for the decomposition \eqref{chamb}.\ As expected, there are two rays if and only if $b_1$ is even and $5\nmid e$.
 $$\tiny{
 \setlength{\extrarowheight}{1ex}
 \begin{array}{|c|c|c|c|c|c|c|c|c|c|c|c|c|c|c|c|c|c|}
 \hline 
 e&5&11&19&29&31&41 &55&71 
  \\
 \hline 
\cP_{e}(1)& (9,4)& (10,3)&(170,39)& (9801 ,1820)&(1520,273)&(2049,320)&(89,12)&(3480,413)\\
\cP_{4e}(5)&(5,1)& (7,1)&(9,1)&(11,1), (2251,209)&(657,59)&(13,1),(397,31)&(15,1)&(17,1)
\\
 \Mov(S^{[2]}) 
 & \frac{20}{9}& \frac{33}{10}&\frac{741}{170}&\frac{52780}{9801}  &\frac{8463}{1520}&\frac{13120}{2049}&\frac{660}{89}&\frac{29323}{3480}
\\
\textnormal{Walls for \eqref{chamb}} 
 &2 &\frac{22}7&\frac{38}{9}&\frac{58}{11} ,\frac{12122}{2251}&\frac{ 3658}{657}&\frac{82 }{13},\frac{2542}{397}&\frac{ 22}{3}&\frac{ 142}{17}
\\[5pt]
 \hline
 \end{array}}$$
\end{exam}

  \begin{exam}[Nef and movable cones of $S^{[m]}$]\label{exa215}
 A complete description of the cone $\Mov(\sss[m])$ is given in \cite[Proposition~13.1]{bama}.\footnote{There is a  typo in   \cite[(33)]{bama} hence also in statement (c) of that proposition.} One extremal ray is spanned by $L_m$ and generators of the other extremal ray are given as follows:
\begin{itemize}
\item if $e(m-1)$ is a perfect square, a generator is   $(m-1)L_m-\sqrt{e(m-1)}\delta$ (with square~$0$);
\item if $e(m-1)$ is not a perfect square and  the equation $(m-1)a^2-eb^2=1$ has a minimal   solution $(a_1,b_1)$, a generator is  $(m-1)a_1L_m-eb_1\delta$ (with square $2e(m-1)$);
\item otherwise, a generator is $a'_1L_m-eb'_1\delta$ (with square $2e$), where  $(a'_1,b'_1)$ is the minimal positive solution of the equation $a^2-e(m-1)b^2=1$ that satisfies 
$a'_1\equiv\pm 1\pmod{{m-1}}$.\footnote{\label{foo16}If $(a,b)$ is a solution to that equation, its ``square'' $(a^2+e(m-1)b^2,2ab) $ is also a solution and $a^2+e(m-1)b^2\equiv a^2\equiv 1\pmod{m-1}$.\ So there always exist solutions with the required property.

Also, we always have the inequalities $  \frac{eb_1}{a_1(m-1)}\le \frac{eb'_1}{a'_1}<\sqrt{\frac{e}{m-1}}$ between slopes and the first inequality is strict when $m\ge 3$ (when the equation $(m-1)a^2-eb^2=1$ has a minimal positive solution $(a_1,b_1)$ and $m\ge 3$, the minimal solution of the equation $\cP_{e(m-1)}(1)$ is $(2(m-1)a_1^2-1,2a_1b_1)$ by Lemma \ref{lemmpell} and this is $(a'_1,b'_1)$; hence $\frac{eb'_1}{a'_1}>\frac{eb_1}{a_1(m-1)}$).}
\end{itemize}
 
 A complete description of the cone $\Nef(\sss[m])$ theoretically follows from~\cite[Theorem~1]{bht}, although it is not as simple as for the movable cone.\  Here are a couple of facts:
 \begin{itemize}
\item when $m\ge \frac{e+3}{2}$, the ``other'' extremal ray of $\Nef(S^{[m]})$ is spanned by  $ (m+e)L_m-2e\delta $ (\cite[Proposition 10.3]{bama1}) and the   movable  and  nef cones are  different, except when $m=e+2$;\footnote{If they are equal, we are in one of the following cases described in Example~\ref{exa215} (we set $g:=\gcd({m+e},2e)$)
\begin{itemize}
\item either $(m+e)L_m-2e\delta$ has square 0 and $2e(m+e)^2=4e^2(2m-2)$, which implies  $ (m+e)^2=2e (2m-2)$, hence $ (m-e)^2=-4e$, absurd;
\item or $ m+e =g(m-1)a_1$ and $2e=geb_1$, which implies $b_1\le 2 $ and
$$ a_1^2\, \frac{e+1}{2}\le a_1^2(m-1)=1+eb_1^2+1\le 1+4e+1
 $$
 hence either $a_1=b_1=2$, absurd, or $a_1=1$, and $m+e=g(m-1)$ implies $g=2$, 
 $b_1=1$,  and $m=e+2$, the only case when the cones are equal;
\item or $ m+e =g a'_1$ and $2e=geb'_1$, which implies $g\le 2$ and
$$(m+e)^2 =g^2a_1^{\prime 2}=g^2(e(m-1)b_1^{\prime 2}+1) =4e(m-1)+g^2 
$$
hence $(m-e)^2 = -4e +g^2\le -4e+4\le 0$ and $e=m=1$, absurd.
\end{itemize}
}
\item when $ e=(m-1)b^2$, with $b\ge 2$,    the ``other'' extremal ray of $\Nef(S^{[m]})$ is spanned by  $ L_m-b\delta $ (\cite[Theorem 10.6]{bama1}) and the movable   and   nef cones are equal. 
\end{itemize}
\end{exam}

 \subsubsection{Nef and movable cones for some other  \hK\ fourfolds with Picard number~2}\label{sec27a}
 
 We now give an example of cones with irrational slopes.\  
Let $n$ and $e'$ be a positive   integers such that $n\equiv -1\pmod4$.\ Let $(X,H)$ be a polarized hyperk\"ahler fourfold of $\KKK^{[2]}$-type with $H$ of divisibility 2 and   $\Pic(X)=\Z H\oplus \Z L$, with intersection matrix   $\left(\begin{smallmatrix}2n&0\\0&-2e'\end{smallmatrix}\right)$.\footnote{In the notation of Section~\ref{sec29}, these are fourfolds whose period point is  very general in one component of the hypersurface $\cD_{2n,2e'n}^{(2)}$ and we will prove in Theorem~\ref{imper} that they exist if and only if $n >0$ and $e'>1$.}\   By Theorem~\ref{thm:NefConeHK4}, cones of divisors on $X$ can be described as follows.\footnote{Before  the general results of \cite{bht} were available, the case $n=3$ and $e'=2$ had been worked out in \cite[Proposition 7.2]{hast3}  using   beautiful geometric arguments.}
\begin{itemize}
\item[(a)] The extremal rays  of the  movable cone $\Mov(X)$ are spanned by $H-\mu_{n,e'} L$ and $H+\mu_{n,e'} L$, where
\begin{itemize}
\item[$\bullet$] if the equation $\cP_{n, e'}(-1)$ is not solvable, $\mu_{n,e}=\sqrt{n/e'}$;
\item[$\bullet$] if the equation $\cP_{n, e'}(-1)$ is solvable and $(a_{-1},b_{-1})$ is its minimal solution, $\mu_{n,e'}=\frac{ na_{-1}}{e'b_{-1}}$.
\end{itemize}
\item[(b)] The extremal rays of the nef cone $\Nef(X)$ are spanned by $H-\nu_{n,e'} L$ and $H+\nu_{n,e'} L$, where
\begin{itemize}
\item[$\bullet$] if the equation $\cP_{n, 4e'}(-5)$ is not solvable, $\nu_{n,e'}=\mu_{n,e'}$;
\item[$\bullet$] if the equation $\cP_{n, 4e'}(-5)$ is solvable and $(a_{-5},b_{-5})$ is its minimal solution, $\nu_{n,e'}=\frac{ na_{-5}}{2e'b_{-5}}$.
\end{itemize}
\end{itemize}
%

The new element here is that we may have cones with irrational slopes (when $e'n $ is not a perfect square).\ As in Section~\ref{sec210a}, the walls of the chamber decomposition \eqref{chamb} correspond to the rays determined by the solutions to the equation $\cP_{ n,4e'}(-5)$ that sit inside the movable cone.\ This is only interesting when the nef and movable cones are different, so we assume that the equation  $\cP_{ n,4e'}(-5)$ is   solvable, with 
 minimal solution  $(a_{-5},b_{-5})$, and the extremal rays of the nef cone have  rational slopes $\pm\frac{n a_{-5}}{2e'b_{-5}}$.\ This implies that $ne' $ is not a perfect square\footnote{\label{lab}Since  the equation  $\cP_{n, 4e'}(-5)$ is   solvable,  $d:=\gcd(n,e')\in\{1,5\}$.\ If  $ne'$ is a perfect square, we can write $n=du^2$ and $e'=dv^2$.\ This is easily checked to be incompatible with the equality $ (ua+2vb)(ua-2vb)=-5/d$.} and the equation $\cP_{n, 4e'}(-5)$ has infinitely many solutions.\ By Lemma~\ref{le46}, these solutions form two conjugate classes if $5\nmid e'$, and one class if $5\mid e'$.
 
 This means that all the solutions $(a,b)$ to the equation  $\cP_{ n,4e'}(-5)$  are given by  
\begin{equation}\label{solm}
na +b\sqrt{4ne'}=\pm (na_{-5} \pm b_{-5}\sqrt{4ne'})x_1^m\ , \quad m\in\Z,
\end{equation}
   where $x_1=a_1+ b_1 \sqrt{4ne'} $ corresponds to the minimal solution $(a_1,b_1)$ to the equation $\cP_{ 4ne'}(1)$.\  There are two cases:
\begin{itemize}
\item[a)] either the equation $\cP_{n ,e'}(-1)$ is not  solvable and the extremal rays of the movable cone have  slopes $\pm\sqrt{n/e'}$, which is irrational by footnote \ref{lab};
\item[b)] or the equation $\cP_{n ,e'}(-1)$ is   solvable, with minimal solution $(a_{-1},b_{-1})$, and $x_1=na_{-1}^{2}+e'b_{-1}^2+a_{-1} b_{-1} \sqrt{4ne'} $  by Lemma \ref{lemmpell}.
\end{itemize}


In  case a), all positive solutions $(a,b)$ of the equation  $\cP_{ n,4e'}(-5)$ satisfy 
$$\frac{ na}{2e'b}<\frac{ n}{2e'}\sqrt{\frac{ 4e'}{n}}=\sqrt{\frac{ n}{e'}}.$$
Therefore, there are infinitely many walls in the chamber decomposition \eqref{chamb}: they correspond to the
  (infinitely many) solutions  $(a,b)$ of the equation  $\cP_{ n,4e'}(-5)$ with $a>0$.
  
  In  case (b), the interior walls of the chamber decomposition \eqref{chamb} correspond to the
 solutions  $(a,b)$ of the equation  $\cP_{ n,4e'}(-5)$ (as given in \eqref{solm}) with $a>0$ that land in the 
interior of the  movable cone, that is, that satisfy $\frac{ na }{2e'|b| }< \frac{n a_{-1}}{ e'b_{-1}}$.\ We saw in \eqref{sols4} that there are only two such solutions, $(a_{-5},b_{-5})$ and $(a_{-5},-b_{-5})$, hence three chambers.

\subsection{The Torelli theorem}\label{sec27}

A general version of the Torelli theorem for \hKm s was proven by Verbitsky (\cite{ver}).\ It was later reformulated by Markman in terms of the {\em monodromy group} of a \hKm\ $X$ (\cite{marsur}): briefly, this is the subgroup $\Mon^2(X)$  of
$O(H^2(X,\Z),q_X)$ generated by  monodromy operators   associated with all smooth deformations of  $X$.\ 

When $X$ is of $\kkk[m]$-type, Markman showed that  $\Mon^2(X)$ is the subgroup of the group $O(H^2(X,\Z),q_X)$ generated by reflections about $(-2)$-classes and the negative of reflections about $(+2)$-classes (\cite[Theorem~9.1]{marsur}).\ Combined with a result of Kneser, he obtains that $\Mon^2(X)$ is the subgroup $\widehat O^+(H^2(X,\Z),q_X)$ of   elements of  $O^+(H^2(X,\Z),q_X)$ that act as $\pm\Id$ on the  discriminant group $D(H^2(X,\Z))$ (\cite[Lemma~9.2]{marsur}).\footnote{As in footnote \ref{misl}, $O^+(H^2(X,\Z),q_X)$ is the index-2 subgroup of $O (H^2(X,\Z),q_X)$ of isometries that preserve  the positive cone $\Pos(X)$.}

This implies
 that the index of $\Mon^2(X)$ in  $O^+(H^2(X,\Z),q_X)$ is  $2^{\max\{\rho(m-1)-1,0\}}$, where $\rho(d)$ is the number of distinct prime divisors of an integer $d$;\footnote{One has $D(H^2(X,\Z))\isom \Z/(2m-2)\Z$ and $O(D(H^2(X,\Z)),\bar q_X)\isom \{x\pmod{2m-2}\mid x^2\equiv 1\pmod{4t} \}\isom (\Z/2\Z)^{\rho(m-1)}$ (\cite[Corollary~3.7]{ghs}).\  One then uses the surjectivity of the canonical map $O^+(H^2(X,\Z),q_X)\to O(D(H^2(X,\Z)),\bar q_X) $ (see Section~\ref{seclat}).}
 in particular, these two groups are equal if and only if $m-1$ is a prime power (including $1$).\ For the sake of simplicity, we will only state the first version of the Torelli theorem in that case (\cite[Theorem~1.3]{marsur}); compare with Theorem \ref{torth}).

\begin{theo}[Verbitsky, Markman; Torelli theorem, first version]\label{torthhk}
Let $m$ be a positive integer such that $m-1$ is a prime power.\
Let $(X,H)$ and $(X',H')$ be polarized     \hKm s of $\kkk[m]$-type.\ If there exists an isometry of lattices 
$$\phi\colon (H^2(X',\Z),q_{X'})\isomlra (H^2(X,\Z),q_X)$$
 such that $\phi(H')=H$ and $\phi_\C(H^{2,0}(X'))=H^{2,0}(X)$, there exists an isomorphism $\sigma\colon X\isomto X'$ such that $\phi=\sigma^*$.
\end{theo}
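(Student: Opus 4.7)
The plan is to deduce the theorem from Verbitsky's Hodge-theoretic Torelli theorem, in the polished form due to Markman: any \emph{parallel transport operator} $\psi\colon H^2(X',\Z)\isomlra H^2(X,\Z)$ which is a Hodge isometry and which maps a K\"ahler class of $X'$ to a K\"ahler class of $X$ is induced by a unique isomorphism $X\isomto X'$. Granting this, the entire content of the theorem reduces to showing that, under the hypothesis $m-1$ is a prime power, every isometry $\phi$ satisfying the hypotheses is automatically a parallel transport operator.

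First, I would verify that $\phi$ preserves positive cones and in particular sends a K\"ahler class to a K\"ahler class. Any isometry of a lattice of signature $(3,b_2-3)$ permutes the two connected components of $\{x\mid q(x)>0\}$, and since $\phi(H')=H$ with both $H$ and $H'$ ample, $\phi$ must map $\Pos(X')$ onto $\Pos(X)$; in particular it sends the K\"ahler class $H'$ to the K\"ahler class $H$, handling the K\"ahler hypothesis of Hodge-theoretic Torelli.

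Next, I would exhibit $\phi$ as a parallel transport operator. Since $X$ and $X'$ have the same deformation type, there exists at least one parallel transport isometry $\phi_0\colon H^2(X,\Z)\isomlra H^2(X',\Z)$, and such a $\phi_0$ preserves positive cones by continuity. The self-isometry $\phi_0\circ\phi$ of $H^2(X',\Z)$ therefore preserves $\Pos(X')$, i.e.\ lies in $O^+(H^2(X',\Z),q_{X'})$. By Markman's description of the monodromy group recalled just before the statement, $\Mon^2(X')=\widehat O^+(H^2(X',\Z),q_{X'})$, with $[O^+:\widehat O^+]=2^{\max\{\rho(m-1)-1,0\}}$. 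Under our hypothesis $\rho(m-1)\le 1$, so $O^+=\widehat O^+=\Mon^2(X')$ and $\phi_0\circ\phi\in\Mon^2(X')$ is a monodromy operator. The identity
$$\phi = \phi_0^{-1}\circ(\phi_0\circ\phi)$$
then expresses $\phi$ as the concatenation of a self-parallel-transport of $X'$ followed by the parallel transport $\phi_0^{-1}$ from $X'$ to $X$, so $\phi$ is itself a parallel transport operator.

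Applying Verbitsky/Markman's Hodge-theoretic Torelli to $\phi$ now yields the desired isomorphism $\sigma\colon X\isomto X'$ with $\sigma^*=\phi$; uniqueness of $\sigma$ follows from the \hKm\ analogue of Proposition~\ref{prop110}. The crux of the argument is Markman's identification $\Mon^2=\widehat O^+$, which combines his generation theorem (the monodromy group is generated by reflections in $(\pm 2)$-classes) with Kneser's surjectivity of the reduction $O^+\to O(D,\bar q)$. If $m-1$ has several prime factors, one has $\widehat O^+\subsetneq O^+$ and the argument fails in substance, not only in form: there exist polarized Hodge isometries that are not parallel transports and that are not induced by any isomorphism, which is precisely what forces the arithmetic restriction on $m$ in the statement.
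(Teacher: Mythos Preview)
The paper does not prove this theorem; it is stated with a citation to \cite[Theorem~1.3]{marsur}. Your sketch is essentially the standard deduction from Verbitsky's global Torelli theorem via Markman's computation of the monodromy group, and it is correct in outline.

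There is, however, a slip in your justification that $\phi_0\circ\phi\in O^+(H^2(X',\Z),q_{X'})$. For a lattice of signature $(3,b_2-3)$ the open cone $\{x\mid q(x)>0\}$ is \emph{connected}, so it cannot single out an index-$2$ subgroup of the orthogonal group; your sentence about ``permuting the two connected components'' is simply false in this signature. The correct characterization of $O^+$ here is the subgroup preserving the orientation of a maximal positive-definite subspace (equivalently, the kernel of the real spinor norm). To see that $\phi$ is orientation-preserving in this sense you must use \emph{both} hypotheses of the theorem, not just $\phi(H')=H$: a positive-definite $3$-plane in $H^2(X,\R)$ can be taken as the span of $\mathrm{Re}(\omega)$, $\mathrm{Im}(\omega)$, and a K\"ahler class; the condition $\phi_\C(H^{2,0}(X'))=H^{2,0}(X)$ forces $\phi$ to respect the complex (hence real oriented) structure on the $2$-plane $(H^{2,0}\oplus H^{0,2})_\R$, and $\phi(H')=H$ with $H,H'$ ample handles the remaining direction. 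Parallel transport operators are orientation-preserving for the same reason, so $\phi_0\circ\phi\in O^+=\Mon^2(X')$ as you claim. With this correction your argument goes through.
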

 
We will state a version of this theorem valid for all $m$ in the next section (Theorem~\ref{torthhk2}).\ In this present form, when $m-1$ is not a prime power, the isometry $\phi$ needs to satisfy additional conditions for it to be induced by an isomorphism between $X$ and $X'$.\ For example, when $X=X'$, the isometry $\phi$ needs to be in the group $\widehat O(H^2(X,\Z),q_X)$ defined above.

\subsection{The period map}\label{sec28}
Let  ${}^m\!\!\cM_\tau$ be the  (not necessarily irreducible!) 20-dimensional quasiprojective moduli space of polarized \hKm s of $\kkk[m]$-type with fixed  polarization type $\tau$, that is, $\tau$ is the $O(\Lkkk[m] )$-orbit of a primitive element $h_\tau$ with positive square (see Section~\ref{sec25}).\ Fix such an element $h_\tau\in \Lkkk[m]$ and define as in Section \ref{sec17} a 20-dimensional (nonconnected) complex manifold
\begin{eqnarray*}
\Omega_{h_\tau}&:=&\{[x]\in \P( h_\tau^\bot \otimes\C)\mid x\cdot x=0,\ x\cdot\overline{x}>0\}\\
&=&\{[x]\in \P( \Lkkk[m] \otimes\C)\mid x\cdot h_\tau = x\cdot x=0,\ x\cdot\overline{x}>0\}
.
\end{eqnarray*}
Instead of taking the quotient by the action of the group
$O(\Lkkk[m] ,h_\tau):=\{\phi\in O(\Lkkk[m] )\mid \phi(h_\tau) =h_\tau\}
$ as we did in the K3 surface case,    we consider, according to the discussion in Section~\ref{sec27}, the (sometimes) smaller group
$$\widehat O(\Lkkk[m] ,h_\tau):=\{\phi\in O(\Lkkk[m] )\mid \phi(h_\tau) =h_\tau \textnormal{ and $\phi$ acts   as $\pm\Id$ on } D(\Lkkk[m])\}.
$$
The quotient
$$\cP_\tau:=\widehat O(\Lkkk[m] ,h_\tau)\backslash \Omega_{h_\tau}$$
  is again an irreducible\footnote{As in footnote \ref{misl},  one usually chooses one component $\Omega_{h_\tau}^+$ of  $\Omega_{h_\tau} $, so that  $\cP_\tau =\widehat O^+(\Lkkk[m] ,h_\tau)\backslash \Omega_{h_\tau}^+$ (see \cite[Theorem 3.14]{ghssur}).} quasiprojective normal variety of dimension 20 and one can define an algebraic  period map
\begin{eqnarray*}
\wp_\tau\colon
 {}^m\!\!\cM_\tau\lra\cP_\tau.
\end{eqnarray*}
When the polarization type is determined by its degree $2n$ and its divisibility $\gamma$, we will also write $ {}^m\!\cP^{(\gamma)}_{2n}$ instead of $\cP_\tau$ and $ {}^m\wp^{(\gamma)}_{2n}$ instead of $\wp_\tau$.

 The Torelli theorem now takes the following form (\cite[Theorem~3.14]{ghssur}, \cite[Theorem~8.4]{marsur}).
 
\begin{theo}[Verbitsky, Markman; Torelli theorem, second version]\label{torthhk2}
Let $m$ be an integer with $m\ge 2$ and let $\tau$ be a polarization type of $\Lkkk[m]$.\ The restriction of the period map $\wp_\tau$  to any irreducible component of ${}^m\!\!\cM_\tau$ is an open embedding.
\end{theo}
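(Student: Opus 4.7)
The strategy combines a local Torelli statement with the general-$m$ version of Theorem~\ref{torthhk} recalled in Section~\ref{sec27}, in which the Hodge isometry $\phi$ is required to be a \emph{parallel transport operator}, equivalently (by Markman's identification) an element of $\Mon^2=\widehat O^+$.

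\emph{Local Torelli and étaleness.} For any polarized \hKm\ $(X,H)$ of $\kkk[m]$-type, the Kuranishi family is smooth of dimension $h^{1,1}(X)-1=20=\dim\cP_\tau$ by Bogomolov--Tian--Todorov (using $K_X\isom\cO_X$). Contraction with the symplectic form $\omega$ identifies $H^1(X,T_X)$ with $H^1(X,\Omega^1_X)$; restricting to classes orthogonal to $H$ on both sides gives a natural isomorphism between the tangent space $T_{[(X,H)]}\,{}^m\!\!\cM_\tau$ and $T_{[\omega]}\cP_\tau$, which is precisely the differential of $\wp_\tau$. Hence $\wp_\tau$ is étale at every point.

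\emph{Injectivity on each component.} Fix an irreducible component $\cN\subset{}^m\!\!\cM_\tau$ and suppose $[(X,H)],[(X',H')]\in\cN$ satisfy $\wp_\tau(X,H)=\wp_\tau(X',H')$. Choose markings $\phi\colon H^2(X,\Z)\isomto\Lkkk[m]$ and $\phi'\colon H^2(X',\Z)\isomto\Lkkk[m]$ with $\phi(H)=\phi'(H')=h_\tau$. Equality of periods yields $g\in\widehat O(\Lkkk[m],h_\tau)$ with $g(\phi'_\C H^{2,0}(X'))=\phi_\C H^{2,0}(X)$; after replacing $\phi'$ with $g\circ\phi'$, the composition $\psi:=\phi^{-1}\circ\phi'\colon H^2(X',\Z)\isomto H^2(X,\Z)$ is a Hodge isometry sending $H'$ to $H$ and acting as $\pm\Id$ on discriminant groups. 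Since $[(X,H)]$ and $[(X',H')]$ belong to the same component $\cN$, a path between them produces, via parallel transport in $R^2\pi_*\Z$ along the corresponding polarized family, a polarized parallel transport operator $\psi_0\colon H^2(X',\Z)\isomto H^2(X,\Z)$ also mapping $H'$ to $H$. Then $\psi\psi_0^{-1}$ is an integral isometry of $H^2(X,\Z)$ fixing $H$, acting by $\pm\Id$ on $D(H^2(X,\Z))$, and preserving the positive cone (both $\psi$ and $\psi_0$ send the ample class $H'$ to the ample class $H$), so $\psi\psi_0^{-1}\in\widehat O^+(H^2(X,\Z),q_X)=\Mon^2(X)$. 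Consequently $\psi$ is itself a polarized parallel transport operator, and the general-$m$ form of Theorem~\ref{torthhk} (cf.\ \cite[Theorem~1.3]{marsur}) yields an isomorphism $\sigma\colon X\isomto X'$ with $\sigma^*=\psi$; in particular $[(X,H)]=[(X',H')]$.

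\emph{Conclusion and main difficulty.} The restriction $\wp_\tau|_\cN$ is an injective étale morphism between quasiprojective normal varieties of dimension 20, hence an open embedding. The principal subtlety is the monodromy step: equality of periods supplies only an element of $\widehat O$, whereas Markman's refined Torelli requires a parallel transport operator, and when $m-1$ is not a prime power $\widehat O^+\subsetneq O^+$, so these are \emph{a priori} different conditions. The restriction to a single irreducible component is essential: it produces the comparison operator $\psi_0$ that converts the given element of $\widehat O$ into a self-isometry of $H^2(X,\Z)$, where Markman's identification $\Mon^2(X)=\widehat O^+(H^2(X,\Z),q_X)$ applies to close the argument. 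Different components of ${}^m\!\!\cM_\tau$ may then have overlapping images in $\cP_\tau$, consistent with the fact that $\wp_\tau$ itself need not be globally injective.
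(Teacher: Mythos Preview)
The paper does not give a proof of this theorem; it is stated with references to \cite[Theorem~3.14]{ghssur} and \cite[Theorem~8.4]{marsur}. Your overall strategy---local Torelli for \'etaleness, then Markman's refined polarized Torelli theorem for injectivity on each component---is essentially what underlies those references.

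There is, however, a gap in the injectivity step. You choose the markings $\phi$ and $\phi'$ \emph{independently} (subject only to $\phi(H)=\phi'(H')=h_\tau$), build $\psi=\phi^{-1}g\phi'$ with $g\in\widehat O(\Lkkk[m],h_\tau)$, and then assert that $\psi\psi_0^{-1}$ acts as $\pm\Id$ on $D(H^2(X,\Z))$. This does not follow: on discriminant groups, $\psi\psi_0^{-1}$ induces $\pm$ the composite of the maps induced by $\phi^{-1}$, $\phi'$, and $\psi_0^{-1}$, and since $\phi'$ may be altered by an arbitrary element of $O(\Lkkk[m],h_\tau)$---whose image in $O(D(\Lkkk[m]))$ is strictly larger than $\{\pm\Id\}$ once $m-1$ is not a prime power (cf.\ footnote~\ref{foo27})---there is no reason for this composite to be $\pm\Id$. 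The repair is immediate but requires reordering: produce $\psi_0$ \emph{first}, then choose the second marking compatibly with it by setting $\phi':=\phi\circ\psi_0$ (legitimate since $\phi'(H')=\phi(\psi_0(H'))=\phi(H)=h_\tau$). With this choice $\psi\psi_0^{-1}=\phi^{-1}g\phi$, which fixes $H$ and acts on the discriminant group exactly as $g$ does, hence lies in $\widehat O^+(H^2(X,\Z),q_X)=\Mon^2(X)$, and your argument concludes. A minor secondary point: ${}^m\!\!\cM_\tau$ is only a coarse moduli space and carries no universal family, so ``parallel transport along a path in $\cN$'' should be read via local Kuranishi families or via a lift to the marked moduli space ${}^m\mathfrak{M}$, as in \cite[Section~7]{marsur}.
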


As we did in Section \ref{sec17} for K3 surfaces (Proposition~\ref{prop19}), we will determine the image of the period map in Section \ref{sec210} (at least when $m=2$).

\begin{rema}
Recall the following facts about the irreducible components of ${}^m\!\!\cM_\tau$:
\begin{itemize}
\item when the divisibility $\gamma$  satisfies $\gamma=2$ or $\gcd(\frac{2n}{\gamma},\frac{2m-2}{\gamma},\gamma)=1$, the polarization type $\tau$ is determined by its square $2n$ and $\gamma$ (in other words, ${}^m\!\!\cM_\tau={}^m\!\!\cM^{(\gamma)}_{2n}$);
\item when  $\gamma=1$, or $\gamma=2$ and $n+m\equiv 1 \pmod4$, the spaces ${}^m\!\!\cM_\tau={}^m\!\!\cM^{(\gamma)}_{2n}$ are irreducible   (Theorem \ref{ghsa}).
\end{itemize} 
\end{rema}

 \begin{rema}[Strange duality]\label{stdu}\hskip-2mm\footnote{This strange duality was already noticed  in \cite[Proposition 3.2]{apot}.}
We   assume here $\gamma=1$, or 
$\gamma=2$ and $n+m\equiv 1 \pmod4$, 
so that the moduli spaces $ {}^m\!\!\cM^{(\gamma)}_{2n}$ are irreducible.\ Recall from Remark~\ref{remsym} the isomorphism of lattices $\L^{(\gamma)}_{\KKK^{[m]},2n}\isom \L^{(\gamma)}_{\KKK^{[n+1]},2m-2}$.\ It translates\footnote{\label{foot21}The compatibility of the group actions were checked by J.~Song; see also \cite{apot}.}
 into an isomorphism
 \begin{equation}\label{ppp}
  {}^m\!\cP^{(\gamma)}_{2n}\isomlra   {}^{n+1}\!\cP^{(\gamma)}_{2m-2} 
  \end{equation} 
hence into a birational isomorphism
\begin{equation}\label{mmm}
 {}^m\!\!\cM^{(\gamma)}_{2n}\isomdra   {}^{n+1}\!\cM^{(\gamma)}_{2m-2}. 
\end{equation}
 So there is a way to associate with a general polarized \hKm\ $(X,H)$ of dimension $2m$, degree $2n$, and divisibility $\gamma\in\{1,2\}$ another polarized \hKm\ $(X',H')$ of dimension $2n+2$, degree $2m-2$, and same divisibility.\ Note that by formula \eqref{chism}, we have $h^0(X,H)=h^0(X',H')$.\ A complete strange duality in the sense of Le Potier would also require a canonical isomorphism 
 \begin{equation}\label{caniso}
 H^0(X,H)\isomlra H^0(X',H')^\vee.
\end{equation}
 \end{rema}

\begin{exam}
There is a very nice interpretation of the birational isomorphism \eqref{mmm} when $m=2 $, $n=3$, and $\gamma=2$.\ Recall from Section~\ref{sec261} that a general element of $ {}^2\!\!\cM^{(2)}_{6}$ is the variety $F(W)\hra  \Gr(2,6)\subset \P^{14}$ of lines contained in a smooth cubic hypersurface $W\subset \P^5$.\ In Section~\ref{sec262}, we explained the construction of an element 
$\xymatrix    @C=25pt
{X(W) \ar@{-->>}[r]^-{72:1} &\Gr(4,6)\subset\P^{14}}$  of $\, {}^4\!\!\cM^{(2)}_{2}$ associated with $W$; this is the strange duality correspondence.\footnote{The fact that the period points of $F(W)$ and  $X(W)$ are identified via the isomorphism~\eqref{mmm} is proved in \cite[Proposition~1.3]{lpz} (see  footnote~\ref{idea}).}\
Geometrically, one can recover $W$ (hence also $F(W)$)  from $X(W)$ as one of the components of the fixed locus of the canonical involution   on $X(W)$  (footnote~\ref{foot24}).\ There is also a canonical rational map
\begin{equation}\label{fwxw}
F(W)\times F(W)\dra X(W) 
\end{equation}
of degree 6 described geometrically in   \cite[Proposition~4.8]{voi} and the canonical involution on $X(W)$ is induced by   the involution of $F(W)\times F(W) $ that interchanges the two factors.

Similarly, a Debarre--Voisin fourfold $X\subset \Gr(6,10) $ (a general element of $\,{}^2\!\!\cM^{(2)}_{22}$; see Section \ref{sec261})  has an associated \hKm\ $(X',H')\in   {}^{12}\!\!\cM^{(2)}_{2}$ of dimension $24$ with $(H')^{24}= 2^{12}\frac{24!}{12!2^{12}}=\frac{24!}{12! }$.\   By analogy with the construction above, could it be that there is a dominant  rational map $\xymatrix    @C=25pt
{X' \ar@{-->>}[r]
&\Gr(4,10)}$?

Finally, a general element $(X,H)$ of $\,{}^2\!\!\cM^{(2)}_{38}$ (geometrically described in Section \ref{sec261}) should have an associated \hKm\ $(X',H')\in   {}^{20}\!\!\cM^{(2)}_{2}$ of dimension $40$, but I have again no idea how to construct it geometrically.
 \end{exam}
 
 \begin{exam}
Let $(S,L)$ be a polarized K3 surface with $\Pic(S)=\Z L$ and $L^2=2e$.\ In \cite[Section~5.3]{og8}, O'Grady considers, for each positive integer $r$,
\begin{itemize}
\item on the one hand, the moduli space $\cM(r,L,r)$, a \hKm\ of type $\KKK^{[m]}$, where $m:=1-r^2+e$, and a line bundle $H$ on $\cM(r,L,r)$, of square 2;
\item on the other hand, the fourfold $S^{[2]}$,  with the line bundle $L_2-r\delta$, of square $2e-2r^2=2m-2$
\end{itemize}
(see Remark~\ref{rema45}).\ We assume $e>r^2$, so that $m>1$.\

By \cite[Corollary~4.15]{og8}, the line bundle $H$ is ample whenever $ e\le r^2+2r-1$.\footnote{Note that $H$ is   movable and nef on $\cM(r,L,r)$ for all $e>r^2$ (see footnotes~\ref{nf} and~\ref{nf2}).}\ The line bundle $L_2-r\delta$ is ample whenever $r$ is less than the slope $\nu_e$ of the nef cone of $S^{[2]}$, which we computed in Example~\ref{exa217}.\ This is the case since $r^2<e$ (Exercice~\ref{lbound}). 

Whenever $r^2< e\le r^2+2r-1$, the pair $( \cM(r,L,r),H)$ therefore defines a point of the moduli space  ${}^m\!\!\cM^{(1)}_2$, the pair $( S^{[2]}, L_2-r\delta)$   defines a point of the moduli space  ${}^2\!\!\cM^{(1)}_{2m-2}$, and it follows from the very construction of these objects that they correspond under the strange duality isomorphism \eqref{mmm} (see \cite[Section~5.3]{og8}).\ Note that one obtains in this fashion infinitely many Heegner divisors in both moduli spaces that correspond under the isomorphism \eqref{mmm}  (just take any $r\ge m/2$ and $e:=m+r^2-1$).

Regarding the existence of the canonical isomorphism \eqref{caniso}, O'Grady constructed in \mbox{\cite[(5.3.8)]{og8}} a canonical linear map
\begin{equation*}\label{sdua}
H^0(\cM(r,L,r), H)^\vee \isomlra H^0(S^{[2]}, L_2-r\delta)
\end{equation*}
and conjectures in 
 \cite[Statement~5.15]{og8} that it is an isomorphism.\ He   gave in \cite[Claim~5.16]{og8} a geometric proof of this conjecture   when $r=2$ and $e\in\{5,6,7\}$.\ This result was vastly extended in \cite[Theorems~1 and~1A]{maop} to an isomorphism
 \begin{equation*} 
H^0(\cM(r,L,s), H)^\vee \isomlra H^0(\cM(r',L,s'), H')
\end{equation*}
where $r$ and $s$ are integers that satisfy the following conditions
\begin{itemize}
\item $r\ge 2$, $r'\ge 2$, $r+s\le 0$, $r'+s'\le 0$;
\item $2e+rs'+r's=0$;
\item $e-rs\ge (r-1)(r^2+1)$, $e-r's'\ge (r'-1)(r^{\prime2}+1)$
\end{itemize}
(the \hKm s $\cM(r,L,s)$ and $\cM(r',L,s')$ are strange duals, of respective dimensions  $2(e-rs+1)$ and $2(e-r's'+1)$). But they do not seem to prove that the (theta)  divisors $H$ and $H'$ are ample.
\end{exam}

\begin{rema}\label{rem220}
There is a chain of subgroups\footnote{\label{foot20}Let $G$ be the subgroup $\Lkkk[m] / (\Z h_\tau\oplus h_\tau^\bot)$ of $D(\Z h_\tau)\times D(h_\tau^\bot)$.\ An element of $O(h_\tau^\bot)$ is in $O(\Lkkk[m] ,h_\tau)$ if and only if it induces the identity on $p_2(G)\subset D(h_\tau^\bot)$.\ This is certainly the case if it is in $\widetilde O(h_\tau^\bot)$ and the lift is then in $\widetilde O(\Lkkk[m] ,h_\tau)$ since it induces the identity on $D(\Z h_\tau)\times D(h_\tau^\bot)$, hence on its subquotient $D(\Lkkk[m])$.} of finite index
$$\widetilde O(h_\tau^\bot)\xhookrightarrow{\ \iota_1\ }\widetilde O(\Lkkk[m] ,h_\tau)\xhookrightarrow{\ \iota_2\ } \widehat O(\Lkkk[m] ,h_\tau)\xhookrightarrow{\ \iota_3\ }   O(\Lkkk[m] ,h_\tau)\xhookrightarrow{\ \iota_4\ } O(h_\tau^\bot)
$$
(the  lattices $h_\tau^\bot$ are described in \cite[Proposition~3.6.(iv)]{ghs}) hence  a further finite  morphism
$$\cP_\tau=\widehat O(\Lkkk[m] ,h_\tau)\backslash \Omega_{h_\tau}\lra O(h_\tau^\bot)\backslash \Omega_{h_\tau}$$
which is sometimes nontrivial.\footnote{\label{foo27}The following holds:
\begin{itemize}
\item the  inclusion $\iota_1$ is an equality if $\gcd(\frac{2n}{\gamma},\frac{2m-2}{\gamma},\gamma)=1$ (\cite[Proposition~3.12(i)]{ghs});
\item the  inclusion $\iota_2$ is an equality if $m=2$, and has index 2 if $m>2$ (\cite[Remark~3.15]{ghssur});
\item the inclusion $\iota_3$ defines a normal subgroup and is an equality if $m-1$ is a prime power (Section~\ref{sec27});
\item the inclusion $\iota_3\iota_2$ defines a normal subgroup and, if $\gcd(\frac{2n}{\gamma},\frac{2m-2}{\gamma},\gamma)=1$, the corresponding quotient  is the group   $(\Z/2\Z)^\alpha$, where  $\alpha= \rho\bigl( \frac{m-1}{\gamma}\bigr)$ when $\gamma$ is odd, and $\alpha= \rho\bigl( \frac{2m-2}{\gamma}\bigr)+\eps$ when $\gamma$ is even, with  $\eps=1$ if $ \frac{2m-2}{\gamma}\equiv 0\pmod8$, $\eps=-1$ if $ \frac{2m-2}{\gamma}\equiv 2\pmod4$, and $\eps=0$ otherwise (\cite[Proposition~3.12(ii)]{ghs}).
\end{itemize}}

Assume $\gamma\in\{1,2\}$.\ When $n=m-1$ (and $n$ even when $\gamma=2$), \eqref{ppp} gives an involution  of $ {}^m\!\cP^{(\gamma)}_{2m-2}$ which corresponds to the element of $O(h_\tau^\bot)$ that switches the two factors of $D(h_\tau^\bot)$.\ This isometry is not in  $O(\Lkkk[m] ,h_\tau)$ (hence the involution of $ {}^m\!\cP^{(\gamma)}_{2m-2}$ is nontrivial), except when $n=\gamma=2$, where the involution is trivial.

Assume now $m=2$.\
The inclusions $\iota_1$, $\iota_2$, and $\iota_3$ are then equalities
and   the group $O(h_\tau^\bot)/\widetilde O(h_\tau^\bot)\isom O(D(h_\tau^\bot))$ acts on $\cP_\tau$ (where  $-\Id$   acts trivially).\ Here are some examples:
\begin{itemize}
\item if $\gamma=1$ and $n$ is odd, we obtain a generically free action of the group $(\Z/2\Z)^{\rho(n)-1}$ when $n\equiv -1\pmod4$, and of the group $(\Z/2\Z)^{\max\{\rho(n),1\}}$ when $n\equiv 1\pmod4$.\footnote{By \eqref{eq1}, we have   $D(h_\tau^\bot)\isom \Z/2n\Z\times \Z/2\Z$, with $q(1,0)=-\frac{1}{2n}$ and $q(0,1)=-\frac12$ in $\Q/2\Z$.\ 
When $n\equiv -1\pmod4$, one checks that any isometry must leave each factor of $D(h_\tau^\bot)$ invariant, so that $O(D(h_\tau^\bot))\isom O(\Z/2n\Z)$.\ When $n\equiv 1\pmod4$, there are extra isometries $(0,1)\mapsto (n,0)$, $(1,0)\mapsto (a,1)$, where $a^2+n\equiv 1\pmod{4n}$.}
\item if $\gamma=2$ and $n\equiv -1\pmod4$, we obtain a generically free action of the group $(\Z/2\Z)^{\rho(n)-1}$.
\end{itemize}
These free actions translate into the existence of nontrivial (birational) involutions on the dense open subset ${}^2\!\!\cM^{(\gamma)}_{2n}$ of ${}^2\!\cP^{(\gamma)}_{2n}$.\ When $\gamma=n=1$, O'Grady gave in \cite{og2} a geometric description of the corresponding involution (general elements of ${}^2\!\!\cM^{(1)}_{2}$ are  double EPW sextics).\ When $\gamma=2$, all cases where we have a geometric description of  general elements of ${}^2\!\!\cM^{(2)}_{2n}$   have $n$ prime (Section~\ref{sec261}), so there are no involutions.

  The quotient of ${}^2\!\cP^{(1)}_{2 }$ by its nontrivial involution is isomorphic to   ${}^3\!\cP^{(2)}_{4}$.\   The authors of \cite{ikkr1}   speculate that given a Lagrangian $A$, the image of the period point of the double EPW sextic $\widetilde Y_A\in {}^2\!\cP^{(1)}_{2 }$  is the period point of the EPW cube associated with $A$ (private discussion).
\end{rema}

\subsection{The Noether--Lefschetz locus}\label{sec29}

Let $(X,H)$ be a polarized \hKm\ of $\kkk[m]$-type with period  $p(X,H)\in \Omega_{h_\tau} $.\ 
As in the case of K3 surfaces, the Picard group of $X$ can be identified with the subgroup of $\Lkkk[m]$ generated by  
$$p(X,H)^\bot \cap \Lkkk[m]
$$
(which contains $h_\tau$).\
This means that if the period of  $(X,H)$ is outside the countable union 
$$\bigcup_{K} \P(K^\bot\otimes\C)\subset \Omega_{h_\tau}$$ 
of hypersurfaces, where $K$ runs over the countable set of primitive, rank-2, signature-$(1,1)$ sublattices of $\Lkkk[m]$ containing $h_\tau$,
 the group $\Pic(X)$ is generated by $H$.\ We let $\cD_{\tau,K} \subset\cP_\tau$ be the image of $  \P(K^\bot\otimes\C)$   and set, for each positive integer $d$,
\begin{equation}\label{unionh}
\cD_{\tau,d}:=\hskip-5mm\bigcup_{K,\ \disc(K^\bot)=-d} \hskip-5mm\cD_{\tau,K}\subset\cP_\tau
\end{equation}
(these are {\em Heegner divisors}; see Section~\ref{sec17}).\
The inverse image in ${}^m\!\!\cM_\tau$  of $\bigcup_d \cD_{\tau,d}$ by the period map is called the {\em Noether--Lefschetz locus.}\ It consists of  (isomorphism classes of) \hKm s of  $\KKK^{[m]}$-type with a polarization of type $\tau$ whose Picard group has rank at least 2.

\begin{lemm}\label{le324}
Fix a polarization type $\tau$ and a positive integer $d$.\ The subset  $\cD_{\tau,d}$  is a finite union of (algebraic) hypersurfaces of $\cP_\tau $.
\end{lemm}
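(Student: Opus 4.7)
The plan is to combine an elementary discriminant bound with Eichler's criterion.

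First, each $\cD_{\tau,K}$ appearing in \eqref{unionh} is an algebraic hypersurface: since $K$ contains the class $h_\tau$ of positive square, $K^\bot\subset h_\tau^\bot$ has rank $21$, so $\P(K^\bot\otimes\C)$ meets $\Omega_{h_\tau}$ in a divisor whose image in $\cP_\tau$ is algebraic by Baily--Borel. The content of the lemma is thus to show that, for $d$ fixed, only finitely many $\widehat O(\Lkkk[m],h_\tau)$-orbits of such sublattices $K$ can occur.

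I would parametrize each $K$ by the primitive generator $v$ of the rank-$1$ sublattice $K\cap h_\tau^\bot$; note that $v$ is automatically primitive in $\Lkkk[m]$, since any proper divisor of $v$ in $\Lkkk[m]$ would again lie in $h_\tau^\bot$ and, by primitivity of $K$, in $K\cap h_\tau^\bot=\Z v$. The hypersurface $\cD_{\tau,K}$ is recovered from $v$ alone as the image of $\{[x]\in\Omega_{h_\tau}\mid x\cdot v=0\}$. Writing $K=\Z h_\tau+\Z w$ with $w$ a lift to $\Lkkk[m]$ of a primitive generator of $K/\Z h_\tau$, the condition $v\perp h_\tau$ forces the index $k:=[K:\Z h_\tau+\Z v]$ to divide $h_\tau^2=2n$, and the identity $\disc(\Z h_\tau+\Z v)=h_\tau^2\,v^2=k^2\disc(K)$ then gives $|v^2|\le 2n\,|\disc(K)|$. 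Combining this with the standard relation $|\disc(K)|\cdot|\disc(K^\bot)|=|\disc(\Lkkk[m])|\cdot|\Lkkk[m]/(K\oplus K^\bot)|^2$ and the elementary bound $|\Lkkk[m]/(K\oplus K^\bot)|\le|\disc(K^\bot)|=d$ yields $|\disc(K)|\le 2(m-1)d$, hence the uniform bound $|v^2|\le 4n(m-1)d$.

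Since $h_\tau^\bot$ contains two orthogonal copies of $U$ (see \eqref{eq1}--\eqref{eq2} when $\gamma\in\{1,2\}$, and \cite[Proposition~3.6(iv)]{ghs} in general), Eichler's criterion (Theorem~\ref{eic}) asserts that the $\tO(h_\tau^\bot)$-orbit of a primitive vector $v\in h_\tau^\bot$ is determined by the finite data $(v^2,v_*)\in\Z\times D(h_\tau^\bot)$. The bound on $|v^2|$ and the finiteness of $D(h_\tau^\bot)$ then produce only finitely many orbits; the finite-index chain of Remark~\ref{rem220} and footnote~\ref{foo27} between $\tO(h_\tau^\bot)$ and $\widehat O(\Lkkk[m],h_\tau)$ transfers this to finitely many $\widehat O(\Lkkk[m],h_\tau)$-orbits in $\cP_\tau$, proving the lemma. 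The main obstacle I anticipate is the careful lattice bookkeeping that produces the uniform bound on $|v^2|$ (in particular the divisibility $k\mid h_\tau^2$); once this is in place, Eichler's criterion does the rest.
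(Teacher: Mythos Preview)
Your proof is correct and follows essentially the same strategy as the paper: parametrize each $K$ by the primitive generator $v$ of $K\cap h_\tau^\bot$, bound $|v^2|$ uniformly in terms of $d$ and the fixed invariants of the polarization, and then invoke Eichler's criterion together with the finiteness of $D(h_\tau^\bot)$. The only difference is in the bookkeeping for the bound on $|v^2|$: the paper applies the single formula $d=|\kappa^2|\cdot\disc(h_\tau^\bot)/s^2$ with $s=\div_{h_\tau^\bot}(\kappa)$ (from \cite[Lemma~7.5]{ghssur}) and the inequality $s\le\disc(h_\tau^\bot)$, whereas you reach an equivalent bound in two steps via $\disc(K)$ and the glue group of $K\oplus K^\bot$ in $\Lkkk[m]$.
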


\begin{proof}
Let $K$ be a primitive, rank-2, signature-$(1,1)$ sublattice  of $\Lkkk[m]$ containing $h_\tau$ and let $\kappa$ be a generator of $K\cap h_\tau^\bot$.\  Since $K$ has signature $(1,1)$, we have $\kappa^2<0$ and  the formula   from~\cite[Lemma~7.5]{ghssur}   reads
\begin{equation*}
d=\left|\disc(K^\bot)\right|=\left|\frac{\kappa^2\disc(h_\tau^\bot)}{s^2}\right|   ,
\end{equation*}
where $s $ is the divisibility of $\kappa$ in $ h_\tau^\bot$.\ Since $\kappa$ is primitive in $h_\tau^\bot$, the integer $s$ is also the order of  the element $\kappa_*$ in the discriminant group $D(h_\tau^\bot)$  (Section \ref{seclat}).\ In particular, we have $s\le \disc(h_\tau^\bot)$, hence
$$
|\kappa^2|= ds^2/ \disc(h_\tau^\bot) \le d\disc(h_\tau^\bot).
$$
Since $\tau$ is fixed, so is the (isomorphism class of the) lattice $h_\tau^\bot$, hence $\kappa^2$ can only take finitely many values.\ Since the element $\kappa_*$ of the (finite) discriminant group $D(h_\tau^\bot)$ can only take finitely many values, Eichler's criterion (Section \ref{seclat}) implies that $\kappa$ belongs to finitely many 
$ \widetilde O(h_\tau^\bot) $-orbits, hence to finitely many 
$\widehat O(\Lkkk[m] ,h_\tau) $-orbits.\ Therefore, the images in $\cP_\tau $ of the hypersurfaces $\cD_{\tau,K}$ form finitely many hypersurfaces.
\end{proof}

Describing the irreducible components of the loci $\cD_{\tau,d}$  is a lattice-theoretic question (which we answer   below in the case $m=2$).\ When the divisiblity $\gamma$ is   1 or 2, the polarization type only depends on $\gamma$ and the integer $h_\tau^2=:2n$ (Section \ref{sec25}).\ We use the notation ${}^m\cD^{(\gamma)}_{2n,d}$ instead of 
$\cD_{\tau,d}$ (and $ {}^m\!\cP^{(\gamma)}_{2n}$ instead of $\cP_\tau$) and we let ${}^m\cC^{(\gamma)}_{2n,d}$ be the inverse image in ${}^m\cM^{(\gamma)}_{2n}$ of ${}^m\cD^{(\gamma)}_{2n,d}$  by the period map.

\begin{prop}[Debarre--Macr\`i]\label{propirr}
Let $n$ and $d$ be a positive integers and let $\gamma\in\{1,2\}$.\ If the locus ${}^2\cD_{2n,d}^{(\gamma)}$ is nonempty, the integer $d$ is even; we set $e:=d/2$.

\noindent {\rm (1)\hskip 6mm  (a)} The locus ${}^2\cD_{2n,2e}^{(1)}$ is nonempty if and only if   either $e$ or $e-n$ is a  square modulo~$4n$.
 
 {\rm (b)}  If $n$ is {\em square-free} and $e$ is divisible by $n$ and   satisfies the conditions in {\rm (a)}, 
 the locus ${}^2\cD_{2n,2e}^{(1)}$ is  an irreducible hypersurface, except when 
 \begin{itemize}
\item either $n\equiv 1\pmod4$ and $e\equiv  n\pmod{4n}$,
\item or   $n\equiv -1\pmod4$ and $e\equiv 0\pmod{4n}$,
\end{itemize}
  in which cases ${}^2\cD_{2n,2e}^{(1)}$ has two irreducible components.

{\rm (c)} If  $n$ is {\em prime} and $e$ satisfies the conditions in {\rm (a)}, ${}^2\cD_{2n,2e}^{(1)}$ is  an irreducible hypersurface, except when $n\equiv 1\pmod4$ and $e\equiv  1  \pmod{4}$,  or when $n\equiv -1\pmod4$ and  $e\equiv  0  \pmod{4}$,  in which cases  ${}^2\cD_{2n,2e}^{(1)}$ has two irreducible components.

\noindent {\rm (2)} Assume moreover $ n\equiv -1\pmod4$.

{\rm (a)}   The locus ${}^2\cD_{2n,2e}^{(2)}$ is nonempty if and only if    $e$ is a  square modulo~$n$.

{\rm (b)} If $n$ is {\em square-free and $ n\mid e$,} the locus ${}^2\cD_{2n,2e}^{(2)}$ is  an irreducible hypersurface.

{\rm (c)} If $n$ is {\em prime} and $e$ satisfies the conditions in {\rm (a)}, ${}^2\cD_{2n,2e}^{(2)}$ is  an irreducible hypersurface.
\end{prop}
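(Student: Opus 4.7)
The plan is to translate the enumeration of components of ${}^2\cD^{(\gamma)}_{2n,d}$ into a count of orbits of primitive vectors in $h_\tau^\bot$. A primitive rank-$2$, signature-$(1,1)$ sublattice $K\subset \Lkkk[2]$ containing $h_\tau$ is the saturation of $\Z h_\tau\oplus\Z\kappa$, where $\kappa$ generates $K\cap h_\tau^\bot$ primitively and is determined up to sign. Hence the irreducible components of ${}^2\cD^{(\gamma)}_{2n,d}$ biject with the orbits of such $\kappa$ under the action of $\widehat O(\Lkkk[2],h_\tau)$ combined with the involution $\kappa\mapsto -\kappa$. For $m=2$, the three inclusions in footnote~\ref{foo27} are all equalities: the gcd condition holds trivially, $\iota_2$ is an equality for $m=2$, and the ``hat'' condition is automatic since $D(\Lkkk[2])\cong \Z/2\Z$. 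Therefore $\widehat O(\Lkkk[2],h_\tau)=\tO(h_\tau^\bot)$, and since $h_\tau^\bot\supset M\supset U^{\oplus 2}$, Eichler's criterion (Theorem~\ref{eic}) classifies $\tO(h_\tau^\bot)$-orbits of primitive $\kappa$ by the pair $(\kappa^2,\kappa_*)\in 2\Z_{<0}\times D(h_\tau^\bot)$. Components of ${}^2\cD^{(\gamma)}_{2n,d}$ thus correspond to $\{\pm 1\}$-orbits (acting as $\kappa_*\mapsto -\kappa_*$) of such pairs satisfying the Eichler compatibility $\kappa^2/s^2\equiv \bar q(\kappa_*)\pmod{2\Z}$, with $s:=\mathrm{ord}(\kappa_*)$.

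Writing $\kappa^2=-2f$ with $f\in\Z_{>0}$, Lemma~\ref{le324} gives $d=2f|{\disc(h_\tau^\bot)}|/s^2$. The explicit presentations in footnotes~\ref{foo11n} and~\ref{foo11} yield $|{\disc(h_\tau^\bot)}|=4n$ with $D(h_\tau^\bot)\cong \Z/2\Z\oplus\Z/2n\Z$ when $\gamma=1$, and $|{\disc(h_\tau^\bot)}|=n$ with $D(h_\tau^\bot)\cong \Z/n\Z$ when $\gamma=2$ (using that $n$ is odd, which follows from $n\equiv -1\pmod 4$). Hence $e=d/2=4nf/s^2$ in the first case and $e=nf/s^2$ in the second, so $d$ is always even. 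Substituting the explicit form of $\bar q$ on these discriminant groups into the compatibility relation and eliminating $f$ and $s$ rewrites the constraint as
\[
n\alpha^2+\beta^2\equiv e\pmod{4n}\quad\bigl(\gamma=1,\ \kappa_*=(\alpha,\beta)\in \Z/2\Z\oplus\Z/2n\Z\bigr),
\]
respectively $\beta^2\equiv e\pmod n$ when $\gamma=2$. Splitting the first congruence according to $\alpha\in\{0,1\}$ yields the dichotomy of part~(1)(a), and the second congruence is exactly part~(2)(a).

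For parts~(b) and~(c), it remains to enumerate the $\{\pm 1\}$-orbits of solutions to these congruences, the involution acting as $\beta\mapsto -\beta$ (and trivially on $\alpha$), fixing a solution precisely when $\beta\in\{0,n\}$. For $\gamma=2$, the square-freeness or primality hypothesis on $n$ restricts the roots of $\beta^2\equiv e\pmod n$ to $\pm\beta_0$, which the involution fuses into a single orbit, handling~(2)(b) and~(2)(c). For $\gamma=1$, nonfixed solutions pair off under the involution, so the number of components equals the number of fixed solutions $(\alpha,\beta)\in\{0,1\}\times\{0,n\}$ plus the number of surviving nonfixed pairs; in the range considered this total is either $1$ or $2$. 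The main obstacle is a careful case analysis according to the value of $n\bmod 4$ and of $e\bmod 4n$ (in case (b)) or $e\bmod 4$ (in case (c)), to determine exactly when two fixed-or-paired orbits coexist. This analysis, controlled by the primality/square-freeness hypotheses (which force all roots of $\beta^2\equiv e\pmod{4n}$ to reduce to $\pm\beta_0$ modulo~$n$), should pinpoint the exceptional congruences of~(1)(b) and~(1)(c) as exactly the loci where two orbits coexist, while a single orbit survives otherwise.
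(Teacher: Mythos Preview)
Your approach coincides with the paper's: both reduce to counting $\tO(h_\tau^\bot)$-orbits of primitive $\kappa\in h_\tau^\bot$ via Eichler's criterion, then do case analysis. The paper parameterizes $\kappa=a(u-nv)+b\ell+cw$ explicitly and organizes the cases by the divisibility $s=\gcd(2na,2b,c)$ and by the parities of $a$ and $b$; you instead work directly with $\kappa_*=(\alpha,\beta)$ in the discriminant group, obtaining the single congruence $e\equiv n\alpha^2+\beta^2\pmod{4n}$ without intermediate coordinates. This is a cleaner repackaging of the same argument, not a different method.

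Two points to tighten. First, you should actually carry out the case analysis you defer with ``should pinpoint''. For (1)(b), once $n\mid\beta$ forces $\beta\in\{0,n\}$, the four candidate pairs $(\alpha,\beta)$ are all fixed by $\beta\mapsto-\beta$, so the component count is just the number of them satisfying the congruence; it remains to check for each residue of $n\bmod 4$ which of the four values $0,\,n,\,n^2,\,n+n^2\bmod 4n$ coincide (exactly $n\equiv n^2$ when $n\equiv 1$, and $0\equiv n+n^2$ when $n\equiv 3$). For (1)(c) with $n$ an odd prime, decompose $\beta\in\Z/2n\Z\cong\Z/2\Z\times\Z/n\Z$; the mod-$n$ part contributes a single $\{\pm\}$-orbit, and the remaining mod-$4$ equation $e\equiv n\alpha+\beta_2\pmod 4$ in $(\alpha,\beta_2)\in\{0,1\}^2$ has two solutions exactly in the stated exceptional cases. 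Second, Eichler's criterion as stated in the paper gives only \emph{uniqueness} of the orbit for given $(\kappa^2,\kappa_*)$; you are also using \emph{existence} of a primitive $\kappa$ for every compatible pair. The paper supplies this by explicit construction (``it is easy\dots to construct examples''); in your abstract setup you should either cite the standard existence statement for even lattices containing $U^{\oplus 2}$ or give the one-line construction.
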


In cases (1)(b) and (1)(c), when $ {}^2\cD_{2n,d}^{(1)}$ is reducible, its the  two components  are exchanged  by one of the involutions of the period space $\cP_\tau$ described at the end of Section~\ref{sec28} when $n\equiv 1\pmod4$, but not when $n\equiv -1\pmod4$ (in that case, these involutions  are in fact trivial when $n$ is prime).

\begin{proof}[Proof of the proposition]
\noindent{\bf Case $\gamma=1$.}\ 
 Let  $(u,v)$ be a standard basis for a hyperbolic plane $U$ contained in $\L_{\KKK^{[2]}}$ and let~$\ell$ be a basis for the $I_1(-2)$ factor.\ We may take  
$h_\tau:=u+nv
$ (it has the correct square and divisibility), in which case  $h_\tau^\bot=\Z( u-nv)\oplus\Z\ell\oplus M$, where  $M:=\{u,v,\ell\}^\bot=U^{\oplus 2}\oplus E_8(-1)^{\oplus 2}$ is unimodular.\ The   discriminant group  $D(h_\tau^\bot)\isom \Z/2\Z\times \Z/2n\Z$ is generated by $\ell_*=\ell/2$ and $(u-nv)_*=(u-nv)/2n$, with $\bar q(\ell_*)=-1/2$ and  $\bar q((u-nv)_*)=-1/2n$.

Let $\kappa$ be a generator of $K\cap h_\tau^\perp$.\ We write
$$\kappa=a(u-nv)+b\ell +cw,
$$
where $w\in M$ is primitive.\ Since $K$ has signature $(1,1)$, we have $\kappa^2<0$ and  the formula   from~\cite[Lemma~7.5]{ghssur}   reads
\begin{equation}\label{eqd}
d=\left|\disc(K^\bot)\right|=\left|\frac{\kappa^2\disc(h_\tau^\bot)}{s^2}\right|  = \frac{8n(na^2+b^2+rc^2)}{s^2}\equiv \frac{8n(na^2+b^2)}{s^2}\pmod{8n},
\end{equation}
where $r:=-\frac12 w^2$ and $s: =\gcd(2na,2b,c) $ is the divisibility of $\kappa$ in $ h_\tau^\bot$.\ If $s\mid b$, we obtain $d\equiv 2 \bigl( \frac{2na}{s}\bigr)^2\pmod{8n}$, which is the first case of the conclusion: $d$ is even and $e:=d/2$ is a square modulo $4n$.\ Assume  $s\nmid b$ and, for any nonzero integer $x$, write $x=2^{v_2(x)}x_{\textnormal{odd}}$, where $x_{\textnormal{odd}}$ is odd.\ One has then
$\nu_2(s)=\nu_2(b)+1
$ and
$$
d\equiv 2 \Bigl( \frac{2na}{s}\Bigr)^2+2n\Bigl( \frac{b_{\textnormal{odd}}}{s_{\textnormal{odd}}}\Bigr)^2 \equiv 2 \Bigl( \frac{2na}{s}\Bigr)^2+2n   \pmod{8n},
$$
which is the second case of the conclusion: $d$ is even and $d/2 -n$ is a square modulo $4n$.\ It is then easy, taking suitable integers $a$, $b$,~$c$, and vector $w$, to construct examples that show that these necessary conditions on $d$ are also sufficient.

We now prove (b) and (c).\ 
 
Given a lattice $K$ containing $h_\tau$ with $\disc(K^\perp)=-2e$, we let as above $\kappa$ be a generator of $K\cap h_\tau^\perp$.\  By Eichler's criterion (Theorem~\ref{eic}),   the group $\widetilde O(h_\tau^\bot)$ acts transitively on the set of primitive vectors $\kappa\in h_\tau^\bot$ of given square and fixed $\kappa_*\in D(h_\tau^\bot )$.\ Since $\kappa$ and $-\kappa$ give rise to the same lattice $K$ (obtained as the saturation of $\Z h_\tau\oplus \Z \kappa$), the locus $ \cD^{(1)}_{2n,2e} $ will be irreducible (when nonempty) if we show that
  the integer $e$ determines $\kappa^2$, and $\kappa_*$ up to sign.
 
We write as above $\kappa=a(u-nv)+b\ell +cw\in h_\tau^\bot$, with $\gcd(a,b,c)=1$ and    $s =\div_{h_\tau^\bot}(\kappa)=\gcd(2na,2b,c)$.\ From \eqref{eqd}, we get
\begin{equation}\label{kap}
\kappa^2=-es^2/2n=-2(na^2+b^2+rc^2)\quad\textnormal{and}\quad
\kappa_*= (2na/s,2b/s)\in \Z/2n\Z\times \Z/2\Z.
\end{equation}

If  $s=1$, we have $e \equiv 0\pmod{4n}$ and $\kappa_*=0$.
 
 If $s=2$, the integer $c$ is even and $a$ and $b$ cannot be both even (because $\kappa$ is primitive).\ We have $e=n(na^2+b^2+rc^2)$ and
 $$ \begin{cases} 
 e \equiv  n^2 \pmod{4n}\textnormal{\quad and\ }\kappa_*=(n,0) &\textnormal{if $b$ is even (and $a$ is odd);}\\
  e \equiv  n \pmod{4n}\textnormal{\quad and\ }\kappa_*=(0,1) &\textnormal{if $b$ is odd and $a$ is even;}\\
   e \equiv  n(n+1) \pmod{4n}\textnormal{\quad and\ }\kappa_*=(n,1) &\textnormal{if $b$ and $a$ are odd.}
 \end{cases}
 $$ 
 
 Assume now that $n$ is square-free and $ n\mid e$.\  From \eqref{eqd}, we get $n\mid \bigl( \frac{2na}{s}\bigr)^2$, hence $s^2\mid 4na^2$, and 
 $s\mid 2a$ because $n$ is square-free.\ This implies $s=\gcd(2a,2b,c)\in\{1,2\}$.\ 
 
 When $n$ is even (that is, $n\equiv 2\pmod4$),  we see from the discussion above that  both $s$ (hence also $\kappa^2$) and $\kappa_*$ are determined by $e$, so the corresponding hypersurfaces $ \cD^{(1)}_{2n,2e} $ are  irreducible.\ 
 
 If $n$ is odd, there are coincidences: 
 \begin{itemize}
 \item when $n\equiv 1\pmod4$, we have $ n\equiv  n^2 \pmod{4n}$, hence $ \cD^{(1)}_{2n,2e} $ is  irreducible when $e\equiv 0$ or $2n\pmod{4n}$,   has two irreducible components (corresponding to $\kappa_*=(n,0)$ and $\kappa_*=(0,1)$) when $e\equiv  n\pmod{4n}$, and is empty otherwise; 
 \item when $n\equiv -1\pmod4$, we have $ n(n+1)\equiv 0 \pmod{4n}$, hence $ \cD^{(1)}_{2n,2e} $ is  irreducible when $e\equiv  -n$ or $ n\pmod{4n}$,   has two irreducible components (corresponding to $\kappa_*=0$ and $\kappa_*=(n,1)$) when $e\equiv 0\pmod{4n}$, and is empty otherwise.\ 
 \end{itemize}
 This proves (b).
 
  We now assume  that $n$ is prime 
and
prove (c).\ Since $s\mid 2n$, we have $s\in\{1,2,n,2n\}$; the cases $s=1$ and $s=2$ were explained above.\ 
 If $s=n$ (and $n$ is odd), we have $n\mid b$, $n\mid c$, $n\nmid a$, and
  $$  
 e \equiv 4a^2 \pmod{4n}\textnormal{\quad and\quad }\kappa_*=(2a,0). $$ 
  If $s=2n$, the integer $c$ is even, $a$ and $b$ cannot be both even, $n\mid b$, and $n\nmid a$.\ We have
$$ \begin{cases} 
 e \equiv  a^2 \pmod{4n}\textnormal{\quad and\ }\kappa_*=(a,0) &\textnormal{if $2n\mid b$  (hence $a$ is odd);}\\
 e \equiv  a^2+ n \pmod{4n}\textnormal{\quad and\ }\kappa_*=(a,1) &\textnormal{if $b$ is odd (and $n$ is odd);}\\
 e \equiv  a^2+2 \pmod{8}\textnormal{\quad and\ }\kappa_*=(a,1) &\textnormal{if $4\nmid b$ is odd and $n=2$.}
 \end{cases}
 $$  
 
 When $n=2$, one checks that the class of $e$ modulo $8$ (which is in $\{0, 1,2,3, 4,6\}$) completely determines $s$, and $\kappa_*$ up to sign.\ The corresponding divisors $ \cD^{(1)}_{4,2e} $ are therefore all irreducible.
 
When $n\equiv 1\pmod4$, we have $ n\equiv  n^2 \pmod{4n}$ and $ a^2 \equiv  (n-a)^2+ n\pmod{4n}$ when $a$ is odd (in which case $ a^2 \equiv 1\pmod{4}$).\ When  $n\equiv -1\pmod4$, we have $ n(n+1)\equiv 0 \pmod{4n}$ and 
 $ a^2 \equiv  (n-a)^2+ n\pmod{4n}$ when $a$ is even (in which case $ a^2 \equiv 0\pmod{4}$).\ Together with changing $a$ into $-a$ (which does not change the lattice $K$), these are the only coincidences: the corresponding divisors  $ \cD^{(1)}_{2n,2e} $  therefore have two components and the others are irreducible.\ This proves (c).

\smallskip
\noindent{\bf Case $\gamma=2$}  (hence $n\equiv -1\pmod4$).\ We may take 
 $h_\tau:=2\bigl(u+\frac{n+1}{4}v\bigr)+\ell
$, in which case $h_\tau^\bot=\Z w_1\oplus \Z w_2\oplus M$, with    $w_1:= v+\ell$ and $w_2:=-u+\frac{n+1}{4}v$.\ The  intersection form on $\Z w_1\oplus \Z w_2$ has matrix    $\left(\begin{smallmatrix}-2&-1\\-1&-\tfrac{n+1}{2}\end{smallmatrix}\right)$ as in \eqref{eq2} and the discriminant group  $D(h_\tau^\bot)\isom \Z/n\Z$ is generated by $(w_1-2w_2)_*=(w_1-2w_2)/n$, with $\bar q((w_1-2w_2)_*)=-2/n$.

Let $(h_\tau,\kappa')$ be a basis for $K$, so that $\disc(K)=2n\kappa^{\prime 2}-(h_\tau\cdot \kappa')^2$.\ Since  $\div(h_\tau)=\gamma=2$, the integer $ h_\tau\cdot \kappa'$ is even and since $\kappa^{\prime 2}$ is also even (because $\L_{\KKK^{[2]}}$ is an even lattice), we have   $4\mid \disc(K) $ and $-\disc(K)/4$ is a square modulo $n$.\ Since the discriminant of $\L_{\KKK^{[2]}}$ is~2,  the integer $d=|\disc(K^\bot)|$ is either $2\,|\disc(K)|$ or $\tfrac12\,|\disc(K)|$, hence it is even and $e=d/2$ is a square modulo $n$, as desired.

Conversely, it is   easy to construct examples that show that these necessary conditions on $d$ are also sufficient.\ This proves (a).
 
We now prove (b) and (c).\ To prove that the loci $ \cD^{(2)}_{2n,2e} $ are irreducible (when nonempty), we need to show that $ e$ determines $\kappa^2$, and $\kappa_*$ up to sign (where $\kappa$ is a generator of $K\cap h_\tau^\bot$).

With the notation above, we have $\kappa= ((h_\tau\cdot \kappa')h_\tau-2n \kappa')/t$, where $t:=\gcd(h_\tau\cdot \kappa',2n)$ is even and $\kappa^2=\frac{2n}{t^2}\disc(K)$.\ 
Formula~\eqref{eqd} then gives
\begin{equation*}
2e=\left|\disc(K^\bot)\right|=\left|\frac{\kappa^2\disc(h_\tau^\bot)}{\div_{h_\tau^\bot}(\kappa)^2}\right|  = \left|\frac{2n^2\disc(K)}{t^2\div_{h_\tau^\bot}(\kappa)^2 }\right|.
\end{equation*}
Since $n$ is odd and $t$ is even, and, as we saw above, $\disc(K)\in\{-e,-4e\}$, the only possibility is $\disc(K)=-4e$ and $t\div_{h_\tau^\bot}(\kappa) =2n$.\ 

Assume that $n$ is square-free and $ n\mid e$.\ Since $-4e=\disc(K)=2n\kappa^{\prime 2}-(h_\tau\cdot \kappa')^2$, we get $2n\mid (h_\tau\cdot \kappa')^2$ hence, since $n$ is square-free and odd, $2n\mid h_\tau\cdot \kappa'$.\ This implies $t=2n$ and $\div_{h_\tau^\bot}(\kappa)=1$; in particular,   $\kappa_*=0$ and $\kappa^2=-2e/n$ are uniquely determined.\ This proves~(b).

We now assume that $n$ is prime.\
 Since $t\div_{h_\tau^\bot}(\kappa) =2n$ and $t$ is even, 
 \begin{itemize}
 \item either 
$(t,\div_{h_\tau^\bot}(\kappa),\kappa^2)=(2n,1,-2e/n)$ and  $n\mid  e$; 
\item or
$(t,\div_{h_\tau^\bot}(\kappa),\kappa^2)=(2,n,-2ne)$ and $n\nmid e$ (because $n\nmid h_\tau\cdot \kappa'$ and $d=-\frac12 \disc(K)\equiv \frac12(h_\tau\cdot \kappa')^2\pmod{n}$).
 \end{itemize}
Given $e= a^2+ nn'$, the integer $\kappa^2$ is therefore uniquely determined by $e $: 
\begin{itemize}
\item either $  n\mid a$,  $\kappa^2= -2e/n$, and $\kappa_*=0$;
\item or $n\nmid a$, $\kappa^2= -2ne$, $\kappa_*=\kappa/n$, and $\bar q(\kappa_*)= -2a^2/n\pmod{2\Z}$.
\end{itemize}
In the second case, $\kappa_*=\pm a(w_1-2w_2)_* $; it follows that in all cases, $\kappa_*$ is also uniquely defined, up to sign, by $e$.\ This proves (c).
\end{proof}

\begin{exam}[Case $n=\gamma=1$]\label{exa221}
The  moduli space ${}^2\!\!\cM^{(1)}_{2}$ is irreducible and contains a dense open subset $\cU_2^{(1)}$ whose 
  points correspond  to double EPW sextics (Section~\ref{sec261}).\ The  complement ${}^2\!\!\cM^{(1)}_{2}\smallsetminus \cU_2^{(1)}$ contains the irreducible hypersurface $\cH_2^{(1)}$ whose general points correspond to     pairs $(S^{[2]}, L_2-\delta)$, where~$(S,L)$ is a polarized K3 surface of degree 4 (Example~\ref{ex19}).

The loci ${}^2\cC^{(1)}_{2,d}$ (for $d>0$) were  studied  in \cite{dims}.\ Assuming $d\equiv 0, 2,$ or $4\pmod8$ (which is, according to Proposition \ref{propirr}, the condition for ${}^2\cD^{(1)}_{2,d}$ to be nonempty), they were shown to be nonempty if and only if  $d\notin\{2,8\}$ (see Remark \ref{rema227} for a more precise statement).\ The locus $  {}^2\cC_{2,4}^{(1)}$ is the hypersurface $\cH_2^{(1)}$.

There is a way to associate  a double EPW sextic with any smooth {\em Gushel--Mukai fourfold}  (these are by definition complete intersections of $\Gr(2,V_5)\subset \P(\bw2V_5)=\P^9$ with a hyperplane and a quadric hypersurface; see \cite{DK}).\ Double EPW sextics with small discriminant $d$
  correspond  to Gushel--Mukai fourfolds  with special geometric features (\cite{dims}).
\end{exam}

\begin{exam}[Case $n=3$ and $\gamma=2$]\label{exa222}
The  moduli space ${}^2\!\!\cM^{(2)}_{6}
$ is irreducible and contains a dense open subset $\cU_6^{(2)}$ whose 
  points  correspond  to the varieties of lines $X:=F(W)$ contained in a  cubic fourfold $W\subset \P^5$ (Section~\ref{sec261}).\ The  complement ${}^2\!\!\cM^{(2)}_{6}\smallsetminus \cU_6^{(2)}$ contains the irreducible hypersurface $\cH_6^{(2)}$ whose general points correspond to     pairs $(S^{[2]}, 2 L_2-\delta)$, where~$(S,L)$ is a polarized K3 surface of degree 2.

  The loci ${}^2\cC^{(2)}_{6,d}$ (for $d>0$) were originally introduced and studied by Hassett in \cite{has}.\ 
Assuming $d\equiv 0$ or $2\pmod6$ (which, according to Proposition \ref{propirr}, is the condition for ${}^2\cD^{(2)}_{6,d}$ to be nonempty), he showed that they are nonempty if and only if $d\ne 6$ (see also Remark \ref{rema227}).\ The locus $  {}^2\cC_{6,2}^{(2)}$ is the hypersurface $\cH_6^{(2)}$ (see \cite{vdd} for a very nice description of the geometry involved).

Small $d$ correspond  to cubics $W$ with special geometric features:   $W$ contains a plane ($d=8 $), $W$ contains a cubic scroll ($d=12$), $W$ is a Pfaffian cubic ($d=14$), $W$ contains a Veronese surface ($d=20$).
 \end{exam}

\subsection{The image of the period map}\label{sec210e}

In this section, we study the image of the period map
\begin{eqnarray*}
\wp_\tau\colon
 {}^m\!\!\cM_\tau\lra\cP_\tau 
\end{eqnarray*}
defined in Section~\ref{sec28} (here $\tau$ is a polarization type for polarized \hKm s of $\kkk[m]$-type).\ For K3 surfaces ($m=1$), we saw in Section~\ref{sec17} that this image is precisely the complement of the (images in $\cP_\tau$ of the)  hypersurfaces $y^\bot$, for all $y\in h_\tau^\bot$ with $y^2=-2$.\ 

It follows from the results of  \cite{av} that for all $m\ge 2$ (which we assume for now on), the image of the period map is the complement of the union of   finitely many Heegner divisors (that is,   of the type studied in Section~\ref{sec29}).\ We will explain how to determine explicitly (in principle) these divisors.\ We will only do here the case $m=2$; the general case will be treated in Appendix~\ref{imagep}.\

  \begin{theo}\label{imper}
Let $n$ be a positive integer and let $\gamma\in\{1,2\}$.\ The   image of the period map
  \begin{equation*} 
 {}^2\wp^{(\gamma)}_{2n}\colon  {}^2\!\!\cM_{2n}^{(\gamma)}\lra  O(\Lambda_{\KKK^{[2]}},h_\tau)\backslash \Omega_{\tau}
 \end{equation*}
  is exactly the complement of the union of finitely many explicit Heegner divisors.\ More precisely, these Heegner divisors are
\begin{itemize}
\item if $\gamma=1$, 
\begin{itemize}
\item  some irreducible components of the hypersurface ${}^2\cD^{(1)}_{2n,2n}$ (two components if $n\equiv 0$ or $1\pmod4$, one component if $n\equiv 2$ or $3\pmod4$);
\item one irreducible component of the hypersurface ${}^2\cD^{(1)}_{2n,8n}$;
\item one irreducible component of the hypersurface ${}^2\cD^{(1)}_{2n,10n}$;
\item and, if $ n \equiv  \pm 5\pmod{25}$, some irreducible components of the hypersurface $\cD^{(1)}_{2n,2n/5}$;
\end{itemize}
\item if $\gamma=2$ (and $n\equiv -1\pmod4$), one irreducible component of the hypersurface ${}^2\cD^{(2)}_{2n,2n}$.
\end{itemize}
\end{theo}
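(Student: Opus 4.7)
The plan is to combine three ingredients: the Torelli theorem (Theorem~\ref{torthhk2}), the surjectivity of the period map for (unpolarized) \hKm s of $\kkk[2]$-type, and the explicit description of the nef and movable cones from Theorem~\ref{thm:NefConeHK4}. Given any period point $[p]\in\cP_\tau$, surjectivity produces a \hK\ fourfold $X$ of $\kkk[2]$-type with a marking $H^2(X,\Z)\isom\Lkkk[2]$ realizing $p$ as its Hodge structure and $h_\tau$ as a $(1,1)$-class in the positive cone (the residual ambiguity being absorbed by the quotient defining $\cP_\tau$). The point $[p]$ lies in the image of $\wp_\tau$ if and only if $h_\tau$ is ample on some \hK\ fourfold $X'$ birational to $X$; by Proposition~\ref{prop110b} and~\eqref{chamb}, this happens exactly when $h_\tau$ lies in the interior of one of the chambers $\sigma^*(\Amp(X'))$ of the decomposition of $\Mov(X)$, that is, when $h_\tau$ avoids every wall of this chamber decomposition.

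By Theorem~\ref{thm:NefConeHK4} and Remark~\ref{rmk:NefConeHK4}(b), the walls are the hyperplanes $H_\kappa\subset\Pic(X)\otimes\R$ for primitive $\kappa\in\Pic(X)$ satisfying either $\kappa^2=-2$ (walls of $\Mov(X)$) or $\kappa^2=-10$ together with $\div_{\Lkkk[2]}(\kappa)=2$ (internal flop walls). The condition $h_\tau\in H_\kappa$ reads $\kappa\in h_\tau^\bot$, and $\kappa$ is of type $(1,1)$ precisely when $p\cdot\kappa=0$. Hence the complement of the image of $\wp_\tau$ is the union, over all primitive $\kappa\in h_\tau^\bot$ satisfying one of these two numerical conditions, of the images in $\cP_\tau$ of the hyperplane sections $\kappa^\bot\cap\Omega_{h_\tau}$. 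Since each such $\kappa$, together with $h_\tau$, spans a primitive rank-$2$ signature-$(1,1)$ sublattice $K\subset\Lkkk[2]$, the complement is a finite union of Heegner divisors in the sense of Section~\ref{sec29}, and it remains only to compute the discriminants $d=|\disc(K^\bot)|$ and to identify the relevant irreducible components.

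The enumeration proceeds via the identity $d=|\kappa^2|\disc(h_\tau^\bot)/s^2$ (from~\cite[Lemma~7.5]{ghssur}), where $s:=\div_{h_\tau^\bot}(\kappa)$, and via Eichler's criterion (Theorem~\ref{eic}), which applies since $h_\tau^\bot$ contains two orthogonal copies of $U$ in both cases $\gamma\in\{1,2\}$: the $\widetilde O(h_\tau^\bot)$-orbit of $\kappa$ is classified by $(\kappa^2,\kappa_*)\in\Z\times D(h_\tau^\bot)$. For $\gamma=1$, where $\disc(h_\tau^\bot)=4n$ and $D(h_\tau^\bot)\isom\Z/2\Z\times\Z/2n\Z$ by~\eqref{eq1}, the case $\kappa^2=-2$ yields one orbit with $s=1$ (giving one component of $\,{}^2\!\cD^{(1)}_{2n,8n}$) and one or two orbits with $s=2$ (giving components of $\,{}^2\!\cD^{(1)}_{2n,2n}$, whose count depends on $n\bmod 4$, consistent with Proposition~\ref{propirr}(1)(c)). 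For $\kappa^2=-10$, the extra condition $\div_{\Lkkk[2]}(\kappa)=2$ is tracked by pushing $\kappa_*\in D(h_\tau^\bot)$ into $D(\Lkkk[2])\isom\Z/2\Z$ through the overlattice $\Lkkk[2]\supset\Z h_\tau\oplus h_\tau^\bot$ (of index $2n$): this forces $s\in\{2,10\}$ and produces one component of $\,{}^2\!\cD^{(1)}_{2n,10n}$ and, in the $s=10$ case, components of $\,{}^2\!\cD^{(1)}_{2n,2n/5}$; the latter exist precisely when the congruence $\bar q(\kappa_*)\equiv -1/10\pmod{2\Z}$ in $D(h_\tau^\bot)$ is solvable by a $\kappa_*$ subject to the $\div_{\Lkkk[2]}=2$ constraint, which unwinds to the arithmetic condition $n=5^{2\alpha+1}n''$ with $n''\equiv\pm1\pmod5$ in the statement. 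The case $\gamma=2$ is analogous but simpler: since $n$ is then odd, $s\geq 2$ with $s\mid n$ would require $s$ odd, hence incompatible with $\div_{\Lkkk[2]}(\kappa)=2$, so only $\kappa^2=-2$ with $s=1$ survives, producing the single component ${}^2\!\cD^{(2)}_{2n,2n}$. The main technical obstacle is the bookkeeping for $\kappa^2=-10$ when $\gamma=1$: one must carefully control the interaction between $\div_{h_\tau^\bot}(\kappa)$ and $\div_{\Lkkk[2]}(\kappa)$, and verify that each surviving Eichler orbit furnishes exactly one irreducible component of the corresponding Heegner divisor (which is where Proposition~\ref{propirr} is used).
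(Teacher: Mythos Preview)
Your approach is essentially the same as the paper's: reduce to the wall-avoidance criterion of Theorem~\ref{thm:NefConeHK4}, then enumerate orbits via Eichler's criterion and the discriminant formula from~\cite{ghssur}. There is, however, a gap in your reduction step. You assert that $[p]$ lies in the image if and only if $h_\tau$ is ample on some birational model $X'$ of the fourfold $X$ produced by surjectivity, and that this in turn holds exactly when $h_\tau$ avoids every wall. The problem is the implicit middle step: after surjectivity, the class $H:=\eta^{-1}(h_\tau)$ is only known to lie in $\Pos(X)$, not in $\overline{\Mov(X)}$. If $H$ lands in a $W_{\Exc}$-chamber of $\Pos(X)$ other than $\Int(\Mov(X))$ (while still avoiding every $(-2)$- and $(-10)$-hyperplane), it is not ample on any birational model of $X$, and your chain of equivalences breaks. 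The paper repairs this by explicitly invoking Remark~\ref{rmk:NefConeHK4}(a): one first acts by an element of the reflection group $W_{\Exc}$ (a Hodge monodromy operator, hence preserving the period point) to bring $H$ into $\overline{\Mov(X)}\cap\Pos(X)$; only then does ``$H$ avoids all walls'' become equivalent to ``$H$ is ample on a birational model'' via Remark~\ref{rmk:NefConeHK4}(b). Your parenthetical about ``residual ambiguity being absorbed by the quotient defining $\cP_\tau$'' does not cover this, since elements of $\widehat O(\Lambda_{\KKK^{[2]}},h_\tau)$ fix $h_\tau$ by definition and hence cannot move $H$ within $\Pos(X)$.

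Once this is fixed, your enumeration matches the paper's. One small correction: your dismissal of the case $\gamma=2$, $\kappa^2=-10$ via ``$s\ge2$ with $s\mid n$ would require $s$ odd'' is not the right argument (there is no a priori divisibility $s\mid n$); the paper instead uses the relation $t\,\div_{h_\tau^\bot}(\kappa)=2n$ established in the proof of Proposition~\ref{propirr} and checks directly that it is incompatible with $\kappa^2=-10$ and $\div_{H^2(X,\Z)}(\kappa)=2$.
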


\begin{rema}\label{rema227}
Assume that $n$ is square-free (so in particular $n\not\equiv 0\pmod4$).\ We proved in Proposition~\ref{propirr} that   
\begin{itemize}
\item the hypersurface $\cD^{(1)}_{2n,2n}$ has two components if $n\equiv  1\pmod4$, one component otherwise;
\item the hypersurface $\cD^{(1)}_{2n,8n}$ has two components if $n\equiv  -1\pmod4$, one component otherwise;
\item the hypersurface $\cD^{(1)}_{2n,10n}$ has two components if $n\equiv   1\pmod4$, one component otherwise;
\item the hypersurface $\cD^{(1)}_{2n,2n/5}$ has two components if $n\equiv   1\pmod4$, one component otherwise;
\item the hypersurface $\cD^{(2)}_{2n,2n}$ is irreducible (when $n\equiv   -1\pmod4$).
\end{itemize}
Furthermore, it follows from the proof of the theorem that when moreover $ n \equiv  \pm 5\pmod{25}$,
\begin{itemize}
\item the hypersurface $\cD^{(1)}_{2n,2n/5}$ is irreducible.
\end{itemize}
\end{rema}

\begin{proof}[Proof of Theorem~\ref{imper}]
Take a  point $x\in\! {}^2\cP_{2n}^{(\gamma)}$.\ Since the period map for smooth compact (not necessarily projective) hyperk\"ahler fourfolds is surjective (\cite[Theorem~8.1]{huyinv}), there exists a compact hyperk\"ahler fourfold $X'$ with the given period point $x$.\  Since the class $h_\tau$ is algebraic and has positive square, $X'$ is projective by~\cite[Theorem~3.11]{huyinv}.\ Moreover, the class $h_\tau$ corresponds to the class of an integral divisor $H$ in the positive cone of $X'$.\ By Remark~\ref{rmk:NefConeHK4}(a), we can let an element in the group $W_{\Exc}$ act and assume that the pair $(X',H)$, representing the period point $x$ and the class $h_\tau$, is such that $H$ is in $\overline{\Mov(X')}\cap\Pos(X')$.\  By Remark~\ref{rmk:NefConeHK4}(b), we can find a projective hyperk\"ahler fourfold $X$  which is birational to $X'$ (and so still has period $x$), such that the divisor $H$, with class $h_\tau$, is nef and big on $X$ and has divisibility $\gamma$.\  By \cite[Theorem~4.6]{huyinv}, the fourfold $X'$ is deformation equivalent to $X$, hence still of type $\kkk[2]$.

To summarize, the point $x$ is in the image of the period map ${}^2\wp^{(\gamma)}_{2n}$ if and only if $H$ is actually ample on $X$.\ We now  use Theorem~\ref{thm:NefConeHK4}: $H$ is ample if and only if it is not orthogonal to any algebraic class either with square $-2$, or with square $-10$ and divisibility~2. 

{\em If $H$ is orthogonal to an algebraic class $v$ with square $-2$,} the Picard group of $X$ contains a rank-2 lattice $K$ with intersection matrix  $\bigl(\begin{smallmatrix}2n&0\\0&-2\end{smallmatrix}\bigr)$; the fourfold $X$ is therefore special of discriminant $2e:=-\disc(K^\perp)$ (its period point is in the hypersurface ${}^2\cD^{(\gamma)}_{2n,K} $).\ In the notation of the proof of Proposition~\ref{propirr}, $v$ is the class~$\kappa$.
 
If $\gamma=1$,   the   divisibility $s:=\div_{K^\perp}(\kappa)$ is either 1 or 2.\ By \eqref{kap}, we have   $es^2=-2n\kappa^2=4n$, hence
\begin{itemize}
\item either $s=1$, $e=4n$, and $\kappa_*=0$: the period point is then in one irreducible component of the hypersurface ${}^2\cD^{(1)}_{2n,8n}$;
\item or $s=2$, $e= n$, and 
\begin{itemize}
\item either $\kappa_*=(0,1)$;
\item or $\kappa_*=(n,0)$ and $n\equiv 1\pmod4$;
\item or $\kappa_*=(n,1)$ and $n\equiv 0\pmod4$.
\end{itemize}
\end{itemize}
The period point $x$ is  in one irreducible component of the hypersurface ${}^2\cD^{(1)}_{2n,2n}$ if $n\equiv 2$ or $3\pmod4$, or in the union of two such components otherwise.

If $\gamma=2$, we have $e=-\disc(K)/4=n$, $t =\sqrt{2n\disc(K)/\kappa^2}=2n $, and $\div(\kappa)=2n/t=1$, hence $\kappa_*=0$: the period point $x$ is in one irreducible component of the hypersurface ${}^2\cD^{(2)}_{2n,2n}$.

{\em If $H$ is orthogonal to an algebraic class with square $-10$ and divisibility 2,} the Picard group of $X$ contains a rank-2 lattice $K$ with intersection matrix  $\bigl(\begin{smallmatrix}2n&0\\0&-10\end{smallmatrix}\bigr)$, hence $X$ is   special of discriminant $2e:=-\disc(K^\perp)$.\ Again, we distinguish two cases, keeping the same notation.

If $\gamma=1$,   the   divisibility $s:=\div_{h_\tau^\perp}(\kappa)$ is even (because the divisibility in $H^2(X,\Z)$ is~2) and divides $\kappa^2=-10$, hence it is
 either  2 or 10.\ Moreover, $es^2=-2n\kappa^2=20n$, hence
\begin{itemize}
\item either  $s=2$, $e= 5n$, and $\kappa_*=(0,1)$: the period point $x$ is then in one irreducible component of the hypersurface ${}^2\cD^{(1)}_{2n,10n}$;
\item or $s=10$ and $e= n/5$: the period point is then in   the hypersurface ${}^2\cD^{(1)}_{2n,n/5}$.
\end{itemize}
In the second case, since the divisibility of $\kappa$ in $H^2(X,\Z)$ is 2,
 $a$ and $c$ are even, so that $b$ is odd and $\kappa_*=(a,1)$.\ We have $10=s=\gcd(2na,2b,2c)$, hence $b$ and $c$ are divisible by $5$, but not $a$, because $\gcd( a, b, c)=1$.\ We have $e \equiv  a^2+ n \pmod{4n}$, hence $ e\equiv    a^2\equiv  \pm1\pmod{5}$.\

In general, there are many possibilities for $a=2a'$, with $a^{\prime 2}\equiv e\pmod {5e}$.\ However, if $n$ is square-free, $e$ divides $a'$ and   $  (a'/e)^2\equiv 1 \pmod{5}$, so that $a \equiv  \pm 2e\pmod{2n}$.\ It follows that
 $\pm a$ (hence also $\pm \kappa_*$) is well determined (modulo $2n$), so we have a single component of $\cD^{(1)}_{2n,n/5}$.

If $\gamma=2$, we have $e=-\disc(K)/4=5n$ and $t^2 = {2n\disc(K)/\kappa^2}=n^2/10 $, which is impossible.

Conversely, in each case  described above, it is easy to construct a class $\kappa$ with the required square and divisibility which is orthogonal to $H$.
\end{proof}
 
 \begin{exam}[Case $n=\gamma=1$]\label{exa221z}
We keep the notation of Example \ref{exa221}.\  O'Grady proved that the image of $\cU_2^{(1)}$ in the period space does not meet  $\cD_{2,2}^{(1)} $, $\cD_{2,4}^{(1)} $,  $\cD_{2,8}^{(1)} $, and one component of $\cD_{2,10}^{(1)} $ (\cite[Theorem~1.3]{og6}\footnote{O'Grady's hypersurfaces $\mathbb{S}'_2\cup \mathbb{S}''_2$, $\mathbb{S}^\star_2$, $\mathbb{S}_4$ are our $\cD_{2,2}^{(1)} $, $\cD_{2,4}^{(1)} $,  $\cD_{2,8}^{(1)} $.}); moreover, by~\cite[Theorem~8.1]{dims},  this image does meet   all the other components of the  nonempty hypersurfaces  $\cD_{2,d}^{(1)} $.\  The hypersurface $\cH_2^{(1)}$ maps to $\cD_{2,4}^{(1)} $.\ 

These results agree with Theorem~\ref{imper} and  Remark~\ref{rema227}, which say that the image of $\cM_2^{(1)}$ in the period space is the complement of the union of the two components of $\cD^{(1)}_{2 ,2 }$, the irreducible $\cD^{(1)}_{2 ,8}$, and one of the two components of $\cD^{(1)}_{2 ,10}$.\ However, our theorem says nothing about the image of 
 $\cU_2^{(1)}$.\ O'Grady conjectures that it is the complement of the   hypersurfaces $\cD_{2,2}^{(1)} $, $\cD_{2,4}^{(1)} $,  $\cD_{2,8}^{(1)} $, and one component of $\cD_{2,10}^{(1)} $; this would follow if one could prove $\cM_2^{(1)}=\cU_2^{(1)}\cup \cH_2^{(1)}$.
\end{exam}

\begin{exam}[Case $n=3$ and $\gamma=2$]\label{exa222z}
We keep the notation of Example \ref{exa222}.\  Theorem~\ref{imper} and Remark~\ref{rema227} say that the image of $\cM_6^{(2)}$ in the period space is the complement of the irreducible hypersurface $\cD^{(2)}_{6 ,6}$.\ This (and much more) was first proved by Laza in~\cite[Theorem~1.1]{laza}, together with the fact that $\cM_6^{(2)}=\cU_6^{(2)}\cup \cH_6^{(2)}$; since $\cH_6^{(2)} $ maps onto $\cD^{(2)}_{6 ,2}$,  the image of $\cU_6^{(2)}$  is the complement of $\cD^{(2)}_{6 ,2}\cup \cD^{(2)}_{6 ,6}$.
 \end{exam}
 
 \begin{exam}[Case $n=11$ and $\gamma=2$]\label{exa222zz}
 Theorem~\ref{imper} and Remark~\ref{rema227} say that the image of $\cM_{22}^{(2)}$ in the period space is the complement of the irreducible hypersurface $\cD^{(2)}_{22,22}$.\ In particular, the loci $  {}^2\cC_{22,2e}^{(2)}$ are irreducible hypersurfaces in the moduli space $  {}^2\cM_{22}^{(2)}$ for all positive integers $e$ which are squares modulo 11 but different from 11.\ Let us try to identify the general elements  of the hypersurfaces $  {}^2\cC_{22,2e}^{(2)}$ for small  $e$  (we will do that more systematically in  Section~\ref{sect5}).
 
 Let $(S,L)$ be a polarized K3 surface of degree 2e.\ A class of divisibility 2 on $\sss[2]$ can be written as $2bL_2-a\delta$ and it has square 11 if and only if $a^2-4eb^2=-11$.\ The lattice $K=\Z L\oplus \Z\delta$ is contained in $\Pic(\sss[2])$ and the discriminant of $K^\perp$ is $-d$ (see Section~\ref{sect5}).

If $e=1$, we find $b=3$ and $a=5$, but the class $6L_2-5\delta$ is not nef on $\sss[2]$, only movable (Example~\ref{exa217}).\ It is however ample on the Mukai flop $X$ of  $\sss[2]$ (see Exercise~\ref{32}).\ 
A  general point of the hypersurface $  {}^2\cC_{22,2}^{(2)}$ corresponds to   a pair$(X,6L_2-5\delta)$.

If $e=3$, we find $b=a=1$ and the class $2L_2-\delta$ is ample on $\sss[2]$ (Example~\ref{exa217}).\ 
A  general point of the hypersurface $  {}^2\cC_{22,6}^{(2)}$ corresponds to   a pair $(\sss[2],2L_2-\delta)$.

If $e=4$, the equation $a^2-16b^2=-11$ has no solutions, so a general point of the hypersurface $  {}^2\cC_{22,8}^{(2)}$ is not birational to the Hilbert square of a K3 surface.

If $e=5$, we find $a=3$ and $b=1$, and the class $2L_2-3\delta$ is ample on $\sss[2]$ (Example~\ref{exa217}).\ A  general point of the hypersurface $  {}^2\cC_{22,10}^{(2)}$ corresponds to   a pair $(\sss[2],2L_2-3\delta)$.

If $e=9$, we find $a=5$ and $b=1$, and the class $2L_2-5\delta$ is ample on $\sss[2]$ (Example~\ref{exa217}).\ A  general point of the hypersurface $  {}^2\cC_{22,18}^{(2)}$ corresponds to   a pair $(\sss[2],2L_2-5\delta)$.

Finally, we noted in Example~\ref{exa217} that nodal degenerations of elements of $\cM_{22}^{(2)}$ are birationally isomorphic to Hibert squares of polarized K3 surfaces of degree 22.\ One can extend the period map to these nodal degenerations and they then dominate the irreducible hypersurface $\cD^{(2)}_{22,22}$. 
 \end{exam}

\section{Automorphisms of \hKm s}\label{sect4}

We determine  the group $\Aut(X)$ of biregular automorphisms  and the group $\Bir(X)$ of birational automorphisms  for some   \hKm s $X$ of $\KKK^{[m]}$-type with Picard number 1 or 2.

\subsection{The orthogonal representations of the automorphism groups}\label{secpsi}

Let $X$ be a \hKm.\ As in the case of K3 surfaces, we have   $H^0(X,T_X)\isom H^0(X,\Omega^1_X)=0
$ and  the group $\Aut(X)$ of biholomorphic automorphisms of $X$ is discrete.\ We introduce the two representations
\begin{equation}\label{reppica}
\Psi_X^A\colon\Aut(X)\lra O( H^2(X,\Z),q_X)\quad\textnormal{and}\quad  \overline\Psi_X^A\colon\Aut(X)\lra O( \Pic(X),q_X).
\end{equation}
By Proposition~\ref{prop110b}, the group $\Bir(X)$ of birational automorphisms of $X$ also acts on $O( H^2(X,\Z),q_X)$.\ We may therefore define two other representations
\begin{equation}\label{reppicb}
\Psi_X^B\colon\Bir(X)\lra O( H^2(X,\Z),q_X)\quad\textnormal{and}\quad  \overline\Psi_X^B\colon\Bir(X)\lra O( \Pic(X),q_X).
\end{equation}
We have of course $\Ker( \Psi_X^A) \subset \Ker(\overline\Psi_X^A )$ and $\Ker( \Psi_X^B) \subset \Ker(\overline\Psi_X^B )$.

\begin{prop}[Huybrechts, Beauville]\label{prop111}
Let $X$ be a \hKm.\ The kernels of $\Psi_X^A$ and $\Psi_X^B$ are equal and finite and, if $X$ is projective,  the kernels  of $\overline\Psi_X^B$ and $\overline\Psi_X^A$ are equal and finite.

If $X$ is   of $\kkk[m]$ type, $\Psi_X^A$ and $\Psi_X^B$ are injective.
\end{prop}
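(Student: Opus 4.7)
The plan is to separate the proposition into three independent steps: equality of the two kinds of kernels, finiteness, and the injectivity statement in the $\KKK^{[m]}$-type case. The first two are mild adaptations of what has already been done for K3 surfaces; only the last really uses the assumption on the deformation type.

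First, the inclusions $\ker(\Psi_X^A)\subset\ker(\Psi_X^B)$ and $\ker(\overline\Psi_X^A)\subset\ker(\overline\Psi_X^B)$ are tautological. For the reverse inclusions I would apply the criterion noted after Proposition~\ref{prop110b}: any birational self-map $\sigma$ of $X$ whose pullback sends a K\"ahler (resp.\ ample) class to a K\"ahler (resp.\ ample) class is biregular. If $\sigma\in\ker(\Psi_X^B)$, then $\sigma^*=\Id$ on $H^2(X,\Z)$, so $\sigma^*$ preserves every K\"ahler class, hence $\sigma\in\Aut(X)\cap\ker(\Psi_X^B)=\ker(\Psi_X^A)$; the same argument with an ample class in place of a K\"ahler one gives $\ker(\overline\Psi_X^B)=\ker(\overline\Psi_X^A)$ in the projective case.

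For finiteness of $\ker(\Psi_X^A)$ I would invoke Fujiki's theorem (\cite[Theorem~4.8]{fuj}, already used in the proof of Proposition~\ref{prop110}): the subgroup of $\Aut(X)$ fixing a K\"ahler class has only finitely many connected components, and since $H^0(X,T_X)\isom H^0(X,\Omega^1_X)=0$ its identity component is trivial, so it is finite.\ For finiteness of $\ker(\overline\Psi_X^A)$ in the projective case I would repeat the argument used at the end of the proof of Proposition~\ref{prop110}: a very ample line bundle $H$ is preserved, so the kernel embeds as a closed discrete subgroup of $\PGL(H^0(X,H))$ stabilizing $X$, hence has finitely many components, hence is finite.

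For the injectivity assertion, assume $X$ is of $\KKK^{[m]}$-type and let $\sigma\in\ker(\Psi_X^A)$.\ By the preceding steps $\sigma$ is a finite-order biregular automorphism acting trivially on the Hodge structure of $H^2(X,\Z)$; in particular $\sigma^*\omega=\omega$ for the symplectic form $\omega$, so the fixed points of $\sigma$ are isolated and symplectically linearizable around each of them.\ I would then imitate the two-Lefschetz-formulas argument of Proposition~\ref{prop110}: the holomorphic Lefschetz formula, using that $\sigma$ acts trivially on each $H^i(X,\cO_X)$, gives an explicit bound on the number $N(\sigma)$ of fixed points depending only on $m$, while the topological Lefschetz formula should yield $N(\sigma)=\chi_{\textnormal{top}}(X)$, a much larger number.\ The main obstacle, and the place where the $\KKK^{[m]}$ hypothesis really intervenes, is establishing this last equality: trivial action on $H^2(X,\Z)$ must force trivial action on the whole of $H^\bullet(X,\Q)$.\ For a K3 surface this is immediate from $b_0=b_4=1$ and the Hodge decomposition; for general $m$, it requires the Nakajima--G\"ottsche description of $H^\bullet(S^{[m]},\Q)$ as a Heisenberg-type module generated by $H^\bullet(S,\Q)$, combined with invariance of this structure under deformation.\ Once this input is granted, the two Euler-characteristic counts cannot coincide unless $\sigma=\Id$.
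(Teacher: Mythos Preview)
Your first two steps (equality of kernels and finiteness) are correct and match the paper's sketch.

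For the injectivity step there is a real gap. You assert that $\sigma^*\omega=\omega$ forces ``the fixed points of $\sigma$ [to be] isolated.'' This holds in dimension~$2$ because the local action at a fixed point is $(\lambda,\lambda^{-1})$ with $\lambda\neq 1$; but in dimension $2m$ the local action is $(\lambda_1,\lambda_1^{-1},\dots,\lambda_m,\lambda_m^{-1})$, and nothing prevents some $\lambda_i=1$, producing positive-dimensional symplectic fixed components. Once the fixed locus is not finite, neither Lefschetz formula reduces to the simple count $N(\sigma)$ you want, and the numerical contradiction from the K3 case does not go through. Even granting that trivial action on $H^2$ forces trivial action on all of $H^\bullet(X,\Q)$---itself nontrivial for a general deformation of $S^{[m]}$---you would still need a separate argument comparing the two Lefschetz contributions along higher-dimensional fixed loci, and it is not clear how to extract a contradiction.

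The paper sidesteps this entirely. It quotes Beauville's result (\cite[Proposition~10]{beac10}) that $\Psi^A_X$ is injective when $X=S^{[m]}$ is an actual punctual Hilbert scheme of a K3 surface, and then invokes the theorem of Hassett--Tschinkel (\cite[Theorem~2.1]{hast4}) that $\ker(\Psi^A_X)$ is invariant under smooth deformation. Since every hyperk\"ahler manifold of $\kkk[m]$-type deforms to some $S^{[m]}$, the kernel is trivial throughout the deformation class. Note that the deformation invariance is applied directly to the \emph{kernel}, not to the cohomology ring or its generation in degree~$2$; this is what makes the argument short and avoids any fixed-point analysis on the general member.
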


\begin{proof}[Sketch of proof]
If $X$ is projective and $H$ is an ample line bundle on $X$, and if $\sigma$ is a birational automorphism of $X$ that acts trivially on $\Pic(X)$, we have $\sigma^*(H)=H$ and this implies that $\sigma$ is an automorphism (Proposition \ref{prop110b}).\  We have therefore 
$\Ker( \overline\Psi_X^B) = \Ker(\overline\Psi_X^A )$ (and $\Ker( \Psi_X^B) = \Ker(\Psi_X^A )$) and the latter group is a discrete subgroup of a general linear group hence is finite.

For the  proof in the  general case (when $X$ is only assumed to be K\"ahler), we refer to \cite[Proposition~9.1]{huyinv} (one replaces $H$ with a K\"ahler class).

When $X$ is a punctual Hilbert scheme of a K3 surface,  Beauville proved that $\Psi^A_X$ is injective (\cite[Proposition 10]{beac10}).\ It was then proved in \cite[Theorem~2.1]{hast4} that the kernel of $\Psi_X^A$ is invariant by smooth deformations.\end{proof}

   Via the representation $\overline\Psi_X^A$, any automorphism of $X$ preserves the cone $\Nef(X)$ and via the representation $\overline\Psi_X^B$, any birational automorphism of $X$ preserves the cone $\Mov(X)$.\    The Torelli theorem (Theorem~\ref{torthhk}) implies that any element of 
   $\widehat O(H^2(X,\Z),q_X)$ which is a Hodge isometry  and preserves an ample class is induced by an automorphism of $X$. 
   
 \begin{rema}
 Let $X$ be a \hKm.\
Oguiso proved in \cite{ogu4} and \cite{ogu5}  that  when $X$ is not projective, both groups $\Aut(X)$ and $\Bir(X)$ are almost abelian  finitely generated, hence finitely presented.\ When 
 $X$ is projective, the groups $\Bir(X)$ and $\Aut(X)$ are also  finitely presented (\cite{bs}, \cite[Theorem~1.6]{cf}).
  \end{rema}
   
   \subsection{Automorphisms of very general polarized \hKm s}\label{sec42}
   
   The Torelli theorem allows us to ``read'' biregular automorphisms of a \hKm\ on its second cohomology lattice.\ For birational automorphisms, this is more complicated but there are still necessary conditions: a birational automorphism must preserve the movable cone.\ The next proposition describes the automorphism groups, both biregular and birational, for a very general polarized \hKm\ of $\KKK^{[m]}$-type; at best, we get groups of order~2 (as in the case of polarized K3 surfaces; see Proposition~\ref{autk3}).

\begin{prop}\label{prop27}
Let $(X,H)$ be a polarized \hKm\ corresponding to a very general point in any component of  a moduli space ${}^m\!\!\cM^{(\gamma)}_{2n}$ (with $m\ge 2$).\ The group $\Bir(X)$ of birational automorphisms of $X$ is trivial, unless 
\begin{itemize}
 \item   $n=1$ (so that $\gamma\in\{1,2\}$);
 \item   $n=m-1= \gamma$ and $-1$ is a square modulo $m-1$.
\end{itemize}
In each of these cases,  $\Aut(X)=\Bir(X)\isom\Z/2\Z$ and the corresponding involution of $X$ is antisymplectic.
\end{prop}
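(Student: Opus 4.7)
The plan is to reduce, via the Torelli theorem (Theorem~\ref{torthhk2}), to a lattice-theoretic question about $\Lkkk[m]$. For $(X,H)$ very general in its moduli component, the Noether--Lefschetz discussion in Section~\ref{sec29} yields $\Pic(X)=\Z H$, and maximality of the Mumford--Tate group at a very general period forces $\End_{\mathrm{Hdg}}(T(X)_\Q)=\Q$, where $T(X):=(\Z H)^{\bot}\subset H^2(X,\Z)$. Any $\sigma\in\Bir(X)$ induces $\sigma^*\in\Mon^2(X)=\widehat O^+(H^2(X,\Z),q_X)$ preserving $\Mov(X)\subset\Pos(X)$; hence $\sigma^*(H)=H$ (the option $-H$ lies in $-\Pos(X)$) and $\sigma^*|_{T(X)}=\varepsilon\Id$ for some $\varepsilon\in\{\pm1\}$ (being a rational scalar, integral, and an isometry). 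When $\varepsilon=1$ the isometry is trivial on the finite-index sublattice $\Z H\oplus T(X)$, hence on all of $H^2(X,\Z)$, and $\sigma=\Id$ by Proposition~\ref{prop111}.

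From now on assume $\varepsilon=-1$, and mark $H^2(X,\Z)\isomto\Lkkk[m]$ with $H\mapsto h_\tau$. The task is to determine when $\phi:=\Id_{\Z h_\tau}\oplus(-\Id_{h_\tau^\perp})$ extends to $\tilde\phi\in\widehat O^+(\Lkkk[m],h_\tau)$. Write $G:=\Lkkk[m]/(\Z h_\tau\oplus h_\tau^\perp)\subset D(\Z h_\tau)\oplus D(h_\tau^\perp)$ for the glue. A standard computation identifies $G$ as the graph of an anti-isometry between two cyclic groups of order $h_\tau^2/\gamma=2n/\gamma$, and since $\phi$ acts as $(\Id,-\Id)$ on the direct sum, it preserves $G$ (yielding a unique extension $\tilde\phi$) if and only if this common order is at most $2$, that is, $\gamma\in\{n,2n\}$. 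Preservation of the positive cone is automatic since $\tilde\phi(h_\tau)=h_\tau\in\Pos(X)$, so the remaining requirement is that $\tilde\phi$ act as $\pm\Id$ on $D(\Lkkk[m])=\Z/(2m-2)\Z$.

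When $\gamma=2n$, one has $G=0$, hence $D(\Lkkk[m])=D(\Z h_\tau)\oplus D(h_\tau^\perp)=\Z/(2n)\oplus\Z/((m-1)/n)$ on which $\tilde\phi=(\Id,-\Id)$; this is $\pm\Id$ only when one summand is 2-torsion, and then the non-emptiness condition $\bar q(h_*)\equiv 1/(2n)\pmod{2\Z}$ in $D(\Lkkk[m])$ singles out $n=1$ (forcing $\gamma=2$ and $m\equiv 0\pmod4$). When $\gamma=n$, one has $|G|=2$, and computing $\tilde\phi(\ell)$ modulo $\Lkkk[m]$ (using the explicit splitting $h_\tau=u+nv$ for $\gamma=1$, the analog $h_\tau=2u+\tfrac{n+m-1}{2}v+\ell$ for $\gamma=2$, and its extension to higher $\gamma$) shows that $\tilde\phi|_{D(\Lkkk[m])}\in\{\pm\Id\}$ exactly when $n=1$ (arbitrary $m$, $\gamma=1$) or $n=m-1=\gamma$. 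In this last subcase, non-emptiness of the moduli space amounts, via Eichler's criterion (Theorem~\ref{eic}) applied to a primitive $h_\tau\in\Lkkk[m]$ of square $2n$ and divisibility $n$, to solvability of the congruence $t^2\equiv -1\pmod{m-1}$.

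In each surviving case, Torelli attaches to $\tilde\phi$ a birational involution $\sigma$ of $X$; since $\sigma^*H=H$ is ample, Proposition~\ref{prop110b} upgrades $\sigma$ to a biregular automorphism, giving $\Aut(X)=\Bir(X)=\{\Id,\sigma\}\isom\Z/2\Z$, and $\sigma$ is antisymplectic because $\tilde\phi$ acts as $-\Id$ on $T(X)\otimes\C\supset H^{2,0}(X)$. The main obstacle in this plan is the final discriminant-form case analysis: enumerating which triples $(m,n,\gamma)$ give $\tilde\phi|_{D(\Lkkk[m])}\in\{\pm\Id\}$ and extracting the clean congruence ``$-1$ is a square modulo $m-1$'' requires careful bookkeeping with $\bar q$, but Eichler's criterion keeps the arithmetic tractable.
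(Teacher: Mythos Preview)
Your proposal is correct and follows essentially the same route as the paper: both reduce via Torelli to determining when the isometry $\Id_{\Z H}\oplus(-\Id_{H^\perp})$ extends to an element of $\widehat O(\Lkkk[m])$, and both then invoke non-emptiness of the moduli space to winnow the surviving cases. The differences are purely technical: you use generic Mumford--Tate to force $\sigma^*|_{T(X)}=\pm\Id$ where the paper uses a deformation argument, and you phrase the extension obstruction via the glue group $G=\Lkkk[m]/(\Z h_\tau\oplus h_\tau^\perp)$ where the paper writes $H=ax+b\delta$ and computes $\phi(v)$ and $\phi(\delta)$ explicitly; the paper then cites Apostolov for non-emptiness where you redo this via Eichler's criterion. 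The paper's explicit formula $\phi(\delta)=-\tfrac{ab(2m-2)}{n}u-\tfrac{abc(2m-2)}{n}v-\tfrac{b^2(2m-2)+n}{n}\delta$ handles the discriminant-form condition uniformly (yielding $n\in\{1,m-1\}$ in one stroke), which is exactly the bookkeeping you flag as the main obstacle in your case-by-case treatment of $\gamma=n$ versus $\gamma=2n$.
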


\begin{proof} 
As we saw in Section~\ref{sec29}, the Picard group of $X$ is generated by the class $H$.\ Any birational automorphism leaves this class fixed, hence is in particular biregular of finite order.\  Let $\sigma$ be a nontrivial automorphism of $X$.\ Since $\sigma$ extends to small deformations of $(X,H)$, the  restriction of $\sigma^*$ to $H^\bot$ is a homothety\footnote{
The argument is classical:  let $X$ be a \hKm, let $\omega$ be a symplectic form on $X$, let $\sigma$ be an automorphism of $X$, and write $\sigma^*\omega=\xi \omega$, where $\xi\in\C^\star$.\ Assume that $(X,\sigma)$ deforms along a subvariety of the moduli space; the image of this subvariety by the period map consists of period points which are eigenvectors for the action of $\sigma^*$   on $H^2(X,\C)$ and the eigenvalue is necessarily $\xi$.\ In our case, the span of the image by the period map is $H^\bot$, which is therefore contained in the eigenspace $H^2(X,\C)_\xi$.} whose ratio is, by~\cite[Proposition 7]{bea}, a root of unity; since it is real and nontrivial (because $\Psi_X^A$ is injective by Proposition \ref{prop111}), it must be $- 1$.\ We will study under which conditions  such an isometry of $\Z H\oplus H^\bot$  extends to an isometry $\phi\in \widehat O(H^2(X,\Z),q_X)$.

Choose an identification $H^2(X,\Z)\isom \Lkkk[m]$ and write, as in Section \ref{sec25}, $H=ax+b\delta$, where $a,b\in\Z$ are relatively prime, $x$ is primitive in $\L_{\KKK}$,  and $\gamma:=\div(H)=\gcd(a,2m-2)$.\ Using Eichler's criterion, we may write $x=u+cv$, where $(u,v)$ is a standard basis for a hyperbolic plane in $\L_{\KKK}$ and $c=\frac12 x^2\in\Z$.\ We then have $\phi(a(u+cv)+b\delta)=a(u+cv)+b\delta$ and
\begin{eqnarray}
\phi(u-cv)&=&-u+cv,\label{eqq1}\\
\phi(b(2m-2)v+a\delta)&=&-b(2m-2)v-a\delta\label{eqq2}
\end{eqnarray}
(these two vectors are in $H^\bot$).\ From these identities, we obtain
\begin{equation}\label{phid}
(2a^2c-b^2(2m-2))\phi(v)=2a^2u+b^2(2m-2)v+2ab\delta.
\end{equation}
This implies that the number $2a^2c-b^2(2m-2)$, which is equal to $H^2=2n$, divides $\gcd(2a^2,b^2(2m-2)v,2ab)=2\gcd(a,m-1)$.\ Since $H^2$ is obviously divisible by $2\gcd(a,m-1)$, we obtain $n=\gcd(a,m-1)\mid \gamma$ hence, since $\gamma\mid 2n$,
$$n\mid m-1 \qquad\textnormal{and}\qquad \gamma\in\{n,2n\}.
$$
One checks that conversely, if these  conditions are satisfied, one can define an involution $\phi$ of $\Lkkk[m] $ that has the desired properties 
$\phi(H)=H$ and $\phi\vert_{H^\bot}=-\Id_{H^\bot}$: use \eqref{phid} to define $\phi(v)$, then \eqref{eqq1} to define $\phi(u)$, and finally \eqref{eqq2} to define 
$$\phi(\delta)=-\frac{ab(2m-2)}{n}u-\frac{abc(2m-2)}{n}v-\frac{b^2(2m-2)+n}{n}\delta
.$$
The involution $\phi$ therefore acts on $D(H^2(X,\Z))=\Z/(2m-2)\Z=\langle \delta_*\rangle$ by multiplication by $-\frac{b^2(2m-2)+n}{n}$.\ For $\phi$ to be in $\widehat O(H^2(X,\Z),q_X)$, we need to have 
\begin{itemize}
\item either $-\frac{b^2(2m-2)+n}{n}\equiv -1\pmod{2m-2}$, \ie, $n\mid b^2$; since $n\mid a$ and $\gcd(a,b)=1$, this is possible only when $n=1$;
\item or $-\frac{b^2(2m-2)+n}{n}\equiv 1\pmod{2m-2}$, which implies $m-1\mid n$, hence $n=m-1$.
\end{itemize}
When the conditions 
\begin{equation}\label{cond}
n\in\{1,m-1\} \qquad\textnormal{and}\qquad  \gamma\in\{n,2n\}
\end{equation}
are realized, $\phi$ acts on $D(H^2(X,\Z))$ as $-\Id$ in the first case and as $\Id$ in the second case (the relations $m-1=n=a^2c-b^2(m-1)$ and $n\mid a$ imply $b^2\equiv-1\pmod{m-1} $).\ By the Torelli theorem, $\phi$ lifts to an involution of $X$.

Finally, we use the numerical conditions on $m$, $\gamma$, and $n$ given in  \cite[Proposition~3.1]{apo} to show that when  \eqref{cond} holds, the moduli space ${}^m\!\!\cM^{(\gamma)}_{2n} $ is empty except in the cases given in the proposition.
\end{proof}

 \begin{rema}\label{remar44}
 The degeneration argument used in \cite[Section~2]{og8}, and Proposition~\ref{prop27}, imply that   any \hKm\ of type $\KKK^{[m]}$ (where $m\ge2$) with  a line bundle $H$ with square 2 carries a birational involution $\sigma$ that acts in cohomology as the symmetry about the line spanned by $H$.\ It is in particular antisymplectic, and it is biregular if and only if either $H$ or $H^{-1}$ is ample.\footnote{\label{nf}If $H^{\pm 1}$ is ample, the relation   $\sigma^*H^{\pm 1}=H^{\pm 1}$ implies that $\sigma$ is biregular.\ Conversely, if $\sigma$ is biregular, consider an ample line bundle $A$ on $X$.\ Then $A\otimes \sigma^*A$ is ample and is proportional to $H$,  hence either $H$ or $H^{-1}$ is ample.\ This reasoning shows that either $H$ or $H^{-1}$ is always movable.}
\end{rema}

\begin{rema}\label{rema45}
When $m=2$ and $n=1$ (and $\gamma=1$), the involution $\sigma$ of $X$ is the canonical involution on double EPW sextics  and the morphism $\phi_H\colon X\to \P^5$ factors as $X\to X/\sigma\hra  \P^5$  (see Section~\ref{sec261}).\ 

When $m=3$, $n=2$, and $\gamma=2$, the involution $\sigma$ has a geometric description and  $\phi_H\colon X\to \P^{19}$ is a morphism that factors  as $X\to X/\sigma\hra  \P^{19}$(see Section~\ref{sec262a}).\ 

When $m=4$, $n=1$, and $\gamma=2$, the involution $\sigma$ again has a geometric description\footnote{\label{foot24}Given a twisted cubic $C$ contained in the cubic fourfold $W$, with span $\langle C\rangle\isom\P^3$, take any quadric $Q\subset \langle C\rangle$ containing $C$; the intersection $Q\cap W$ is a curve of degree 6 which is the union of $C$ and another twisted cubic $C'$.\ The curve $C'$ depends on  the choice of $Q$, but not its image in $X$ (thanks to Ch.~Lehn for this description).\ The map $[C]\mapsto [C']$ defines an  involution $\sigma$ on $X$; the fact that it is biregular is not at all clear from this description, but follows from the fact that $\sigma^*H=H$.\ The fixed locus of $\sigma$ has two (Lagrangian) connected components and one is the  image of the canonical embedding $W\hra X$ (\cite[Theorem~B(2)]{lls}).}  and the map $\phi_H\colon X\dra \P^{14}$ also factors through the quotient  $X\to X/\sigma$, but it is not a morphism and has degree 72 onto its image (see Section~\ref{sec262}).

This last example  shows that, for a general polarized \hKm\ $X$ of $\KKK^{[m]}$-type with a polarization $H$ of degree 2, and contrary to what   the L conjecture\footnote{\label{f45}This conjecture states that if $(X,H)$ is a polarized \hKm\ corresponding to a   general point of a moduli space ${}^m\!\!\cM^{(\gamma)}_{2}$ (with $m\ge 2$ and $\gamma\in\{1,2\}$), the linear system $|H|$ is base-point-free and the morphism $\phi_H$ has degree 2 onto its image and factors through the quotient of $X$ by its involution from Proposition~\ref{prop27}.}
 from \cite[Conjecture~1.2]{og8} predicts, the map $\phi_H$ is not always a morphism of degree 2 onto its image.\ The question remains however of whether $\phi_H$ always factors through the quotient of $X$ by its involution $\sigma$ from Proposition~\ref{prop27}.

%
%

Let $(S,L)$ be a polarized K3 surface of degree $2e$ with $\Pic(S)=\Z L$ and $e\ge 2$.\ 
The class $H:= L_e-\delta$ on $S^{[e]}$  has square 2 and divisibility 1, and it is nef (Example~\ref{exa215}).\ 
There is (Example~\ref{exa37}) a geometrically defined birational  involution $\sigma$ on $S^{[e]}$  that acts in cohomology as the symmetry about the line spanned by $H$; it is therefore the involution of Remark~\ref{remar44} and it is biregular if and only if $e=2$ (precisely  when $H$ is ample).\ There is a factorization
$$\phi_H\colon S^{[e]} \stackrel{\alpha}{\dra}  S^{[e]}/\sigma\stackrel{\beta}{\dra} \Gr(e,e+2)\hra \P^{e(e+3)/2},
$$
where $\beta\circ \alpha$ is the rational map $\phi_e$   defined in \eqref{phim} and $\beta$ is dominant of degree  $\frac12\binom{2e}{e} $.\


 O'Grady works out in  \cite{og8} generalizations of this construction.\ More precisely, let $(S,L)$ be  a polarized K3 surface of degree $2e$, let $r$ be a positive integer, and let $\cM(r,L,r)$ be the moduli space of
Gieseker--Maruyama $L$-semistable torsion-free rank-$r$ sheaves $\cF$  with $c_1(\cF)=L$ and $c_2(\cF)=r[S]$.\ Yoshioka proved that under certain conditions (which are satisfied if $\Pic(S)=\Z L$; see \cite[Theorem~4.5]{og8}), $\cM(r,L,r)$ is a \hKm\ of type $\KKK^{[m]}$ with Picard number $\rho(S)+1$, where $m:=1-r^2+e$ (when $r=1$, this is just $S^{[e]}$). 

 Markman constructs geometrically a birational involution $\sigma$  on   $\cM(r,L,r)$ which acts  in cohomology as the symmetry about the line spanned by a  class $H$ of square 2 and divisibility~1   (\cite[Theorem~4.12 and Proposition~4.14]{og8}): it is the involution of Remark~\ref{remar44}.\ When $\Pic(S)=\Z L$ and   
$r^2<e\le r^2+2r-1$, the class $H$ is ample on  $\cM(r,L,r)$ and the involution $\sigma$ is biregular   (\cite[Corollary~4.15]{og8}).\ We will see in Example~\ref{ehm} that when $e\ge r^2+2r$, the class $H$ is nef but not ample on $\cM(r,L,r)$.
 \end{rema}

 \begin{exer}\label{er2}
\footnotesize\narrower

Let $r$ be a positive integer and let   $(S,L)$ be a polarized K3 surface of degree \mbox{$2(r^2+1)$} with $\Pic(S)=\Z L$.\ By the discussion above, the moduli space $X:=\cM(r,L,r)$ is a hyperk\"ahler fourfold with Picard number 2 that carries an ample class $H$ of square 2 and a biregular involution~$\sigma$.

\noindent (a) Show that the extremal rays of the movable cone of $ X$  are rational and interchanged by the involution $\sigma^*$ ({\em Hint:} in the notation of Example~\ref{ehm}, prove that the class $\bs:= (r-e,r+e,rL)$ is in $\cS_X$ and use the description of the movable cone given in Theorem~\ref{thm:NefConeHK4}(a)).


\noindent (b) Show that   the nef cone   of $X$ is equal to its movable cone and that
 $\Aut(X)=\Bir(X)=\{\Id,\sigma\}$ ({\em Hint:} prove that there are no $(-2)$-classes $\bs$ such that $\bs\cdot \bv_X=1$ and use the proof of Theorem~\ref{thm:NefConeHK4}(b)).
 

 \end{exer}
 
 These constructions allow us to prove part of the L conjecture from \cite{og8} (see footnote~\ref{f45}).
 
 \begin{coro}\label{coro46}
Let $(X,H)$ be a polarized \hKm\ of dimension $2m\ge 4$, degree~$2$ and divisibility 1, with biregular involution $\sigma$ (Remark~\ref{remar44}).\ The rational map
$$\phi_H\colon X\dra \P^{m(m+3)/2}$$
factors through the quotient $X\to X/\sigma$.
\end{coro}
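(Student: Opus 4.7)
My plan is to reduce the claim to the linear-algebraic statement that $\sigma^*$ acts on $V:=H^0(X,H)$ as a scalar, and then to verify this at one convenient point by a family argument.\ For the reduction: since $H^1(X,\cO_X)=0$, the group $\Pic(X)$ is torsion-free, so the cohomological identity $\sigma^*H=H$ lifts to an isomorphism $\sigma^*H\isom H$ of line bundles, giving a linear involution $\sigma^*\colon V\to V$ with decomposition $V=V^+\oplus V^-$.\ The induced projective involution on $\P(V^\vee)=\P^{m(m+3)/2}$ has fixed locus equal to the disjoint union of two linear subspaces $\P_+\sqcup \P_-$ of respective dimensions $\dim V^+-1$ and $\dim V^--1$.\ Now $\phi_H$ factors through $X/\sigma$ if and only if $\phi_H(X)\subset \P_+\cup \P_-$; but $\phi_H$ is defined by the complete linear system $|H|$, so its image spans $\P^{m(m+3)/2}$, and being irreducible cannot be contained in $\P_+\cup\P_-$ unless one of these linear subspaces equals $\P(V^\vee)$, i.e.,~$V^+=0$ or $V^-=0$.\ The corollary is therefore equivalent to the statement that $\sigma^*=\pm \Id$ on $V$.

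By Theorem~\ref{ghsa} the moduli space ${}^m\!\!\cM^{(1)}_2$ is irreducible, and by Remark~\ref{remar44} every point in it carries a biregular antisymplectic involution, uniquely determined by Torelli (Theorem~\ref{torthhk}) as the lift of the reflection of $H^2(X,\Z)$ about the line spanned by $H$.\ Over the universal family $\pi\colon\cX\to{}^m\!\!\cM^{(1)}_2$, the direct image $\pi_*\cH$ is locally free of rank $h^0=\binom{m+2}{2}$ (Grauert combined with Kodaira vanishing, since $K_X=\cO_X$ and $H$ is ample), and the relative involution acts on it fiberwise.\ The trace of this action is an integer-valued algebraic function on the irreducible base, hence constant: it therefore suffices to prove $\sigma^*=\pm\Id$ on $V$ at a single point of ${}^m\!\!\cM^{(1)}_2$.

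My test case is $(X_0,H_0)=(S^{[m]},L_m-\delta)$, where $(S,L)$ is a very general polarized K3 surface of degree $L^2=2m$; then $H_0^2=2$, $\div(H_0)=1$, and by Remark~\ref{rema45} the rational map $\phi_{H_0}$ coincides with the linear-span map $\phi_m\colon S^{[m]}\dra\Gr(m,m+2)\hra\P^{m(m+3)/2}$ and factors through the birational residual-intersection involution $\sigma_0$ of Example~\ref{exa37}, so $\sigma_0^*$ acts as $\pm\Id$ on $H^0(X_0,H_0)$.\ The main obstacle is that for $m\ge 3$ the class $H_0$ is only nef (not ample) on $S^{[m]}$ and $\sigma_0$ is only birational, so $(X_0,H_0)$ lies on the boundary of, not inside, ${}^m\!\!\cM^{(1)}_2$.\ I would handle this by passing to the birational model $X'_0$ of $S^{[m]}$ on which $H_0$ becomes ample (the flop along the extremal ray of the nef cone determined by $H_0$): on $X'_0$ the involution $\sigma_0$ is biregular and $(X'_0,H_0)\in{}^m\!\!\cM^{(1)}_2$; since sections of nef and big line bundles together with birational self-maps are birational invariants (Proposition~\ref{prop110b}), the equality $\sigma_0^*=\pm\Id$ on $H^0$ transfers to this model.\ Combined with the constancy established in the previous paragraph, this propagates $\sigma^*=\pm\Id$ throughout the irreducible ${}^m\!\!\cM^{(1)}_2$, yielding the factorization of $\phi_H$ through $X/\sigma$.
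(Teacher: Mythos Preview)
Your overall strategy matches the paper's proof almost exactly: reduce to showing that $\sigma^*$ acts as $\pm\Id$ on $H^0(X,H)$, use constancy of the eigenspace dimensions over the irreducible moduli space ${}^m\!\!\cM^{(1)}_2$, and verify the claim at the single test point $(S^{[m]},L_m-\delta)$ coming from the Beauville involution.

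There is, however, a genuine gap in your handling of the fact that $H_0=L_m-\delta$ is only nef for $m\ge 3$.\ You propose to pass to a birational model $X'_0$ of $S^{[m]}$ on which $H_0$ becomes ample, but \emph{no such model exists}.\ As recorded in Example~\ref{exa37}, the class $L_m-\delta$ spans the axis of the reflection $\sigma_0^*$ and sits precisely on the common wall of the two-chamber decomposition
\[
\Mov(S^{[m]})=\Nef(S^{[m]})\cup \sigma_0^*\Nef(S^{[m]});
\]
both chambers correspond to $S^{[m]}$ itself, so $H_0$ lies on the boundary of the nef cone of every hyperk\"ahler birational model.\ In particular your pair $(X'_0,H_0)$ never lands in ${}^m\!\!\cM^{(1)}_2$.

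The paper circumvents this differently, by \emph{deformation} rather than birational modification.\ One works over a local family of pairs $(X_t,H_t)$ through $(S^{[m]},L_m-\delta)$ (e.g.\ the hypersurface in the Kuranishi family where $H_0$ stays of type $(1,1)$); for generic $t$ the Picard number drops to~$1$ and $H_t$ is ample, so $(X_t,H_t)\in{}^m\!\!\cM^{(1)}_2$.\ Since $h^0(X_t,H_t)$ is constant along this family and the dimensions of the $\pm1$-eigenspaces are lower semicontinuous with constant sum, they are locally constant; the vanishing of one eigenspace, checked geometrically at the nef-and-big central fiber via the factorization of $\phi_m$ through $\sigma_0$, therefore propagates to the nearby ample fibers and hence to all of ${}^m\!\!\cM^{(1)}_2$.\ Replacing your flop step with this deformation argument closes the gap.
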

 
 \begin{proof} 
The space of sections $H^0(X,H)$ has dimension  $\binom{m+2}{2}$ has decomposes as the direct sum of the space of $\sigma$-invariant sections and the space of $\sigma$-antiinvariant sections.\ Since the dimension  of each of these spaces is lower semicontinuous (they are the kernels of the endomorphisms $\Id\pm\sigma^*$ of a vector space of constant dimension), they are in fact locally constant.\ Since the moduli space ${}^m\!\!\cM^{(1)}_{2} $  is connected, it is therefore
enough to find one point where
  one of these   spaces vanishes.
  
  In Remark~\ref{rema45}, we have the examples   $(S^{[e]}, L_e-\delta)$, but the line bundle $H=L_e-\delta$ is in general only nef (and big).\ This is not a problem: the space $H^0(X,H)$ has the same dimension, and $H$ will become ample in a small deformation, providing the required example.
\end{proof}

\begin{rema}
In Proposition \ref{prop27}, the assumption that $X$ be very general in its moduli space is necessary and it is not enough in general to assume only that $X$ have Picard number~1.\ When $m=2$, the proof of~\cite[Theorem~3.1]{sbcomp}   implies that $\Bir(X)$ is trivial when $\rho(X)=1$, unless $n\in\{1,3,23\}$.\ These three cases are actual exceptions: all fourfolds corresponding to points of ${}^2\!\!\cM^{(1)}_2$ carry a nontrivial involution; there   is   a 10-dimensional subfamily of 
${}^2\!\!\cM^{(2)}_{6}$ whose elements consists of fourfolds that have an automorphism of order~3 and whose very general elements have  Picard number~1 (\cite[Section~7.1]{sbcomp}); there is
a
(unique) fourfold in ${}^2\!\!\cM^{(2)}_{46}$ with Picard number~1 and an automorphism of order 23 (\cite[Theorem~1.1]{sbisom}).
\end{rema}

  \subsection{Automorphisms of projective \hKm s with Picard number 2}

  Let $X$ be a \hKm.\  When the Picard number of $X$ is 2, the very simple structure
of its nef and movable cones    (they only have two extremal rays)  can be used to describe the groups $\Aut(X)$ and $\Bir(X)$.\ We get potentially more interesting groups.

   \begin{theo}[Oguiso]\label{thogui}
Let $X$ be a projective \hKm\ with Picard number~2.\ Exactly one of the following possibilities occurs:
\begin{itemize}
\item   the extremal rays of the cones  $\Nef(X)$ and $\Mov(X)$  are all rational and the groups $\Aut(X)$ and $\Bir(X)$ are  both finite;
\item both extremal rays of $\Nef(X)$ are rational and the group $\Aut(X)$ is finite, both extremal rays of $\Mov(X)$ are irrational and the group $\Bir(X)$ is infinite;
\item    the cones $\Nef(X)$ and $\Mov(X)$ are equal with irrational extremal rays and the groups $\Aut(X)$ and $\Bir(X)$ are equal and  infinite.
\end{itemize}
Moreover, when $X$ is of $\kkk[m]$-type, the group  $\Aut(X)$ (resp.\ $\Bir(X)$), when finite, is isomorphic to $(\Z/2\Z)^r$, with $r\le 2$. 
%
\end{theo}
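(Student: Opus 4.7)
The plan is to exploit the very rigid structure of a rank--$2$ lattice of signature $(1,1)$ together with the Torelli theorem. By Proposition~\ref{prop111} (applied in the projective setting), the kernels of $\overline\Psi_X^A$ and $\overline\Psi_X^B$ are finite, so the finiteness/infiniteness questions reduce to analyzing the images in the group $O^+(\Pic(X),q_X)$ of isometries preserving the positive cone $\Pos(X)$. Since $\Pic(X)\otimes\R\isom\R^2$ with $q_X$ of signature $(1,1)$, the cone $\overline{\Pos(X)}$ is bounded by the two isotropic lines of $q_X$. A standard computation (Pell-type) shows that $O^+(\Pic(X),q_X)$ is finite when $-\disc\Pic(X)$ is a perfect square (so the two isotropic rays are rational), and is otherwise virtually cyclic, every infinite-order element acting as a hyperbolic rotation whose only fixed lines in $\Pos(X)$ are the two (irrational) isotropic lines; in particular, no infinite-order element of $O^+$ can stabilize a rational ray in $\overline{\Pos(X)}$.

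The second key input is that any extremal ray of $\Nef(X)$ or $\Mov(X)$ lying \emph{strictly inside} $\Pos(X)$ is rational: by Theorem~\ref{thm:NefConeHK4} and more generally the BHT/Markman description, such walls are cut out by algebraic classes of controlled square and divisibility, hence are rational. Therefore an \emph{irrational} extremal ray of $\Mov(X)$ (resp.\ $\Nef(X)$) must lie on $\partial\Pos(X)$, i.e.\ be isotropic, which by the previous paragraph forces $-\disc\Pic(X)$ to be a nonsquare; moreover since both extremal rays are then isotropic, the corresponding cone equals $\overline{\Pos(X)}$. Combining these remarks yields the trichotomy: (i) both cones have rational rays $\Rightarrow$ their stabilizers in $O^+$ are finite (either $O^+$ itself is finite, or the cones have rational walls inside $\Pos(X)$ not fixed by any infinite-order element), so $\Aut(X)$ and $\Bir(X)$ are finite; (ii) $\Nef(X)$ has rational rays but $\Mov(X)=\overline{\Pos(X)}$ $\Rightarrow$ $\Aut(X)$ is finite (it stabilizes the smaller $\Nef(X)$) while $\Bir(X)$ is infinite since enough infinite-order elements of $\widehat O^+(H^2(X,\Z))$ preserve $\overline{\Pos(X)}$ and, via the Torelli theorem (Theorem~\ref{torthhk2}), lift to birational self-maps; (iii) $\Nef(X)$ already has irrational rays $\Rightarrow$ $\Nef(X)=\Mov(X)=\overline{\Pos(X)}$, in which case any birational automorphism preserves $\Nef(X)$ and hence an ample class, so by Proposition~\ref{prop110b} is biregular, giving $\Aut(X)=\Bir(X)$, again infinite by Torelli.

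For the final refinement when $X$ is of $\kkk[m]$-type, the image of $\overline\Psi_X^B$ lands in the subgroup of $O^+(\Pic(X))$ induced by $\widehat O^+(H^2(X,\Z))$, and any finite subgroup of $O^+$ on a rank-$2$ indefinite lattice is generated by at most two commuting involutions (one can take the reflection exchanging the two isotropic rays and a reflection about a distinguished integral class), so is isomorphic to a subgroup of $(\Z/2\Z)^2$. This gives the bound $r\le 2$.

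The main obstacle is the Torelli-theoretic lifting step in cases (ii) and (iii): given an infinite-order isometry $\phi$ of $\Pic(X)$ preserving the relevant cone, one must extend $\phi$ to a Hodge isometry of the full lattice $H^2(X,\Z)\isom\Lkkk[m]$ that lies in the monodromy group $\widehat O^+(H^2(X,\Z))$, so that Theorem~\ref{torthhk2} applies and produces a (birational) self-map of $X$. Concretely, one extends $\phi$ by $\pm\Id$ on the transcendental lattice $\Pic(X)^\bot$ and verifies compatibility on the discriminant group using the gluing of discriminant forms of Section~\ref{seclat}; this requires checking that $\phi$ acts as $\pm\Id$ on $D(\Pic(X))$, which in turn follows from the fact that any rank-$2$ isometry of signature $(1,1)$ preserving the positive cone acts by a scalar on the discriminant group of such a lattice.
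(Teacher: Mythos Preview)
Your argument has two genuine gaps.

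First, the trichotomy is stated for an \emph{arbitrary} projective \hKm, but your proof relies on tools available only in the $\kkk[m]$ case: Theorem~\ref{thm:NefConeHK4} and the BHT wall description for rationality of interior extremal rays, and a Torelli-type statement to manufacture birational self-maps from lattice isometries (the result you cite, Theorem~\ref{torthhk2}, is the \emph{polarized} period-map statement and does not say that Hodge isometries lift to birational maps). The paper instead uses two inputs valid for any \hKm: Markman's cone-conjecture result \cite[Theorem~6.25]{marsur}, giving a rational polyhedral fundamental domain $\Delta\subset\Mov(X)$ for the $\Bir(X)$-action, and Kawamata's theorem that an extremal nef ray with positive square is rational. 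The fundamental domain gives ``$\Bir(X)$ finite $\Rightarrow\Mov(X)$ rational'' (hence its contrapositive) with no Torelli; the converse is the elementary observation, which you also make, that an isometry of a rank-$2$ lattice fixing a rational ray squares to the identity. The delicate case where one ray of $\Nef(X)$ is rational and the other isotropic is handled by combining the chamber decomposition~\eqref{chamb} with the fundamental domain to contradict finiteness of $\Aut(X)$; your argument does not address this case for general~$X$.

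Second, even in the $\kkk[m]$ case your lifting step is wrong. The claim that ``any rank-$2$ isometry of signature $(1,1)$ preserving the positive cone acts by a scalar on the discriminant group'' is false: compare the proof of Proposition~\ref{autfw*}, where nontrivial congruences~\eqref{eqab**} on $(a,b)$ are required for an element of $O^+(\Pic(X))$ to extend to $H^2(X,\Z)$. What \emph{would} suffice is that the image of $SO^+(\Pic(X))$ in the finite group $O(D(\Pic(X)))$ is finite, so some power of a generator acts trivially and extends; but that is not what you wrote. Your final paragraph is also off: a finite subgroup of $O^+$ of a rank-$2$ indefinite lattice has order at most $2$, not $4$ (two distinct reflections would compose to a nontrivial element of $SO^+$ of finite order, which is impossible). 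The extra factor of $2$ in the paper's bound comes from the sign $\pm\Id$ on the transcendental lattice $\Pic(X)^\bot$, forced by simplicity of its rational Hodge structure together with its odd rank $21$, combined with injectivity of $\Psi_X^B$ in the $\kkk[m]$ case.
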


We will see later that all three cases already occur when $\dim(X)=4$.

\begin{proof}
 One of the main ingredients of the proof in \cite{ogu2} is a result of Markman's (\cite[Theorem~6.25]{marsur}): there exists a  rational  polyhedral closed cone $\Delta\subset \Mov(X)$ such that 
\begin{equation}\label{cone}
\Mov(X)=\overline{\bigcup_{\sigma\in \Bir(X)}\sigma^*\Delta}
\end{equation}
(where $\sigma^*=\overline\Psi_X^B(\sigma)$) and $\sigma^*( \overset{\circ}{\Delta})\cap \overset{\circ}{\Delta}=\vide$   unless $\sigma^*=\Id$.\ This decomposition implies that if $\Bir(X)$ is finite, the cone $\Mov(X)$ is rational (\ie, both its extremal rays are rational).

Let $x_1$ and $x_2$ be generators  of the two extremal rays of $\Mov(X)$.\ Let $\sigma\in \Bir(X)$;  these extremal rays are either left stable by $\sigma^*$ or exchanged.\ We can therefore write $\sigma^{*2}(x_i)=\alpha_ix_i$, with $\alpha_i>0$; moreover, $\alpha_1+\alpha_2\in\Z$ and $\alpha_1\alpha_2=\det(\sigma^{*2})=1$, since $\sigma^*$ is defined over $\Z$.\ 

If the ray $\R_{\ge0}x_1$ is rational, we may choose $x_1$ integral primitive, hence $\alpha_1$ is a (positive) integer.\ Since $\alpha_1+\alpha_2\in\Z$, so is $\alpha_2$ and, since  $\alpha_1\alpha_2=\det(\sigma^{*2})=1$, we obtain $\alpha_1=\alpha_2=1$ and $\sigma^{*2}=\Id$.\ Thus, every element of $\overline\Psi_X^B(\Bir(X))$ has order 1 or 2.\ In the orthogonal group of a real plane, the only elements order 2 are $-\Id$, which cannot be in $\overline\Psi_X^B(\Bir(X))$ since it does not fix the movable cone, and isometries with determinant $-1$.\ It follows that $\overline\Psi_X^B(\Bir(X))$ has order 1 or 2.\ Proposition \ref{prop111}  implies that $\Bir(X)$ is finite, hence $\Mov(X)$ is rational.

The same argument shows that if one extremal ray $\R_{\ge0}y_1$ of the cone $\Nef(X)$ is rational, the group $\Aut(X)$ is finite and $\overline\Psi_X^A(\Aut(X))$ has order 1 or 2.\ To show that the other ray $ \R_{\ge0}y_2$ is also rational, we use a result of Kawamata (see \cite[Theorem~2.3]{ogu2}) that implies that if  $q_X(y_2)>0$, the ray   $ \R_{\ge0}y_2$ is rational.\ But if $q_X(y_2)=0$, the ray $ \R_{\ge0}y_2$ is also, by \eqref{inc}, an extremal ray for $\Mov(X)$.\ The decomposition \eqref{cone} then implies that for infinitely many distinct $\sigma_k\in \Bir(X)$, we have $\sigma_k^*\Delta\subset \Nef(X)$.\ Fix an integral class $x\in \overset{\circ}{\Delta}$; the integral class $a_k:=\sigma_k^*(x)$ is ample, and $\sigma_0^*\sigma_k^{*-1}(a_k)=a_0$.\ This implies  $\sigma_k^{-1}\sigma_0\in\Aut(X)$, which is absurd since $\Aut(X)$ is finite.

If one extremal ray $\R_{\ge0}y_1$ of the cone $\Nef(X)$ is irrational, the argument above implies that the other extremal ray $\R_{\ge0}y_2$ is also irrational, and $q_X(y_1)=q_X(y_2)=0$.\ The chain of inclusions \eqref{inc} implies $\Nef(X)=\Mov(X)$, hence $\Aut(X)=\Bir(X)$.

 To prove the last assertion, we use the classical (and rather elementary, see~\cite[Lemma~3.1]{huyk3}) fact that
the transcendental lattice   $\Pic(X)^\bot \subset H^2( X,\Z)$ carries a simple rational Hodge structure.\ Since its rank, 21, is   odd, $\pm1$ is, for any $\sigma\in\Bir(X)$,  an eigenvalue of the isometry $\sigma^*$ and the corresponding eigenspace is a sub-Hodge structure of $\Pic(X)^\bot$.\ This implies
   $\sigma^*\vert_{\Pic(X)^\bot}=\pm \Id$.\ When $X$ is of $\kkk[m]$-type, the morphisms $\Psi_X^A$ and $\Psi_X^B$ are injective and we just described their possible images when they are finite.\ This ends the proof of the theorem.
\end{proof}

One can go further when the nef and movable cones are explicitly known and obtain necessary conditions for nontrivial (birational or biregular) automorphisms to exist on a projective \hKm\ $X$.\
To actually construct a biregular automorphism is easy if the nef cone is known: the Torelli theorem (see Section~\ref{sec27}) implies that any isometry in $\widehat O(H^2(X,\Z),q_X)$ that preserves the nef cone is induced by an automorphism of~$X$.\ Given an isometry $\phi\in \widehat O(H^2(X,\Z),q_X)$ that preserves the movable cone, it is harder in general to construct a birational automorphism that induces that isometry.\ One way to do it is to identify, inside the movable cone of $X$, the nef cone of a birational model $X'$ of~$X$ (see~\eqref{chamb}) which is preserved by $\phi$, and construct the birational automorphism of $X$ as a biregular automorphism of $X'$ using Torelli.\ This is however not always possible and requires a good knowledge of the chamber decomposition~\eqref{chamb}.\ We will describe some situations when this can be done without too much effort, using the results of Sections \ref{sec210a} and \ref{sec27a}.

\subsubsection{Automorphisms of punctual Hilbert schemes of K3 surfaces}\hskip-2mm\footnote{Many of the results of this section can also be found in \cite{cat}.}\label{sec331}
Let $(S,L)$ be a polarized K3 surface such that $\Pic(S)=\Z L$ and let $m\ge 2$.\ The Picard group of $\sss[m]$ is isomorphic to $\Z L_m\oplus \Z\delta$.\ The ray spanned by the integral class $L_m\in\Pic(\sss[m])$     is  extremal  for both cones $\Nef(\sss[m]) $ and $\Mov(\sss[m])  $ (described in Example~\ref{exa215}).\ We are therefore in the first case of Theorem~\ref{thogui} and both groups $\Bir(\sss[m]) $ and $\Aut(\sss[m])  $ are 2-groups of order at most 4.\ More precisely, if $e:=\frac12 L^2$, 
\begin{itemize}
\item either $e=1$, the canonical involution on $S$ induces a canonical involution on $\sss[m]$ which generates the kernel of 
$$\overline\Psi_{\sss[m]}^B\colon\Bir(\sss[m])\lra O( \Pic(\sss[m]),q_{\sss[m]})$$
 and the groups $\Bir(\sss[m]) $ and $\Aut(\sss[m])  $ are 2-groups of order 2 or 4;
\item  or $e>1$, the morphism $\overline\Psi_{\sss[m]}^B$ is injective, and the groups $\Bir(\sss[m]) $ and $\Aut(\sss[m])  $ have order 1 or 2.
\end{itemize}

 As we explained earlier, it is hard  to determine precisely  the group $\Bir(S^{[m]})$, but we can at least formulate necessary conditions for this group to be nontrivial.\ When $e>1$ and $m\ge 3$ (the case $m=2$ will be dealt with in Proposition \ref{bbb2} below), one needs all the following properties to hold for $\Bir(S^{[m]})$ to be nontrivial:\footnote{\label{f35}Since $e>1$, the morphism $\overline\Psi_{\sss[m]}^B$ is injective and any nontrivial birational involution $\sigma$ of $ \sss[m] $  induces a nontrivial involution $\sigma^*$  of $\Pic(\sss[m])$ that preserves the movable cone; in particular, primitive generators of its two extremal rays need to have the same lengths.\ Since $m\ge 3$, this implies (Example~\ref{exa215}) that items~(a) and~(c) hold.\ The other extremal ray of $\Mov(S^{[m]})$ is then spanned by $a_1L_m-eb_1\delta$, where  $(a_1,b_1)$ is the minimal positive solution of the equation $a^2-e(m-1)b^2=1$ that satisfies 
$a_1\equiv\pm 1\pmod{m-1}$.

The   lattice $\Pic(S^{[m]})=\Z L_m\oplus \Z\delta$ has intersection matrix $\left(\begin{smallmatrix} 2e& 0\\0&-(2m-2)\end{smallmatrix}\right)$ and its orthogonal group is
\[
O(\Pic(S^{[m]}))=\left\{ \begin{pmatrix} a&  \alpha m'b\\e'b&\alpha a \end{pmatrix}  \Big|\ a,b\in \Z, \  a^2-e'm'b^2=1 ,\ \alpha=\pm 1 \right\},
\]
where $g :=\gcd(e,m-1)$ and we write $e=g e'$ and $m-1=g m'$.\ Since $\sigma^*(L_m)=a_1L_m-eb_1\delta$, we must therefore have $g=\gcd(e,m-1)=1$.\ 

The transcendental lattice   $\Pic(S^{[2]} )^\bot \subset H^2( S^{[2]},\Z)$ carries a simple rational Hodge structure (this is a classical fact found for example  in~\cite[Lemma~3.1]{huyk3}\label{fsimple}).\ Since the eigenspaces of the involution
$\sigma^*$ of $ H^2( S^{[2]},\Z)$ are sub-Hodge structures, the restriction of $\sigma^*$ to $\Pic(S^{[2]} )^\bot$ is $\eps \Id$, with $\eps\in\{\pm 1\}$.\ As in the proof of Proposition~\ref{prop27} and with its notation, we write $L_m=u+ev$; we have  
\begin{eqnarray*}
\sigma^* (u+ev)&=& a_1(u+ev)-eb_1\delta \\
\sigma^* (u-ev)&=& \eps(u-ev),
\end{eqnarray*}
which implies $2e\sigma^*(v) =  (a_1-\eps)u+e(a_1+\eps)v-eb_1\delta$,
hence $2e\mid a_1-\eps$ and $2\mid b_1$.}
\begin{itemize}
\item[(a)] $e(m-1)$ is not a perfect square;
\item[(b)] $e$ and $m-1$ are relatively prime;
\item[(c)] the equation $(m-1)a^2-eb^2=1$ is not solvable;
\item[(d)] if $(a_1,b_1)$ is the minimal positive solution of the equation $a^2-e(m-1)b^2=1$ that satisfies 
$a_1\equiv\pm 1\pmod{m-1}$, one has  $a_1\equiv\pm 1\pmod{2e}$ and $b_1$ even.
\end{itemize}
The group $\Bir(S^{[m]})$ is therefore trivial in the following cases:
\begin{itemize}
\item $m=e+1$, because item (a)  does not hold;
\item $m=e+2$,  because item~(c) does not hold;
\item $m=e+3$ ,  because item~(d) does not hold ($a_1=e+1$ and $b_1=1$);
\item $m=e-1$ ,  because item~(d) does not hold ($a_1=e-1$ and $b_1=1$).
\end{itemize}
  When $m=e\ge 3$, we will see in Example \ref{exa37} that the group $\Bir(S^{[m]})$ has order $2$:  items (a) and (b) hold, so does (d) ($a_1=2e-1$ and $b_1=2$), but I was unable to check  item (c) directly!

In some cases, we can be  more precise.

\begin{exam}[Case  $m\ge \frac{e+3}{2}$]\label{exa36}
Let $(S,L)$ be a polarized K3 surface such that $\Pic(S)=\Z L$ and $L^2=:2e\ge 4$.\ When $m\ge \frac{e+3}{2}$, {\em the automorphism group of $S^{[m]}$ is trivial.}\ Indeed, we saw in Example~\ref{exa215} that the ``other'' extremal ray of $\Nef(S^{[m]})$ is spanned by the primitive vector $((m+e)L_m-2e\delta)/g$, where $g:=\gcd (m+e,2e)$.\ Any nontrivial automorphism of  $S^{[m]}$ acts nontrivially on $\Pic(S^{[m]})$ and exchanges the two extremal rays.\ The primitive generator $L_m$ of the first ray (with square $2e$) is therefore sent to $((m+e)L_m-2e\delta)/g$, with square $(2e(m+e)^2-4e^2(2m-2))/g^2$, hence
$$ g^2= (m+e)^2-2e (2m-2)=(m-e)^2+4e.
$$
Since $g\mid m-e$, this implies $g^2\mid 4e$; but it also implies $g^2\ge 4e$, hence $g^2=4e$ and $m=e$.\ We obtain 
$g =\gcd (2e,2e)=2e$ and $e=1$, which contradicts our hypothesis.
\end{exam}

\begin{exam}[The Beauville involution (case $m=e$)]\label{exa37}
Let $S\subset \P^{e+1}$ be a K3 surface of degree $2e \ge 4$.\ By sending a general point $Z\in\sss[e]$ to the residual intersection $(\langle Z\rangle\cap S)\moins Z$, one defines a birational involution $\sigma$ of $\sss[e]$; it is biregular if and only if $e=2$ and $S$ contains no lines (\cite[Section~6]{beac10}).\ 

Assume $\Pic(S)=\Z L$ (so that $S$ contains no lines).\  The isometry $\sigma^*$ of $\Pic(\sss[e])$ must then exchange the two  extremal rays of the movable cone; in particular, a primitive generator of  the ``other'' extremal ray of $\Mov(S^{[e]})$ must have square $2e$.\ We must therefore be in the third case of Example~\ref{exa215}
and $\sigma^*(L_e)=(2e-1)L_e-2e\delta$ and $\sigma^*((2e-1)L_e-2e\delta)=L_e$, so that $\sigma^*(L_e-\delta)=L_e-\delta$: the axis of the reflection $\sigma^*$ is spanned by the square-$2$ class $ L_e-\delta$.

When $e=2$, the nef and movable cones are equal (see Example \ref{exa217}) and $\sigma$ is biregular.\ When $e\ge 3$, the class $ L_e-\delta$ spans the ``other'' extremal ray of the nef cone (Example~\ref{exa36}).\ It is therefore not ample, which confirms the fact that $\sigma$ is not biregular (either by the Beauville result mentioned above or by Example~\ref{exa36}).\ Note also that $ L_e-\delta$ is not
  base-point-free by Corollary~\ref{cor17}.\ The chamber decomposition~\eqref{chamb} is
$$\Mov(S^{[e]})=\Nef(S^{[e]})\cup \sigma^*\Nef(S^{[e]}).$$
In particular, $S^{[e]}$ has no other \hK\ birational model than itself and $\Bir(S^{[e]} )=\{\Id,\sigma\}$.
\end{exam} 

\begin{exam}[A birational involution of a Hilbert cube (case $m=3$ and $e=5$)]\label{exa412}
A general intersection $X:=\Gr(2,\C^5)\cap\P^6\subset \P(\bw2\C^5)$ is a (smooth) Fano threefold of degree $5$ and
we saw in Section~\ref{sec13} that  a general polarized K3 surface $(S,L)$ of degree 10 is the transverse intersection of  $X$ and a quadric $Q\subset \P^6$.\   

A general point of $S^{[3]}$ corresponds to projective lines $\ell_1,\ell_2,\ell_3\subset \P^4$.\ Let $\Sigma_{\ell_i}\subset\Gr(2,\C^5) $ be the Schubert cycle of lines meeting $\ell_i$. Since $\Sigma_{\ell_i}\cdot \Sigma_{\ell_2}\cdot \Sigma_{\ell_3} =1$, there exists a unique line $\ell\subset \P^4$ that meets $\ell_1,\ell_2,\ell_3$.\ The intersection $\Sigma_\ell\cap X$ is a (possibly reducible) rational normal cubic curve, which meets $S$ in six points.\ Since $\ell_i\in \Sigma_\ell$, the points of $S$ corresponding to $\ell_1,\ell_2,\ell_3$ are among these six points.\ By associating with them the three remaining points, we obtain a commutative diagram
 \begin{equation*}\label{d10}
\xymatrix
@C=18pt@M=6pt
{
S^{[3]} \ar@{-->}[rr]^\iota\ar@{-->}[dr]_-\pi&& S^{[3]} \ar@{-->}[dl]^-\pi\\
&\Gr(2,\C^5),}
\end{equation*}
 where  $\iota$ is   a birational involution and $\pi$ is dominant, generically finite of degree 20.\ Another geometric description of this involution is   given in \cite[Proposition~4.1]{ikkr1} (the pair $(S^{[3]},2L_3-3\delta)$ occurs as a degeneration of elements of $\,{}^3\!\!\cM^{(2)}_{4}$; see Section~\ref{sec262a}).
 
 The other extremal ray of the movable cone of  $S^{[3]}$ is spanned by $19L_3-30\delta$ (Example~\ref{exa215}) hence $\iota^*L_3=19L_3-30\delta$ and $\iota^*(2L_3-3\delta)=2L_3-3\delta$.\ Using for example  \cite[Section~2.2]{mon}, we see that the extremal rays of the fundamental chambers in the moving cone are spanned by\footnote{According to that reference, the extremal rays of the three fundamental chambers in the moving cones are   the orthogonal of divisors $D=b\div(D)L_3-a\delta$ such that
 \begin{itemize}
\item either $D^2=-12$ and $\div(D)=2$: it gives the equation $a^2-10b^2=3$, which has no solutions (reduce modulo $5$);
\item either $D^2=-36$ and $\div(D)=4$: it gives the equation $a^2-40b^2=9$, which has solutions $(7,1)$ and $(13,2)$ (the corresponding rays appear in \eqref{rayss3});
\item either $D^2=-2$ and $\div(D)=1$: it gives gives the equation $2a^2-5b^2=1$, which has no solutions (reduce modulo $5$);
\item either $D^2=-4$ and $\div(D)=4$: it gives gives the equation $a^2-40b^2=1$, which has solution $(19,3)$ (the corresponding ray is the other ray of the moving cone);
\item either $D^2=-4$ and $\div(D)=2$: it gives gives the equation $a^2-10b^2=1$ with $b$ odd, which has no solutions.
\end{itemize}
}
\begin{equation}\label{rayss3}
7L_3-10\delta\qquad\textnormal{and}\qquad 13L_3-20\delta=\iota^*(7L_3-10\delta) 
.
\end{equation}
In particular, $\iota$ is not regular, but $S^{[3]}$ has a unique nontrivial \hK\ birational model $X$ on which $\iota$ is regular.\ The class $ 2L_3-3\delta$ is ample on $X$ and the pair $(X,2L_3-3\delta)$ defines a point in $\,{}^3\!\!\cM^{(2)}_{4}$. 
\end{exam}


We end this section with a detailed study of the groups $\Aut(S^{[2]})$ (see \cite[Proposition~5.1]{bcns}) and $\Bir(S^{[2]})$.\

\begin{prop}[Biregular automorphisms of $S^{[2]}$]\label{bbb2a}
Let $(S,L )$ be a polarized K3 surface of degree $2e$ with   Picard group $\Z L $.\ The group $\Aut(S^{[2]})$ is trivial except when the equation $\cP_{e}(-1)$ is solvable and the equation $\cP_{4e}(5)$ is not, or when $e=1$.

If this is the case and $e>1$, the only  nontrivial automorphism of $S^{[2]}$ is an antisymplectic involution $\sigma$, the fourfold $S^{[2]}$ defines an element of $\, {}^2\!\!\cM^{(1)}_2$ (hence generically a double EPW sextic) and $\sigma$ is its canonical involution.
 \end{prop}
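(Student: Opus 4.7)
The plan is to study $\Aut(S^{[2]})$ through its action on $\Pic(S^{[2]}) = \Z L_2 \oplus \Z\delta$ via $\overline\Psi^A_{S^{[2]}}$. For $e > 1$ this representation is injective (Proposition~\ref{prop111} and the discussion in Section~\ref{sec331}), so nontrivial automorphisms correspond to nontrivial isometries of $\Pic(S^{[2]})$ preserving $\Nef(S^{[2]})$. Since any isometry of the rank-$2$ lattice $\Pic(S^{[2]})$ that fixes both extremal rays of the nef cone must be the identity, a nontrivial automorphism must swap the two rays, and in particular their primitive generators must have equal squares.

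Using Example~\ref{exa217} case by case, I will show this forces $\cP_{4e}(5)$ to be unsolvable (so that $\Nef(S^{[2]}) = \Mov(S^{[2]})$) and $e$ not to be a perfect square: otherwise the primitive generator of the other extremal ray has square $0$, $10e$, or $2e/5$, none of which equals $L_2^2 = 2e$. In the remaining case the other ray is spanned by $a_1 L_2 - e b_1 \delta$ with $(a_1,b_1)$ the minimal solution of $\cP_e(1)$, and the swap is the explicit determinant-$(-1)$ isometry
\[
\phi\colon L_2 \longmapsto a_1 L_2 - e b_1 \delta,\qquad \delta \longmapsto b_1 L_2 - a_1 \delta.
\]

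To lift $\phi$ to an automorphism of $S^{[2]}$ via Torelli (Theorem~\ref{torthhk}; note $\widehat O = O^+$ for $m=2$ since $D(\Lkkk[2])\isom\Z/2\Z$), I will extend it to a Hodge isometry of $H^2(S^{[2]},\Z) = H^2(S,\Z) \oplus \Z\delta$. The transcendental lattice $T := \Pic(S^{[2]})^\bot$ carries a simple rational Hodge structure, so any such extension acts on $T$ as $\eps\Id$ with $\eps\in\{\pm1\}$, and the remaining obstruction is purely lattice-theoretic: it must respect the gluing $D(\Z L_2) \isom D(T) \isom \Z/2e\Z$ defining the overlattice $H^2(S,\Z)$. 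A direct computation of $\phi$ on $D(\Pic(S^{[2]})) = \Z/2e\Z \oplus \Z/2\Z$ shows that $\phi$ fixes $\delta_* := \delta/2$ and acts on $L_* := L_2/(2e)$ as multiplication by $a_1$ exactly when $b_1$ is even; compatibility with the gluing then forces $a_1 \equiv \eps \pmod{2e}$. The main obstacle in the proof will be the Pell-theoretic identification of these conditions with the solvability of $\cP_e(-1)$: squaring the fundamental unit $p + q\sqrt{e}$ of $\cP_e(-1)$ gives $(a_1,b_1) = (2e q^2 - 1,\ 2pq)$, so $a_1 \equiv -1 \pmod{2e}$ and $b_1$ is even; conversely, $a_1 \equiv -1 \pmod{2e}$ yields a coprime factorization $b_1^2 = 4k(ek-1)$ whose two factors must both be perfect squares, producing a solution of $\cP_e(-1)$, while the sign $a_1 \equiv +1 \pmod{2e}$ with $a_1>1$ is ruled out because it would exhibit $(a_1,b_1)$ as the square of a smaller unit, contradicting minimality. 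Thus $\phi$ extends iff $\cP_e(-1)$ is solvable, necessarily with $\eps = -1$, so the resulting involution $\sigma$ is antisymplectic.

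For the second assertion, when $\sigma$ exists and $e > 1$ its $+1$-eigenline in $\Pic(S^{[2]})$ is spanned by $H := q L_2 - p\delta$ with $(p,q)$ minimal for $\cP_e(-1)$; a quick check gives $H^2 = 2$ and $\div(H) = 1$, and $H$ lies in the interior of $\Nef(S^{[2]}) = \Mov(S^{[2]})$ because solvability of $\cP_e(-1)$ forces $e$ not to be a square (so no $(-2)$-class in $\Pic(S^{[2]})$ is orthogonal to $H$), while unsolvability of $\cP_{4e}(5)$ removes all $(-10)$-walls of divisibility $2$. Hence $H$ is ample, $(S^{[2]}, H)$ represents a point of $\,{}^2\!\!\cM^{(1)}_2$, and $\sigma$ is its canonical involution from Remark~\ref{remar44} and Proposition~\ref{prop27}. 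The exceptional case $e = 1$ is handled separately: there $S$ itself carries the degree-$2$ K3 involution of Proposition~\ref{autk3}, which induces a nontrivial biregular involution $\iota^{[2]}$ of $S^{[2]}$ lying in the kernel of $\overline\Psi^A_{S^{[2]}}$ and thus invisible to the Picard-based analysis above.
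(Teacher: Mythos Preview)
Your proof is correct and follows essentially the same strategy as the paper's: reduce to isometries of $\Pic(S^{[2]})$ preserving the nef cone, rule out the $\cP_{4e}(5)$-solvable and perfect-square cases by comparing squares of primitive ray generators, obtain the conditions $a_1\equiv\eps\pmod{2e}$ and $2\mid b_1$, and identify these with solvability of $\cP_e(-1)$ via the coprime factorization $b^2=k(ek+\eps)$. The only presentational differences are that you phrase the extension obstruction in terms of the discriminant-group gluing $D(\Z L_2)\isom D(T)$ (the paper instead computes directly inside the hyperbolic plane $U$ containing $L_2=u+ev$, as in footnote~\ref{f35}), and you verify ampleness of $H=qL_2-p\delta$ via Theorem~\ref{thm:NefConeHK4} rather than by the paper's one-line observation that $H$ is proportional to $L_2+(a_1L_2-eb_1\delta)$, the sum of the two extremal-ray generators.
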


\begin{proof}
When $e=1$, the canonical involution of $S$ induces an involution on $S^{[2]}$.\ So we assume $e>1$ and $\Aut(S^{[2]})$ nontrivial.\ The morphism $\overline\Psi_{\sss[m]}^B$ is then injective, so we look for nontrivial involutions $\sigma^*$ of $\Pic(S^{[2]})$ that preserve the nef cone.

If the equation $\cP_{4e}(5)$ has a minimal solution $(a_5,b_5)$, the  extremal rays of $\Nef(S^{[2]})$ are spanned by $L_2$ and $a_5L_2-2eb_5\delta$ (see Example~\ref{exa217}), hence $\sigma^*(L_2)=(a_5L_2-2eb_5\delta)/g$, where $g=\gcd(a_5, 2eb_5)$.\ Taking lengths, we get $2eg^2=10e$, which is absurd.\ Therefore, the equation $\cP_{4e}(5)$ is not solvable, the other ray of the nef cone is spanned by $a_1L_2-eb_1\delta$, where $(a_1,b_1)$ is the minimal solution to the equation $\cP_{e}(1)$, and $\sigma^*(L_2)=a_1L_2-eb_1\delta$.\ As in footnote~\ref{f35} and with its notation, we have $2e\mid a_1-\eps$ and $2\mid b_1$, with $\eps\in\{-1,1\}$.\ Setting $a_1=2ea+\eps$ and $b_1=2b$, we rewrite the equation $\cP_{e}(1)$ as
$$ a(ea+\eps)=b^2.
$$
Since $a$ and $ea+\eps$ are relatively prime, there exists positive integers $r$ and $s$ such that $a=s^2$, $ea+\eps=r^2$, and $b=rs$.\ The pair $(r,s)$ satisfies the equation $r^2=ea+\eps=es^2+\eps$, \ie, $\cP_e(\eps)$.\ Since $(a_1,b_1)$ is the minimal solution to the equation $\cP_{e}(1)$, we must have $\eps=-1$.

Conversely, if $(r,s)$ is the minimal solution to the equation $\cP_e(-1)$, the class $H:=sL_2-r\delta$ has square 2 and is ample (because it is proportional to $ L_2+(a_1L_2-eb_1\delta) $).\ The pair  $ (S^{[2]},H)$ is an element of $\, {}^2\!\!\cM^{(1)}_2$ and as such, has an antisymplectic involution by Proposition~\ref{prop27}.
\end{proof}

\begin{rema}
When $e=2$, the equation  $\cP_2(-1)$ is solvable and the involution $\sigma$ of $S^{[2]}$ is the (regular) Beauville involution  described in Example~\ref{exa37}.\ There is a dominant morphism $S^{[2]}\to \Gr(2,4)$ of degree 6 that sends a length-2 subscheme of $S$ to the line that it spans and it factors through the quotient  $ S^{[2]}/\sigma$.\ In that case, $S^{[2]}/\sigma$ is not an EPW sextic (although one could say that it is three times the smooth quadric $\Gr(2,4)\subset \P^5$, which is a degenerate EPW sextic!) but $(S^{[2]},L_2-\delta)$ is still an element of $ \,{}^2\!\!\cM^{(1)}_2$ (see Example \ref{ex19} and Section~\ref{sec261}).

\end{rema}

\begin{prop}[Automorphisms of $S^{[2]}$]\label{bbb2}
Let $(S,L )$ be a polarized K3 surface of degree $2e$ with   Picard group $\Z L $.\ The group $\Bir(S^{[2]})$ is trivial except in the following cases:
\begin{itemize}
\item  $e=1$, or  the equation $\cP_{e}(-1)$ is solvable and the equation $\cP_{4e}(5)$ is not, in which cases $\Aut(S^{[2]})=\Bir(S^{[2]})\isom\Z/2\Z$;
\item $e=5$, or $e>1$,  $5\nmid e$, and both equations   $\cP_{e}(-1)$ and $\cP_{4e}(5)$ are  solvable,  in which case $\Aut(S^{[2]})=\{\Id\}$ and  $\Bir(S^{[2]})\isom\Z/2\Z$.
\end{itemize}
 \end{prop}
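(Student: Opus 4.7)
The strategy parallels the proof of Proposition~\ref{bbb2a}: classify involutions of $\Pic(S^{[2]})$ preserving $\Mov(S^{[2]})$ (rather than $\Nef(S^{[2]})$), extract a Pell-type condition, and lift via Torelli. The case $e=1$ is handled first, since it is the only case where $\overline\Psi^B_{S^{[2]}}$ has a nontrivial kernel: the canonical double-cover involution $\iota$ of $S$ lifts to a biregular involution $\iota^{[2]}$ of $S^{[2]}$ acting trivially on $\Pic(S^{[2]})$, while the image of $\overline\Psi^B_{S^{[2]}}$ is trivial because (by Example~\ref{exa217}, with $e=1$ a perfect square) the extremal rays of $\Mov(S^{[2]})$ are spanned by $L_2$ and $L_2-\delta$, of squares $2$ and $0$, which no isometry can swap. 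A small gluing check on the discriminant groups (or \cite{bcns}) then shows $\Ker(\overline\Psi^B_{S^{[2]}})=\{\Id,\iota^{[2]}\}$, giving $\Aut(S^{[2]})=\Bir(S^{[2]})\cong\Z/2\Z$.

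For $e>1$, the same gluing analysis (now with $L_2^*\in D(\Pic)$ of order $2e>2$) rules out any antisymplectic extension of $\Id_{\Pic}$, so $\overline\Psi^B_{S^{[2]}}$ is injective, and any nontrivial birational involution $\sigma$ must act nontrivially on $\Pic(S^{[2]})$, swapping the two extremal rays of $\Mov(S^{[2]})$. Equality of the squares of their primitive generators forces (Example~\ref{exa217}) $e$ to be a non-square, with the other generator $a_1L_2-eb_1\delta$ for $(a_1,b_1)$ the minimal solution of $\cP_e(1)$. I then repeat verbatim the integrality computation of Proposition~\ref{bbb2a} (writing $L_2=u+ev$, $\sigma^*(u-ev)=\varepsilon(u-ev)$, and imposing $\sigma^*(u),\sigma^*(v)\in\Lambda_{\KKK^{[2]}}$): the constraints $2e\mid a_1-\varepsilon$ and $2\mid b_1$ combined with $a_1^2-eb_1^2=1$ yield via the substitution $(a_1,b_1)=(2es^2+\varepsilon,2rs)$ the relation $r^2-es^2=\varepsilon$, and minimality of $(a_1,b_1)$ for $\cP_e(1)$ forces $\varepsilon=-1$, so $\cP_e(-1)$ must be solvable. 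Conversely, when $\cP_e(-1)$ has minimal solution $(r,s)$, these formulas produce $\sigma^*\in O(\Pic(S^{[2]}))$, and the direct sum $\phi:=\sigma^*\oplus(-\Id_{\Pic^\bot})$ glues to an isometry of $\Lambda_{\KKK^{[2]}}$ (the cyclic gluing subgroup of order $2e$ is reversed by both summands, using $a_1\equiv-1\pmod{2e}$ and $b_1$ even), lies in $\widehat O=O$ (automatic for $m=2$), is a Hodge isometry (since $H^{2,0}\subset\Pic^\bot\otimes\C$), and preserves $\Mov(S^{[2]})$; Markman's birational version of the Torelli theorem (Theorem~\ref{torthhk2}) then produces a birational involution $\sigma$ of $S^{[2]}$ with $\sigma^*=\phi$.

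Finally, to decide biregularity, I compare the movable and nef slopes of Example~\ref{exa217}. When $\cP_{4e}(5)$ is not solvable, $\Nef(S^{[2]})=\Mov(S^{[2]})$, so $\phi$ preserves $\Nef$ and $\sigma$ is biregular, recovering Proposition~\ref{bbb2a} and giving $\Aut(S^{[2]})=\Bir(S^{[2]})\cong\Z/2\Z$. When $\cP_{4e}(5)$ is solvable, the slope $\nu_e=2eb_5/a_5$ of the other extremal ray of $\Nef$ satisfies $\nu_e<\mu_e=eb_1/a_1$ strictly (otherwise the two rays would be proportional, forcing $a_1^2-eb_1^2=a_5^2-4eb_5^2$, i.e. $1=5$), so $\sigma^*$ sends the nef ray $\R_{\ge0}L_2$ to the movable ray $\R_{\ge0}(a_1L_2-eb_1\delta)$ which lies strictly outside $\Nef(S^{[2]})$; hence $\sigma$ is not biregular, and Proposition~\ref{bbb2a} gives $\Aut(S^{[2]})=\{\Id\}$, $\Bir(S^{[2]})\cong\Z/2\Z$. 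The main obstacle will be the converse step: verifying the discriminant-group gluing cleanly and invoking Torelli in its birational form uniformly across the two-chamber case ($e=5$, where $\sigma$ swaps the two chambers of $\Mov$) and the three-chamber case ($e>1$, $5\nmid e$, both Pell equations solvable, where $\sigma$ swaps the two outer chambers while fixing the middle one).
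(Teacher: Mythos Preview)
Your forward direction (necessary conditions for a nontrivial birational involution) matches the paper's exactly, including the Pell-type computation borrowed from Proposition~\ref{bbb2a}.

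The converse is where you and the paper diverge, and where your argument has a gap. You construct the Hodge isometry $\phi=\sigma^*\oplus(-\Id)$ and then invoke ``Markman's birational version of the Torelli theorem (Theorem~\ref{torthhk2})'' to produce $\sigma\in\Bir(S^{[2]})$ with $\sigma^*=\phi$. But Theorem~\ref{torthhk2} is the statement that the period map is an open embedding; it does not say that a Hodge monodromy operator preserving the movable cone lifts to a birational automorphism. What you need is: since $\phi$ is a Hodge isometry in $\widehat O^+=O^+$ (here $m=2$), it preserves the set of wall classes $\kappa$ with $\kappa^2=-10$, $\div(\kappa)=2$, hence permutes the chambers of~\eqref{chamb}; then $\phi(\Amp(S^{[2]}))=\tau^*(\Amp(Y))$ for some birational $\tau\colon S^{[2]}\dashrightarrow Y$, and Theorem~\ref{torthhk} applied to $(\tau^*)^{-1}\circ\phi$ gives an isomorphism $\rho\colon Y\isomto S^{[2]}$; finally $\rho\circ\tau\in\Bir(S^{[2]})$ realizes~$\phi$. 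This argument is correct but is not what Theorem~\ref{torthhk2} says.

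The paper avoids this by a case-split on the chamber decomposition of Example~\ref{exa217}: when there are three chambers ($5\nmid e$), the reflection $\phi$ fixes the middle chamber, which is the ample cone of a birational model $X$ on which the square-$2$ class $b_{-1}L_2-a_{-1}\delta$ is ample, so $(X,H)\in{}^2\!\!\cM^{(1)}_2$ and Proposition~\ref{prop27} supplies the involution biregularly on~$X$; when there are two chambers ($5\mid e$), the paper shows the wall must coincide with the axis of $\phi$, which forces $e=5$, and then invokes O'Grady's explicit geometric involution (Example~\ref{exa311}). This case analysis also explains why the statement of the proposition carries the condition $5\nmid e$: it is not that your simpler criterion ``$\cP_e(-1)$ solvable'' is wrong, but that the case $5\mid e$, $e\ne 5$, with both Pell equations solvable, simply does not occur---and it is the chamber-count argument that proves this. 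Your uniform approach would recover this as a corollary, but you have not made that deduction explicit.
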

 

\begin{proof} 
 If $\sigma\in \Bir(S^{[2]})$ is not biregular, $  \sigma^* $ is a reflection that acts on the movable cone $\Mov(S^{[2]})$ in such a way that $ \sigma^*(\Amp(S^{[2]}))\cap \Amp(S^{[2]})=\vide$ (if the pull-back by $\sigma$ of an ample class were ample, $\sigma$ would be regular).\ This implies $\Mov(S^{[2]})\ne \Nef(S^{[2]})$ hence (see Example~\ref{exa217})  
 \begin{itemize}
\item either $e=1$ and $ \sigma^*(L_2)=  L_2- \delta$, which is impossible since these two elements do not have the same square;
\item or $e>1$,  the equation $\cP_{4e}(5)$ has a minimal solution $(a_5,b_5)$ (which implies that $e$ is not a perfect square), and, as in the proof of Proposition~\ref{bbb2a}, we have 
$ \sigma^*(L_2)= a_1L_2-eb_1\delta$, $\sigma^*\vert_{\Pic(S^{[2]} )^\bot}=-\Id$,    $a_1= 2eb_{-1}^2-1$, and $b_1=2a_{-1}b_{-1}$, where $(a_{-1},b_{-1})$ is the minimal solution to the equation $\cP_e(-1)$.
\end{itemize}

 The chamber decomposition of the movable cone, which is preserved by $\sigma^*$, was determined in Example \ref{exa217}: since $b_1 $ is even,
 \begin{itemize}
\item either   $5\mid e$, there are two chambers, and the middle wall is spanned by  $ a_5L_2-2eb_5\delta$;
\item or  $5\nmid e$, there are three chambers, and the middle walls are spanned by  $ a_5L_2-2eb_5\delta$ and $(a_1a_5-2eb_1b_5)L_2-e(a_5b_1-2a_1b_5)\delta$ respectively.
\end{itemize}
In the first case, the two chambers are $\Nef( S^{[2]})$ and $\sigma^*(\Nef( S^{[2]}))$ hence the wall must be the axis of the reflection $\sigma^*$.\ It follows that there is an integer $c$ such that $a_5=c b_{-1}$ and $2eb_5=c a_{-1}$; substituting these values in the equation $\cP_{4e}(5)$, we get $5=c^2b_{-1}^2-c^2a_{-1}^2/e= c^2/e$, hence $e=c=5$.\ In that case, one can construct geometrically a nontrivial birational involution on   $S^{[2]}$ (Example \ref{exa311}).

In the second case, since the reflection $\sigma^*$ respects the chamber decomposition, the square-2 class $H:= b_{-1}L_2- a_{-1}\delta$ is in the interior of the ``middle'' chamber, which is the nef cone of a birational model $X $ of $S^{[2]}$.\ It
  is therefore ample on  $X $ and the pair $(X,H) $ is an element of $\, {}^2\!\!\cM^{(1)}_2$; as such, it has an antisymplectic involution by Proposition~\ref{prop27}, which induces a birational involution of $ S^{[2]}$.
 \end{proof}

\begin{exam}[The O'Grady involution]\label{exa311}
A general intersection $X:=\Gr(2,\C^5)\cap\P^6\subset \P(\bw2\C^5)$ is a (smooth) Fano threefold of degree $5$ and
we saw in Section~\ref{sec13} that  a general polarized K3 surface $S$ of degree 10 is the transverse intersection of  $X$ and a quadric $Q\subset \P^6$.\   

 A general point of $S^{[2]}$ corresponds to $V_2,W_2\subset \C^5$ and 
$$\Gr(2,V_2\oplus W_2)\cap S=\Gr(2,V_2\oplus W_2)\cap Q \cap \bw2(V_2\oplus W_2))\subset\P^2
$$
is the intersection of two general conics in $\P^2$ hence consists of 4 points.\ The (birational) O'Grady   involution $S^{[2]}\dra S^{[2]}$ takes the pair of points $([V_2],[W_2])$  to the residual two points of this intersection.

This involution has the following geometric description.\ The scheme $L(X)$ of lines contained in $X$ is a $\P^2$ and the map that takes a line $L\subset X$ to $L\cap Q$ (which is regular if $S$ contains no lines) defines an embedding $L(X)\hra \SS$.\ The involution is the Mukai flop of this $\P^2$ (\cite[Section~4.3]{og8}).
\end{exam}

\begin{rema}
 In the second case of the proposition, when $5\nmid e$, the fourfold $S^{[2]}$ is birationally isomorphic to a double EPW sextic, whose canonical involution induces the only nontrivial birational automorphism of $ S^{[2]}$.
 \end{rema}

\begin{rema}
 There are  cases where both equations $\cP_{e}(-1)$ and $\cP_{4e}(5)$ are solvable  and $ 5\nmid e$: when $e=m^2+m-1$, so that $(2m+1,1)$ is the minimal solution of the equation $\cP_{4e}(5)$, and $m\not\equiv 2\pmod5$, this happens for $m\in\{ 5,6,9,10,13,21\}$.\
I do not know whether this happens for infinitely many integers $m$.
\end{rema}

  \subsubsection{Automorphisms of some other  \hK\ fourfolds with Picard number~2}
 
We determine the automorphism groups of the \hK\ fourfolds of $\KKK^{[2]}$-type studied in Section~\ref{sec27a} and find more interesting groups.\ The polarized \hK\ fourfolds $(X,H)$ under study are those  for which $H$ has square $2n$ and divisibility 2   (so that   $n\equiv -1\pmod4$), and $\Pic(X)=\Z H\oplus \Z L$, with   intersection matrix   $\left(\begin{smallmatrix}2n&0\\0&-2e'\end{smallmatrix}\right)$, where $e'>1$.\ If $n$ is square-free, a very general element of the irreducible hypersurface $  {}^2\cC_{2n, 2e'n}^{(2)}\subset    {}^2\!\!\cM_{2n}^{(2)}$ is of this type (Proposition~\ref{propirr}(2)(b)).

 \begin{prop}\label{autfw*}
Let $(X,H)$ be a polarized \hK\ fourfold as above.
   \begin{itemize}
  \item[{\rm (a)}] If both equations $\cP_{n,e'}(-1)$ and $\cP_{n,4e'}(-5)$ are not solvable and $ne'$ is not a perfect square, the groups   $\Aut(F)$ and   $\Bir(F)$ are equal.\ They are infinite cyclic, except when the equation $\cP_{n,e'}(1)$ is solvable, in which case these groups are isomorphic to the infinite dihedral group.\footnote{This is the group $\Z\rtimes \Z/2\Z$, also isomorphic to the free product $ \Z/2\Z\star \Z/2\Z$.}
  \item[{\rm (b)}] If the equation
    $\cP_{n,e'}(-1)$ is not solvable but the equation $\cP_{n,4e'}(-5)$ is,
   the  group $\Aut(X)$ is trivial and
  the       group $\Bir(X)$ is infinite cyclic, except when the equation $\cP_{n,e'}(1)$ is solvable, in which case it is  infinite dihedral.
   \item[{\rm (c)}] If the equation $\cP_{n,e'}(-1)$ is solvable or if $ne'$ is a perfect square, the group $\Bir(X)$ is trivial.
    \end{itemize}
   \end{prop}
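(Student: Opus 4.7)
The plan is to combine Theorem~\ref{thogui} with the explicit descriptions of the nef and movable cones from Section~\ref{sec27a} and with the Torelli theorem (Theorem~\ref{torthhk2}), noting that for \hKm s of $\kkk[2]$-type the monodromy group $\widehat O^+(H^2(X,\Z),q_X)$ coincides with $O^+(H^2(X,\Z),q_X)$ since $D(\Lkkk[2])=\Z/2\Z$.

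First I would translate the Pell-solvability hypotheses into cone geometry. The extremal rays of $\Mov(X)$ are irrational (and then null of square $0$) exactly when $\cP_{n,e'}(-1)$ is not solvable and $ne'$ is not a perfect square; the rays of $\Nef(X)$ coincide with those of $\Mov(X)$ exactly when $\cP_{n,4e'}(-5)$ is not solvable. This places case (a) in the third alternative of Theorem~\ref{thogui} ($\Nef(X)=\Mov(X)$ with irrational rays, hence $\Aut(X)=\Bir(X)$ infinite), case (b) in the second alternative (irrational movable rays but rational nef rays, so $\Aut(X)$ is finite and $\Bir(X)$ is infinite), and case (c) in the first alternative (rational movable rays, so both groups are finite).

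Second, I would identify the infinite subgroup of $\Bir(X)$ in cases (a) and (b) by exhibiting explicit hyperbolic rotations. Writing $\Pic(X)=\Z H\oplus \Z L$ with form $\left(\begin{smallmatrix}2n&0\\0&-2e'\end{smallmatrix}\right)$, the matrix
$$\rho=\begin{pmatrix} a_\star & e'b_\star \\ n b_\star & a_\star \end{pmatrix},$$
where $(a_\star,b_\star)$ is the fundamental solution of $\cP_{ne'}(1)$, is an orientation-preserving isometry of $\Pic(X)$ of infinite order preserving the positive cone, hence the movable cone. By Torelli, an extension of $\rho$ by $\pm\Id$ on $\Pic(X)^\perp$ that actually defines an isometry of $H^2(X,\Z)$ lies in $\widehat O^+$ and lifts to an element of $\Bir(X)$ (and to $\Aut(X)$ in case (a), since then $\Mov(X)=\Nef(X)$). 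The additional involution yielding the dihedral structure is the ``$-r_v$'' of Markman's description, where $v=a_1H+b_1L$ is a square-$(+2)$ class in $\Pic(X)$ furnished by a solution $(a_1,b_1)$ of $\cP_{n,e'}(1)$; this lies in the monodromy group by Markman, it preserves $\Mov(X)$ because $v$ lies in the positive cone, and together with the cyclic rotation subgroup it produces the infinite dihedral group.

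Third, in case (b) the rotation $\rho$ does not preserve the strictly smaller $\Nef(X)$: it moves an ample class across the rational interior walls determined by the solutions of $\cP_{n,4e'}(-5)$. Similarly, when $v$ exists, the Markman involution has fixed axis $\R v\subset \Mov(X)$ which, as an elementary slope computation shows, lies outside $\Nef(X)$ (since $|b_1/a_1|<\sqrt{n/e'}$ while the nef slope $na_{-5}/(2e'b_{-5})$ is strictly less than $\sqrt{n/e'}$, but $|b_1/a_1|$ can be shown to exceed it). Consequently no nontrivial generator of $\Bir(X)$ lies in $\Aut(X)$, so $\Aut(X)=\{1\}$.

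The main obstacle is case (c), where ``finite'' must be upgraded to ``trivial''. Here any $\sigma\in\Bir(X)$ has $\sigma^*$ preserving the two rational extremal rays of $\Mov(X)$; since $-\Id$ does not preserve the positive cone, $\sigma^*$ either equals $\Id$ or the reflection $(H,L)\mapsto (H,-L)$ swapping the two rays. To rule out the latter, I would use that this reflection acts on $D(\Pic(X))=\Z/2n\Z\oplus \Z/2e'\Z$ by $(x,y)\mapsto(x,-y)$, which is nontrivial because $e'>1$. I would then determine the glue subgroup $G\subset D(\Pic(X))\oplus D(\Pic(X)^\perp)$ coming from the primitive embedding $\Pic(X)\hra \Lkkk[2]$ (using the explicit polarization type~\eqref{eq2} together with the additional primitive class $L\in h_\tau^\perp$ of square $-2e'$), and verify that no extension of the reflection by $\pm\Id$ on $\Pic(X)^\perp$ preserves $G$, so that it does not lift to an isometry of $H^2(X,\Z)$ and hence not to an element of $\Bir(X)$. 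This lattice-theoretic incompatibility is the hardest step, and its verification hinges on the rigidity forced by the condition $e'>1$ combined with the symmetry of the glue inherited from $\gamma=2$.
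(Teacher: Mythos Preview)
Your overall architecture matches the paper's: use Oguiso's trichotomy together with the cone descriptions of Section~\ref{sec27a}, then realize or obstruct isometries via Torelli.\ The gap is at the step you repeatedly defer: determining \emph{which} isometries of $\Pic(X)$ (extended by $\pm\Id$ on $\Pic(X)^\bot$) actually extend to isometries of $H^2(X,\Z)$.\ This is not a formality; it is the heart of the argument, and the paper carries it out by an explicit coordinate computation in $\Lambda_{\KKK^{[2]}}$, obtaining the congruences $2g\mid b$, $a\equiv\eps\pmod n$, $a\equiv\alpha\eps\pmod{2e'}$ (with $g=\gcd(n,e')$).\ Several things you need drop out of this computation but are absent from your proposal:
\begin{itemize}
\item The identity on $\Pic(X)$ extended by $-\Id$ on $\Pic(X)^\bot$ does \emph{not} extend to $H^2(X,\Z)$.\ You need this for injectivity of $\Bir(X)\to O^+(\Pic(X))$; without it you have no upper bound on the group and cannot conclude ``infinite cyclic'' rather than merely ``contains an infinite cyclic subgroup.''
\item Your rotation $\rho$ built from the fundamental solution of $\cP_{ne'}(1)$ need not extend: the congruences force $b$ even (and more), so the liftable rotations correspond to solutions $(r,s)$ of $\cP_{ne'}(1)$ via $a=2ne's^2+1$, $b=2grs$, which is not the obvious identification.\ In the $\alpha=-1$ case the analysis also uses that $-1$ is not a square modulo $n$ (a consequence of $n\equiv-1\pmod4$) to pin down which of $\cP_{n,e'}(\pm1)$ is relevant.
\item The reflection $\left(\begin{smallmatrix}1&0\\0&-1\end{smallmatrix}\right)$ on $\Pic(X)$ is never among the liftable involutions (since $n\ge3$ and $e'\ge2$).\ This single fact finishes case~(c) immediately and also gives $\Aut(X)=\{1\}$ in case~(b): any finite-order element preserving the cone must be this reflection, which does not lift.\ Your slope argument for (b) and your glue-subgroup sketch for (c) are both aiming at this, but neither is executed.
\end{itemize}

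One further point on case~(b): producing the dihedral involution as a \emph{birational} automorphism requires showing it preserves the nef cone of some birational model, not just the movable cone.\ The paper does this by proving (via quadratic reciprocity, to exclude $5\mid n$ and $5\mid e'$) that $\cP_{n,4e'}(-5)$ has two conjugate classes of solutions, so that the chamber decomposition of $\Mov(X)$ has a ``middle'' chamber on which the candidate involution acts; your Markman-reflection shortcut does not by itself supply this chamber compatibility.
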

   
   When $n$ is square-free, we can consider equivalently the equation $\cP_{ne'}(\pm n)$ instead of $\cP_{n,e'}(\pm 1)$ and the equation $\cP_{4ne'}(-5n)$ instead of $\cP_{n,4e'}(-5)$.

\begin{proof}
The map $\Psi_X^A\colon \Aut(X)\to  O(H^2(X,\Z),q_X)$ is injective (Proposition \ref{prop111}).\ Its image consists of isometries which  preserve $\Pic(X)$ and the  ample cone and, since $b_2(X)-\rho(X)$ is odd,  restrict  to $\pm \Id $ on $\Pic(X)^\bot$  (\cite[proof of Lemma~4.1]{ogu3}).\ Conversely, by Theorem~\ref{torthhk}, any isometry with these properties is in the image of $\Psi_X^A$.\ We begin with some general remarks on the group $G$ of isometries of   $H^2(X,\Z)$ which preserve $\Pic(X)$ and the components of the positive cone, and restrict   to $\eps \Id $ on $\Pic(X)^\bot$, with $\eps\in\{-1,1\}$.

As we saw in footnote \ref{fsimple}, we have\[
O(\Pic(X),q_X)=\left\{ \begin{pmatrix} a& \alpha e''b\\n'b&\alpha a \end{pmatrix}  \Big|\ a,b\in \Z, \  a^2-n'e''b^2=1 ,\ \alpha\in\{-1, 1\} \right\},
\]
where $g :=\gcd(n,e')$, $n'=n/g$, and $e''=e'/g$.\ Note that $\alpha$ is the determinant of the isometry and
\begin{itemize}
 \item  such an isometry   preserves  the components of  the positive cone if and only if $a>0$; we denote the corresponding subgroup by $O^+(\Pic(X),q_X)$;
 \item when $ne'$ is not a perfect square, the group $SO^+(\Pic(X),q_X)$ is infinite cyclic, generated by the isometry $R$ corresponding to the minimal solution of the equation $\cP_{n'e''}(1)$ and the group $O^+(\Pic(X),q_X)$ is infinite dihedral;
 \item when $ne'$ is  a perfect square, so is $n'e''=ne'/g^2$, and   $O^+(\Pic(X),q_X)=\{ \Id, \left(\begin{smallmatrix} 1&0\\0&-1\end{smallmatrix}\right)\}$. 
\end{itemize}
    
By Eichler's criterion, there exist standard bases $(u_1,v_1)$ and $(u_2,v_2)$    for   two orthogonal hyperbolic planes in $\Lambda_{\KKK^{[2]}}$, a generator $\ell$ for the $I_1(-2)$ factor, and an isometric identification $H^2(X,\Z)\isomto \Lambda_{\KKK^{[2]}}$ such that
\[
H=2u_1+\frac{n+1}{2}v_1+\ell \quad \text{and}\quad L=u_2-e'v_2.
\]
The elements $\Phi$ of $G$  must then satisfy $a>0$ and
\begin{eqnarray*} 
   \Phi(2u_1+\tfrac{n+1}{2}v_1+\ell)&=&a(2u_1+\tfrac{n+1}{2}v_1+\ell)+n'b(u_2-e'v_2)\\
   \Phi(u_2-e'v_2)&=&\alpha e''b(2u_1+\tfrac{n+1}{2}v_1+\ell)+\alpha a(u_2-e'v_2)\\
   \Phi(v_1+\ell)&=&\eps (v_1+\ell)\\
  \Phi(u_1-\tfrac{n+1}{4}v_1)&=&\eps (u_1-\tfrac{n+1}{4}v_1)\\
 \Phi(u_2+e'v_2)&=&\eps (u_2+e'v_2)
   \end{eqnarray*}
   (the last three lines correspond  to elements of $\Pic(X)^\bot$).\ From this, we deduce
     \begin{eqnarray*} 
   n\Phi(v_1)&=&2(a-\eps)u_1+\bigl( (a+\eps)\tfrac{n+1}{2}-\eps\bigr)v_1+(a-\eps)\ell+n'b(u_2-e'v_2)\\
  2 \Phi(u_2)&=&2\alpha e'' bu_1+\alpha e'' b\tfrac{n+1}{2}v_1+
  \alpha e''b \ell+(\eps+\alpha a)u_2+e' (\eps-\alpha a)v_2\\
   2e'\Phi(v_2)&=&-2\alpha e'' bu_1-\alpha e'' b\tfrac{n+1}{2}v_1
  -\alpha e''b \ell+(\eps-\alpha a)u_2+e' (\eps+\alpha a)v_2.
   \end{eqnarray*} 
   From the first equation, we get $g\mid b$ and $a\equiv \eps\pmod{n}$; from the second equation, we deduce that $e''b$ and $\eps+\alpha a$ are even; from the third equation, we get $2g\mid b$ and $a\equiv \alpha\eps\pmod{2e'}$.\ All this is equivalent to $a>0$ and
\begin{equation}\label{eqab**}
2g\mid b\quad,\quad a\equiv \eps\pmod{n} \quad,\quad   a\equiv \alpha\eps\pmod{2e'}.
\end{equation}

 Conversely, if these conditions are realized, one may define $\Phi$ uniquely on $\Z u_1\oplus \Z v_1\oplus \Z u_2\oplus \Z v_2 \oplus \Z \ell$ using  the formulas above, and extend it by $\eps \Id$ on the orthogonal of this lattice in $ \Lambda_{\KKK^{[2]}}$ to obtain an element of $G$.

The first congruence in \eqref{eqab**} tells us that the identity on $\Pic(X)$ extended by $- \Id$ on its orthogonal does not lift to an isometry of $H^2(X,\Z) $.\ This means that the restriction
$ G\to O^+(\Pic(X),q_X)$ is injective.\ Moreover, the two congruences in \eqref{eqab**} imply $a\equiv \eps \equiv \alpha\eps\pmod{g}$.\ If
  $g>1$,   since $n$, hence also $g$, is odd, we get  $\alpha=1$, hence the image of $G$ is contained in $SO^+(\Pic(X),q_X)$.

\noindent{\bf Assume $\alpha=1$.}
The relations \eqref{eqab**} imply that $a-\eps$ is divisible by $n$ and $2e'$, hence by their least common multiple $2gn'e''$.\ We write
$b=2g b'$ and $a=2g n'e''a'+\eps$  
and
 obtain from the equality  $a^2-n'e''b^2=1$  the relation
\[
4g^2n^{\prime 2}e^{\prime\prime 2}a^{\prime 2}+4\eps g n'e''a'=4g^2n'e'' b^{\prime 2},
\]
hence
\[
g n'e''a^{\prime 2}+\eps a'=g b^{\prime 2}.
\]
In particular, $a'':=a'/g$ is an integer and $b^{\prime 2}=a''(ne'a''+\eps)$.\ 

Since $a>0$ and $a''$ and $ne'a''+\eps$ are coprime, both are perfect squares and there exist coprime integers $r$ and $s$, with $r>0$, such that
\[
a''=s^2\quad,\quad ne' a''+\eps=r^2 \quad,\quad b'=rs.
\]

Since  $-1$ is not a square modulo $n$, we obtain $\eps=1$; the pair $(r,s)$ satisfies the Pell equation $r^2- ne' s^2=1$, and $a=2ne's^2+1$ and $b=2g rs$.\ 
In particular, either $ne'$ is not a perfect square and there are always infinitely many solutions, or $ne'$ is a perfect square and we get $r=1$ and $s=0$, so that $\Phi=\Id$.

\noindent{\bf Assume $\alpha=-1$.}
As observed before, we have $g=1$, \ie, $n$ and $e'$ are coprime.\ Using~\eqref{eqab**}, we may
  write $b=2b'$ and $a=2a'e'-\eps$.\ Since $2\nmid n$ and $a\equiv \eps \pmod{n}$, we deduce  $\gcd(a',n)=1$.\ 
Substituting  into the equation $a^2-ne'b^2=1$, we obtain
\[
a'(e'a'-\eps)=nb^{\prime 2},
\]
hence there exist coprime integers $r$ and $s$, with $r\ge 0$, such that $b'=rs$, $a'=s^2$, and $e'a'-\eps=nr^2$.\ 
The pair $(r,s)$ satisfies the equation $nr^2-e's^2=-\eps$, and $a=2e's^2-\eps$ and $b=2rs$.\ 
In particular,  one of the two equations $\cP_{n,e'}(\pm 1)$ is solvable.\ 
Note that at most one of these equations may be solvable: if $\cP_{n,e'}(\eps)$ is solvable, $-\eps e'$ is a square modulo $n$, while $-1$ is not.\ 
These isometries are all involutions and, since $n\ge 2$ and $e'\geq2$, $\left(\begin{smallmatrix} 1&0\\0&-1\end{smallmatrix}\right)$ is not one of them.\ In particular, if $ne'$ is a perfect square,   $G=\{\Id\}$.

\medskip

We now go back to the proof of the proposition.\  
We proved that the composition $\Aut(X)\to G\to O^+(\Pic(X),q_X)$ is injective and, by the discussion in Section~\ref{secpsi}, so is the morphism $\Bir(X)\to G\to O^+(\Pic(X),q_X)$ (any element of its kernel is in $\Aut(X)$).

Under the hypotheses of (a),  both slopes of the nef   cone are   irrational (Section~\ref{sec27a}), hence the groups $\Aut(X)$ and   $\Bir(X)$ are   equal and infinite (Theorem~\ref{thogui}).\ The calculations above allow us to be more precise: in this case, the ample cone is just one component of the positive cone and the groups $\Aut(X)$ and $G$ are isomorphic.\ The conclusion follows from the discussions above.

Under the hypotheses of (c), the slopes of the extremal rays of the nef and movable cones are rational (Section~\ref{sec27a}) hence, by~Theorem~\ref{thogui} again,  $\Bir(X)$ is a finite group.\ By~\cite[Proposition~3.1(2)]{ogu2},  any nontrivial element $\Phi$ of its image in $O^+(\Pic(X))$  is an involution which satisfies $\Phi(\Mov(X))= \Mov(X)$, hence  
  switches the two extremal rays of this cone.\ This means $\Phi(H\pm \mu L)=H\mp \mu L$, hence $\Phi(H)=H$, so that  $\Phi=\left(\begin{smallmatrix} 1&0\\0&-1\end{smallmatrix}\right)$.\ Since we saw that this is impossible,  the group  $\Bir(X)$ is trivial.
 
Under the hypotheses of (b), the slopes of the extremal rays of the nef cone are both rational and the slopes of the extremal rays of the movable cone are both irrational (Section~\ref{sec27a}).\ 
By~Theorem~\ref{thogui} again, $\Aut(X)$ is a finite group and $\Bir(X)$ is infinite.\ 
The same reasoning as in case~(c) shows that the group  $\Aut(X)$ is in fact trivial; moreover, the group $\Bir(X)$  is a subgroup of $\Z$, except when the equation $\cP_{n,e'}(1) $ is solvable, where it is a subgroup of $\Z\rtimes \Z/2\Z$.\
 
In the latter case, such an infinite subgroup is isomorphic  either to $\Z$ or to $\Z\rtimes \Z/2\Z$ and we exclude the first case by  showing that there is indeed a regular involution on  a birational model of $X$.\

As observed in Appendix~\ref{secpell},  the positive solutions $(a,b)$       to the equation $\cP_{n,4e'}(-5)$    determine an infinite sequence of rays $\R_{\ge0}(2e'bH\pm naL)$ in $\Mov(X)$.\ 
The nef cones of hyperk\"ahler fourfolds birational to $X$ can be identified with the chambers with respect to this collection of rays.\  
In order to apply Lemma~\ref{le46} and show that the equation $\cP_{n,4e'}(-5)$ has two classes of solutions, we need to check that $5$  divides neither $n$ nor $e'$.\ 
We will use quadratic reciprocity and, given  integers $r$ and $s$, we denote by $\left(\frac{r}{s}\right)$ their Jacobi symbol.

 Assume first  $5\mid e'$.\
Since the equation $\cP_{n,e'}(1)$ is solvable, we have   $\left(\frac{n}{5} \right)=1$; moreover, since $n\equiv -1\pmod{4}$, we have   $\left(\frac{e'}{n}\right)=-1$.\
The solvability of the equation $\cP_{n,4e'}(-5)$ implies $\left(\frac{5}{n}\right)=\left(\frac{e'}{n}\right)$; putting all that together contradicts quadratic reciprocity.

Assume now  $5\mid n$ and set $n':=n/5$.\ Since the equation $\cP_{n,e'}(1)$ is solvable, we have   $\left(\frac{e'}{5} \right)=1$; moreover, since $n'\equiv -1\pmod{4}$, we have   $\left(\frac{e'}{n'}\right)=-1$.\ Since $5\nmid e'$, the equation $\cP_{n',20e'}(-1)$ is solvable, hence $\left(\frac{5e'}{n'}\right)=1$; again, this contradicts quadratic reciprocity.

The assumptions of Lemma~\ref{le46} are therefore satisfied and the equation $\cP_{n,4e'}(-5)$ has two (conjugate) classes of solutions.\ We can reinterpret this
as follows.\ Let $(r,s)$ be the minimal solution to the equation $\cP_{n,e'}(1)$; by Lemma \ref{lemmpell}, the minimal solution to the equation $\cP_{ne'}(1)$ is
 $(a,b):=(nr^2+e's^2,2rs)$ and it corresponds to 
 the generator $R=\left(\begin{smallmatrix} a&  e'b\\n b& a \end{smallmatrix}\right)$ of $SO^+(\Pic(X),q_X)$ previously defined (we are in the case $\gcd(n,e')=1$).\

The two extremal rays of the nef cone of $X$ are spanned by
$\mathbf{x}_0:=2e'b_{-5}H - na_{-5}L$ and $\mathbf{x}_1:=2e'b_{-5}H + na_{-5}L$, where $(a_{-5},b_{-5})$ is the minimal solution to the equation  $\cP_{n,4e'}(-5)$.\ If we set $\mathbf{x}_{i+2}:=R(\mathbf{x}_i)$, the fact the $\cP_{n,4e'}(-5)$ has two classes of solutions means exactly that the ray $\R_{\ge0}\mathbf{x}_2$ is ``above'' the ray $\R_{\ge0}\mathbf{x}_1$; in other words,  we get an ``increasing'' infinite sequence of  rays 
 $$\cdots< \R_{\ge0}\mathbf{x}_{-1}<\R_{\ge0}\mathbf{x}_0< \R_{\ge0}\mathbf{x}_1< \R_{\ge0}\mathbf{x}_2< \cdots.$$

It follows from the discussion above that the involution $R\left(\begin{smallmatrix} 1&0\\0&-1\end{smallmatrix}\right)$  belongs to the group $G$ and preserves the nef cone of the birational model $X'$ of $X$ whose nef cone is generated by $\mathbf{x}_1$ and $\mathbf{x}_2$.\ It is therefore induced by a biregular involution of $X'$ which defines a birational involution of $X$.\ 
This concludes the proof of the proposition.
\end{proof}

 \begin{rema}
The case $n=3$ and $e'=2$ (where $\Aut(X)=\{\Id\}$ and $\Bir(X)\isom \Z\rtimes \Z/2\Z $) was treated geometrically by Hassett and Tschinkel in \cite{hast3}.\ Here is a table for the groups $\Aut(X) $ and $\Bir(X)$ when $n=3$ and $2\le e'\le 11$. 
 $$\tiny{
 \setlength{\extrarowheight}{1ex}
 \begin{array}{|c|c|c|c|c|c|c|c|c|c|c|c|c|c|c|c|c|c|}
 \hline 
e'&2&
3&4&
 5&6 &7&8&9&10&
11   \\
 \hline 
 \Aut(X)&\Id&
 \Id&\Id& 
 \Id&\Z&\Id&\Id&\Z&\Z&
 \Z\rtimes \Z/2\Z
 \\
 \Bir(X)&\Z\rtimes \Z/2\Z&
 \Id& \Id& 
 \Z&\Z&\Id&\Z&\Z&\Z&
 \Z\rtimes \Z/2\Z
 \\
  \hline
 \end{array}
 }
 $$
  
 When $e'= 3a^2-1 $, the pair  $( a,1)$ is a solution of $\cP_{3,e'}(1)$, but neither  $\cP_{3,e'}(-1)$ nor, when $a\not\equiv\pm1\pmod5$, $\cP_{3,4e'}(- 5)$ are solvable (reduce modulo 3 and 5).\ Therefore, we have $\Aut(X)= \Bir(X)\isom \Z\rtimes \Z/2\Z$.
\end{rema}

\section{Unexpected isomorphisms between special hyperk\"ahler fourfolds\\ and Hilbert squares of  K3 surfaces}\label{sect5}

The nef cone of the Hilbert square of a polarized K3 surface $(S,L)$  of degree $2e$ such that $\Pic(S)=\Z L$ was described in Example~\ref{exa217}: its extremal rays are spanned by $L_2$ and $L_2-\nu_e\delta $, where $\nu_e$ is a positive rational number that can be computed from the minimal solutions to the equations $\cP_e(1)$ or $\cP_{4e}(5)$.

 We can use this result to parametrize some of the special divisors ${}^2\cC_{2n,2e}^{(\gamma)}\subset {}^2\!\!\cM_{2n}^{(\gamma)}$.\ We let $\cK_{2e}$ be the quasiprojective 19-dimensional coarse moduli space  of polarized K3 surfaces of degree $2e$ (Theorem \ref{thm16}).

   \begin{prop} \label{thh}
Let $n$ and $e$  be positive integers and assume   that the equation $\cP_{e}(-n)$ has a   positive solution $(a,b)$   that satisfies the conditions
\begin{equation}\label{nue}
\frac{a}{b}<\nu_e\qquad{and}\qquad \gcd(a,b)=1.
\end{equation}
The rational map
\begin{eqnarray*}
\varpi\colon \cK_{2e} &\dra& {}^2\!\!\cM_{2n}^{(\gamma)}\\
(S,L)&\longmapsto&(S^{[2]}, bL_2-a\delta ),
\end{eqnarray*}
where the divisibility   $\gamma$ is $2$ if $b$ is even and $ 1$ if $b$ is odd, induces a birational isomorphism onto an irreducible component of $\,{}^2\cC_{2n,2e}^{(\gamma)}$.\ In particular, if $n$ is prime  and $b$ is even, it induces a birational isomorphism 
\begin{eqnarray*}
 \cK_{2e} &\isomdra& {}^2\cC_{2n,2e}^{(2)}.
\end{eqnarray*}
 \end{prop}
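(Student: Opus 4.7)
The plan is to work on the dense open subset $\cK_{2e}^\circ\subset \cK_{2e}$ parametrizing polarized K3 surfaces $(S,L)$ with $\Pic(S)=\Z L$; this suffices by irreducibility of $\cK_{2e}$. First I would check well-definedness: for $(S,L)\in \cK_{2e}^\circ$, the class $H:=bL_2-a\delta$ satisfies $H^2=2(eb^2-a^2)=2n$ by the Pell equation $\cP_e(-n)$; its divisibility in $\Lkkk[2]$ equals $\gcd(b,2a)=\gcd(b,2)=\gamma$ since $\gcd(a,b)=1$; and the strict inequality $a/b<\nu_e$ places $H=b(L_2-(a/b)\delta)$ in the interior of $\Nef(S^{[2]})$ by Example~\ref{exa217}, so $H$ is ample. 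Hence $(S^{[2]},H)$ defines a point of ${}^2\!\!\cM_{2n}^{(\gamma)}$ and $\varpi$ is a morphism on $\cK_{2e}^\circ$.

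Second I would verify that the image lies in a single irreducible component of ${}^2\cC_{2n,2e}^{(\gamma)}$. The Picard lattice $K:=\Pic(S^{[2]})=\Z L_2\oplus \Z\delta$ is a primitive rank-$2$ signature-$(1,1)$ sublattice of $H^2(S^{[2]},\Z)$ containing $H$, so the period of $(S^{[2]},H)$ lies in the Heegner divisor $\cD_{\tau,K}$. Setting $g:=\gcd(a,e)$, $a=ga'$, $e=ge'$, $n':=n/g=e'b^2-ga'^2$, a primitive generator of $K\cap H^\bot$ is $\kappa_0:=a'L_2-e'b\delta$, with $\kappa_0^2=-2e'n'$. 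A direct computation of $H^\bot$ in a standard basis $(u,v,\delta)$ of $U\oplus I_1(-2)\subset \Lkkk[2]$ with $L=u+ev$ produces explicit generators of $H^\bot$ from which one reads off $|\disc(H^\bot)|=4n$ or $n$ according as $\gamma=1$ or $\gamma=2$ (matching \eqref{eq1} and \eqref{eq2}) and $\div_{H^\bot}(\kappa_0)=2n'$ or $n'$; the formula of \cite[Lemma~7.5]{ghssur} then yields $d=|\disc(K^\bot)|=2e$ in both cases. Since the class $\kappa_{0,*}\in D(H^\bot)$ is determined by $(a,b)$, the whole image lies in a single irreducible component.

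For birationality onto this component, I would combine a dimension count with generic injectivity. The source $\cK_{2e}^\circ$ is irreducible of dimension $19$, and the target component is a hypersurface in the $20$-dimensional ${}^2\!\!\cM_{2n}^{(\gamma)}$, hence also of dimension $19$. Given an isomorphism $(S^{[2]},H)\isom (S'^{[2]},H')$, the induced Hodge isometry on $H^2$ identifies the rank-$21$ transcendental lattices $T_{S^{[2]}}=T_S$ and $T_{S'^{[2]}}=T_{S'}$ as polarized Hodge structures; since $\delta$ is characterized very generically as the unique primitive square-$(-2)$ class in $\Pic(S^{[2]})$ perpendicular to the subspace $H^2(S,\Z)\subset H^2(S^{[2]},\Z)$, this isometry also identifies $H^2(S,\Z)$ with $H^2(S',\Z)$ as sub-Hodge structures, and the Torelli theorem for K3 surfaces yields $S\isom S'$; the polarization $L$ is recovered as the unique primitive class in $\delta^\bot\cap \Pic(S^{[2]})$ of square $2e$ compatible with $H$. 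Thus $\varpi$ is generically injective, hence birational onto its image component.

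The last assertion is then immediate from Proposition~\ref{propirr}(2)(c): when $n$ is prime and $b$ is even (so $\gamma=2$), the hypersurface ${}^2\cD^{(2)}_{2n,2e}$ is irreducible, so ${}^2\cC^{(2)}_{2n,2e}$ has a single component and $\varpi$ is birational onto the entire hypersurface. The main obstacle I anticipate is the lattice computation in the second step: the naive invariant $|\disc K|=4e$ differs from the correct $d=2e$ by a factor of $2e$ coming from the index of $K\oplus K^\bot$ in $\Lkkk[2]$, and this glue must be tracked carefully both to produce $d=2e$ and to pin down the class $\kappa_{0,*}$ in $D(H^\bot)$ that identifies the correct component.
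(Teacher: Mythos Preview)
Your argument for generic injectivity has a genuine gap. You claim that an isomorphism $\phi\colon S^{[2]}\isomto S'^{[2]}$ sending $H$ to $H'$ must identify $H^2(S,\Z)$ with $H^2(S',\Z)$, essentially because $\delta$ is ``the unique primitive square-$(-2)$ class perpendicular to $H^2(S,\Z)$''. But this characterization is circular: the sublattice $H^2(S,\Z)\subset H^2(S^{[2]},\Z)$ is precisely $\delta^\bot$, so you are assuming what you want to prove. No intrinsic characterization of $\pm\delta$ inside $\Pic(S^{[2]})$ is available here: in the rank-$2$ lattice $\Z L_2\oplus\Z\delta$ with form $\left(\begin{smallmatrix}2e&0\\0&-2\end{smallmatrix}\right)$ there are in general infinitely many primitive square-$(-2)$ classes of divisibility~$2$ (one for each solution of $\cP_{4e}(1)$), and a Hodge isometry fixing $H$ may well permute them nontrivially. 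The paper's proof explicitly faces this: it begins by \emph{assuming} $\phi^*\delta'\ne\delta$.

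The missing idea is to exploit the nef cone. If $\phi^*\delta'\ne\delta$, then $\phi^*L'_2\ne L_2$, so the nef extremal ray $\R_{\ge0}L_2$ and its image $\R_{\ge0}\phi^*L'_2$ are distinct; since $\phi^*$ preserves $\Nef(S^{[2]})$, the class $\phi^*L'_2$ must span the \emph{other} extremal ray. Comparing with the explicit description in Example~\ref{exa217} forces $e$ not to be a perfect square, the equation $\cP_{4e}(5)$ to be unsolvable, and $\phi^*L'_2=a_1L_2-eb_1\delta$. Exactly as in the proof of Proposition~\ref{bbb2a}, this implies that $\cP_e(-1)$ is solvable, so $S^{[2]}$ carries its nontrivial involution~$\sigma$. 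Since $\sigma^*$ swaps the two extremal rays, the composite $\phi\circ\sigma$ now satisfies $(\phi\circ\sigma)^*L'_2=L_2$ and $(\phi\circ\sigma)^*\delta'=\delta$, hence is induced by an isomorphism $(S,L)\isomto(S',L')$ of polarized K3 surfaces, contradicting the hypothesis.

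Your second step is correct but harder than necessary: the paper simply observes that $K^\bot$ in $H^2(S^{[2]},\Z)=H^2(S,\Z)\oplus\Z\delta$ equals $L^\bot$ inside the \emph{unimodular} lattice $H^2(S,\Z)$, whence $\disc(K^\bot)=-L^2=-2e$ with no further computation.
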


The case $n=3$ and $e=m^2+m+1 $ (one suitable solution is then $(a,b)=(2m+1,2)$) is a result of Hassett (\cite{has}; see also \cite{add2}).\  

\begin{proof}
  If $(S,L)$ is a   polarized K3 surface of degree $2e$ and $K:=\Z L_2\oplus \Z\delta \subset H^2(S^{[2]},\Z)$, the lattice
 $K^\bot
 $ is the orthogonal in $H^2(S,\Z)$ of the class $L$.\ Since the lattice $H^2(S,\Z)$ is unimodular, $K^\bot$ has discriminant $-2e$, hence $S^{[2]}$ is special of discriminant  $2e$.

The class $ H=bL_2-a\delta$ has   divisibility~$\gamma$ and square $2n$.\ It is   primitive, because $\gcd(a,b)=1$,  and, if $S$ is very general,   ample on $S^{[2]}$ because of   the inequality in~\eqref{nue}.\ Therefore, the pair $(S^{[2]},H)$ corresponds to a point of ${}^2\cC_{2n,2e}^{(\gamma)}$.
 
 The map $\varpi$ therefore sends   a very general point  of $\cK_{2e}$ to $\cC_{2n,2e}^{(\gamma)}$.\ To prove   that $\varpi$ is generically injective, we assume to the contrary that there is an isomorphism $\phi\colon S^{[2]}\isomto S^{\prime [2]}$ such that $\phi^*(bL'_2-a\delta')=bL_2-a\delta$, although  $(S,L)$ and $(S',L')$ are not isomorphic.\ It is straightforward to check that this implies $\phi^*\delta'\ne \delta$ and  that the extremal rays of the nef cone of $S^{[2]}$ are spanned by the primitive classes $L_2$ and $\phi^*L'_2$.\ Comparing this with the description of the nef cone given in Example~\ref{exa217}, we see that $e$ is not a perfect square, $\phi^*L'_2=a_1L_2-eb_1\delta$ and $\phi^*(a_1L'_2-eb_1\delta')=L_2$, where 
 $(a_1,b_1)$ is the minimal solution to the Pell equation $\cP_e(1)$.\ The same proof as that of Proposition~\ref{bbb2a} implies  $e>1$, the equation $\cP_{e}(-1)$ is solvable  and  the equation $\cP_{4e}(5)$ is not.

By Proposition~\ref{bbb2a} again, $S^{[2]}$ has a nontrivial involution $\sigma$ and $(\phi\circ\sigma)^*(L'_2)=L_2$ and $(\phi\circ\sigma)^*(\delta')=\delta$.\ This implies that $\phi\circ\sigma$ is induced by an isomorphism $(S,L) \isomto (S',L')$, which contradicts our hypothesis.\  The map $\varpi$ is therefore generically injective and since $\cK_{2e}$ is irreducible of  dimension~19, its image is a component of $\cC_{2n,2e}^{(\gamma)}$.\ When 
  $n$ is prime and $b$ is even, the conclusion follows from the irreducibility of 
${}^2\cC_{2n,2e}^{(2)}$ (Proposition~\ref{propirr}).
  \end{proof}

\begin{exam}
Assume $n=3$.\ For   $e=7$, one computes $\nu_{7}=\frac{21}{8}$.\ The only  positive  solution to the equation $\cP_{7}(-3)$ with $b$ even that satisfy \eqref{nue} is $(5,2)$.\ A general element of ${}^2\cC_{6,14}^{(2)}$ is therefore isomorphic to the Hilbert square of a polarized K3 surface of degree $14$.\ This is explained geometrically by the Beauville--Donagi construction (\cite{bedo}).
\end{exam}

\begin{exam}
Assume $n=3$.\ For   $e=13$, one computes $\nu_{13}=\frac{2340}{649}$.\ The only  positive  solutions to the equation $\cP_{13}(-3)$ with $b$ even that satisfy \eqref{nue} are $(7,2)$ and $(137,38)$.\ A general element $(X,H)$ of ${}^2\cC_{6,26}^{(2)}$ is therefore isomorphic to the Hilbert square of a polarized K3 surface $(S,L)$ of degree $26$ in two different ways: $H$ is mapped either to  $ 2L_2-7\delta$ or to $ 38L_2-137\delta$.\ This can be explained as follows:   the equation $\cP_{13}(-1)$ has a minimal solution  $(18,5)$ and the equation $\cP_{52}(5)$ is not solvable (reduce modulo 5), hence $S^{[2]}$ has a nontrivial involution $\sigma$ (Proposition~\ref{bbb2a}); one isomorphism $S^{[2]}\isomto X$ is obtained from the other by composing it with $\sigma$ (and indeed, $\sigma^*(2L_2-7\delta)=38L_2-137\delta$).\ This is a case where $X$ is the variety of lines on a cubic fourfold, the Hilbert square of a K3 surfaces, and a double EPW sextic! There is no geometric explanation for this remarkable fact (which also happens  for $e\in\{73,157\}$).
\end{exam}

\begin{rema}
The varieties $\cK_{2e}$ are known to be of general type for $e>61$  (\cite{ghs0}).\ Proposition~\ref{thh} implies that for any prime number $n$ satisfying its hypotheses, the Noether--Lefschetz divisors $ {}^2\cC_{2n,2e}^{(2)}$, which are irreducible by Proposition~\ref{propirr}(2), are also of general type.\ More precise results on the geometry of the varieties $ {}^2\cC_{6,2e}^{(2)}$ can be found in~\cite{nue,vas,lai} (they are known to be of general type for $e>96$ (\cite{vas}) and unirational for $e\le 19$ (\cite{nue})).
\end{rema}
   
  \begin{coro}
Let $n$ be a positive integer.

\noindent{\rm (1)} Inside the moduli space $   {}^2\!\!\cM_{2n}^{(1)}$, the  general points of some component of each of the  infinitely many distinct  hypersurfaces ${}^2\cC^{(1)}_{2n,2(a^2+n)}$, where $a$ describes the set of all positive integers  such that $(n,a)\ne (1,2)$,  correspond to  Hilbert squares of  K3 surfaces.\

\noindent{\rm (2)} Assume moreover $n\equiv -1\pmod4$.\ Inside the moduli space $  {}^2\!\!\cM_{2n}^{(2)}$, the  general points of some component of each of the  infinitely many distinct hypersurfaces ${}^2\cC^{(2)}_{2n,2\left(a^2+a+\tfrac{n+1}{4}\right)}$, where $a$ describes the set of all nonegative integers such that  $(n,a)\ne (3,1)$,  correspond to  Hilbert squares of  K3 surfaces.

In both cases, the union of these hypersurfaces is dense in the moduli space $  {}^2\!\!\cM_{2n}^{(\gamma)}$ for the euclidean topology.
\end{coro}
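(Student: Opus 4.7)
Both parts of the corollary amount to exhibiting, for each listed value of $a$, a positive solution $(a',b)$ of $\cP_e(-n)$ satisfying the hypotheses of Proposition~\ref{thh}.

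For part~(1), I take $b=1$, $a'=a$, and $e=a^2+n$: then $a^2-e=-n$, $\gcd(a,1)=1$, and $\gamma=1$, so only the strict inequality $a<\nu_e$ requires verification. Using the three formulas of Example~\ref{exa217}(b), this inequality is automatic when $\nu_e=\sqrt{e}$ (as $n\geq 1$) or when $\nu_e=eb_1/a_1$ (as $a_1^2\geq 1+e$ gives $e/a_1^2\leq e/(1+e)<1\leq n$), whereas for $\nu_e=2eb_5/a_5$ it rearranges via $a^2=e-n$ to $a^2(4nb_5^2-5)>-4n^2b_5^2$, which holds whenever $nb_5^2\geq 2$ and, for $nb_5^2=1$ (forcing $n=b_5=1$), reduces to $a^2<4$. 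The remaining possibility $n=b_5=1$ with $a_5^2=4a^2+9$ a perfect square gives, from $(a_5-2a)(a_5+2a)=9$, the unique exclusion $(n,a)=(1,2)$.

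For part~(2), I take $b=2$, $a'=2a+1$, $e=a^2+a+(n+1)/4$, and $\gamma=2$: then $(2a+1)^2-4e=-n$ and $\gcd(2a+1,2)=1$, so only $(2a+1)/2<\nu_e$ requires verification. The solvable case $\cP_{4e}(5)$ is ruled out by a mod~$4$ obstruction, since the boundary equation $5n=4e(5-nb_5^2)$ has left-hand side $\equiv n\equiv -1\pmod 4$ and right-hand side $\equiv 0\pmod 4$. The perfect-square case forces $n=0$. In the remaining case $\nu_e=eb_1/a_1$, the inequality amounts to $a_1^2>4e/n$; this is automatic for $n\geq 4$ (since $a_1^2\geq 1+e$ gives $na_1^2\geq n(1+e)>4e$), and for $n=3$ is automatic when $b_1\geq 2$ (since then $a_1^2\geq 1+4e>4e/3$), whereas $n=3$ with $b_1=1$ means $e+1$ is a perfect square, which combined with $e=a^2+a+1$ forces $a^2+a+2$ to be a square and hence $a=1$, $e=3$. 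This identifies $(n,a)=(3,1)$ as the unique exclusion.

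For the density statement, the hypersurfaces constructed above correspond, via the open period embedding of Theorem~\ref{torthhk2}, to an infinite sequence of pairwise distinct $19$-dimensional Heegner divisors inside the irreducible $20$-dimensional quasiprojective Shimura variety ${}^2\cP^{(\gamma)}_{2n}$. Since any Zariski-closed subset of ${}^2\cP^{(\gamma)}_{2n}$ has only finitely many irreducible components, no proper algebraic subvariety can contain all but finitely many of these divisors. Heegner divisors on an orthogonal Shimura variety are weakly special subvarieties, so the Clozel--Ullmo equidistribution theorem for sequences of weakly special subvarieties forces the sequence above to equidistribute in ${}^2\cP^{(\gamma)}_{2n}$; in particular, the union of the divisors is dense in the Euclidean topology, and pulling back through the period embedding yields the stated density in ${}^2\cM^{(\gamma)}_{2n}$. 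The main obstacle is this last step: the Pell-type verifications in parts~(1) and~(2) are routine bookkeeping against the explicit form of $\nu_e$ from Example~\ref{exa217}, but upgrading from the immediate Zariski-density of the infinite union of Heegner divisors to Euclidean density really uses the full strength of the Clozel--Ullmo equidistribution theorem.
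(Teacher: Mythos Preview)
Your overall strategy matches the paper's: feed the explicit solutions $(a,1)$ (for $\gamma=1$) and $(2a+1,2)$ (for $\gamma=2$) into Proposition~\ref{thh}, verify the slope inequality in \eqref{nue} by a case split on the three formulas for $\nu_e$ from Example~\ref{exa217}, and then appeal to Clozel--Ullmo for Euclidean density. Part~(1) is fine and essentially reproduces the paper's argument.

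In part~(2), your handling of the case ``$\cP_{4e}(5)$ solvable'' has a genuine gap. The desired inequality $(2a+1)/2<\nu_e=2eb_5/a_5$ rearranges to $4e(5-nb_5^2)<5n$, and you observe that the \emph{equality} $4e(5-nb_5^2)=5n$ is impossible modulo~$4$. But excluding equality does not yield the strict inequality: nothing in your argument prevents $4e(5-nb_5^2)>5n$. What is actually needed is the small additional analysis the paper carries out: since $n\equiv-1\pmod4$ forces $n\ge3$, the only way to have $5-nb_5^2>0$ is $n=3$ and $b_5=1$; then $a_5^2=4e+5=4a^2+4a+9$, which lies strictly between $(2a+1)^2$ and $(2a+2)^2$ for $a\ge2$, equals $17$ for $a=1$ (not a square), and equals $9$ for $a=0$, where one checks directly that $\nu_1=2/3>1/2$. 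Thus the $\cP_{4e}(5)$-solvable branch contributes no exclusion in part~(2), but this is a genuine computation, not a consequence of the mod~$4$ boundary observation.

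One small terminological point on density: Heegner divisors are \emph{strongly} special in the sense of Clozel--Ullmo (not merely weakly special), and it is to strongly special subvarieties that their equidistribution theorem applies; with that correction your density paragraph is correct and coincides with Theorem~\ref{thcu}.
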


\begin{proof}
 For (1), the pair $(a,1)$ is a solution of the equation $\cP_{e}(-n)$, with $e=a^2+n$.\ We  need to check that the inequality  $a<\nu_e $ in \eqref{nue} holds.

If $e$ is a perfect square,  we have $\nu_e=\sqrt{e}$ and \eqref{nue} obviously holds.

 If $e$ is not a perfect square and  the equation $\cP_{4e}(5)$ is not solvable, we have $\nu_e=e\frac{b_1}{a_1}$ (Example~\ref{exa217}).\ If   the inequality~\eqref{nue} fails, since  $\nu_e^2=e^2\frac{b_1^2}{a_1^2}=e-\frac{e}{a_1^2}=a^2+n-
\frac{e}{a_1^2}
$, we have $a_1^2\le e/n$.\ Since $a_1^2=eb_1^2+1\ge e+1$, this is absurd and~\eqref{nue} holds in this case.

If the equation $\cP_{4e}(5)$ has a  minimal solution $(a_5 ,b_5 )$, we have $\nu_e=2e\frac{b_5 }{a_5 }$ (Example~\ref{exa217}).\ If   the inequality \eqref{nue} fails, we have  again  $\nu_e^2=4e^2\frac{b_5^2}{a_5^2}=a^2+n-\frac{5e}{a_5^2}\le a^2$, hence $a_5^2\le 5e/n$.\ Since $a_5^2=4eb_5^2+5 $, this is possible only if $n=b_5=1$, in which case $a_5^2=4e+5=4a^2+9 $.\ This implies $a_5>2a$, hence $a_5^2\ge 4a^2+4a+1$ and $a\le 2$.\ If $a=1$, the integer $4a^2+9 $ is not a perfect square.\ If $a=2$, we have $a_5=e=5$ and $a=\nu_e$, but this is a case that we have excluded.\ The  inequality~\eqref{nue} therefore holds in this case.

 For (2), the pair $(2a+1,2)$ is a solution of the equation $\cP_{e}(-n)$, with $e=a^2+a+\tfrac{n+1}{4}$ and we  need to check that the inequality $a+\tfrac12<\nu_e $ in~\eqref{nue}  holds.
 
 If $e$ is a perfect square,  we have $\nu_e=\sqrt{e}$ and \eqref{nue}   holds.
 
 If $e$ is not a perfect square and the equation $\cP_{4e}(5)$ is not solvable, we have $\nu_e=e\frac{b_1}{a_1}$.\ If   the inequality \eqref{nue} fails, since  $\nu_e^2=e^2\frac{b_1^2}{a_1^2}=e-\frac{e}{a_1^2}=a^2+a+\tfrac{n+1}{4}-
\frac{e}{a_1^2}
$, we have $a_1^2\le 4e/n$.\ Since $a_1^2=eb_1^2+1$ and $n\equiv -1\pmod4$, this is possible only if $n=3$ and $b_1=1$, in which case $a_1^2=a^2+a+2$.\ This implies $a_1>a$, hence $a_1^2\ge a^2+2a+1$ and $a\le 1$.\ If $a=0$, the integer $a^2+a+2$ is not a perfect square.\ If $a=1$, we have $a_1=2$, $e=3$, and $a=\nu_e$, but this is a case that we have excluded (and indeed, $ \cC^{(2)}_{6,6}$ is empty as noted in Example~\ref{exa222}).\  The  inequality~\eqref{nue} therefore  holds in this case.

If the equation $\cP_{4e}(5)$ has a   solution $(a_5 ,b_5 )$, we have $\nu_e=2e\frac{b_5 }{a_5 }$.\ If   the inequality \eqref{nue} fails, we have  again  $\nu_e^2=4e^2\frac{b_5^2}{a_5^2}=a^2+a+\tfrac{n+1}{4}-\frac{5e}{a_5^2}\le \bigl(a+\tfrac12\bigr)^2$, hence $a_5^2\le 20e/n$.\ Since $a_5^2=4eb_5^2+5 $, this is possible only if $n=3$ and $b_5=1$, in which case $a_5^2=4e+5=4a^2+4a+9 $.\ This implies $a_5>2a+1$, hence $a_5^2\ge 4a^2+8a+4$ and $a\le 1$.\ If $a=1$, the integer $4a^2+4a+9  $ is not a perfect square.\ If $a=0$, we have $a_5=3$, $e=1$, and $a=\nu_e=\tfrac23>a+\tfrac12$, so the  inequality \eqref{nue} always holds.

Finally, the density of the union of the special hypersurfaces in the moduli space follows from a powerful result of Clozel and Ullmo (Theorem \ref{thcu} below).
\end{proof}

\begin{theo}[Clozel--Ullmo]\label{thcu}
The union of infinitely many Heegner divisors in any moduli space ${}^m\!\!\cM^{(\gamma)}_{2n}$ is dense for the euclidean topology. 
\end{theo}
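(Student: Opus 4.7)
The plan is to interpret the period space as (a connected component of) an orthogonal Shimura variety, recognize Heegner divisors as special subvarieties, and then invoke the equidistribution theorem of Clozel--Ullmo. Throughout, set $\Lambda:=h_\tau^\bot$ and work on the connected component $\cP_\tau^+ =\widehat O^+(\Lkkk[m],h_\tau)\backslash \Omega_{h_\tau}^+$ (cf.\ footnote~\ref{misl}); the statement for all of $\cP_\tau$ follows by applying the argument to each component.

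First I would recall that $\Omega_{h_\tau}^+\subset\P(\Lambda\otimes\C)$ is a Hermitian symmetric domain of type $\mathrm{IV}_{\dim\Lambda-2}$ and that the corresponding arithmetic quotient is a connected component of the Shimura variety attached to the Shimura datum $(\mathrm{GSpin}(\Lambda\otimes\Q),\Omega_{h_\tau})$ (or equivalently, up to connected components, to $(\mathrm{SO}(\Lambda\otimes\Q),\Omega_{h_\tau})$). Next I would identify each Heegner divisor $\cD_{\tau,K}\subset \cP_\tau$ (see~\eqref{unionh}) with a special subvariety: writing $\Lambda_K := K^\bot\cap\Lambda$, the inclusion of Shimura data $(\mathrm{SO}(\Lambda_K\otimes\Q),\Omega_{\Lambda_K})\hookrightarrow(\mathrm{SO}(\Lambda\otimes\Q),\Omega_{h_\tau})$ realizes $\cD_{\tau,K}$ as the image of a sub-Shimura variety of codimension~$1$. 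A standard verification (the center of the smaller reductive group is contained in the center of the larger one, both being trivial modulo scalars for these orthogonal groups) shows that these are in fact \emph{strongly special} subvarieties in the sense of Clozel--Ullmo.

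Now I would invoke the main theorem of Clozel--Ullmo (\emph{\'Equidistribution de sous-vari\'et\'es sp\'eciales}, Ann.\ of Math., 2005): any sequence $(Z_i)_{i\in\N}$ of pairwise distinct strongly special subvarieties in a Shimura variety $\mathrm{Sh}$ satisfies the following dichotomy --- either there exists a proper special (hence closed algebraic) subvariety $Y\subsetneq \mathrm{Sh}$ containing $Z_i$ for infinitely many $i$, or the canonical probability measures $\mu_{Z_i}$ converge weakly to the canonical measure on $\mathrm{Sh}$; in particular $\bigcup_i Z_i$ is dense for the euclidean topology. Applying this to any infinite family $(\cD_{\tau,K_i})_{i\in\N}$ of pairwise distinct Heegner divisors in $\cP_\tau^+$, it remains to exclude the first alternative.

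The main obstacle, and the only step that requires real work, is excluding the containment alternative. Suppose infinitely many $\cD_{\tau,K_i}$ were contained in a proper special subvariety $Y\subsetneq \cP_\tau^+$. Since $\cD_{\tau,K_i}$ is an irreducible divisor and $Y$ is a finite union of irreducible special subvarieties, $Y$ must itself contain infinitely many of our $\cD_{\tau,K_i}$ as irreducible components, forcing $Y$ to have a component of codimension~$1$ that is itself a Heegner-type divisor $\cD_{\tau,K'}$; but then $\cD_{\tau,K_i}\subset \cD_{\tau,K'}$ together with irreducibility and equality of dimensions forces $\cD_{\tau,K_i}=\cD_{\tau,K'}$, contradicting pairwise distinctness. (Equivalently: the sublattices $\Lambda_{K_i}\otimes \Q$ defining the $\cD_{\tau,K_i}$ are pairwise non-conjugate under $\mathrm{SO}(\Lambda\otimes\Q)$, and no single proper reductive $\Q$-subgroup of $\mathrm{SO}(\Lambda\otimes\Q)$ contains infinitely many conjugacy classes of orthogonal complements of rank-$1$ sublattices.) This rules out the first alternative and yields the desired density.
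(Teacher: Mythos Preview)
Your proposal is correct and follows essentially the same route as the paper: identify the period space as an orthogonal Shimura variety, recognize Heegner divisors as strongly special subvarieties, apply the Clozel--Ullmo equidistribution theorem, and conclude by the dimension argument that the limiting special subvariety $Z$ must be the whole space. The paper's proof is terser (it compresses your exclusion step into the phrase ``for dimensional reasons''), but the substance is the same.
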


\begin{proof}
This follows from the main result of \cite{clul}: the space ${}^m\!\!\cM^{(\gamma)}_{2n}$  is a (union of components of a) Shimura variety and   each Heegner divisor $\cD_x$ is a ``strongly special'' subvariety, hence is endowed with a canonical probability measure $\mu_{\cD_x}$;.\ Given any infinite family $(\cD_{x_a})_{a\in\N}$ of Heegner divisors, there exists a subsequence $(a_k)_{k\in\N}$, a  strongly special subvariety $Z \subset {}^m\!\!\cM^{(\gamma)}_{2n}$ which contains  $\cD_{x_{a_k}}$ for all $k\gg 0$ such that $(\mu_{\cD_{x_{a_k}}})_{k\in\N}$ converges weakly to $\mu_Z$ (\cite[th.~1.2]{clul}).\ For dimensional reasons, we have $Z= {}^m\!\!\cM^{(\gamma)}_{2n}$; this implies that $\bigcup_a \cD_{x_a}$ is dense in    
${}^m\!\!\cM^{(\gamma)}_{2n}$.
\end{proof}

\begin{rema}
It was proved in \cite{mame} that Hilbert schemes of projective K3 surfaces are dense in the coarse moduli space of all (possibly nonalgebraic) \hKm s  of $\KKK^{[m]}$-type.
\end{rema}
   
   \appendix\section{Pell-type equations}\label{secpell}

We state or prove a few elementary results on some diophantine equations.

Given nonzero integers $e$ and $t$ with $e>0$, we denote by $\cP_e(t)$ the Pell-type equation
 \begin{equation}\label{pet}
a^2-eb^2=t,
\end{equation}
where $a$ and $b$ are integers (the usual Pell equation is the case $t=1$).\ A solution $(a,b)$ of this equation is called positive if $a>0$ and $b>0$.\ 
If $e$ is not a perfect square, $(a,b)$ is a solution if and only if the norm of $a+b\sqrt{e}$ in the quadratic number field $\Q(\sqrt{e})$ is $t$.

A positive solution with minimal $a$ is called   the minimal solution; it is also the positive solution $(a,b)$ for which the ``slope'' $b/a=\sqrt{\frac{1}{e }-\frac{t}{ea^2}}$ is minimal when $t>0$,   maximal when $t<0$.\ 
Since the function $x\mapsto  x+\frac{t}{x}$ is increasing on the interval $(\sqrt{|t|},+\infty)$, the minimal solution is also the one for which the real number $a+b\sqrt{e}$ is $ >\sqrt{|t|}$ and minimal.

{\em Assume that $e$ is not a perfect square.} There is always a minimal solution $(a_1,b_1)$ to the Pell equation $\cP_e(1)$  and if $x_1:=a_1+b_1\sqrt{e}$, all the   solutions of the equation $\cP_e(1)$  correspond to the ``powers'' $\pm x_1^n$, for $n\in \Z$, in $ \Z[\sqrt{e}]$.\ 

 If an equation $\cP_e(t)$ has a solution $(a,b)$, the elements $\pm (a+b\sqrt{e})x_1^n$ of   $\Z[\sqrt{e}]$, for $n\in\Z$,  all give rise to solutions of $\cP_e(t)$ which are said to be {\em associated with} $(a,b)$.\ The set of all solutions of $\cP_e(t)$ associated with each other form a {\em class of solutions.}\ A class and its conjugate (generated by $(a,-b)$) may be distinct or equal.
 
 Assume that $t$ is positive but not a perfect square.\ Let $(a,b)$ be a solution to the equation  $\cP_e(t)$ and set $x:=a+b\sqrt{e}$.\ If $x=\sqrt{t}$, we have $\bar x=t/x=x$, hence $b=0$; this contradicts our hypothesis that $t$ is not a perfect square, hence $x\ne \sqrt{t}$.\ 
 
 A class of solutions to the equation  $\cP_e(t)$ and its conjugate give rise to real numbers which are ordered as follows\footnote{\label{solsf}Since $x_1>1$, we have $0<x_tx_1^{-1}<x_t$.\ Since $x_t$ corresponds to a minimal solution, this implies $x_tx_1^{-1}< \sqrt{t}$, hence $\bar x_tx_1>\sqrt{t}$.\ By minimality of $x_t$ again, we get $\bar x_tx_1\ge x_t$.}
\begin{equation}\label{sols}
\cdots <x_tx_1^{-2}\le\bar x_tx_1^{-1}<x_tx_1^{-1}\le \bar x_t <\sqrt{t}<x_t\le \bar x_tx_1<x_tx_1\le \bar x_tx_1^2<x_tx_1^2<\cdots
\end{equation}
where $x_t=a_t+b_t\sqrt{e}$ corresponds to a solution which is minimal in its class and $\bar x_t$ is its conjugate.\ We have  $x_t= \bar x_tx_1$ if and only if the class of the solution $(a_t,b_t)$ is associated with its conjugate.\ The inequality $x_t\le \bar x_tx_1$ implies $a_t\le a_ta_1-eb_tb_1$, hence
\begin{equation}\label{slope}
\frac{b_t}{a_t}\le \frac{a_1-1}{eb_1}=\frac{a_1^2-a_1}{ea_1b_1}=\frac{b_1}{a_1}-\frac{a_1-1}{ea_1b_1}<   \frac{b_1}{a_1}.
\end{equation}
 This inequality between slopes also holds for the solution $\bar x_tx_1$ in the conjugate class (because $(\bar x_tx_1)x_1^{-1}=\bar x_t<\sqrt{t}$) but for no other positive solutions in these two classes.
 
 We will need the following variation on this theme.\  We still assume that    $t$ is positive and not a perfect square.\ 
 Let $(a'_t,b'_t)$ be the minimal positive solution to the equation $\cP_{4e}(t)$ (if it exists) and set $x'_t:= a'_t+b'_t\sqrt{4e}$.\footnote{If $b_t$ is even, we have $a'_t=a_t$, $b'_t=b_t/2$, and $x'_t=x_t$.\ If $t$ is even, we have $4\mid t$, $a'_t=2a_{t/4}$, $b'_t=b_{t/4}$, and $x'_t=2x_{t/4}$.\ If $b_t$ and $t$ are odd, 
 \begin{itemize}
\item either $b_1$ is even (and $a_1$ is odd) and the  second argument of any solution of $\cP_e(t)$ associated with $(a_t,b_t)$ or its conjugate remains odd, so that the solution $(a'_t,2b'_t)$ of $\cP_e(t)$ is   in neither of these two classes;
\item or $b_1$ is odd, we have $a_tb_1-a_1b_t\equiv a_t+a_1 \equiv t+e+1+e\equiv  0 \pmod2$, hence $x'_t=\bar x_tx_1$.
\end{itemize}
 }
 
 If $b_1$ is even, $(a_1,b_1/2)$ is the minimal solution to the equation $\cP_{4e}(1)$ and we obtain from \eqref{slope}  the inequality
 \begin{equation}\label{slope2}
\frac{b'_t}{a'_t}  <   \frac{b_1}{2a_1}.
\end{equation}
As above, the only other solution (among the   class  of $(a'_t,b'_t)$ and its conjugate) for which  this inequality between slopes also holds is the ``next'' solution, which corresponds to $\bar x'_tx_1$.

If $b_1$ is odd, $(a'_1,b'_1)= (2eb_1^2+1,a_1b_1)$ is the minimal solution to the equation $\cP_{4e}(1)$, so that $x'_1=x_1^2$.\ The  solutions associated with $(a'_t,b'_t)$ correspond to the $\pm x'_tx_1^{2n}$, $n\in\Z$.\ We still get 
 from \eqref{slope}  the inequality
 \begin{equation}\label{slope3}
\frac{b'_t}{a'_t}  \le\frac{a'_1-1}{4eb'_1}=\frac{(2eb_1^2+1)-1}{4ea_1b_1}=   \frac{b_1}{2a_1} 
\end{equation}
which is in fact strict.\footnote{Since $a_1$ and $b_1$ are relatively prime, the equality $2a_1b'_t=b_1a'_t$ implies that there exists a positive integer $c$ such that $2b'_t=cb_1$ and $a'_t=ca_1$; plugging these values into the equation $\cP_{4e}(t)$, we get $t=c^2$, which contradicts our hypothesis that $t$ is not a perfect square.}\ 
No other  solution associated with $(a'_t,b'_t)$ or its conjugate  satisfies  this inequality between slopes.


The following criterion   (\cite[Theorem~110]{nage})   ensures that in some cases, any two classes of solutions are conjugate, so  that the discussion  above applies to all   solutions.

 \begin{lemm}\label{le46}
Let $u$ be a positive  integer which is either prime or equal to 1,  let $e $ be a positive integer which is not a perfect square, and let
 $\eps\in\{-1,1\}$.\
If the equation $\cP_e(\eps u)$ is solvable, it has one or two classes of solutions according to whether $u$ divides $2e $ or not; if there are two classes, they are conjugate.
\end{lemm}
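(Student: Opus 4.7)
The plan is to translate the equation into the arithmetic of the ring $\Z[\sqrt{e}]$.\ A solution $(a,b)$ corresponds to the element $x:=a+b\sqrt{e}$ of norm $N(x)=\varepsilon u$, and two solutions $x,x'$ belong to the same class precisely when $x/x'$ is a unit of $\Z[\sqrt{e}]$.\ Computing $x\bar{x}'=(aa'-ebb')+(a'b-ab')\sqrt{e}=:A+B\sqrt{e}$, one finds $x/x'=(A+B\sqrt{e})/(\varepsilon u)$, whose norm is $(A^{2}-eB^{2})/u^{2}=N(x)N(x')/u^{2}=1$.\ Hence $x/x'$ is a unit exactly when it lies in $\Z[\sqrt{e}]$, that is, exactly when $u$ divides both $A$ and $B$.\ This criterion drives the entire argument.

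With it in hand, I would do a short case analysis on $u$.\ If $u=1$, the divisibility is automatic and there is a single class.\ If $u$ is prime with $u\mid e$ (so $u\mid 2e$), reducing $a^{2}\equiv eb^{2}\pmod{u}$ gives $u\mid a$, and similarly $u\mid a'$; then $u$ divides both $A=aa'-ebb'$ and $B=a'b-ab'$ immediately, so there is again one class.\ If $u=2$ and $2\nmid e$, then $e$ is odd, and inspecting $a^{2}-eb^{2}=\pm 2$ modulo $4$ forces $e\equiv 3\pmod{4}$ with $a,b,a',b'$ all odd, whence $A$ and $B$ are both even; once more, one class.\ These three subcases cover all situations with $u\mid 2e$.

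The remaining (and main) case is $u$ an odd prime with $u\nmid e$.\ Then $u\nmid ab$ (otherwise $u^{2}\mid\varepsilon u$), and likewise $u\nmid a'b'$, so the congruences $a^{2}\equiv eb^{2}$ and $a'^{2}\equiv eb'^{2}\pmod{u}$ force $ab'\equiv\pm a'b\pmod{u}$.\ The key identity $bA-aB=\varepsilon u b'$, obtained by substituting $eb^{2}=a^{2}-\varepsilon u$, gives $bA\equiv aB\pmod{u}$, so that $u\mid A$ if and only if $u\mid B$.\ In the case $ab'\equiv a'b\pmod{u}$, one has $B\equiv 0\pmod{u}$, hence $u\mid A$, and $(a,b)\sim(a',b')$; in the case $ab'\equiv -a'b\pmod{u}$, applying the same argument to the conjugate solution $(a',-b')$ yields $(a,b)\sim(a',-b')$.\ Consequently every solution is equivalent either to a fixed $(a_{0},b_{0})$ or to its conjugate $(a_{0},-b_{0})$, giving at most two classes, which by construction are conjugate.

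The last point is to verify that these two classes are genuinely distinct, which is the crux of the two-class assertion.\ For this I would check that $(a_{0},b_{0})\not\sim(a_{0},-b_{0})$: the relevant $A$ equals $a_{0}^{2}+eb_{0}^{2}=2eb_{0}^{2}+\varepsilon u$, so $u\mid A$ would force $u\mid 2eb_{0}^{2}$, impossible since $u$ is odd, coprime to $e$, and coprime to $b_{0}$.\ I do not expect any real obstacle here: once the $\Z[\sqrt{e}]$ reformulation is in place, the proof becomes a transparent exercise in congruences, and the dichotomy $u\mid 2e$ versus $u\nmid 2e$ emerges precisely from which of $2$, a prime divisor of $e$, or neither is available to kill the obstruction $2eb_{0}^{2}$ modulo $u$.
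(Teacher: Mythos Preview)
Your proof is correct.\ The paper does not give its own proof of this lemma: it merely states the result and cites \cite[Theorem~110]{nage}.\ Your argument is essentially the classical one found there, and all the steps check out---in particular the identity $bA-aB=\varepsilon u b'$ and the distinctness verification via $A=a_0^2+eb_0^2=2eb_0^2+\varepsilon u$ are exactly what is needed.
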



We now  extend slightly the class of equations that we are considering: if $e_1$ and $e_2$ are positive integers, we denote by $\cP_{e_1,e_2}(t)$ the equation
\[
e_1 a^2 - e_2 b^2 =t.
\]
Given an integral solution $(a,b)$ to $\cP_{e_1,e_2}(t)$, we obtain a solution $(e_1a,b)$ to $\cP_{e_1e_2}(e_1t)$.\ If $e_1$ is square-free, all the solutions to $\cP_{e_1e_2}(e_1t)$ arise  in this way; in general, all the solutions whose first argument is divisible by $e_1$ arise.\ 
A positive solution $(a,b)$ to $\cP_{e_1,e_2}(t)$ is called minimal if $a$ is minimal.\ 
If $e_1e_2$ is not a perfect square, we say that the solutions $(a,b)$ and $(a',b')$ of $\cP_{e_1,e_2}(t)$ are associated if $(e_1a,b)$ and $(e_1a',b')$ are associated solutions of $\cP_{e_1e_2}(e_1t)$.\footnote{If $(a,b)$ is a solution to $\cP_{e_1,e_2}(t)$ and $(a_1,b_1)$ is a solution to $\cP_{e_1e_2}(1)$, and if we set $x_1:=a_1+b_1\sqrt{e_1e_2}$, then $(e_1a+b\sqrt{e_1e_2})x_1=:e_1a'+b'\sqrt{e_1e_2}$, where $(a',b')$ is again a solution to $\cP_{e_1,e_2}(t)$.}

 Let  $\eps\in\{-1,1\}$; assume that the equation $\cP_{e_1,e_2}(\eps)$ has a solution $(a_\eps,b_\eps)$ and set 
 $x_\eps:= e_1a_\eps+b_\eps\sqrt{e_1e_2}$.\ Let $(a,b)\in\Z^2$ and set $x:=e_1a+b\sqrt{e_1e_2}$.\ We have
 $$xx_\eps=e_1(e_1aa_\eps+e_2bb_\eps +(a_\eps b+ab_\eps)\sqrt{e_1e_2} )=:e_1y,
 $$ 
 where $x\bar x=e_1\eps y\bar y$.\ In particular, $x$ is a solution to the equation $\cP_{e_1 e_2}(e_1t)$ if and only if 
 $y$ is a solution to the equation $\cP_{e_1 e_2}( \eps t)$.\ This defines a bijection between the set of solutions to 
the equation $\cP_{e_1, e_2}(t)$ and the set of solutions to 
the equation $\cP_{e_1 e_2}(\eps t)$ (the inverse bijection is given by $y\mapsto x=\eps \bar x_\eps y$).\ 

The proof of the following lemma is left to the reader.


  \begin{lemm}\label{lemmpell}
 Let $e_1$ and $e_2$ be positive integers.\ Assume that for some $\eps\in\{-1,1\}$, the equation $\cP_{e_1,e_2}(\eps )$ is solvable and let $(a_\eps,b_\eps)$ be its minimal solution.\ Then,  $e_1e_2$ is not a perfect square and the  minimal solution of the equation $\cP_{e_1e_2}(1)$ is $(e_1a_\eps^{2}+e_2b_\eps^2,2a_\eps b_\eps)$, unless 
 $e_1= \eps= 1$ or
 $e_2=-\eps=1$, 
 \end{lemm}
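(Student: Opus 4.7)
The plan is to work in the real quadratic order $\Z[\sqrt{d}]$, where $d := e_1 e_2$, equipped with its norm $N(A + B\sqrt{d}) = A^2 - dB^2$.

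First, a direct expansion gives $(e_1 a_\eps^2 + e_2 b_\eps^2)^2 - e_1 e_2(2 a_\eps b_\eps)^2 = (e_1 a_\eps^2 - e_2 b_\eps^2)^2 = \eps^2 = 1$, so $(A_1, B_1) := (e_1 a_\eps^2 + e_2 b_\eps^2,\ 2 a_\eps b_\eps)$ is a solution to $\cP_{e_1 e_2}(1)$. Since $(a_\eps, b_\eps)$ is a positive solution, $B_1 \ge 2$; were $e_1 e_2 = k^2$ a square, the identity $(A_1 - kB_1)(A_1 + kB_1) = 1$ in $\Z$ would force $B_1 = 0$, contradicting this. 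So $e_1 e_2$ is not a perfect square, and $\Z[\sqrt{d}]$ admits a fundamental positive Pell unit $u_\star > 1$.

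For the minimality statement, I will set $z := e_1 a_\eps + b_\eps \sqrt{d}$ (so that $N(z) = e_1 \eps$) and observe that $u_0 := z^2/e_1 = A_1 + B_1 \sqrt{d}$ is a positive Pell unit; I then write $u_0 = u_\star^k$ with $k \ge 1$ and aim to rule out $k \ge 2$. For any Pell unit $u$, the element $w := z\bar u$ lies in $\Z[\sqrt{d}]$ with $N(w) = e_1 \eps$, and direct expansion shows its rational part is divisible by $e_1$, hence $w = e_1 a' + b' \sqrt{d}$ with $a', b' \in \Z$ satisfying $e_1 a'^2 - e_2 b'^2 = \eps$. Applied to $u = u_\star$ under the assumption $k \ge 2$ (which gives $u_\star \le u_0^{1/2} = z/\sqrt{e_1}$), the construction yields $\sqrt{e_1} \le w < z$. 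Combining the identity $2 e_1 a' = w + e_1 \eps/w$ with the strict monotonicity of $x \mapsto x + e_1 \eps/x$ on $(\sqrt{e_1}, +\infty)$ will give $0 < |a'| < a_\eps$ and $b' \ne 0$ whenever $w > \sqrt{e_1}$ strictly; this contradicts the minimality of $(a_\eps, b_\eps)$.

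The hard part will be the boundary case $u_0 = u_\star^2$, in which $w = \sqrt{e_1}$ exactly. The main obstacle is then showing that $\sqrt{e_1} \in \Z[\sqrt{d}]$ forces one of the excluded cases: writing $\sqrt{e_1} = p + q\sqrt{d}$ with $p, q \in \Z$ and squaring forces $pq = 0$, so either $e_1$ is a perfect square (then $b' = 0$ and the relation $e_1 a'^2 = \eps$ forces $e_1 = \eps = 1$) or $e_2 = 1$ (then $w = \sqrt{d}$ gives $a' = 0$ and $b' = 1$, whence $-b'^2 = \eps$ forces $\eps = -1$, i.e.\ $e_2 = -\eps = 1$). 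Outside these exceptions we therefore have $k = 1$, i.e.\ $(A_1, B_1)$ is the fundamental Pell solution.
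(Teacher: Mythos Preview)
Your argument is correct. The verification that $(A_1,B_1)$ solves $\cP_{e_1e_2}(1)$ and that $e_1e_2$ is not a perfect square is straightforward, and your treatment of the minimality is sound: writing $u_0=u_\star^k$ and analyzing $w=z/u_\star$ via the monotone function $x\mapsto x+e_1\eps/x$ on $(\sqrt{e_1},+\infty)$ cleanly produces a smaller positive solution whenever $k\ge3$, while the boundary case $k=2$ (where $w=\sqrt{e_1}$ exactly) is correctly shown to force one of the two excluded configurations. One small point worth making explicit is that for $k\ge3$ the inequality $w>\sqrt{e_1}$ is strict because $w^2=z^2/u_\star^2=e_1u_\star^{\,k-2}>e_1$; you essentially have this, but stating it this way avoids any ambiguity.

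The paper itself gives no argument for this lemma (it reads ``The proof of the following lemma is left to the reader''), so there is nothing to compare your approach against. Your proof is exactly the kind of self-contained verification the reader is being invited to supply.
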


We now assume $t<0$ (the discussion is entirely analogous when $t>0$ and leads to the reverse inequality in \eqref{sols3}) and $-t$ is not a perfect square.\ Let $(a_t,b_t)$ be the minimal solution to the equation  $\cP_{e_1,e_2}(t )$ and set    $x_t:= e_1a_t+b_t\sqrt{e_1e_2}$, solution to the equation 
$\cP_{e_1 e_2}(e_1t)$ which is minimal among all solutions whose first argument is divisible by $e_1$.\ We have as in~\eqref{slope} the inequalities
\begin{equation}\label{sols2}
\cdots  <x_tx_1^{-1}\le -\bar x_t <\sqrt{-e_1t}<x_t\le -\bar x_tx_1<x_tx_1\le  \cdots
\end{equation}
with $x_1:= \frac{1}{e_1}x_\eps^2$ by Lemma \ref{lemmpell}.\ The increasing correspondence with the solutions to the equation $\cP_{e_1 e_2}(\eps t)$ that we described above maps 
 $x_tx_1^{-1}$ to $\frac{1}{e_1}x_tx_1^{-1}x_\eps=x_tx_\eps^{-1}$ and  $-\bar x_t$ to $-\frac{1}{e_1}\bar x_tx_\eps $.\ Since the product of these positive numbers is $-t $, we have 
\begin{equation} \label{sols22}
\cdots  \le x_tx_\eps^{-1} <\sqrt{-t}< -\frac{1}{e_1}\bar x_tx_\eps\le  \cdots
\end{equation}
In particular, $-\frac{1}{e_1}\bar x_tx_\eps $ corresponds to a positive solution, hence  
\begin{equation}\label{sols3}
\frac{a_t}{b_t} < \frac{a_\eps}{b_\eps}
.\end{equation}
Moreover, $(a_t,b_t)$ is the only positive solution to the equation  $\cP_{e_1,e_2}(t )$ among those appearing in \eqref{sols2} that satisfies this inequality.\ When $|t|$ is prime, Lemma \ref{le46} implies that any two classes of solutions are conjugate, hence all positive solutions appear in \eqref{sols2}, and \eqref{sols3} holds for only one positive solution  to the equation  $\cP_{e_1,e_2}(t )$.

Finally, one small variation: we assume that  $(a_\eps,b_\eps)$ is the  minimal solution  to the equation $\cP_{e_1,e_2}(\eps )$ but that  $(a_t,b_t)$ is the  minimal solution  to the equation $\cP_{e_1,4e_2}(t )$.\ Then we have 
\begin{equation}\label{sols4}
\frac{a_t}{2b_t} < \frac{a_\eps}{b_\eps}
 \end{equation}
and, when $|t|$ is prime, $(a_t,b_t)$ is the only solution that satisfies that inequality.\ The proof is exactly the same: since $2a_\eps b_\eps$ is even, $x_1$ still corresponds to the minimal solution to the equation $\cP_{4e_1 e_2}(1)$; in \eqref{sols2}, we have  solutions to the equation $\cP_{4e_1 e_2}(e_1t)$ and in \eqref{sols22}, we have solutions to the equation $\cP_{e_1 e_2}(\eps t)$, but this does not change the reasoning.

   \section{The image of the period map (with E.~Macr\`i)}\label{imagep}

In this second appendix, we generalize Theorem \ref{imper} in all dimensions.\ Recall the set up:  a polarization type $\tau$ is the $O(\Lambda_{\KKK^{[m]}})$-orbit of a primitive element $h_\tau$  of $ \Lambda_{\KKK^{[m]}}$ with positive square and   ${}^m\!\cM_\tau$ is the moduli space for \hKm s of $ \KKK^{[m]}$-type with a polarization of type $\tau$.\ There is a period map
\[
\wp_\tau\colon
 {}^m\!\!\cM_\tau\lra\cP_\tau=\widehat O(\Lkkk[m] ,h_\tau)\backslash \Omega_{h_\tau}.
\]

The goal of this appendix is to prove the following result.

\begin{theo}[Bayer, Debarre--Macr\`i, Amerik--Verbitsky]\label{thm:ImagePeriodMap}
Assume $m\ge 2$.\ Let $\tau$ be a polarization type.\ The image of the restriction of the period map $\wp_{\tau}$ to any component of the moduli space
 ${}^m\!\cM_\tau$  is the complement of a finite union of explicit Heegner divisors.
\end{theo}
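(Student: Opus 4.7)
The plan is to follow exactly the strategy of the proof of Theorem~\ref{imper}, replacing the case-by-case analysis afforded by Theorem~\ref{thm:NefConeHK4} with the general description of the ample cone of a \hKm\ of $\KKK^{[m]}$-type in terms of the Markman--Mukai lattice, quoted in the paper as Theorem~\ref{thm:BHT}. Concretely, pick a point $x\in \cP_\tau$. Huybrechts' surjectivity theorem produces a \hKm\ $X'$ of $\KKK^{[m]}$-type with period $x$; since $h_\tau$ has positive square and is algebraic, $X'$ is projective and $h_\tau$ is represented by an integral class $H$ lying in the positive cone. Using the reflection group $W_{\Exc}$ acting faithfully and transitively on the chambers cut out in $\Pos(X')$ by the $(-2)$-hyperplanes coming from $\Pic(X')$ (Remark~\ref{rmk:NefConeHK4}(a) generalizes without modification to all $m$ via~\cite[Section~6]{marsur}), we can assume that $H\in\overline{\Mov(X')}\cap \Pos(X')$. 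Using \eqref{chamb}, we can then further replace $X'$ by a \hKm\ $X$, birational to $X'$ and still of $\KKK^{[m]}$-type, whose nef cone contains $H$ in its closure.  At this stage $x$ lies in the image of $\wp_\tau$ if and only if $H$ is actually ample on $X$.

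Theorem~\ref{thm:BHT} describes $\Amp(X)$, inside $\Pos(X)$, as the connected component (containing a given ample class) of the complement of the union of the hyperplanes $H_{\bs}=\bs^\bot\cap (\Pic(X)\otimes \R)$, where $\bs\in \widetilde\Lambda_{\textnormal{alg},X}$ runs over all Mukai vectors satisfying $\bs^2\ge -2$ and $0\le \bs\cdot \bv_X\le \tfrac12 \bv_X^2=m-1$ (a spherical or isotropic class, together with a control on its ``Mukai pairing'' against $\bv_X$). Hence $H$ fails to be ample precisely when there exists such an $\bs$ with $\bs\perp H$. For each admissible pair $(\bs^2,\bs\cdot \bv_X)$ with $\bs^2\in\{-2\}\cup \Z_{\ge 0}$ (in bounded range; the case $\bs^2>0$ only matters when $\bs\cdot \bv_X$ is small enough for $H_{\bs}$ to meet $\Pos(X)$, which again imposes a bound) and $0\le \bs\cdot \bv_X\le m-1$, one then writes $\bs=\kappa+r\bv_X$ with $\kappa\in h_\tau^\bot$ and $r\in \tfrac{1}{2(m-1)}\Z$, and translates $\bs\perp H$ into an arithmetic condition on the primitive class~$\kappa$: its square and its class $\kappa_*$ in $D(h_\tau^\bot)$ are determined (up to finitely many choices) by the pair $(\bs^2,\bs\cdot \bv_X)$.

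By Eichler's criterion (Theorem~\ref{eic}), each such pair $(\kappa^2,\kappa_*)$ singles out a finite union of $\widehat O(\Lkkk[m],h_\tau)$-orbits of primitive vectors $\kappa\in h_\tau^\bot$, hence a finite collection of irreducible Heegner divisors in $\cP_\tau$, as in Lemma~\ref{le324}. Taking the union over the finitely many admissible pairs $(\bs^2,\bs\cdot\bv_X)$ gives the desired finite set of Heegner divisors whose union is precisely the complement of the image of $\wp_\tau$ on the given component of ${}^m\!\cM_\tau$. The same argument applies to any component of ${}^m\!\cM_\tau$ once we fix the polarization type $\tau$.

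The main obstacle is bookkeeping rather than conceptual: one has to check carefully that, for each admissible pair $(\bs^2,\bs\cdot\bv_X)$, the resulting wall classes $\kappa=\bs-r\bv_X\in h_\tau^\bot$ do range over finitely many $\widehat O(\Lkkk[m],h_\tau)$-orbits (the subtlety being that $\bs$ need not be integral on $\widetilde\Lambda_X$, only divisible by $2(m-1)$ in the appropriate sense, so that $\kappa_*\in D(h_\tau^\bot)$ has only finitely many admissible values), and that the resulting divisors can indeed be described ``explicitly'' (in principle) via the discriminants and divisibilities computed as in Proposition~\ref{propirr}; the sharper enumeration obtained for $m=2$ in Theorem~\ref{imper} should be viewed as the first instance of such an explicit list.
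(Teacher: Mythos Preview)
Your overall strategy is correct and matches the paper's: lift the $m=2$ argument of Theorem~\ref{imper} to arbitrary $m$ by replacing Theorem~\ref{thm:NefConeHK4} with Theorem~\ref{thm:BHT}, and identify the walls $\bs\perp H$ with finitely many Heegner divisors via a bound on $\kappa^2$ and Eichler's criterion. The finiteness argument you sketch is essentially the paper's (it is cleanest to set $\kappa:=2(m-1)\bs-k\bv_X$ with $k=\bs\cdot\bv_X$ and $a=\tfrac12\bs^2$; then $\kappa^2=2(m-1)(4(m-1)a-k^2)<0$ forces $-1\le a<(m-1)/4$, so finitely many pairs $(a,k)$).

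There is, however, a genuine gap: you do not track the component $M\subset{}^m\!\cM_\tau$ nor the canonical orbit $[\theta_X]$ of embeddings $H^2(X,\Z)\hookrightarrow\widetilde\Lambda_{\KKK}$. For $m=2$ this is harmless because ${}^2\!\cM_{2n}^{(\gamma)}$ is irreducible and there is a unique orbit of embeddings; for general $m$ neither holds. Two places where this matters:
\begin{itemize}
\item The wall condition coming from $\cS_X$ is not purely a condition on $h_\tau^\perp\subset\Lambda_{\KKK^{[m]}}$: whether a given class $\kappa$ arises as $2(m-1)\bs-k\bv_X$ with $\bs\in\widetilde\Lambda_{\KKK}$ integral depends on the chosen embedding $\theta$ (this is the divisibility condition $\frac{\theta(\kappa)+k\bv}{2(m-1)}\in\widetilde\Lambda_{\KKK}$ in Proposition~\ref{prop:BHTnew}). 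Since $[\theta_X]$ is constant on a connected component of the marked moduli space but may vary between components of ${}^m\!\cM_\tau$, the resulting list of Heegner divisors can a priori depend on $M$ (cf.\ Remark~\ref{explicit}). Your sentence ``the same argument applies to any component'' hides this.
\item In the converse direction, starting from ``Huybrechts' surjectivity produces some $X'$'' does not place $X'$ in the given component $M$. The paper fixes this by working with the marked moduli space ${}^m\mathfrak{M}^{\,t_0,+}_{h_0^\perp}$, using surjectivity of the \emph{refined} period map on that specific component, and then, after applying Proposition~\ref{prop:BHTnew} to obtain $(Y,H_Y)$ with $H_Y$ ample and a Hodge isometry $g$ satisfying $[\theta_X]=[\theta_Y\circ g]$, invoking Markman's theorem that any such $g$ is a parallel transport operator, so that $(Y,\eta_Y)$ lies in the \emph{same} component ${}^m\mathfrak{M}^{\,t_0}$ and hence $(Y,H_Y)\in M$.
\end{itemize}
In short, what you call ``bookkeeping'' is where the real work is: the paper's Proposition~\ref{prop:BHTnew} is precisely the repackaging of Theorem~\ref{thm:BHT} needed to make the component-tracking and the parallel-transport argument go through, and you should incorporate it (or an equivalent) into your argument.
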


The procedure for listing these divisors will be explained in Remark~\ref{explicit}.\ The proof of the theorem is based on the description of the nef cone for hyperk\"ahler manifolds of type $\KKK^{[m]}$  in \cite{bht,mon2} (another proof follows from \cite{amve}).\ We start by revisiting these results.\ Recall (see \eqref{mml}) that  the (unimodular) extended K3 lattice is
  \[
 \widetilde{\Lambda}_{\KKK}:= U^{\oplus 4}\oplus E_8(-1)^{\oplus 2}.
\]
Let $X$ be   a projective hyperk\"ahler manifold  of $K3^{[m]}$-type.\ By \cite[Corollary 9.5]{marsur},   there is a canonical $O( \widetilde{\Lambda}_{\KKK})$-orbit $[\theta_X]$ of primitive isometric embeddings
\[
\theta_X\colon H^2(X,\Z)  \xhookrightarrow{\ \ \ }   \widetilde{\Lambda}_{\KKK}.
\]
We denote by $\bv_X$ a generator of the orthogonal of $\theta_X(H^2(X,\Z))$ in $ \widetilde{\Lambda}_{\KKK}$.\ It satisfies $ \bv_X^2=2m-2$.\ We endow $\widetilde{\Lambda}_{\KKK}$ with the weight-2 Hodge structure $\widetilde\Lambda_X$ for which $\theta_X$ is a morphism of Hodge structures and $\bv_X$ is of type $(1,1)$, and we set
\[
\widetilde{\Lambda}_{\textnormal{alg},X} := 
\widetilde\Lambda_X \cap \widetilde\Lambda_X^{ 1,1} \subset \widetilde{\Lambda}_{\KKK}
,
\]
so that $\NS(X)=\theta_X^{-1}(\widetilde{\Lambda}_{\textnormal{alg},X})$.\ Finally, we set
\[
\cS_X :=  \{ \bs\in \widetilde{\Lambda}_{\textnormal{alg},X}\mid  \bs^2\ge -2 \text{ and } 0\leq  \bs \cdot\bv_X\le  \bv_X^2/2  \}\moins \{0\}.
\]

The hyperplanes $\theta_X^{-1}(\bs^\perp)\subset \NS(X)\otimes\R$, for $\bs\in\cS_X$, are locally finite in the positive cone $\Pos(X)$.\ The dual statement of \cite[Theorem 1]{bht} is then the following (see also \cite[Theorem 12.1]{bama}).\

\begin{theo}\label{thm:BHT}
Let $X$ be a projective hyperk\"ahler manifold  of $K3^{[m]}$-type.\ The ample cone of $X$ is the connected component of
\begin{equation}\label{set}
\Pos(X) \setminus \bigcup_{\bs\in\cS_X} \theta_X^{-1}(\bs^\perp)
\end{equation}
that contains the class of an ample divisor.
\end{theo}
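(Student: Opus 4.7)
The plan is to combine a deformation-theoretic reduction to moduli spaces of Bridgeland-stable objects on a K3 surface with the explicit wall-crossing analysis of Bayer--Macr\`i. First I would reduce to the case in which $X=M_\sigma(v)$ is a smooth projective moduli space of $\sigma$-stable objects with primitive Mukai vector $v\in\widetilde{H}(S,\Z)$ of square $v^2=2m-2$, for some K3 surface $S$ and some generic stability condition $\sigma$ in the distinguished component $\mathrm{Stab}^\dagger(S)$ of the stability manifold. This reduction is legitimate because the whole package $(H^2(X,\Z),\theta_X,\bv_X,\widetilde\Lambda_{\textnormal{alg},X})$, together with a chosen class in $\Amp(X)\cap\Pic(X)$, is invariant under hyperholomorphic deformations of $X$, and by the Perego--Rapagnetta--Markman construction every projective hyperk\"ahler manifold of $\KKK^{[m]}$-type can be connected to such an $M_\sigma(v)$ through a family in which the algebraic Mukai lattice and a chosen ample class stay algebraic. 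Since both sides of the claimed equality are continuous in such families (the set $\cS_X$ varies locally constantly and the hyperplanes $\theta_X^{-1}(\bs^\perp)$ move continuously with the Hodge structure), it suffices to prove the statement for $X=M_\sigma(v)$.

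Next I would use the Bayer--Macr\`i map
\[
\ell_\sigma\colon\mathrm{Stab}^\dagger(S)\lra \NS(M_\sigma(v))\otimes\R,
\]
which sends each open chamber of $\mathrm{Stab}^\dagger(S)$ isomorphically onto the ample cone of the birational model of $M_\sigma(v)$ associated with that chamber; in particular, $\ell_\sigma$ maps the chamber containing $\sigma$ onto $\Amp(M_\sigma(v))$. The walls of this chamber decomposition are indexed by \emph{numerical walls} in the sense of \cite{bama}: each such wall corresponds to a primitive class $\bs\in\widetilde{\Lambda}_{\textnormal{alg},X}$ that is the Mukai vector of a potentially destabilizing subobject of some $\sigma$-semistable object with Mukai vector $v$. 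Two conditions characterize such an $\bs$: (i) $\bs^2\ge-2$, so that stable objects of class $\bs$ can exist (by the Mukai--Yoshioka existence theorems on K3 surfaces); and (ii) the phase $\arg Z(\bs)=\arg Z(v)$ can be achieved inside $\Pos(X)$, which, after translating the central charge equality via the Mukai pairing, is equivalent to the hyperplane $\bs^\perp$ meeting the positive cone; the natural involution $\bs\leftrightarrow v-\bs$ swapping a destabilizing subobject with its quotient is responsible for the upper bound, yielding exactly $0\le\bs\cdot\bv_X\le\bv_X^2/2$. Thus walls of $\Amp(M_\sigma(v))$ inside $\Pos(X)$ are precisely the hyperplanes $\theta_X^{-1}(\bs^\perp)$ for $\bs\in\cS_X$ that genuinely produce a destabilization.

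The remaining point is to show that every $\bs\in\cS_X$ whose orthogonal hyperplane actually meets $\Pos(X)$ does produce an honest wall (i.e.\ that there are no ``fake'' walls). This is the content of the Bayer--Macr\`i \emph{positivity lemma} combined with the existence of $\sigma$-stable objects of class $\bs$: for spherical $\bs$ (with $\bs^2=-2$) one uses the unique stable spherical object, while for $\bs^2\ge 0$ one invokes Yoshioka's non-emptiness results for moduli of sheaves and the fact that Gieseker moduli lie in the closure of $\mathrm{Stab}^\dagger(S)$. Granting this, the ample cone of $M_\sigma(v)$ is the connected component of $\Pos(X)\setminus\bigcup_{\bs\in\cS_X}\theta_X^{-1}(\bs^\perp)$ containing $\ell_\sigma(\sigma)$, which is the component containing any ample class. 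Transporting this identification along the deformation from step one yields the theorem for all projective hyperk\"ahler manifolds of $\KKK^{[m]}$-type.

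The main obstacle is step three: the exact matching between the arithmetic condition $0\le\bs\cdot\bv_X\le\bv_X^2/2$ and the geometric condition of being a \emph{genuine} wall rather than a ``totally semistable'' locus. This requires the full strength of the wall-and-chamber structure on $\mathrm{Stab}^\dagger(S)$ established in \cite{bama}, together with a careful analysis of the two ``borderline'' cases $\bs\cdot\bv_X=0$ (walls of the movable cone, coming from Markman's exceptional classes and governed by \cite{marsur,mar3}) and $\bs^2=-2$ with $\bs\cdot\bv_X\in\{0,1\}$ (the spherical walls that, in the $m=2$ case, produce exactly the $(-2)$-hyperplanes and the $(-10)$-hyperplanes of divisibility $2$ of Theorem~\ref{thm:NefConeHK4}). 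Once this dictionary is set up, the rest of the argument is formal.
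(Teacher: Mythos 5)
The paper itself contains no proof of this theorem: it is introduced as ``the dual statement of \cite[Theorem 1]{bht}'' (see also \cite[Theorem 12.1]{bama}), so there is nothing in the text to compare your argument against, and what you have written is in effect an outline of how those two references establish the result. Your second and third steps are a fair summary of that proof: the Bayer--Macr\`i map identifying chambers of the distinguished component of the stability manifold with nef cones of birational models of $M_\sigma(v)$, the numerical classification of the walls by classes $\bs$ with $\bs^2\ge -2$ and $0\le \bs\cdot\bv_X\le \bv_X^2/2$ (with the symmetries $\bs\mapsto -\bs$ and $\bs\mapsto \bv_X-\bs$ accounting for the normalization, since both cut out the same hyperplane of $\bv_X^\perp$), and the use of Mukai--Yoshioka existence results to show that each such hyperplane bounding the distinguished chamber is an honest wall of the nef cone.

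The genuine gap is in your first step. Not every projective hyperk\"ahler manifold of $\KKK^{[m]}$-type is a moduli space of Bridgeland-stable objects on a K3 surface: this requires a copy of the hyperbolic plane $U$ inside $\widetilde{\Lambda}_{\textnormal{alg},X}$, which already fails when $\rho(X)=1$ (there $\widetilde{\Lambda}_{\textnormal{alg},X}$ is positive definite of rank $2$, hence has no isotropic vectors). Consequently one cannot connect a general $X$ to an $M_\sigma(v)$ ``through a family in which the algebraic Mukai lattice and a chosen ample class stay algebraic'': the algebraic Mukai lattice must jump at the endpoint to acquire the hyperbolic plane, and at that jump both $\cS_X$ and the hyperplane arrangement change. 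More seriously, the assertion that ``both sides of the claimed equality are continuous in such families'' assumes the hard point: the right-hand side varies continuously by construction, but the ample cone of a family of hyperk\"ahler manifolds does not, in any naive sense. The actual content of \cite{bht}, beyond the wall-crossing results of \cite{bama}, is precisely that the extremal curve classes dual to the elements of $\cS_X$ remain extremal under deformations of $(X,h)$ along which they stay algebraic --- proved one wall at a time, keeping only the relevant rank-two sublattice algebraic, and using Markman's monodromy results to make $\cS_X$ a deformation invariant together with the deformation theory of extremal contractions. As written, your reduction assumes what must be proved.
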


Note that changing $\bv_X$ into $-\bv_X$ changes $\cS_X$ into $-\cS_X$, but the
set in \eqref{set} remains the same. 

 \begin{exam}\label{ehm}
Let $(S,L)$ be a polarized K3 surface of degree $2e$ and let $r$ be a positive integer.\ Recall from Remark~\ref{rema45} that the moduli space $X:=\cM(r,L,r)$, when smooth,  is a \hKm\ of type $\KKK^{[m]}$, where $m:=1-r^2+e$, and carries a class $H$ of square 2.\ In fact, the vector $\bv_X$ above is $(r,-r,L)\in U\oplus  {\Lambda}_{\KKK}=\widetilde{\Lambda}_{\KKK}$ and $H$ is $\theta_X^{-1}((1,1,0))$.\ Consider the vector $\bs:= (-1,1,0)\in \widetilde{\Lambda}_{\textnormal{alg},X}$.\ We have $\bs^2=-2$, $\bs\cdot\bv_X=2r $,  and $H\in \theta_X^{-1}(\bs^\perp)$.\ When $2r\le \bv_X^2/2= m-1$, that is, when $ e\ge r^2+2r$, the class $\bs$ is in $\cS_X$, hence, by the theorem, $H$ is {\em not} ample on $X$.

Assume now $\Pic(S)=\Z L$, so that the Picard number of $X$ is 2.\ 
When    
$r^2<e\le r^2+2r-1$, the class $H$ is ample on  $X$     (\cite[Corollary~4.15]{og8} and Remark~\ref{rema45}).

It can actually be shown that for all $e>r^2$, the class $H$ is nef on $X$.\footnote{\label{nf2}One checks that
for all $\alpha^2 > 1/e$, all stable sheaves in  $X$ are   stable for the Bridgeland stability condition $\sigma_{\alpha,0}$ (in Bridgeland's notation).\ Therefore, by \cite[Lemma~9.2]{bama1}, the class $H$ is nef: it  corresponds exactly to the (nonexistent) stability condition at $\alpha^2=1/e$.}\ However, we do not know    the  nef cone    except when $r=1$, where $X=S^{[e]}$ and everything is described in Example~\ref{exa37} (see also Exercise~\ref{er2} for the case $e=r^2+1$).\ One can only say that
\begin{itemize}
\item when $r^2<e\le r^2+2r-1$, the manifold $X$ carries a biregular nontrivial involution (Remark~\ref{rema45}) that interchanges the two extremal rays of $\Nef(X)$;\footnote{We do not know in general whether these rays are rational.}
\item when $ e\ge r^2+2r$, one  ray is spanned by $H$, the other is rational by \cite[Theorem~1.3(1)]{ogu2}.
\end{itemize}

\end{exam}

We  rewrite Theorem \ref{thm:BHT} in terms of the existence of certain rank-2  lattices in the N\'eron-Severi group as follows.

\begin{prop}\label{prop:BHTnew}
Let $m\geq2$.\
Let $X$ be a projective hyperk\"ahler manifold of $K3^{[m]}$-type and let $h_X\in\NS(X)$ be a primitive class such that $h_X^2>0$.\
The following conditions are equivalent: 
\begin{enumerate}
\item[\textnormal{(i)}]
there exist a projective hyperk\"ahler manifold $Y$ of $K3^{[m]}$-type, an ample primitive class $h_Y\in\NS(Y)$, and a Hodge isometry $g\colon H^2(X,\Z) \isomto H^2(Y,\Z)$ such that $[\theta_X]=[\theta_Y\circ g]$ and $g(h_X)=h_Y$;
\item[\textnormal{(ii)}]
there are no 
 rank-2 sublattices $L_X\subset \NS(X)$ such that
\begin{itemize}
\item $h_X\in L_X$, and
\item there exist integers $0\leq k \leq m-1$ and $a\geq -1$,   
and $\kappa_X\in L_X$, such that
\[
\kappa_X^2 = 2(m-1)   ( 4 (m-1) a -k^2 ), \qquad  \kappa_X\cdot h_X =0, \qquad \frac{\theta_X(\kappa_X) + k \bv_X}{2(m-1)} \in \widetilde{\Lambda}_{\KKK}.
\]
\end{itemize}
\end{enumerate}
\end{prop}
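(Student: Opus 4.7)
The plan is to translate condition (ii) into a statement about the set $\cS_X$ appearing in Theorem~\ref{thm:BHT}, and then to deduce (i) from Theorem~\ref{thm:BHT} combined with surjectivity of the period map, the action of $W_{\Exc}$, and the chamber decomposition~\eqref{chamb} of the movable cone.

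First I would reformulate (ii) inside the extended lattice. Given $\bs\in\widetilde{\Lambda}_X$, the orthogonal decomposition $\bs=\theta_X(\mu)+(k/(2m-2))\bv_X$, with $k:=\bs\cdot\bv_X\in\Z$ and $\mu\in H^2(X,\Q)$, shows that integrality of $\bs$ is equivalent to $\kappa_X:=(2m-2)\mu\in H^2(X,\Z)$ together with $(\theta_X(\kappa_X)+k\bv_X)/(2m-2)\in\widetilde{\Lambda}_\KKK$; since $\bv_X$ is of type $(1,1)$, algebraicity of $\bs$ amounts to $\kappa_X\in\NS(X)$. A direct calculation gives $\bs^2=(\kappa_X^2+k^2(2m-2))/(2m-2)^2$, so $\bs^2\ge -2$ rewrites as $\kappa_X^2=2(m-1)(4(m-1)a-k^2)$ for some $a\ge-1$, while $\bs\cdot\bv_X=k$ and $\bs\cdot\theta_X(h_X)=(\kappa_X\cdot h_X)/(2m-2)$. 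The rank-2 hypothesis on $L_X:=\Z h_X+\Z\kappa_X$ is equivalent to $\kappa_X\ne 0$ (since $\kappa_X\cdot h_X=0\ne h_X^2$), so (ii) becomes the assertion $(\ast)$ that $\theta_X(h_X)\cdot\bs\ne 0$ for every nonzero $\bs\in\cS_X$.

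For the implication (i)$\Rightarrow$(ii), the equality $[\theta_X]=[\theta_Y\circ g]$ produces an isometry $\varphi\in O(\widetilde{\Lambda}_\KKK)$ with $\theta_X=\varphi\circ\theta_Y\circ g$; then $\widetilde g:=\varphi^{-1}$ is a Hodge isometry $\widetilde{\Lambda}_X\isomto\widetilde{\Lambda}_Y$ extending $g$ and sending $\bv_X$ to $\pm\bv_Y$, hence inducing a bijection between the wall arrangements $\{\theta_X^{-1}(\bs^\perp)\mid\bs\in\cS_X\}$ and $\{\theta_Y^{-1}(\bs^\perp)\mid\bs\in\cS_Y\}$. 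Applying Theorem~\ref{thm:BHT} to the ample $h_Y$ yields the analogue of $(\ast)$ on $Y$, and pulling back via $\widetilde g$ produces $(\ast)$ on $X$.

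For (ii)$\Rightarrow$(i), I would follow the template of the proof of Theorem~\ref{imper}. Surjectivity of the period map \cite[Theorem~8.1]{huyinv}, combined with $h_X^2>0$, produces a projective hyperk\"ahler manifold $X'$ of $\KKK^{[m]}$-type with the same period as $X$ and a Hodge-isometric identification $\psi\colon H^2(X,\Z)\isomto H^2(X',\Z)$ satisfying $[\theta_{X'}\circ\psi]=[\theta_X]$, under which $h_X$ corresponds to an integral class in $\Pos(X')$. The generalization to all $m$ of Remark~\ref{rmk:NefConeHK4}(a) (cf.~\cite[Theorem~6.25]{marsur}) lets us compose $\psi$ with an element of $W_{\Exc}$ to arrange that $\psi(h_X)\in\overline{\Mov(X')}\cap\Pos(X')$. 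The chamber decomposition~\eqref{chamb} of $\Mov(X')$ into ample cones of hyperk\"ahler models birational to $X'$ (valid in arbitrary dimension via \cite{mar3,marsur} and the Kawamata--Morrison conjecture \cite{mayo}) refines the wall arrangement of Theorem~\ref{thm:BHT}: each ample cone is a connected component of $\Pos(X')\setminus\bigcup_{\bs\in\cS_{X'}}\theta_{X'}^{-1}(\bs^\perp)$. Assertion $(\ast)$, transported via $\psi$, excludes $\psi(h_X)$ from every wall $\theta_{X'}^{-1}(\bs^\perp)$, so $\psi(h_X)$ lies in the open ample cone of some hyperk\"ahler model $Y$ birational to $X'$; composing $\psi$ with the Hodge isometry induced by the birational map $X'\dashrightarrow Y$ gives the desired $g$.

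The technical heart of the argument is the last step: for general $m$, both the transitivity of $W_{\Exc}$ on positive-cone chambers and the decomposition of $\overline{\Mov}\cap\Pos$ into ample cones of birational models rest on the combined work of Markman, Bayer--Macr\`i, Bayer--Hassett--Tschinkel, and the Kawamata--Morrison conjecture, which is the substantive input beyond the $m=2$ case treated in Theorem~\ref{thm:NefConeHK4}. The purely arithmetic translation in the first step, by contrast, is routine bookkeeping once the identification $\bs\leftrightarrow(\kappa_X,k,a)$ is written down.
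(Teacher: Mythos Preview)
Your proof is correct and follows essentially the same strategy as the paper's: translate (ii) into the assertion that $\theta_X(h_X)$ is orthogonal to no nonzero $\bs\in\cS_X$, then invoke Theorem~\ref{thm:BHT} for (i)$\Rightarrow$(ii) and Markman's cone machinery for (ii)$\Rightarrow$(i). The only differences are cosmetic: the paper works directly on $X$ via \cite[Lemma~6.22]{marsur} rather than detouring through surjectivity of the period map (you may simply take $X'=X$, $\psi=\Id$), and it first lands $h_Y$ as a nef class on some birational model (using \cite[Theorem~6.17]{marsur} and \cite[Theorem~7]{hast2}) before contradicting non-ampleness, whereas you argue directly that $(\ast)$ forces $\psi(h_X)$ into an open chamber.
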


Again, if one changes  $\bv_X$ into $-\bv_X$, one needs to take     $-\kappa_X$ instead of $ \kappa_X$.\

\begin{proof}
Assume   that a pair $(Y,h_Y)$ satisfies (i)     but that there is a lattice $L_X$ as in (ii).\
The rank-2 sublattice $L_Y:=g(L_X)\subset H^2(Y,\Z)$ is contained in $ \NS(Y)$.\
We also let $\kappa_Y:=g(\kappa_X)\in L_Y$ and choose the generator $\bv_Y:=\bv_X\in\widetilde{\Lambda}_{\KKK}$ of $H^2(Y,\Z)^\perp$.\ We then have  
\[
\kappa_Y^2 = 2(m-1) ( 4 (m-1) a -k^2 ), \qquad  \kappa_Y\cdot h_Y =0, \qquad \bs_Y:=\frac{\theta_Y(\kappa_Y) + k \bv_Y}{2(m-1)} \in \widetilde{\Lambda}_{\KKK} 
\]
and 
\[
\bs_Y^2=2a\geq -2,\qquad  
0\leq  \bs_Y\cdot \bv_Y =k\leq  m-1 =\frac{\bv_Y^2}{2} ,\qquad     \bs_Y\cdot \theta_Y(h_Y) =0.
\]
Moreover, we have   $\bs_Y\in\widetilde{\Lambda}_{\textnormal{alg},Y}$.\
By Theorem \ref{thm:BHT}, $h_Y$ cannot be ample on $Y$, a contradiction.

Conversely, assume that there exist  no lattices $L_X$ as in (ii).\
By \cite[Lemma 6.22]{marsur}, there exists a Hodge isometry $g'\colon H^2(X,\Z)\isomto H^2(X,\Z)$ such that $[\theta_X\circ g']=[\theta_X]$ and $g'(h_X)\in\overline{\Mov}(X)$.\
Since $g'(h_X)\in\NS (X)$ and $g'(h_X)^2>0$, by \cite[Theorem~6.17 and Lemma~6.22]{marsur} and \cite[Theorem 7]{hast2}, there exist  a projective hyperk\"ahler manifold $Y$ of $K3^{[m]}$-type, a nef divisor class $h_Y\in\Nef(Y)$, and a Hodge isometry $g''\colon H^2(X,\Z) \isomto H^2(Y,\Z)$ such that $[\theta_X]=[\theta_Y\circ g'']$ and $g''(g'(h_X))=h_Y$.\
Assume that $h_Y$ is not ample.\ Since $h_Y^2>0$ and $h_Y$ is nef,  there exists by Theorem \ref{thm:BHT} a class $\bs_Y\in\cS_Y$ such that  
$\bs_Y\cdot \theta_Y(h_Y)=0$.\ 
We set $a:=\frac12\bs_Y^2$, $k:= \bs_Y\cdot \bv_Y$, and
\[
\kappa_Y:= \theta_Y^{-1}\left( 2(m-1)  \bs_Y - k \bv_Y \right) \in \theta_Y^{-1}
  (  \widetilde{\Lambda}_{\textnormal{alg},Y} ) = \NS(Y).\]
Let $L_Y$ be the 
 sublattice of $H^2(Y,\Z)$ generated by $h_Y$ and $\kappa_Y$.\ 
Then $L_X:=g^{-1}(L_Y)$   satisfies the conditions in (ii), a contradiction.
\end{proof}

When $m-1$ is a prime number, this can be written only in terms of $H^2(X,\Z)$.

\begin{prop}\label{prop:BHTnewprime}
Assume that   $p:=m-1 $  is either 1 or   a prime number.\ Let $X$ be a projective hyperk\"ahler manifold of $K3^{[m]}$-type and let $h_X\in\NS(X)$ be a primitive class such that $h_X^2>0$.\  The following conditions are equivalent: 
\begin{enumerate}
\item[\textnormal{(i)}]
there exist a projective hyperk\"ahler manifold $Y$ of $K3^{[m]}$-type, an ample primitive class $h_Y\in\NS(Y)$, and a Hodge isometry $g\colon H^2(X,\Z) \isomto H^2(Y,\Z)$ such that $g(h_X)=h_Y$; 
\item[\textnormal{(ii)}]
there are no 
rank-2 sublattices $L_X\subset \NS(X)$ such that
\begin{itemize}
\item $h_X\in L_X$, and
\item there exist integers $0\leq k \leq p$ and $a\geq -1$,
and $\kappa_X\in L_X$, such that
\[
\kappa_X^2 = 2p   ( 4 p a -k^2 ), \qquad  \kappa_X\cdot h_X =0, \qquad  2p \mid \div_{H^2(X,\Z)}(\kappa_X).
\]
\end{itemize}
\end{enumerate}
\end{prop}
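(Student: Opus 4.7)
The plan is to deduce this proposition from the preceding Proposition~\ref{prop:BHTnew} by showing that, when $p$ is $1$ or prime, (i) its condition (i) is equivalent to the weaker condition (i) here (the compatibility $[\theta_X]=[\theta_Y\circ g]$ with the Markman orbit is automatic), and (ii) its condition (ii) is equivalent to the reformulation here. Both simplifications rest on the same observation: the discriminant group $D(H^2(X,\Z))\isom \Z/2p\Z$ has $O(D,\bar q)=\{\pm\Id\}$ for $p$ a prime or $1$, so, combining with Markman's theorem (Section~\ref{sec27}), $\Mon^2(X)=\widehat O^+(H^2(X,\Z))=O^+(H^2(X,\Z))$.

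For the equivalence of the two versions of~(i), one direction is trivial.\ For the converse, given a Hodge isometry $g$ with $g(h_X)=h_Y$, I would use Markman's Corollary~9.5 of \cite{marsur}: the canonical orbit $[\theta_Y]$ agrees with $[\theta_X]$ under any monodromy-equivariant identification of the lattices.\ Since $\Mon^2=O^+$ in our case, every such Hodge isometry $g$ is automatically monodromy-equivariant, so $\theta_Y\circ g$ lies in the canonical orbit $[\theta_X]$ (possibly after composing $g$ with an element of $\Mon^2(X)$ fixing $h_X$).

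For the equivalence of the two versions of~(ii), the key is a concrete lattice computation.\ Choose a standard basis so that $\widetilde\Lambda_{\KKK}=\L\oplus U$ with $\L:=U^{\oplus 3}\oplus E_8(-1)^{\oplus 2}$ unimodular and $U=\Z u\oplus \Z v$; take $\theta_X$ so that $H^2(X,\Z)=\L\oplus \Z\ell$ with $\ell=u-pv$ and $\bv_X=u+pv$.\ Writing $\kappa_X=\lambda+c\ell$ with $\lambda\in \L$ and $c\in\Z$, one computes
\[
\theta_X(\kappa_X)+k\bv_X=\lambda+(c+k)u+p(k-c)v,
\]
hence $(\theta_X(\kappa_X)+k\bv_X)/(2p)\in\widetilde\Lambda_{\KKK}$ iff $\lambda\in 2p\L$ and $k+c\equiv 0\pmod{2p}$.\ On the other hand, a direct calculation gives $\div_{H^2(X,\Z)}(\kappa_X)=\gcd(\div_\L(\lambda),2pc)$, and unimodularity of $\L$ yields $2p\mid\div_\L(\lambda)\Leftrightarrow\lambda\in 2p\L$.\ Thus the divisibility condition $2p\mid\div_{H^2(X,\Z)}(\kappa_X)$ matches the ``algebraic'' half of the glueing condition, and it remains to arrange the congruence $k\equiv -c\pmod{2p}$.\ Using $\lambda\in 2p\L$ and evenness of $\L$, one has $\lambda^2\in 8p^2\Z$, so the square condition $\kappa_X^2=2p(4pa-k^2)$ reduces to $k^2\equiv c^2\pmod{4p}$; for $p$ an odd prime, the Chinese remainder theorem applied modulo $4$ and $p$ gives $k\equiv \pm c\pmod{2p}$, and the cases $p=1,2$ are checked directly.\ Replacing $\kappa_X$ by $-\kappa_X\in L_X$ if necessary (which preserves all other conditions) selects the correct sign, giving $k\equiv -c\pmod{2p}$.

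The main obstacle I anticipate is Step~1: the compatibility condition $[\theta_X]=[\theta_Y\circ g]$ involves double cosets of the monodromy group in the orthogonal group and requires careful bookkeeping of orientations and of the sign under which $\bv_X$ is sent to $\pm\bv_Y$ by the lift to $O(\widetilde\Lambda_{\KKK})$; the lattice-theoretic Step~2 is essentially a direct computation once the standard embedding has been fixed.
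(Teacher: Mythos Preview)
Your approach is essentially correct and matches the paper's proof, including the choice of the standard embedding $\ell\mapsto u-pv$, $\bv=u+pv$, and the key computation that under the divisibility hypothesis $\kappa=2pw+c\ell$ one gets $k^2\equiv c^2\pmod{4p}$, whence $k\equiv\pm c\pmod{2p}$ when $p$ is $1$ or prime, so that $(\theta(\pm\kappa)+k\bv)/(2p)$ is integral; this is exactly the paper's argument with your $c$ equal to their $r$.

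The only difference is in Step~1: you argue via $\Mon^2(X)=O^+(H^2(X,\Z))$ to force $g$ to be a parallel transport operator (which requires care with orientations, as you note), whereas the paper argues more directly that when $p$ is $1$ or prime there is a \emph{unique} $O(\widetilde{\Lambda}_{\KKK})$-orbit of primitive embeddings $\Lambda_{\KKK^{[m]}}\hra\widetilde{\Lambda}_{\KKK}$ (\cite[Sections~9.1.1--9.1.2]{marsur}), so $[\theta_Y\circ g]=[\theta_X]$ for \emph{any} isometry $g$ whatsoever; this sidesteps the orientation bookkeeping you were worried about and is the simpler route.
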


\begin{proof}
As explained in \cite[Sections~9.1.1 and~9.1.2]{marsur}, under our assumption on $m$, there is a unique $O(\widetilde{\Lambda}_{\KKK})$-orbit of primitive isometric embeddings $ \Lambda_{\KKK^{[m]}}\hra\widetilde{\Lambda}_{\KKK}$.\
This implies that any Hodge isometry $  H^2(X,\Z) \isomto H^2(Y,\Z)$   commutes with the orbits $[\theta_X]$ and $[\theta_Y]$.\

We can choose the embedding $\theta\colon\Lambda_{\KKK^{[m]}}\hra\widetilde{\Lambda}_{\KKK}$ as follows.\ Let us fix a canonical basis $(u,v)$ of a   hyperbolic plane $U$ in $\widetilde{\Lambda}_{\KKK}$.\ We   define $\theta $ by mapping the generator $\ell$ of $I_1(-2p)$ to $u-pv$.\ We also choose $\bv:=u+pv$ as the generator of $\Lambda_{\KKK^{[m]}}^\perp$.

By Proposition \ref{prop:BHTnew}, we only have to show that given a class $\kappa\in\Lambda_{\KKK^{[m]}}$ with divisibility $2p$ and such that $\kappa^2 = 2p ( 4p a -k^2 )$, either $ \theta(\kappa)+ k\bv$ or $\theta(-\kappa)+ k\bv$ is divisible by $2p$ in $\widetilde{\Lambda}_{\KKK}$.\ Since $\div_{H^2(X,\Z)}(\kappa)$ is divisible by $2p$, we can write $\kappa=2pw+r\ell$, with $w\in U^{\oplus 3}\oplus E_8(-1)^{\oplus 2}$ and $r\in\Z$.\ By computing $\kappa^2$, we obtain the equality
\[
r^2-k^2 = 2p(w^2-2a).
 \]
In particular, $r^2-k^2$ is even, hence so are $ r + k $ and $ -r + k $.\ Moreover, $p$   divides $r^2-k^2$, hence also   $ \eps r + k $, for some $\eps\in\{-1,1\}$.\   Then
\[
  \theta(\eps \kappa) + k\bv = \eps2p\theta(w) + (\eps r + k)u + (-\eps r + k)p v 
\]
is divisible by $2p$, as we wanted.
\end{proof}

Before proving the theorem, we  briefly review polarized marked hyperk\"ahler manifolds of $\KKK^{[m]}$-type, their moduli spaces,  and their periods,   following the presentation in \cite[Section~7]{marsur} (see also \cite[Section~1]{apo}).\

Let $ {}^m\mathfrak{M}$ be the (smooth, nonHausdorff, $21$-dimensional) coarse moduli space of marked hyperk\"ahler manifolds of $\KKK^{[m]}$-type (see \cite[Section~6.3.3]{huyinv}), consisting of isomorphisms classes of pairs $(X,\eta)$ such that $X$ is a compact complex (not necessarily projective) hyperk\"ahler manifold of $\KKK^{[m]}$-type and $\eta\colon H^2(X,\Z)\isomto \Lambda_{\KKK^{[m]}}$ is an isometry.\ We let $({}^m\mathfrak{M}^{\,t})_{t\in \mathfrak{t}}$ be the family of connected components of~${}^m\mathfrak{M}$.\ By \cite[Lemma~7.5]{marsur}, the set $\mathfrak{t}$ is finite and is acted on transitively by the group $O(\Lambda_{\KKK^{[m]}})$.\

Let $h\in\Lambda_{\KKK^{[m]}}$ be a class with $h^2>0$ and let $t\in\mathfrak{t}$.\
Let ${}^m\mathfrak{M}^{\,t,+}_{h^\perp}\subset{}^m\mathfrak{M}^{\,t}$ be the subset parametrizing the pairs $(X,\eta)$ for which the class $\eta^{-1}(h)$ is of Hodge type $(1,1)$ and  belongs to the positive cone of $X$, and let $ {}^m\mathfrak{M}^{\,t,a}_{h^\perp}\subset {}^m\mathfrak{M}^{\,t,+}_{h^\perp}$ be the  open subset where $\eta^{-1}(h)$ is ample on~$X$.\ By \cite[Corollary~7.3]{marsur},  ${}^m\mathfrak{M}^{\,t,a}_{h^\perp}$ is connected, Hausdorff, and $20$-dimensional.

The relation with our moduli spaces ${}^m\!\cM_\tau$ is as follows.\ Let $\tau$ be a polarization type and let $M$ be an irreducible component of ${}^m\!\cM_\tau$.\
Pick a point $(X_0,H_0)$ of $ M$ and choose a marking $\eta_0\colon H^2(X_0,\Z)\isomto \Lambda_{\KKK^{[m]}}$.\ If $h_0:=\eta_0( H_0 )$, the   $O(\Lambda_{\KKK^{[m]}})$-orbit of $h_0$ is the  polarization type $\tau$, and the pair $(X_0,\eta_0)$ is in ${}^m\mathfrak{M}^{\,t_0,a}_{h_0^\perp}$, for some $t_0\in\mathfrak{t}$.\ As in \cite[(7.4)]{marsur}, we consider the disjoint union
\[
{}^m\mathfrak{M}^{\,a}_{\overline h_0 } := \hskip-3mm\coprod_{(h,t)\in O(\Lambda_{\KKK^{[m]}})\cdot (h_0,t_0)} \hskip-7mm{}^m\mathfrak{M}^{\,t,a}_{h^\perp}.
\]
It is acted on by  $O(\Lambda_{\KKK^{[m]}})$ and, by \cite[Lemma~8.3]{marsur},  there is an analytic bijection 
\begin{equation}\label{bijj}
M\isomlra {}^m\mathfrak{M}^{\,a}_{\overline h_0 }/O(\Lambda_{\KKK^{[m]}}).
\end{equation}

\begin{proof}[Proof of Theorem \ref{thm:ImagePeriodMap}.]
Let $M$ be a irreducible component of ${}^m\!\cM_\tau$.\
Pick a point $(X_0,H_0)$ of $ M$ and choose a marking $\eta_0\colon H^2(X_0,\Z)\isomto \Lambda_{\KKK^{[m]}}$.\ As above, set $h_0:=\eta_0( H_0 )\in\Lambda_{\KKK^{[m]}}$ and let $t_0\in\mathfrak{t}$ be such that the pair $(X_0,\eta_0)$ is in ${}^m\mathfrak{M}^{\,t_0,a}_{h_0^\perp}$.\ Given another point $(X ,H )$ of $M$, by using the bijection \eqref{bijj}, we can always find a marking $\eta \colon H^2(X ,\Z)\isomto \Lkkk[m]$ for which $(X , \eta )$ is in ${}^m\mathfrak{M}^{\,t_0,a}_{h_0^\perp}$ and $H =\eta^{-1}(h_0)$.  

By \cite[Corollary~9.10]{marsur}, for all $(X,\eta)\in{}^m\mathfrak{M}^{\,t_0}$, the  primitive embeddings 
$$\theta_X\circ\eta^{-1}\colon\Lkkk[m]\hra\widetilde{\Lambda}_{\KKK}$$
 are all in the same $O(\widetilde{\Lambda}_{\KKK})$-orbit.\   We fix one such  embedding $\theta$ and  a generator $\bv\in\widetilde{\Lambda}_{\KKK}$ of $\theta(\Lambda_{\KKK^{[m]}})^\perp$.\ As in Proposition~\ref{prop:BHTnew}, we consider the Heegner divisors  $ \cD_{\tau,K}$
in $ \cP_\tau:= \widehat O(\Lkkk[m] ,h_0)\backslash \Omega_{\tau}$, where $K$ is a primitive, rank-2, signature-$(1,1)$, sublattice of $\Lkkk[m]$ such that $h_0\in K$ and there exist integers $0\leq k \leq m-1$ and $a\geq -1$, and $\kappa\in K$ with
\begin{equation}\label{nc}
\kappa^2 = 2(m-1)   ( 4 (m-1) a -k^2 ), \qquad  \kappa\cdot h_0 =0, \qquad \frac{\theta(\kappa) + k \bv}{2(m-1)} \in \widetilde{\Lambda}_{\KKK}.
\end{equation}

There are finitely many such divisors.\ 
Indeed, since the signature of $K$ is $(1,1)$, we have $\kappa^2<0$.\ Moreover, we have $\kappa^2\ge 2(m-1)   ( -4 (m-1)   -(m-1)^2 )$, hence $\kappa^2$ may take only finitely many values.\ As explained in the proof of Lemma~\ref{le324},
  this implies that there   only finitely many Heegner divisors of the above form.

We claim that the image of the period map  
\[
\wp_\tau\colon
M\lra \cP_\tau 
\]
coincides with the complement of the union of these Heegner divisors.\
We first show the image does not meet these divisors.\
Let $(X,H)\in M$ and, as explained above, choose a marking $\eta$ such that $(X,\eta)\in{}^m\mathfrak{M}^{\,t_0,a}_{h_0^\perp}$, where $h_0:=\eta(H)$.\
If $(X,H)\in  \cD_{\tau,K}$, for $K$ as above, the lattice $K\subset \NS(X)$   satisfies   condition (ii) of Proposition \ref{prop:BHTnew}, which is impossible.

Conversely, take a point $x\in \cP_{\tau}$.\
The refined period map defined in \cite[(7.3)]{marsur} is surjective (this is a consequence of \cite[Theorem~8.1]{huyinv}).
Hence there exists $(X,\eta_X)\in{}^m\mathfrak{M}^{\,t_0,+}_{h_0^\perp}$ such that $H_X:=\eta_X^{-1}(h_0)$ is an algebraic class in the positive cone of $X$ and $(X,H_X)$ has period point $x$.\ 
By~\cite[Theorem~3.11]{huyinv}, $X$ is projective.\
We can now apply Proposition \ref{prop:BHTnew}: if $x$ is outside the union of  the Heegner divisors described above, there exist a projective hyperk\"ahler manifold $Y$ of $K3^{[m]}$-type, an ample primitive class $H_Y$, and a Hodge isometry $g\colon H^2(X,\Z) \isomto H^2(Y,\Z)$ such that $[\theta_X]=[\theta_Y\circ g]$ and $g(H_X)=H_Y$.\ 
By \cite[Theorem 9.8]{marsur}, $g$ is a \emph{parallel transport operator;}  
by \cite[Definition 1.1(1)]{marsur}, this means that if $\eta_Y:=\eta\circ g^{-1}$, the pair $(Y,\eta_Y)$ belongs to the same   connected component ${}^m\mathfrak{M}^{\,t_0}$ of ${}^m\mathfrak{M}$.\
 Moreover, $(Y,\eta_Y )$ is in $ {}^m\mathfrak{M}^{\,t_0,+}_{h_0^\perp}$ and since $\eta_Y^{-1}(h_0)=H_Y$ is ample,  it is even in ${}^m\mathfrak{M}^{\,t_0,a}_{h_0^\perp}$.\ 
By \eqref{bijj}, it defines a point of $M$ which  still has  period point $x$.\ This means that $x$ is in the image of $\cP_\tau$, which is what we wanted.
\end{proof}

\begin{rema}\label{explicit}
 Let us explain how to list the Heegner divisors referred to in the statement of Theorem~\ref{thm:ImagePeriodMap}.\  Fix a  representative  $h_\tau\in \Lambda_{\KKK^{[m]}}$  of the polarization $\tau$.\ For each $O(\widetilde{\Lambda}_{\KKK})$-equivalence class of   primitive embeddings   $  \Lambda_{\KKK^{[m]}}\hra \widetilde{\Lambda}_{\KKK}$,\footnote{When $m-1$ is prime or equal to $1$, there is a unique equivalence class; in general, the number of equivalence classes is the index of $ \widehat O(\Lkkk[m]  )$ in $  O^+(\Lkkk[m])$, that is $2^{\max\{\rho(m-1)-1,0\}}$ (Section~\ref{sec27} or \cite[Lemma~9.4]{marsur}).}  pick a representative $\theta$ and a generator $\bv$ of $\theta(\Lambda_{\KKK^{[m]}})^\perp$.\ Let $T$ be the saturation in $\widetilde{\Lambda}_{\KKK}$ of the sublattice generated by $\theta(h_\tau)$ and $\bv$.\ 
By \cite[Theorem 2.1]{apo}, the abstract isometry class  of the pair $(T,\theta(h_\tau))$ determines a component $M$ of ${}^m\!\cM_\tau$ (and all the components are obtained in this fashion).\ Now list, for all integers $0\leq k \leq m-1$ and $a\geq -1$, all $ \widehat O(\Lkkk[m] ,h_\tau)$-orbits of primitive, rank-2, signature-$(1,1)$, sublattices $K$ of $\Lkkk[m]$ such that $h_\tau\in K$ and there exist   $\kappa\in K$ satisfying~\eqref{nc}.

The image   $\wp_\tau(M)$ is then the complement in $\cP_\tau$ of the union of the corresponding Heegner divisors $\cD_{\tau,K}$.\ The whole procedure is worked out in a particular case in Example~\ref{ex:LLSvS} below (there are more examples  in Section~\ref{sec210e} in the case $m=2$). 

{\it A priori,}
the image of $\wp_{\tau}$ may be different when restricted to different components $M$ of ${}^m\!\cM_\tau$. 
\end{rema}

\begin{exam}\label{ex:LLSvS}
 The moduli space ${}^4\!\cM_2^{(2)}$ for hyperk\"ahler manifolds of $\KKK^{[4]}$-type   with a polarization of square $2$ and divisibility $2$ is irreducible (Theorem~\ref{ghsa}).\ Let us show that the image of the period map ${^4}\wp_{2}^{(2)}$ is the complement of the three irreducible Heegner divisors ${^4}\cD_{2,2}^{(2)}$, ${^4}\cD_{2,6}^{(2)}$, and ${^4}\cD_{2,8}^{(2)}$.

We begin with the more general case where $m$ is divisible by 4 and $p:=m-1$ is an odd prime number.\ We follow the recipe given  in Remark~\ref{explicit}.\ The irreducibility of ${}^m\!\cM_2^{(2)}$ (Theorem~\ref{ghsa}) means that we can fix the embedding $\theta\colon\Lkkk[4]\hra\widetilde{\Lambda}_{\KKK}$ and the class $h=h_\tau\in\Lkkk[4]$ as we like.\ 
Let us fix bases $(u_i,v_i)$, for $i\in\{1,\dots,4\}$, for each of the  four copies of $U$ in $\widetilde{\Lambda}_{\KKK}$.\ 
We choose the embedding
\[
\Lambda_{\KKK^{[m]}} = U^{\oplus 3}\oplus E_8(-1)^{\oplus 2}\oplus I_1(-2p) \hookrightarrow U^{\oplus 4}\oplus E_8(-1)^{\oplus 2}=\widetilde{\Lambda}_{\KKK}
\]
given by mapping the generator $\ell$ of $I_1(-2p)$ to $u_1-pv_1$.\ We also set $\bv:=u_1+pv_1$ (so that $\bv^\perp=\Lambda_{\KKK^{[m]}}$) and $h:= 2(u_2+\frac{m}{4}v_2) + \ell$.

As explained in Remark~\ref{explicit} (and using Proposition \ref{prop:BHTnewprime}), the image of the period map is the complement of the Heegner divisors of the form $ \cD_{\tau,K}$, where $K$ is a primitive, rank-2, 
sublattice of $ \Lkkk[m]$ such that  {$h\in K$ and} there exist integers $0\leq k \leq p$ and $a\geq -1$, and $\kappa\in K$ with
\[
\kappa^2 = 2p( 4pa -k^2 )<0, \qquad  \kappa\cdot h =0, \qquad 
 2p\mid \div_{\Lkkk[m]}(\kappa)  . 
 \]
Since we are interested in the lattices $K^\bot$, this is equivalent, by strange duality, to looking at the lattices $K^\bot$ in $h^\bot=\Lkkk[2]$, where $K$ now contains the primitive class $\bv$, of square $2p$.\ This is a computation that was done during the proof of Proposition~\ref{propirr}(2)(c): in the notation of that proof, if we write $\kappa=b\kp$, with $\kp$ primitive, and $\disc(K^\bot)=:-2e$,
 \begin{itemize}
 \item either 
$(\div_{\Lkkk[m]}(\kp),\kp^2)=(1,-2e/p)$ and  $p\mid  e$; 
\item or
$(\div_{\Lkkk[m]}(\kp),\kp^2)=(p,-2pe)$ and $p\nmid e$.
 \end{itemize}
In the first case, we have $ 2p\mid \div_{\Lkkk[m]}(\kappa) =2b$ and, writing $b=2pb'$ and $e=pe'$, we obtain 
$$  2p( 4pa -k^2 )=\kappa^2 = 4p^2b^{\prime 2}\kp^2= 4p^2b^{\prime 2}\kp^2=-8peb^{\prime 2}=-8p^2e'b^{\prime 2}.
$$
This implies $2p\mid k$, and since $0\leq k \leq p$, we get $k=0$, $a=-1$, and $e'=b'=1$.\ This corresponds to the irreducible divisor ${^m}\cD_{2,2p}^{(2)}$.

In the second case, we write similarly $b=2b'$ and we obtain
$$  2p( 4pa -k^2 )=\kappa^2 = 4b^{\prime 2}\kp^2= -8peb^{\prime 2}.
$$
Writing $k=2k'$, we get $ k^{\prime 2} -pa= eb^{\prime 2}$, with $k'>0$.

{Assume now $p=3$.} The only possible pairs $(k,a)$ are then  $(0,-1)$, $(2,0)$, and $(2,-1)$.\ When $(k,a)=(0,-1)$, the discussion above shows that we obtain the divisor ${^4}\cD_{2,6}^{(2)}$.

When $(k,a)=(2,0)$, we obtain $e=1$, hence the divisor ${^4}\cD_{2,2}^{(2)}$.\ 
When $(k,a)=(2,-1)$, we obtain   $e\in\{1,4\}$, hence the extra divisor ${^4}\cD_{2,8}^{(2)}$.
\end{exam}


\begin{exer}
\footnotesize\narrower
 (a) Show that the image of the period map $
{}^8\!\cM_2^{(2)}\to{}^8\!\cP_2^{(2)}$ is the complement of ${^8}\cD_{2,2}^{(2)}\cup {^8}\cD_{2,4}^{(2)}\cup {^8}\cD_{2,8}^{(2)}\cup {^8}\cD_{2,14}^{(2)}\cup {^8}\cD_{2,16}^{(2)}\cup {^8}\cD_{2,18}^{(2)}\cup {^8}\cD_{2,22}^{(2)}\cup {^8}\cD_{2,32}^{(2)}$.


\noindent (b) Show that the image of the period map $
{}^{12}\!\cM_2^{(2)}\to{}^{12}\!\cP_2^{(2)}$ is the complement of 
${^{12}}\cD_{2,2}^{(2)}\cup 
{^{12}}\cD_{2,6}^{(2)}\cup 
{^{12}}\cD_{2,8}^{(2)}\cup 
{^{12}}\cD_{2,10}^{(2)}\cup
 {^{12}}\cD_{2,18}^{(2)}\cup
  {^{12}}\cD_{2,22}^{(2)}\cup
   {^{12}}\cD_{2,24}^{(2)}\cup 
   {^{12}}\cD_{2,28}^{(2)}\cup
    {^{12}}\cD_{2,30}^{(2)}\cup 
    {^{12}}\cD_{2,32}^{(2)}\cup
     {^{12}}\cD_{2,40}^{(2)}\cup
      {^{12}}\cD_{2,50}^{(2)}\cup 
      {^{12}}\cD_{2,54}^{(2)}\cup
       {^{12}}\cD_{2,72}^{(2)}$.


\noindent (c) Let $p$ be a prime number such that $p\equiv-1\pmod4$.\ Show that the complement of the image of the period map $
{}^{p+1}\!\cM_2^{(2)}\to{}^{p+1}\!\cP_2^{(2)}$ contains   ${^{p+1}}\cD_{2,2(l^2-pa)}^{(2)}$ for all $0\le l\le \frac{p-1}2$ and $-1\le a< l^2/p$, but none of the divisors ${^{p+1}}\cD_{2,2e}^{(2)}$ for $e> \frac{(p+1)^2}4$.

 \end{exer}

\begin{rema}\label{neerem}
The ample cone of a projective \hK\ fourfold of $K3^{[2]}$-type was already described in Theorem \ref{thm:NefConeHK4}.\ This description involved only classes $\bs\in\cS_X$ such that   $\bs^2=-2$ and $\bs\cdot \bv_X\in\{0,1\}$.\ We still find these classes in higher dimensions (\cite[Corollary~2.9]{mon}):
\begin{itemize}
\item    classes $\bs$ with  $\bs^2=-2$ and $\bs\cdot \bv_X=0$; in the notation of Proposition \ref{prop:BHTnew}, we have $a=-1$, $k=0$, and $\theta_X(\kappa_X)=2(m-1)\bs$, and the lattice $ \Z h\oplus \Z \frac{1}{2(m-1)}\kappa_X$ has intersection matrix $\begin{pmatrix} h^2 & 0 \\ 0 & -2\end{pmatrix}$
but may or may not be primitive;
 \item classes $\bs$ with  $\bs^2=-2$ and $\bs\cdot \bv_X=1$; in the notation of Proposition \ref{prop:BHTnew}, we have $a=-1$, $k=1$, and $\theta_X(\kappa_X)=2(m-1)\bs-\bv_X$; the lattice $ \Z h\oplus \Z \kappa_X$ has intersection matrix $\begin{pmatrix} h^2 & 0 \\ 0 & -2(m-1)(4m-3)\end{pmatrix}$ and may or may not be primitive.
\end{itemize}

When $m\in\{3,4\}$,   complete lists of possible pairs $(\kappa_{\textnormal{prim}}^2,\div(\kappa_{\textnormal{prim}}))$ are given in \cite[Sections~2.2 and~2.3]{mon}:
 \begin{itemize}
 \item when $m=3$,  we have
 $$(\kappa_{\textnormal{prim}}^2,\div(\kappa_{\textnormal{prim}}))\in\{(-2,1),(-4,2),(-4,4), (-12,2), (-36,4)\};$$
\item when $m=4$,  we have
$$(\kappa_{\textnormal{prim}}^2,\div(\kappa_{\textnormal{prim}}))\in\{(-2,1),(-6,2),(-6,3),(-6,6),(-14,2),(-24,3),(-78,6)\}.$$
\end{itemize}
Depending on $m$ and on the polarization, not all pairs in these lists occur: in Example~\ref{ex:LLSvS} (where $m=4$), only the pairs $(-6,3)$, $(-2,1)$, and $(-24,3)$ do occur.

Whenever $m-1$ is a power of a prime number, the pair $(\kappa_{\textnormal{prim}}^2,\div(\kappa_{\textnormal{prim}}))=(-2m-6,2)$ is also   in the list (\cite[Corollary~2.9]{mon}): in our notation, it corresponds to $\bs^2=2a=-2$  and $\bs\cdot \bv_X =k=m-1$.
\end{rema}

When the divisibility $\gamma  $ of the polarization $h$ is 1 or 2, we list some of the Heegner divisors that are avoided by  the period map.

\begin{prop} 
Consider the period map  \begin{equation*} 
  {}^m\wp^{(\gamma)}_{2n}\colon  {}^m\!\!\cM_{2n}^{(\gamma)}\lra  O(L_{\KKK^{[m]}},h_\tau)\backslash \Omega_{\tau}
 \end{equation*}
When $\gamma=1$, 
the image of  $\,{}^m\wp^{(1)}_{2n}$  does not meet 
\begin{itemize}
\item $\nu$ of the components of the hypersurface ${}^m\cD_{2n,2n(m-1)}^{(1)}$, where $\nu\in\{0,1,2\}$ is the number of the following congruences that hold: $n+m\equiv 2\pmod 4$, $m\equiv 2\pmod 4$, $n\equiv 1\pmod 4$\textnormal{;}
\item one of the components of the hypersurface ${}^m\cD_{2n,8n(m-1)}^{(1)}$\textnormal{;}
\item when $m-1$ is a prime power, one   component  of the hypersurface ${}^m\cD_{2n,2(m-1)(m+3)n}^{(1)}$.
\end{itemize}
  When $\gamma=2$ (so that $n+m\equiv 1\pmod4$), the image of $\, {}^m\wp^{(2)}_{2n}$ does not meet 
 \begin{itemize}
\item one of the components of the hypersurface ${}^m\cD_{2n,2n(m-1)}^{(2)}$\textnormal{;}
\item when $m-1$ is a power of $2$, one   component  of the hypersurface ${}^m\cD_{2n,(m-1)(m+3)n/2}^{(2)}$.
\end{itemize}
\end{prop}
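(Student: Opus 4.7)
The plan is to derive the proposition from Theorem~\ref{thm:ImagePeriodMap} (equivalently, Proposition~\ref{prop:BHTnew}): a Heegner divisor $\cD_{\tau,K}$ avoids the image of ${}^m\wp^{(\gamma)}_{2n}$ precisely when $K$ admits a class $\kappa_X$ whose associated $\bs := (\theta(\kappa_X) + k\bv_X)/(2(m-1))$ lies in $\widetilde{\Lambda}_{\KKK}$ with $\bs^2 \ge -2$ and $0 \le k = \bs \cdot \bv_X \le m-1$, and $\bs \perp \theta(h_\tau)$. Following Remark~\ref{neerem}, I will concentrate on the classes with $\bs^2 = -2$ and $k \in \{0, m-1\}$, compute $d = |\disc(K^\perp)| = |\kappa^2 \disc(h_\tau^\perp)/s^2|$ (where $\kappa$ is primitive in $K \cap h_\tau^\perp$ and $s = \div_{h_\tau^\perp}(\kappa)$, the formula from the proof of Lemma~\ref{le324}), and enumerate the components of each resulting ${}^m \cD^{(\gamma)}_{2n, d}$ by counting $\widetilde O(h_\tau^\perp)$-orbits of such $\kappa$ through the invariant $\kappa_* \in D(h_\tau^\perp)$ via Eichler's criterion (Theorem~\ref{eic}).

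For $\gamma = 1$: the decomposition $h_\tau^\perp \isom U^{\oplus 2} \oplus E_8(-1)^{\oplus 2} \oplus I_1(-2n) \oplus I_1(-2(m-1))$ gives $\disc(h_\tau^\perp) = 4n(m-1)$ and $D(h_\tau^\perp) \isom \Z/2n\Z \oplus \Z/2(m-1)\Z$. The case $(k, \bs^2) = (0, -2)$ with $s = 1$ produces a single orbit, hence the lone component of ${}^m\cD^{(1)}_{2n, 8n(m-1)}$. The case $(k, \bs^2) = (0, -2)$ with $s = 2$ is governed by the nonzero order-$2$ elements $\kappa_* = (n\alpha, (m-1)\beta)$ of $D(h_\tau^\perp)$, with $(\alpha,\beta)\in\{0,1\}^2\setminus\{(0,0)\}$, satisfying $\bar q(\kappa_*) \equiv -\tfrac{1}{2} \pmod{2\Z}$; a direct computation of $\bar q$ yields respectively the three congruences $n \equiv 1 \pmod 4$, $m \equiv 2 \pmod 4$, and $n + m \equiv 2 \pmod 4$, producing exactly $\nu$ components of ${}^m\cD^{(1)}_{2n, 2n(m-1)}$. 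For $(k, \bs^2) = (m-1, -2)$, setting $y := 2\bs - \bv_X$ one has $y \in \bv_X^\perp$ with $y^2 = -2(m+3)$ and $\theta(\kappa_X) = (m-1)\,y$; under the prime-power hypothesis on $m-1$, $y$ is primitive in $\Lkkk[m]$ and $\div_{h_\tau^\perp}(y) = 2$, so that $d = 2(m+3)\cdot 4n(m-1)/4 = 2n(m-1)(m+3)$, with a single orbit giving the unique component of the third bullet.

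For $\gamma = 2$: using the decomposition of $h_\tau^\perp$ from~\eqref{eq2} together with the discriminant-form computation in footnote~\ref{foo11}, one has $\disc(h_\tau^\perp) = n(m-1)$. The case $(k, \bs^2) = (0, -2)$ with $s = 1$ produces a single Eichler orbit, giving the single component of ${}^m\cD^{(2)}_{2n, 2n(m-1)}$ (first bullet). The case $(k, \bs^2) = (m-1, -2)$ proceeds exactly as in $\gamma = 1$, with the power-of-$2$ hypothesis on $m-1$ now playing the role of the prime-power one; the resulting divisibility $s = 2$ gives $d = 2(m+3)\cdot n(m-1)/4 = (m-1)(m+3)n/2$, matching the second bullet.

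The main obstacle I foresee is the primitivity/divisibility analysis for the $(k, \bs^2) = (m-1, -2)$ classes, namely that the primitive element of $K \cap h_\tau^\perp$ is exactly $y$ (rather than a proper divisor $y/\lambda$ with $\lambda^2 \mid 2(m+3)$) and that $\div_{h_\tau^\perp}(y) = 2$. Writing $\bs = \bv_X/(2(m-1)) + \bs'$ with $\bs' \in (\bv_X^\perp)^\vee$ reduces this to controlling the order of $\bs'_* \in D(\Lkkk[m]) \isom \Z/(2m-2)\Z$. The prime-power (respectively, power-of-$2$) hypothesis on $m-1$ intervenes at precisely this step: via \cite[Lemma~9.2]{marsur} (cf.\ Section~\ref{sec27}), it guarantees that there is a unique $O(\widetilde{\Lambda}_{\KKK})$-orbit of primitive embeddings $\Lkkk[m] \hookrightarrow \widetilde{\Lambda}_{\KKK}$, pinning down the divisibility data up to isometry and isolating a single irreducible component of the relevant Heegner divisor.
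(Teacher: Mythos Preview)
Your approach is essentially the paper's: both identify the relevant wall classes from Remark~\ref{neerem} (namely $\bs^2=-2$ with $k\in\{0,m-1\}$), then compute $d$ via the formula $d=|\kappa^2\disc(h_\tau^\perp)/s^2|$ and count components by Eichler's criterion.\ The difference is stylistic: for the $\gamma=1$, $s=2$ case (first bullet), the paper writes $\kappa=a(u-nv)+b\ell+cw$ explicitly and solves the congruence $na^2+(m-1)b^2\equiv 1\pmod 4$ to read off the parities of $a,b$ and hence $\kappa_*$, while you go directly to the three nonzero order-$2$ elements of $D(h_\tau^\perp)$ and check when $\bar q$ equals $-\tfrac12$.\ Your computation of the three congruences is correct and more conceptual; the paper's is more hands-on but equivalent.

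Where your sketch is weaker is the prime-power bullet.\ You correctly set $y=2\bs-\bv_X$ and compute $y^2=-2(m+3)$, but your justification for primitivity of $y$ and for $\div_{h_\tau^\perp}(y)=2$ via ``uniqueness of the embedding orbit'' is not quite the right mechanism.\ The prime-power hypothesis enters because it is precisely the condition (from \cite[Corollary~2.9]{mon}, quoted in Remark~\ref{neerem}) under which $(\kappa_{\textnormal{prim}}^2,\div(\kappa_{\textnormal{prim}}))=(-2m-6,2)$ is a wall type; the paper then simply \emph{exhibits} such a class explicitly (for $\gamma=1$: $\kappa_{\textnormal{prim}}=2(u_3-v_3)+\ell$ with $\bs=u_3-v_3+u_1$; for $\gamma=2$: an analogous choice with $b=0$, $a=1$, $c=2$), from which $s=2$ and $d$ follow by inspection.\ Your abstract reduction to the order of $\bs'_*$ in $D(\Lkkk[m])$ can be made to work, but as written it does not yet show that a suitable $\bs$ exists over the generic point of the Heegner divisor; the paper's explicit construction closes this gap in one line.
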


\begin{proof}
No algebraic  class with square $-2$   can be orthogonal to $h_\tau$ (see Remark~\ref{neerem}).\ As in the proof of Theorem~\ref{imper}, there is such a class $\kappa$ if and only if the period point belongs to the hypersurface ${}^m\cD^{(\gamma)}_{2n,K} $, where $K$ is the rank-2 lattice with intersection matrix  $\bigl(\begin{smallmatrix}2n&0\\0&-2\end{smallmatrix}\bigr)$.\ This hypersurface is a component of   ${}^m\cD^{(\gamma)}_{2n,d} $, where $d:=|\disc(K^\bot)|$ can be computed using  the formula   \begin{equation*}
d=\left|\frac{\kappa^2\disc(h_\tau^\bot)}{s^2}\right|
\end{equation*}
from~\cite[Lemma~7.5]{ghssur} (see \eqref{eqd}), where $s:=\div_{h_\tau^\bot}(\kappa)\in\{1,2\}$.

Assume $\gamma=1$, so that $D(h_\tau^\bot)\isom  \Z/(2m-2)\Z\times \Z/2n\Z$ (see \eqref{eq1}).\ As in  the proof of Proposition~\ref{propirr}, we write
 $\kappa=a(u-nv)+b\ell +cw$, so that $1=-\frac12\kappa^2= na^2+(m-1)b^2+(\frac12 w^2)c^2$, and $s =\gcd(2na,2(m-1)b,c) $ is $1$ if $c$ is odd, and 2 otherwise.
 
If $s=1$, we have $d=8n(m-1)$ and $\kappa_*=0$, and, by Eichler's criterion, this defines an irreducible component of ${}^2\cD_{2n,8n(m-1)}^{(1)}$ (to prove that it is nonempty, just take $a=b=0$, $c=1$, and $w^2=-2$).

If $s=2$, we have $d=2n(m-1)$ and $\kappa_*$ has order 2.\ 
More precisely,
$$\kappa_*=\begin{cases} (0,n)&\textnormal{if $a$ is odd and $b$ is even;}\\
(m-1,0)&\textnormal{if $a$  is even and $b$ is odd;}\\
(m-1,n)&\textnormal{if $a$ and $b$ are odd.} 
\end{cases}$$
Conversely, to obtain such a class $\kappa$ with square $-2$, one needs to solve the equation $na^2+(m-1)b^2+rc^2=1$, with $c$   even and $r$  any integer.\ It is equivalent to solve $na^2+(m-1)b^2\equiv 1\pmod4$ and one checks that this gives the first item of the proposition.

Assume $\gamma=2$, so that $\disc(h_\tau^\bot)=n(m-1)$ (see \eqref{eq2}) and $d=\frac{2}{s^2}n(m-1)\in\{2n(m-1),\frac{n(m-1)}{2}\} $.\ As in  the proof of Proposition~\ref{propirr}, we may take
$h_\tau^\bot=\Z w_1\oplus \Z w_2\oplus M$, with    $w_1:= (m-1)v+\ell$ and $w_2:=-u+\frac{n+m-1}{4}v$ and write $\kappa=aw_1+bw_2+cw$.\ The equality $\kappa^2=-2$ now reads $1=a(a+b)(m-1)+b^2 \bigl(\frac{n+m-1}{4}\bigr)-c^2\bigl(\frac12 w^2\bigr)$, and again, $s=1$ if $c$ is odd, $s=2$ if $c$ is even.

Again, the case $s=1$ gives rise to a single component of ${}^2\cD_{2n,2n(m-1)}^{(1)}$, since it corresponds to $\kappa_*=0$ (take $a=b=0$ and $c=1$).\footnote{If $s=2$, we have $d=n(m-1)/2$, so that both $n$ and $m-1$ are even, and we need to solve
$a(a+b)(m-1)+b^2 \bigl(\frac{n+m-1}{4}\bigr)\equiv 1\pmod4$.\ The reader is welcome to determine the number of solutions of this equation and see  how many different elements of order 2 in the group $D(h_\tau^\bot)$ (which was determined in footnote \ref{foo11}) are obtained in this way in each case.\ This will produce more hypersurfaces avoided by the image of the period map.}

{\em Assume now that $m-1$ is a prime power.} In that case, no algebraic  class with square  $-2m-6$ and divisibility 2 can be orthogonal to $h_\tau$ (see Remark~\ref{neerem}).\ Assume that there is such a class  $\kappa$.\ 

When $\gamma=1$, keeping the same notation as above, we need to solve $na^2+(m-1)b^2+(\frac12 w^2)c^2=m+3$, with $a$ and $c$ even.\ Just taking $c=2$, $a=0$, and $b=1$ works, with $d=\frac{8(m+3)n(m-1) }{\gcd(2na,2(m-1)b,c)^2}=2(m-1)(m+3)n$.\ In the notation of Example~\ref{ex:LLSvS}, we may take $\bv=u_1+(m-1)v_1$, $\ell=u_1-(m-1)v_1$,  $h= u_2+nv_2 $,   $\kappa=(m-1)(2(u_3-v_3)+\ell)$, and $\bs=u_3-v_3+u_1$.

When $\gamma=2$, we need to solve $a(a+b)(m-1)+b^2 \bigl(\frac{n+m-1}{4}\bigr)-c^2\bigl(\frac12 w^2\bigr)=m+3$, with $a(m-1)$ and $c$ even, and $d=\frac{2(m+3)n(m-1) }{\gcd(2na,2(m-1)b,c)^2}\mid (m-1)(m+3)n/2$, so that both $n$ and $m-1$ are even.\ In that case, take $c=2$, $b=0$, and $a=1$.\ In the notation of Example~\ref{ex:LLSvS}, we may take $\bv=u_1+(m-1)v_1$, $\ell=u_1-(m-1)v_1$,  $h= 2(u_2+\frac{n+m-1}{4}v_2)+\ell $,   $\kappa=(m-1)((m-1)v_2+2(u_3-v_3)+\ell)$, and $\bs=\frac{m-1}{2}v_2+u_3-v_3+u_1$.
  \end{proof}

\subsection{The period map for cubic fourfolds}

Let $\cM_{\textnormal{cub}}$ be the moduli space of smooth   cubic hypersurfaces in $\P^5$.\  With any cubic fourfold $W$, we can associate its period, given by the weight-2 Hodge structure on $H^4_{\textnormal{prim}}(W,\Z)$.\  The Torelli Theorem for cubic fourfolds (\cite{vo}) says that the period map is an open embedding.\  By \cite[Proposition 6]{bedo}, there is a Hodge isometry $H^4_{\textnormal{prim}}(W,\Z)(-1) \isom H^2_{\textnormal{prim}}(F(W),\Z)$.\  Hence, we can equivalently consider the period map for (smooth) cubic fourfolds as the period map of their (smooth) Fano varieties of lines, which are hyperk\"ahler fourfolds of $\KKK^{[2]}$-type with a polarization of  square 6 and divisibility 2.\ The latter form a dense open subset in the moduli space ${}^2\!\cM_{6}^{(2)} $.

By \cite[Theorem 8.1]{dm},   the image  of the corresponding period map $  {}^2\!\cM_{6}^{(2)}  \to {}^2\cP_{6}^{(2)}$ is exactly the complement of the Heegner divisor ${}^2\cD_{6,6}^{(2)}$.\  The description of the image of the period map for cubic fourfolds, which is smaller, is part of a celebrated result of Laza and Looijenga (\cite{laza,looi}).

\begin{theo}[Laza, Looijenga]\label{lalo}
The image of the period map for cubic fourfolds is the complement of the divisor  ${}^2\cD_{6,2}^{(2)}\cup {}^2\cD_{6,6}^{(2)}$.
\end{theo}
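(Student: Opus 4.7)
The plan is to deduce the theorem from results already stated: the Torelli theorem for cubic fourfolds (Voisin), the Beauville--Donagi Hodge isometry identifying the periods of $W$ and of the Fano variety of lines $F(W)$, Hassett's description of $\,{}^2\!\cM_6^{(2)}$, and the description of the image of the period map $\,{}^2\wp_6^{(2)}$ from Theorem~\ref{imper} (or Example~\ref{exa222z}).

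First, I will set up the dictionary between the two period maps. Sending $W\mapsto (F(W),H)$, where $H$ is the Pl\"ucker polarization, defines an open embedding
\[
\iota\colon \cM_{\textnormal{cub}}\lhra{} \,{}^2\!\!\cM_6^{(2)}
\]
whose image is exactly the open subset $\cU_6^{(2)}$ of Section~\ref{sec261}. The Beauville--Donagi Hodge isometry $H^4_{\textnormal{prim}}(W,\Z)(-1)\isom H^2_{\textnormal{prim}}(F(W),\Z)$ identifies the period points of $W$ and of $(F(W),H)$, so the period map for cubic fourfolds factors as
\[
\cM_{\textnormal{cub}}\xrightarrow{\ \iota\ } \,{}^2\!\!\cM_6^{(2)}\xrightarrow{\,{}^2\wp_6^{(2)}\ } \,{}^2\cP_6^{(2)},
\]
and both arrows are open embeddings. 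Thus the image of the cubic period map is $\,{}^2\wp_6^{(2)}(\cU_6^{(2)})$.

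Next I will identify this image. By Example~\ref{exa222z}, the moduli space decomposes as $\,{}^2\!\!\cM_6^{(2)}=\cU_6^{(2)}\sqcup \cH_6^{(2)}$ (Hassett), and by Theorem~\ref{imper} (or Example~\ref{exa222z}), the image of $\,{}^2\wp_6^{(2)}$ is the complement of the irreducible Heegner divisor $\,{}^2\cD_{6,6}^{(2)}$. It therefore suffices to establish that
\[
\,{}^2\wp_6^{(2)}(\cH_6^{(2)})=\,{}^2\cD_{6,2}^{(2)}.
\]
The inclusion ``$\subset$'' is immediate from the definition of $\cH_6^{(2)}$: a general pair $(S^{[2]},2L_2-\delta)$ with $(S,L)$ a polarized K3 surface of degree~$2$ satisfies $\Pic(S^{[2]})\supset \Z(2L_2-\delta)\oplus \Z\delta$, where the sublattice spanned by the polarization and the primitive class $\delta$ of square $-2$ has orthogonal complement of discriminant $-2$; hence the period lies in $\,{}^2\cD_{6,2}^{(2)}$. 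For the reverse inclusion, by Proposition~\ref{propirr}(2)(c) applied to $n=3$ (prime), the divisor $\,{}^2\cD_{6,2}^{(2)}$ is irreducible, and since $\cH_6^{(2)}$ is an irreducible hypersurface mapping into it with $\,{}^2\wp_6^{(2)}$ generically injective on each component of $\,{}^2\!\!\cM_6^{(2)}$ (Theorem~\ref{torthhk2}), the two must agree.

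Combining these two facts gives
\[
\,{}^2\wp_6^{(2)}(\cU_6^{(2)})=\,{}^2\wp_6^{(2)}(\,{}^2\!\!\cM_6^{(2)})\setminus \,{}^2\wp_6^{(2)}(\cH_6^{(2)})=\,{}^2\cP_6^{(2)}\setminus\bigl(\,{}^2\cD_{6,2}^{(2)}\cup\,{}^2\cD_{6,6}^{(2)}\bigr),
\]
which is the statement of the theorem. The main obstacle will be justifying rigorously that $\cH_6^{(2)}$ maps \emph{onto} $\,{}^2\cD_{6,2}^{(2)}$ (rather than into a proper subvariety): this uses both the irreducibility of the target (Proposition~\ref{propirr}(2)(c), which relies on $n=3$ being prime) and the fact that the restriction of the period map to $\cH_6^{(2)}$ is still generically injective by Torelli (Theorem~\ref{torthhk2} applied to the irreducible component $\cH_6^{(2)}$), together with a dimension count showing $\dim \cH_6^{(2)}=\dim \,{}^2\cD_{6,2}^{(2)}=19$. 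Note that this argument bypasses the more delicate GIT analysis of Laza and the Looijenga ball-quotient argument, by leveraging the Bayer--Hassett--Tschinkel description of the nef cone which underlies Theorem~\ref{imper}.
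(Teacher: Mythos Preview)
Your argument is circular. The decomposition $\,{}^2\!\!\cM_6^{(2)}=\cU_6^{(2)}\cup \cH_6^{(2)}$ that you invoke from Example~\ref{exa222z} is not a result of Hassett; in that example it is explicitly attributed to Laza (``This (and much more) was first proved by Laza in~\cite[Theorem~1.1]{laza}, together with the fact that $\cM_6^{(2)}=\cU_6^{(2)}\cup \cH_6^{(2)}$''). What is known independently (Example~\ref{exa222}, Section~\ref{sec261}) is only that $\cU_6^{(2)}$ is a dense open subset and that its complement \emph{contains} the irreducible hypersurface $\cH_6^{(2)}$. Proving that the complement is \emph{equal} to $\cH_6^{(2)}$---equivalently, that every $(X,H)\in{}^2\!\!\cM_6^{(2)}$ with period outside ${}^2\cD_{6,2}^{(2)}$ is the Fano variety of lines of a smooth cubic---is precisely the hard surjectivity content of the Laza--Looijenga theorem. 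Theorem~\ref{imper} and the Bayer--Hassett--Tschinkel description of the nef cone do not give you this: they tell you which periods come from \emph{some} polarized hyperk\"ahler fourfold, not which ones come specifically from a Fano variety of lines.

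This is why the paper does not claim to reprove Theorem~\ref{lalo} in full. What the methods of the paper yield without invoking~\cite{laza,looi} is the weaker Proposition~\ref{prop:BayerMongardi}(b): the image of the cubic period map contains the complement of ${}^2\cD_{6,2}^{(2)}\cup{}^2\cD_{6,6}^{(2)}\cup{}^2\cD_{6,8}^{(2)}$, obtained by passing through the LLSvS eightfold $X(W)$, strange duality, and the description of the image of ${}^4\wp_2^{(2)}$ in Example~\ref{ex:LLSvS}. The extra divisor ${}^2\cD_{6,8}^{(2)}$ (cubics containing a plane) cannot be removed by this argument, and closing that gap is exactly where the GIT analysis of Laza or Looijenga's argument is needed.
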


It had been known for some time (see \cite{has}) that the image of the period map for cubic fourfolds {\em is contained in} the complement of ${}^2\cD_{6,2}^{(2)}\cup {}^2\cD_{6,6}^{(2)}$.

Let $W$ be a smooth cubic fourfold containing no planes (this is equivalent to saying that its period point is not on the Heegner divisor ${}^2\cD_{6,8}^{(2)}$).\ We explained in   Section~\ref{sec262} the construction (taken from  \cite{lls}) of a hyperk\"ahler eightfold $X(W)\in{}^4\!\cM_2^{(2)}$.\ It follows from \cite[Proposition~1.3]{lpz} that the varieties $F(W)$ and $X(W)$ are ``strange duals''  in the sense of Remark~\ref{stdu}.\footnote{\label{idea}This follows from a more general ``strange duality'' statement as follows.\ 
Consider the \emph{Kuznetsov component} $\cK\!u(W)$ of the derived category $\mathrm{D}^\mathrm{b}(W)$ (\cite{ku}).\ 
As explained in \cite[Section 2]{at},  one can associate with such a category a weight-2 Hodge structure on the lattice $\widetilde{\Lambda}_{\KKK}$; we denote it by $\widetilde{\Lambda}_{W}$.\ 
Moreover, there is a natural primitive sublattice $A_2\hra\widetilde{\Lambda}_{\textnormal{alg}, W}$; we denote its canonical basis by $(u_1,u_2)$: it satisfies $u_1^2=u_2^2=2$ and $u_1\cdot u_2=-1$.\ 

Let $\sigma_0=(\mathcal{A}_0,Z_0)$ be the Bridgeland stability condition constructed in \cite[Theorem~1.2]{blms}.\  
Given a Mukai vector $\bv\in\widetilde{\Lambda}_{\textnormal{alg}, W}$, we denote by $M_{\sigma_0}(\bv)$ the moduli spaces of $\sigma_0$-semistable objects in $\cA_0$ with Mukai vector $\bv$.\ 
If there are no properly $\sigma_0$-semistable objects,  $ M_{\sigma_0}(\bv)$ is, by \cite{blmnps}, a smooth projective hyperk\"ahler manifold of dimension $\bv^2+2$ and there is a natural Hodge isometry $H^2(M_{\sigma_0}(\bv),\Z)\isom \bv^\perp$ such that the embedding $\theta_{M_{\sigma_0}(\bv)}$ can be identified with $\theta_{\bv}\colon \bv^\perp \hra \widetilde{\Lambda}_{\KKK}$.\ 
Moreover, there is  by \cite{bama1} a natural ample class $\ell_{\sigma_0}(\bv)$ on $M_{\sigma_0}(\bv)$.\ 

By \cite{lpz}, there are isomorphisms $F(W)\isom M_{\sigma_0}(u_1)$ and $X(W)\isom M_{\sigma_0}(u_1+2u_2)$.\ Moreover, possibly after  multiplying by a positive constant, the class $\ell_{\sigma_0}(u_1)$ on $M_{\sigma_0}(u_1)$ (respectively, the class $\ell_{\sigma_0}(u_1+2U_2)$ on $M_{\sigma_0}(u_1+2u_2)$) corresponds   to the Pl\"ucker polarization on $F(W)$ (respectively, to the degree-$2$ polarization on $X(W)$).\
Finally, an easy computation shows   $\ell_{\sigma_0}(u_1)=\theta_{u_1}(u_1+2u_2)$ and $\ell_{\sigma_0}(u_1+2u_2)=\theta_{u_1+2u_2}(u_1)$.\ The strange duality statement follows directly from this: the periods of both polarized varieties are identified with the orthogonal of the $A_2$ sublattice.\ 
We notice that, in this example, this gives a precise formulation of the strange duality between polarized hyperk\"ahler manifolds in terms of Le Potier's strange duality (\cite{le}) between the two moduli spaces (on a noncommutative K3 surface).}

It follows from Theorem~\ref{lalo} that the image of the set of $X(W)$ by the period map $  {}^4\!\cM_{2}^{(2)}  \to {}^4\cP_{2}^{(2)}$ contains the complement of the Heegner divisors ${}^2\cD_{6,2}^{(2)}$, $ {}^2\cD_{6,6}^{(2)}$, and $ {}^2\cD_{6,8}^{(2)}$.\ Since the image of this period map is 
 exactly the complement of these Heegner divisors (Example~\ref{ex:LLSvS}), we obtain that any element of 
 $ {}^4\!\cM_{2}^{(2)} $ is of the form $X(W)$.\ We provide below a different proof (due to Bayer and Mongardi) of this statement and of a  weaker form of Theorem~\ref{lalo}.\  

\begin{prop}[Bayer, Mongardi]\label{prop:BayerMongardi}
{\rm (a)} Any element of $  {}^4\!\cM_{2}^{(2)}$ is of the type $X(W)$, for some smooth cubic fourfold $W\subset \P^5$ containing no planes.

\noindent {\rm (b)}
The image of the period map for cubic fourfolds contains the complement of ${}^2\cD_{6,2}^{(2)}\cup {}^2\cD_{6,6}^{(2)}\cup {}^2\cD_{6,8}^{(2)}$. 
\end{prop}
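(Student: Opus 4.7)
The plan is to derive both (a) and (b) from Example~\ref{ex:LLSvS} (which pins down the image of ${}^4\wp_2^{(2)}$) by transporting information across the strange-duality isomorphism of Remark~\ref{remsym}. Let $\phi\colon {}^4\cP_2^{(2)}\isomto {}^2\cP_6^{(2)}$ denote the isomorphism of period spaces induced by the lattice identification $\L^{(2)}_{\KKK^{[4]},2}\isom \L^{(2)}_{\KKK^{[2]},6}$. My first step would be to verify that, for every even positive integer $d$, $\phi$ matches ${}^4\cD_{2,d}^{(2)}$ with ${}^2\cD_{6,d}^{(2)}$. This is immediate from~\eqref{unionh}: the Heegner divisor ${}^m\cD_{2n,d}^{(\gamma)}$ is defined by primitive rank-2 overlattices $K$ of the polarization with $|\disc(K^\perp)|=d$, and by the formula of \cite[Lemma~7.5]{ghssur} recalled in the proof of Proposition~\ref{propirr}, this discriminant depends only on the pair $(\kappa^2,\div_{h_\tau^\perp}(\kappa))$, where $\kappa$ generates $K\cap h_\tau^\perp$; this is intrinsic data of $h_\tau^\perp$ and is therefore preserved by $\phi$.

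For (a), I would take $(Y,H_Y)\in{}^4\cM_2^{(2)}$ with period $p\in{}^4\cP_2^{(2)}$ and set $q:=\phi(p)$. By Example~\ref{ex:LLSvS}, $p$ avoids ${}^4\cD_{2,2}^{(2)}\cup{}^4\cD_{2,6}^{(2)}\cup{}^4\cD_{2,8}^{(2)}$, so by the previous step $q$ avoids ${}^2\cD_{6,2}^{(2)}\cup{}^2\cD_{6,6}^{(2)}\cup{}^2\cD_{6,8}^{(2)}$. The case $m=2$, $n=3$, $\gamma=2$ of Theorem~\ref{imper} (cf.\ Example~\ref{exa222z}) then supplies some $(X',H')\in{}^2\cM_6^{(2)}$ with period $q$. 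Because $q\notin{}^2\cD_{6,2}^{(2)}$, the pair $(X',H')$ cannot lie in the Heegner divisor $\cH_6^{(2)}$ parametrizing Hilbert squares of polarized K3 surfaces of degree $2$ (Example~\ref{exa222}), so $X'=F(W)$ for some smooth cubic fourfold $W\subset\P^5$; because $q\notin{}^2\cD_{6,8}^{(2)}$, this $W$ contains no planes. The LLSvS construction of Section~\ref{sec262} then produces $X(W)\in{}^4\cM_2^{(2)}$, and the period identification of footnote~\ref{idea} (from \cite[Proposition~1.3]{lpz}) gives that the period of $X(W)$ equals $\phi^{-1}(q)=p$. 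Since ${}^4\cM_2^{(2)}$ is irreducible (Theorem~\ref{ghsa}), its period map is an open embedding by Torelli (Theorem~\ref{torthhk2}), hence $Y\cong X(W)$.

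For (b), I would take $q\in{}^2\cP_6^{(2)}$ outside ${}^2\cD_{6,2}^{(2)}\cup{}^2\cD_{6,6}^{(2)}\cup{}^2\cD_{6,8}^{(2)}$; then $p:=\phi^{-1}(q)$ avoids the three dual Heegner divisors, so Example~\ref{ex:LLSvS} yields some $Y\in{}^4\cM_2^{(2)}$ with $\wp(Y)=p$. Part~(a) then produces a smooth cubic fourfold $W$ containing no planes such that $Y\cong X(W)$, and the same strange-duality period identification, run in the opposite direction, puts $q$ in the image of $\wp_{F(W)}$; via the Beauville--Donagi Hodge isometry $H^4_{\textnormal{prim}}(W,\Z)(-1)\isom H^2_{\textnormal{prim}}(F(W),\Z)$ recalled above, this is exactly the period of the cubic $W$ itself. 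The only nontrivial input is the strange-duality period identification $\wp(X(W))=\phi^{-1}(\wp(F(W)))$ borrowed from \cite{lpz}, which is where the hard geometric content sits; everything else is bookkeeping around the Heegner-divisor correspondence under $\phi$, the $m=2$ case of Theorem~\ref{imper}, and the Torelli theorem.
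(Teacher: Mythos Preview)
Your argument has a genuine gap in part~(a), at the step ``so $X'=F(W)$ for some smooth cubic fourfold $W$.'' From $q\notin{}^2\cD_{6,2}^{(2)}$ you correctly deduce that $(X',H')$ is not in $\cH_6^{(2)}$, but Example~\ref{exa222} only tells you that the complement of $\cU_6^{(2)}$ \emph{contains} $\cH_6^{(2)}$, not that it equals it. The equality $\cM_6^{(2)}=\cU_6^{(2)}\cup\cH_6^{(2)}$ is precisely part of Laza's theorem (see Example~\ref{exa222z}), and invoking it here defeats the purpose of the proposition, which is explicitly advertised as a proof \emph{independent} of Theorem~\ref{lalo}. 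Your part~(b) then inherits this circularity, since it rests on~(a).

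The paper's proof supplies the missing idea by a direct geometric construction rather than by passing through ${}^2\cM_6^{(2)}$: every $X\in{}^4\cM_2^{(2)}$ carries a canonical anti-symplectic involution (Proposition~\ref{prop27}), and for $X=X(W)$ one component of its fixed locus is the cubic $W$ itself (footnote~\ref{foot24}). As $X$ varies over the connected moduli space, the fixed loci deform smoothly, and Fujita's rigidity theorem says any smooth deformation of a cubic fourfold is again a cubic fourfold. This produces, for every $X$, a smooth cubic $W_X\subset X$ without ever asking whether a given point of ${}^2\cM_6^{(2)}$ lies in $\cU_6^{(2)}$. The strange-duality period matching you describe is then used exactly as you do, but only \emph{after} the cubic has been exhibited geometrically.
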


%

\begin{proof}[Sketch of proof of Proposition \ref{prop:BayerMongardi}]
By Proposition~\ref{prop27},   any eightfold $X$ in the moduli space $^{4}\!\cM_{2}^{(2)}$ has an  anti-symplectic regular involution.\  
Upon varying the   eightfold in the moduli space, the fixed loci form  a smooth family.\ 
In particular, since, when $X=X(W)$, the fixed locus contains a copy of the cubic fourfold $W$, and any smooth deformation of a cubic fourfold is a cubic fourfold as well (this is due to Fujita; see  \cite[Theorem 3.2.5]{iskpro}),  there exists for any $X\in {^{4}\!\cM_{2}^{(2)}}$ a smooth cubic fourfold $W_X\subset X$   fixed by the involution, and the periods of $W_X$ and $X$ are compatible because they are strange duals, as explained above.\ 

By Example~\ref{ex:LLSvS}, the image of  the period map for $^{4}\!\cM_{2}^{(2)}$ is exactly the complement of the union of the Heegner divisors ${^4}\cD_{2,2}^{(2)}$, ${^4}\cD_{2,6}^{(2)}$, and ${^4}\cD_{2,8}^{(2)}$, and
 these divisors correspond by   strange duality to the (irreducible) Heegner divisors ${}^2\cD_{6,2}^{(2)}$, ${}^2\cD_{6,6}^{(2)}$, and ${}^2\cD_{6,8}^{(2)}$ in the period domain of cubic fourfolds.\ 
\end{proof}

 \end{document}